\let\mathcal\mathscr
\newtheoremstyle{theorem}{11pt}{11pt}{\slshape}{}{\bfseries}{.}{.5em}{}
\newtheoremstyle{note}{11pt}{11pt}{}{}{\bfseries}{.}{.5em}{}
\theoremstyle{plain}
  \newtheorem{theorem}{Th\'{e}or\`{e}me}[section]
  \newtheorem{proposition}[theorem]{Proposition}
  \newtheorem{lemma}[theorem]{Lemme}
  \newtheorem{corollary}[theorem]{Corollaire}
\theoremstyle{definition}
  \newtheorem{definition}[theorem]{D\'{e}finition}
\theoremstyle{remark}
  \newtheorem{remark}[theorem]{Remarque}
\def\Q{{\bf Q}} \def\Z{{\bf Z}}
\def\dual{{\boldsymbol *}}
\def\zp{{\Z_p}}
\def\zpet{{\Z_p^\dual}}
\def\qp{{\Q_p}}
\def\qpet{{\Q_p^\dual}}
\def\p1{{\bf P}^1}
\def\epsilon{\varepsilon}
\begin{document}
\title[Extensions et vecteurs algébriques]
{Extensions de représentations de de Rham et vecteurs localement algébriques}
\author{Gabriel Dospinescu}
\address{UMPA, \'Ecole Normale Sup\'erieure de Lyon, 46 all\'ee d'Italie, 69007 Lyon, France}
\email{gabriel.dospinescu@ens-lyon.fr}
\begin{abstract}
  Soit $\Pi$ une complétion unitaire irréductible d'une représentation localement algébrique de ${\rm GL}_2(\qp)$.
  On décrit les déformations infinitésimales $\Pi_1$ de $\Pi$ qui sont elles-mêmes complétions d'une représentation localement algébrique.
  Cela répond à une question de Paskunas et a des applications directes
 à la conjecture de Breuil-Mézard. 
 
 
\end{abstract}
\begin{altabstract}
Let $\Pi$ be an irreducible unitary completion of a locally algebraic ${\rm GL}_2(\qp)$-representation. 
We describe those first-order deformations of $\Pi$ which are themselves completions of a locally algebraic representation.
This answers a question of Paskunas and has direct applications to the Breuil-Mézard conjecture. 


\end{altabstract}
\setcounter{tocdepth}{3}

\maketitle

\stepcounter{tocdepth}
{\Small
\tableofcontents
}

\section{Introduction}

  \subsection{Le résultat principal} \quad Soit $p$ un nombre premier, $L$ une extension finie de $\qp$, $O_L$ l'anneau de ses entiers et soit $\delta: \qpet\to O_L^*$ un caractère unitaire. On note 
   ${\rm Rep}_L(\delta)$ la catégorie des représentations de $G={\rm GL}_2(\qp)$ sur des $L$-espaces de Banach $\Pi$,  
   telles que 
   
   $\bullet$ $\Pi$ a pour caractère central $\delta$.
   
   $\bullet$ $\Pi$ est unitaire, i.e. il existe une valuation $v_{\Pi}$ qui définit la topologie de $\Pi$ et qui est $G$-invariante.
   
   $\bullet$ Si $\Pi_0$ est la boule unité de $\Pi$ pour la valuation $v_{\Pi}$, alors $\Pi_0/p\Pi_0$ est un $O_L[G]$-module 
   de longueur finie.  
   
     Un objet $\Pi$ de ${\rm Rep}_L(\delta)$ est dit {\it supersingulier} si $\Pi$ est absolument irréductible\footnote{Cela signifie que $\Pi\otimes_{L} L'$ est topologiquement
     irréductible pour toute extension finie $L'$ de $L$.} et si $\Pi$ n'est pas isomorphe à un sous-quotient de
     l'induite parabolique d'un caractère unitaire du tore diagonal de $G$. Si $\Pi\in {\rm Rep}_L(\delta)$, on note 
     $\Pi^{\rm alg}$ l'espace des vecteurs localement algébriques de $\Pi$. Ce sont les vecteurs $v$ dont l'application orbite
     $g\to g\cdot v$ est localement polynômiale sur $G$. $\Pi^{\rm alg}$ est la plupart du temps nul\footnote{Si $\Pi^{\rm alg}\ne 0$, alors 
     $\delta$ est localement algébrique, mais cela est loin d'être suffisant.} et on a 
     $\Pi^{\rm alg}\ne 0$ si et seulement si $\Pi$ contient une représentation de la forme 
     $W\otimes \pi$, avec $W$ une représentation algébrique de $G$ et 
     $\pi$ une représentation lisse, admissible (bien entendu, avec $W\ne 0$ et $\pi\ne 0$).
     Les représentations $\Pi$ telles que $\Pi^{\rm alg}\ne 0$ jouent un rôle très important à travers les connexions 
     qu'elles ont avec la théorie des représentations galoisiennes: leur étude est un ingrédient clé dans la preuve des conjectures
     de Fontaine-Mazur \cite{KiFM, Emcomp} et de Breuil-Mézard \cite{KiFM, BM, PaskunasBM}. Leur étude a été initiée par 
     Breuil \cite{Br}, ce qui a abouti \cite{Cbigone, Pa} à la correspondance de Langlands locale $p$-adique pour $G$.

     \begin{remark}\label{extalg} Si $\Pi$ est irréductible et si $\Pi^{\rm alg}\ne 0$, alors 
     $\Pi^{\rm alg}$ est dense dans $\Pi$ (puisque $\Pi^{\rm alg}$ est stable par $G$), donc $\Pi$ est une complétion unitaire de $\Pi^{\rm alg}$.
     Si $\Pi$ est réductible et si $\Pi^{\rm alg}\ne 0$, il n'y a aucune raison pour que $\Pi^{\rm alg}$ soit dense dans 
     $\Pi$. L'exemple suivant est assez éclairant: soient $\delta_1,\delta_2$ des caractères unitaires tels que 
     $\delta_1\delta_2=\chi\delta$, où $\chi:{\rm Gal}(\overline{\qp}/\qp)\to \zpet$ est le caractère cyclotomique. 
     On suppose que $\delta_1\delta_2^{-1}\ne 1,\chi^{\pm 1}$ et on considère les induites paraboliques continues\footnote{Si 
     $\delta_1\otimes \delta_2$ est un caractère unitaire du tore diagonal de ${\rm GL}_2(\qp)$, on note 
     ${\rm Ind}_B^{G}(\delta_1\otimes \delta_2)^{\rm cont}$ l'espace des fonctions continues $f: {\rm GL}_2(\qp)\to L$
     telles que $f(\left(\begin{smallmatrix} a & b \\0 & d\end{smallmatrix}\right)g)=\delta_1(a)\delta_2(d)f(g)$ pour 
     $a,d\in \qpet$, $b\in \qp$ et $g\in {\rm GL}_2(\qp)$. C'est naturellement un objet de ${\rm Rep}_L(\delta_1\delta_2)$.}
           $\Pi_1= {\rm Ind}_{B}^{G} (\delta_1\otimes \delta_2\chi^{-1})^{\rm cont}$ et
     $\Pi_2={\rm Ind}_{B}^{G} (\delta_2\otimes \delta_1\chi^{-1})^{\rm cont}$. Alors 
     $\Pi_1,\Pi_2\in {\rm Rep}_L(\delta)$ sont des objets absolument irréductibles non isomorphes (mais pas supersinguliers non plus) et il existe\footnote{Ce genre d'extension n'existerait pas si on travaillait avec des coefficients $l$-adiques, où $l\ne p$ est un nombre premier.} une extension
     nontriviale $0\to \Pi_1\to \Pi \to \Pi_2\to 0$ dans ${\rm Rep}_L(\delta)$. On montre \cite[lemma 11.5]{Pa} que si 
     $\Pi_1^{\rm alg}\ne 0$, alors $\Pi_2^{\rm alg}=0$, donc $\Pi^{\rm alg}=\Pi_1^{\rm alg}$ n'est pas dense dans $\Pi_1$.

    
     \end{remark}

       Colmez a défini \cite[ch. IV]{Cbigone} un foncteur exact contravariant $\Pi\to D(\Pi)$ de ${\rm Rep}_L(\delta)$ 
       dans la catégorie $\Phi\Gamma^{\rm et}(\mathcal{E})$ des $(\varphi,\Gamma)$-modules étales sur un corps de séries
       de Laurent $\mathcal{E}$. D'après Fontaine \cite{FoGrot}, la catégorie $\Phi\Gamma^{\rm et}(\mathcal{E})$ est équivalente à 
   la catégorie ${\rm Rep}_L(G_{\qp})$ des $L$-représentations continues de dimension finie de ${\rm Gal}(\overline{\qp}/\qp)$, d'où un foncteur (contravariant et exact)
   $\Pi\to V(\Pi)$ de ${\rm Rep}_L(\delta)$ dans ${\rm Rep}_L(G_{\qp})$. On suppose dans la suite de l'introduction que $p\geq 5$. Soit 
   $\Pi\in {\rm Rep}_L(\delta)$ un objet supersingulier. On dispose alors de deux résultats suivants, qui ont fait couler beaucoup d'encre:
   
   $\bullet$ $V(\Pi)$ est absolument irréductible, de dimension $2$, et de déterminant\footnote{Nous ne suivons pas les normalisations de Paskunas, mais plutôt 
     celles de Colmez \cite{Cbigone}. On passe d'une normalisation à l'autre par la dualité de Cartier 
     $V\to V^*(1)$.} $\zeta:=\chi\cdot \delta^{-1}$, où
   $\chi:{\rm Gal}(\overline{\qp}/\qp)\to \zpet$ est le caractère cyclotomique. 
   
   $\bullet$ $\Pi^{\rm alg}\ne 0$ si et seulement si $V(\Pi)$ est de de Rham, à poids de Hodge-Tate distincts.
   
     Le premier résultat est dû à Paskunas \cite[th. 11.4]{Pa}, et le second suit du premier et d'un théorème de Colmez \cite[th. VI.6.13, VI.6.18]{Cbigone}. 
      Le but de cet article est de montrer le résultat suivant, qui répond à une question de Paskunas et peut être vu comme une version du théorème
      de Colmez pour les déformations infinitésimales de $\Pi$.
      
      \begin{theorem}\label{main}  Soit $p\geq 5$ et soit 
      $\Pi\in {\rm Rep}_L(\delta)$ un objet supersingulier, 
      tel que $\Pi^{\rm alg}\ne 0$. On considère une suite exacte $0\to \Pi\to \Pi_1\to \Pi\to 0$ dans 
      ${\rm Rep}_L(\delta)$. Alors les assertions suivantes sont équivalentes:
      
      a) $\Pi_1^{\rm alg}$ est dense dans $\Pi_1$.
      
      b) $\Pi_1^{\rm alg}\ne \Pi^{\rm alg}$.
      
      c) $V(\Pi_1)$ est de de Rham.
            
      \end{theorem}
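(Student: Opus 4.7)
The equivalence (a)~$\Leftrightarrow$~(b) is a soft consequence of the absolute irreducibility of $\Pi$, which makes $\Pi^{\rm alg}$ dense in $\Pi$ (Remarque~\ref{extalg}). Setting $C=\overline{\Pi_1^{\rm alg}}$, we have $C\supset\overline{\Pi^{\rm alg}}=\Pi$, so $C/\Pi$ is a closed $G$-stable subspace of $\Pi_1/\Pi\cong\Pi$, hence equal to $0$ or $\Pi$. In the first case $\Pi_1^{\rm alg}\subset \Pi$, forcing $\Pi_1^{\rm alg}=\Pi^{\rm alg}$ (so (b) fails); in the second case $C=\Pi_1$, giving (a). Conversely, (a) yields (b) immediately since $\overline{\Pi^{\rm alg}}=\Pi\ne\Pi_1$. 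The task thus reduces to (b)~$\Leftrightarrow$~(c).

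Applying Colmez's exact contravariant functor $V$ to $0\to\Pi\to\Pi_1\to\Pi\to 0$ yields a self-extension $0\to V(\Pi)\to V(\Pi_1)\to V(\Pi)\to 0$ of the two-dimensional absolutely irreducible de Rham representation $V(\Pi)$. Condition (c) is then equivalent to the class of this extension in $H^1(G_{\qp},\mathrm{End}\,V(\Pi))$ lying in the Bloch--Kato subspace $H^1_g$, the tangent space of de Rham deformations of $V(\Pi)$. My plan for (b)~$\Leftrightarrow$~(c) is to interpret both sides in terms of the $(\varphi,\Gamma)$-module $D_{\rm rig}(\Pi_1)$ over the Robba ring, using the Kirillov-type reconstruction of $\Pi$ from $D(\Pi)$ due to Colmez and Berger's theory of de Rham $(\varphi,\Gamma)$-modules: locally algebraic vectors of $\Pi_1$ should correspond to elements of an ``$N_{\rm dR}$''-type (or ``plus'') subobject of $D_{\rm rig}(\Pi_1)$, and the deformation of this subobject from $D_{\rm rig}(\Pi)$ to $D_{\rm rig}(\Pi_1)$ is governed precisely by the de Rham condition on $V(\Pi_1)$.

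The main obstacle will be the direction (c)~$\Rightarrow$~(b): starting from a purely Galois-theoretic de Rham deformation of $V(\Pi)$, one must actually \emph{construct} new locally algebraic vectors in $\Pi_1$ beyond $\Pi^{\rm alg}$. This is an infinitesimal refinement of Colmez's theorem (\cite[th.~VI.6.13, VI.6.18]{Cbigone}): the canonical embedding $W\otimes\pi\hookrightarrow\Pi$ of the locally algebraic part has to be lifted to $L[\varepsilon]/\varepsilon^2$-linear data in $\Pi_1$, and the obstruction to such a lift lives in an Ext-group on the $(\varphi,\Gamma)$-module side which vanishes exactly when the deformation is de Rham. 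The converse (b)~$\Rightarrow$~(c) should then follow by running the construction in reverse: the presence of lifted locally algebraic vectors forces the infinitesimal monodromy/differential datum on $D_{\rm rig}(\Pi_1)$ to be trivial, which is the de Rham condition to first order.
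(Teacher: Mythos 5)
Your reduction of (a)~$\Leftrightarrow$~(b) is correct and is, in fact, cleaner than the paper's route (the paper instead obtains density in (a) by showing that $\Pi_1^{\rm alg}$ contains a specific dense $B$-stable subspace $\Pi_{1,c}^{P-\rm alg}$ and invoking topological irreducibility of $\Pi$ as a $B$-module, cf.\ \cite[prop.\ III.32]{CD}). The argument that $C/\Pi$ is a closed $G$-stable subspace of $\Pi_1/\Pi\simeq\Pi$, hence is $0$ or all of $\Pi$, is sound and uses only $\overline{\Pi^{\rm alg}}=\Pi$.

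However, the (b)~$\Leftrightarrow$~(c) part of your proposal is a sketch, not a proof: you correctly identify where the difficulty lies but do not supply the ideas that actually close it. In particular, the phrase ``locally algebraic vectors of $\Pi_1$ should correspond to elements of an `$N_{\rm dR}$'-type (or `plus') subobject of $D_{\rm rig}(\Pi_1)$'' is not what happens, and the assertion that ``the obstruction \dots lives in an Ext-group on the $(\varphi,\Gamma)$-module side which vanishes exactly when the deformation is de Rham'' is not a step one can quote or easily prove. Concretely, the paper's route involves several essential and nontrivial intermediate steps that your plan does not mention and which do not appear to have a shortcut: \emph{first}, one must show that (b) already forces $V(\Pi_1)$ to be Hodge--Tate; this is done via the Casimir element $C\in U(\mathfrak{gl}_2)$ acting $\mathcal{R}$-linearly on $\check{E}_{\rm rig}$ (Theorem~\ref{Casimir}, Proposition~\ref{HTalg}) and cannot be bypassed, because the key structural input for the rest of the proof (a good $\Gamma$-equivariant basis of $\check{E}_{{\rm dif},n}^+$ as in Proposition~\ref{basedif}) is only available once Hodge--Tate-ness is established. \emph{Second}, one needs a concrete ``algebraic Kirillov model'' $\Pi_c^{P-\rm alg}\subset\Pi^{\rm an}$ cut out by the compact-support and $\mu_{n,k}$-torsion conditions, with its $B$-equivariant description ${\rm LP}_c(\qpet,(D_{{\rm dif},\infty}/D_{{\rm dif},\infty}^+)^{k,\rm alg})$ (Proposition~\ref{pipalg}) and its splitting property for the extension $\Pi_1$ (Proposition~\ref{picscindee}). \emph{Third}, and this is the technical heart, one must prove the explicit duality formula
\[
\{\check{z},v\}_{\p1}=\sum_{j\in\mathbf{Z}} \{i_{j,n}(\check{z}), \phi_v(p^{-j})\}_{\rm dif}
\]
of Theorem~\ref{dualKir}, which translates the action of $(u^-)^k$ on $\Pi_c^{P-\rm alg}$ into an orthogonality condition against the modules $O_n$ and $M_n$ built from $\check{\mathcal{D}}_{{\rm dif},n}^+$, and then compute $M_n(E)$ in terms of the de Rham parameter $\alpha$ (Propositions~\ref{mn}, \ref{mne}). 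The equivalence ``(b) iff (c)'' is then the statement that $(u^-)^k$ kills $\Pi_c^{P-\rm alg}$ iff the de Rham obstruction $\alpha$ vanishes (Proposition~\ref{crux}). None of this is foreshadowed by your sketch, and the direction (c)~$\Rightarrow$~(b) in particular requires the splitting of Proposition~\ref{picscindee} to produce the new locally algebraic vectors you correctly anticipate are needed, not an Ext-group computation. In short: the (a)~$\Leftrightarrow$~(b) part is complete and correct; the (b)~$\Leftrightarrow$~(c) part is an accurate identification of the problem without the solution.
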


      \subsection{Lien avec la conjecture de Breuil-Mézard et généralisation d'un résultat de Colmez}
      
        Nous allons maintenant expliquer pourquoi le théorème \ref{main} finit la preuve de \cite[th. 4.18]{PaskunasBM}, qui est un des ingrédients techniques 
        majeurs de la preuve locale de Paskunas de la conjecture 
       de Breuil-Mézard \cite{BMorig}. 
    Soit $\Pi\in {\rm Rep}_L(\delta)$ un objet supersingulier (on ne suppose pas pour l'instant que $\Pi^{\rm alg}\ne 0$) et 
        notons\footnote{Cette catégorie est note ${\rm Ban}_{G,\zeta}^{\rm adm.fl}(L)_{\Pi}$ dans \cite{Pa}.} ${\rm Rep}_L(\delta)_{\Pi}$ la 
          sous-catégorie pleine de ${\rm Rep}_L(\delta)$ formée des objets dont tous les facteurs de Jordan-Hölder sont isomorphes à $\Pi$. Soit $R^{\zeta}$
          l'anneau de déformation universel de $V(\Pi)$ avec déterminant $\zeta$ (on a un isomorphisme 
          $R^{\zeta}\simeq O_L[[T_1,T_2,T_3]]$). Paskunas montre \cite[th. 1.4, cor. 4.48]{Pa} que le foncteur de Colmez 
         induit une anti-équivalence entre
          ${\rm Rep}_L(\delta)_{\Pi}$ et la catégorie des 
          $R^{\zeta}$-modules de longueur finie, et cette anti-équivalence est compatible avec ${\rm Ext}^i$. En particulier, on a un 
          isomorphisme canonique 
          $${\rm Ext}^1(\Pi,\Pi)\simeq {\rm Hom}(R^{\zeta}, L[\varepsilon]),$$
          et le terme de droite est de dimension $3$ sur $L$ (le premier ${\rm Ext}^1$ est calculé dans ${\rm Rep}_L(\delta)$). Autrement dit, l'espace ${\rm Ext}^1(\Pi,\Pi)$ des déformations infinitésimales de $\Pi$ dans ${\rm Rep}_L(\delta)$ est isomorphe à l'espace ${\rm Ext}^1_{\zeta}(V(\Pi),V(\Pi))$ des déformations 
 infinitésimales de $V(\Pi)$, de déterminant $\zeta$ en tant que $L[\varepsilon]$-module.
 
     Un rôle important dans les travaux de Paskunas \cite{PaskunasBM} est joué par les déformations 
     $\Pi_1$ de $\Pi$ telles que la suite $0\to \Pi^{\rm alg}\to \Pi_1^{\rm alg}\to \Pi^{\rm alg}\to 0$
     soit exacte. Le point fondamental est que la dimension de cet espace est $\leq 1$ quand 
     $\Pi^{\rm alg}$ est irréductible, ce qui lui permet d'interpréter les anneaux de déformations potentiellement semi-stables
     purement en termes de représentations de ${\rm GL}_2(\qp)$. Paskunas montre dans \cite[th. 4.18, 6.1]{PaskunasBM} ce résultat 
      quand $V(\Pi)$ est trianguline, autrement dit, quand $\Pi$ fait partie de la série
        principale $p$-adique. Sa preuve, qui repose sur une description explicite des vecteurs localement analytiques $\Pi^{\rm an}$ de 
        $\Pi$ (due à Liu, Xie, Zhang \cite{LXZ} et Colmez \cite{Cvectan} et conjecturée par Berger, Breuil \cite{BB} et Emerton \cite{EmCoates}), ainsi que sur les travaux de Berger-Breuil \cite{BB}, ne s'adapte pas au cas où $V(\Pi)$ n'est plus trianguline. Nous allons voir que cela découle du théorème \ref{main} dans tous les cas (si $\Pi^{\rm alg}$ est irréductible).

       Il est clair que si la suite $0\to \Pi^{\rm alg}\to \Pi_1^{\rm alg}\to \Pi^{\rm alg}\to 0$ est exacte, alors $\Pi_1^{\rm alg}$ est dense dans 
     $\Pi_1$, et la réciproque est vraie si en plus $\Pi^{\rm alg}$ est irréductible. Considérons donc le sous-espace
     ${\rm Ext}^1_{\rm alg}(\Pi,\Pi)$
    de ${\rm Ext}^1(\Pi,\Pi)$ engendré par les extensions $0\to \Pi\to\Pi_1\to \Pi\to 0$
      pour lesquelles $\Pi_1^{\rm alg}$ est dense dans $\Pi_1$. Il s'agit donc de l'espace des déformations infinitésimales
      de $\Pi$ qui sont complétions unitaires de leurs vecteurs localement algébriques. 
                              On note ${\rm Ext}^1_{g,\zeta}(V(\Pi),V(\Pi))$
      le sous-espace de ${\rm Ext}^1_{\zeta}(V(\Pi),V(\Pi))$ correspondant aux déformations de $V(\Pi)$ qui sont de de Rham.
       La discussion ci-dessus permet de reformuler le théorème 
      \ref{main} comme suit:
      
      \begin{theorem}
      Soit $p\geq 5$ et soit $\Pi\in {\rm Rep}_L(\delta)$ est un objet supersingulier tel que $\Pi^{\rm alg}\ne 0$. Alors
      on a un isomorphisme canonique, induit par le foncteur de Colmez
       $${\rm Ext}^1_{\rm alg}(\Pi,\Pi)\simeq {\rm Ext}^1_{g,\zeta}(V(\Pi),V(\Pi)).$$
    \end{theorem}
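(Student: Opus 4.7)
The plan is to deduce this statement as a formal consequence of Theorem \ref{main} combined with Paskunas's Ext-compatibility already recalled in the introduction.

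First, I would invoke \cite[th. 1.4, cor. 4.48]{Pa}: Colmez's functor induces a canonical isomorphism
\[
\alpha : {\rm Ext}^1(\Pi,\Pi) \simeq {\rm Ext}^1_\zeta(V(\Pi),V(\Pi)),
\]
both sides being of dimension $3$ over $L$. It therefore suffices to show that $\alpha$ identifies the subspaces ${\rm Ext}^1_{\rm alg}(\Pi,\Pi)$ and ${\rm Ext}^1_{g,\zeta}(V(\Pi),V(\Pi))$.

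Next, I would verify that ${\rm Ext}^1_{g,\zeta}$ is a linear subspace of ${\rm Ext}^1_\zeta$. This is standard: being de Rham is stable under subquotients and direct sums, so if $V_1, V_1'$ are both de Rham extensions of $V(\Pi)$ by itself, their Baer sum, realised as a subquotient of $V_1 \oplus V_1'$, is again de Rham.

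The core input is then Theorem \ref{main}, which says that for each class $[\Pi_1] \in {\rm Ext}^1(\Pi,\Pi)$ one has $\Pi_1^{\rm alg}$ dense in $\Pi_1$ if and only if $V(\Pi_1)$ is de Rham. Via $\alpha$, the set of classes satisfying the former condition coincides with $\alpha^{-1}({\rm Ext}^1_{g,\zeta})$, which is a subspace of ${\rm Ext}^1(\Pi,\Pi)$. Consequently this set is already a subspace, and by definition equals ${\rm Ext}^1_{\rm alg}(\Pi,\Pi)$; the restriction of $\alpha$ then yields the asserted canonical isomorphism.

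The main, indeed only, obstacle is Theorem \ref{main} itself, which is the content of the paper; the reformulation above is then purely formal. In particular, one should notice that the ``subspace generated by'' in the definition of ${\rm Ext}^1_{\rm alg}(\Pi,\Pi)$ is a posteriori a tautology, since the generating set is already stable under Baer sum via the Galois-side identification.
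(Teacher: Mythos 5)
Your proposal is correct and is essentially the paper's own argument: the paper explicitly presents this theorem as a reformulation of Theorem \ref{main} via the discussion preceding it (Paskunas's ${\rm Ext}^1$-compatibility of the Colmez functor giving ${\rm Ext}^1(\Pi,\Pi)\simeq{\rm Ext}^1_\zeta(V(\Pi),V(\Pi))$), exactly as you do. Your additional remark that the ``subspace generated by'' in the definitions is a posteriori superfluous, because the generating set is identified with a linear subspace on the Galois side, is a correct and worthwhile clarification that the paper leaves implicit.
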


    Il nous reste donc à calculer la dimension de l'espace ${\rm Ext}^1_{g,\zeta}(V(\Pi),V(\Pi))$, ce qui se fait sans mal 
    en utilisant la théorie d'Iwasawa des représentations de de Rham. Le calcul offre quelques surprises. Avant d'énoncer le résultat,
    il nous faut une définition.
   On dit 
      qu'une représentation $V$ absolument irréductible, de dimension $2$ est {\it spéciale} si elle est 
       potentiellement cristalline, à poids de Hodge-Tate distincts
     et si la représentation de Weil-Deligne associée au module de Fontaine $D_{\rm pst}(V)$ est 
      la somme directe de deux caractères, dont le quotient est $|x|$ ou $|x|^{-1}$. Une telle représentation est donc trianguline, car elle
      devient cristalline sur une extension abélienne de $\qp$. Ces représentations peuvent donc être entièrement comprises en termes
      des modules filtrés associés par la théorie de Fontaine.
      
   \begin{proposition} \label{iwas1} Soit $\zeta$ un caractère unitaire et soit $V$ une $L$-représentation absolument irréductible, de de Rham, à poids de Hodge-Tate distincts
  et telle que $\det V=\zeta$. Alors $\dim_L {\rm Ext}^1_{ g, \zeta}(V,V)=1$
   sauf si $V$ est spéciale, auquel cas $\dim_L {\rm Ext}^1_{ g, \zeta}(V,V)=2$.
   \end{proposition} 
   
      Enfin, nous disposons du résultat suivant de Colmez \cite[th. 4.12]{Cvectan}, qui, combiné au théorème \ref{main} et à la proposition \ref{iwas1}
      permet de finir la preuve de 
     \cite[th. 4.18]{PaskunasBM} dans les cas restants
      (i.e. quand la partie lisse de $\Pi^{\rm alg}$ est supercuspidale). 
      
      \begin{proposition} \label{iwas1}
       $\Pi^{\rm alg}$ est réductible si et seulement si $V(\Pi)$ est spéciale. Dans ce cas, $\Pi^{\rm alg}$ vit dans une suite exacte de la forme 
       $$0\to W\otimes_L {\rm St}\to \Pi^{\rm alg}\to W\to 0$$
   pour une certaine représentation algébrique $W$, ${\rm St}$ étant la représentation de Steinberg (lisse).     
      \end{proposition}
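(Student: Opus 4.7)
Le plan est d'utiliser la correspondance de Langlands locale $p$-adique pour ramener la question sur $\Pi^{\rm alg}$ à une question sur des représentations lisses. L'étape clé, qui constituera l'essentiel du travail technique, consiste à établir une identification
$$\Pi^{\rm alg}\simeq W\otimes_L \pi_{\rm sm},$$
où $W$ est la représentation algébrique irréductible de ${\rm GL}_2(\qp)$ associée aux poids de Hodge-Tate distincts $a<b$ de $V(\Pi)$, et $\pi_{\rm sm}$ est la représentation lisse admissible de ${\rm GL}_2(\qp)$ obtenue en appliquant la correspondance de Langlands locale classique (convenablement normalisée) à la représentation de Weil-Deligne ${\rm WD}(D_{\rm pst}(V(\Pi)))$. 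Pour cela, je passerais par les vecteurs localement analytiques $\Pi^{\rm an}$, et j'extrairais le $(\varphi,N,{\rm Gal})$-module filtré $D_{\rm pst}(V(\Pi))$ à partir du $(\varphi,\Gamma)$-module associé à $\Pi^{\rm an}$, via la machinerie des opérateurs différentiels développée par Colmez dans \cite{Cvectan}; c'est ici que réside le principal obstacle technique.

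Admettant cette identification, la réductibilité de $\Pi^{\rm alg}$ équivaut à celle de $\pi_{\rm sm}$. Comme $V(\Pi)$ est absolument irréductible de dimension $2$, l'opérateur de monodromie $N$ sur $D_{\rm pst}(V(\Pi))$ doit être nul --- sans quoi $V(\Pi)$ serait semistable et réductible ---, donc $V(\Pi)$ est potentiellement cristalline et ${\rm WD}(D_{\rm pst}(V(\Pi)))$ est semi-simple. Deux sous-cas se présentent alors: soit cette dernière est irréductible comme représentation de $W_{\qp}$, auquel cas $\pi_{\rm sm}$ est supercuspidale irréductible, soit elle est somme directe de deux caractères $\chi_1\oplus \chi_2$ de $W_{\qp}$, auquel cas $\pi_{\rm sm}$ est une induite parabolique lisse, irréductible sauf lorsque $\chi_1/\chi_2=|x|^{\pm 1}$. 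Ce dernier sous-cas coïncide exactement avec la condition \emph{spéciale} de la définition, ce qui établit l'équivalence annoncée.

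Enfin, dans le cas spécial, la structure bien connue des induites paraboliques lisses réductibles de ${\rm GL}_2(\qp)$ fournit une suite exacte $0\to {\rm St}\otimes (\eta\circ\det)\to \pi_{\rm sm}\to \eta\circ\det\to 0$ pour un caractère lisse non ramifié $\eta$ de $\qpet$, où ${\rm St}$ désigne la Steinberg lisse et où le choix des normalisations fait apparaître la Steinberg comme sous-objet. En tensorisant par $W$ et en posant $W':= W\otimes(\eta\circ\det)$, qui est encore une représentation algébrique irréductible de ${\rm GL}_2(\qp)$, on obtient la suite exacte souhaitée
$$0\to W'\otimes_L {\rm St}\to \Pi^{\rm alg}\to W'\to 0,$$
ce qui conclurait la démonstration.
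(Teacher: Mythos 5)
Tu devrais d'abord savoir que le papier ne d\'emontre pas cette proposition : elle y est cit\'ee comme un th\'eor\`eme de Colmez (\cite[th.~4.12]{Cvectan}, ou la combinaison de \cite[th.~VI.6.30, VI.6.50]{Cbigone}). Ton approche repose exactement sur ce m\^eme ingr\'edient --- l'isomorphisme $\Pi^{\rm alg}\simeq W\otimes\pi_{\rm sm}$ reliant la partie lisse de $\Pi^{\rm alg}$ \`a la correspondance de Langlands classique via $D_{\rm pst}(V(\Pi))$ --- que tu identifies toi-m\^eme comme \og le principal obstacle technique \fg{} et que tu ne prouves pas. C'est l\`a tout le contenu du r\'esultat ; une fois cet isomorphisme acquis, la dichotomie devient, comme tu le dis, une v\'erification \'el\'ementaire sur les repr\'esentations lisses de ${\rm GL}_2(\qp)$. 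Ton sch\'ema est donc coh\'erent avec celui du papier (s'appuyer sur le th\'eor\`eme de compatibilit\'e de Colmez), mais ne constitue pas une d\'emonstration autonome.

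Il y a en outre une erreur math\'ematique pr\'ecise dans ton analyse de cas : tu affirmes que $N\neq 0$ sur $D_{\rm pst}(V(\Pi))$ entra\^inerait que $V(\Pi)$ est \og semistable et r\'eductible \fg. C'est faux : il existe des $L$-repr\'esentations de dimension $2$ de ${\rm Gal}(\overline{\qp}/\qp)$ absolument irr\'eductibles, semi-stables non cristallines (avec $N\neq 0$) --- typiquement celles attach\'ees aux formes nouvelles de niveau $\Gamma_0(p)$ en poids $2$. Le module filtr\'e peut parfaitement \^etre irr\'eductible au sens des modules filtr\'es admissibles, ce qui force l'irr\'eductibilit\'e de $V$. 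La conclusion de la proposition n'est pas menac\'ee --- dans ce cas $\pi_{\rm sm}$ est une torsion de la Steinberg, donc irr\'eductible, donc $\Pi^{\rm alg}$ est irr\'eductible et $V$ n'est pas sp\'eciale --- mais ton argument pour \'ecarter ce cas est incorrect ; il faut le traiter comme une troisi\`eme branche de la dichotomie (WD irr\'eductible, WD somme de deux caract\`eres avec $N=0$, ou $N\neq 0$), et non l'exclure a priori.
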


        \begin{remark}
         Supposons que $\Pi^{\rm alg}$ est réductible, donc $V(\Pi)$ est spéciale. Soit $\Pi_1$ une déformation 
         de $\Pi^{\rm alg}$ telle que $\Pi_1^{\rm alg}$ soit dense dans $\Pi_1$. Une question naturelle (que je dois à Paskunas)
         est de savoir si dans ce cas la suite $0\to \Pi^{\rm alg}\to \Pi_1^{\rm alg}\to \Pi^{\rm alg}\to 0$ est exacte. 
         Dans \cite{Dthèse} nous "montrons" que la réponse est affirmative, sauf que... la preuve n'est pas correcte. D'ailleurs, il est 
         probable que la réponse est négative, pour la raison suivante. 
        La proposition \ref{iwas1} fournit deux classes
       de déformations de de Rham de $V(\Pi)$: une potentiellement cristalline et l'autre potentiellement semi-stable mais non cristalline.
       Il est probable que le théorème VI.6.30 de Colmez \cite{Cbigone}, reliant le module de Jacquet de la partie lisse de 
       $\Pi^{\rm alg}$ au module de Fontaine 
       $D_{\rm pst}(V(\Pi))$,
       admet une version pour les déformations infinitésimales.
       Si c'était le cas, cela montrerait que la suite $0\to \Pi^{\rm alg}\to \Pi_1^{\rm alg}\to \Pi^{\rm alg}\to 0$
       est exacte si et seulement si $V(\Pi_1)$ est la déformation potentiellement cristalline de $V(\Pi)$.
       On obtiendrait ainsi une suite exacte 
       $0\to \Pi\to\Pi_1\to\Pi\to 0$, avec $\Pi$ supersingulière, 
       $\Pi_1^{\rm alg}$ dense dans $\Pi_1$, mais telle que la suite 
       $0\to \Pi^{\rm alg}\to \Pi_1^{\rm alg}\to \Pi^{\rm alg}\to 0$ ne soit pas exacte. Nous espérons revenir sur ce point dans un futur article.

        \end{remark}

         Paskunas sait démontrer, en utilisant le théorème \ref{main}, ses résultats de \cite{PaskunasBM} et sa "décomposition de Bernstein" de \cite[th. 1.4]{Pa}, la généralisation suivante du
         théorème de Colmez \cite[th. 0.20]{Cbigone}.

     \begin{theorem}\label{genColmez}
         
        Si $p\geq 5$ et si $\Pi\in {\rm Rep}_L(\delta)$ est une complétion de ses vecteurs localement algébriques, alors 
         $V(\Pi)$ est de de Rham.
                 
        \end{theorem}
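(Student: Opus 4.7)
La strat\'egie consiste \`a combiner le th\'eor\`eme~\ref{main} avec la d\'ecomposition en blocs de Paskunas \cite[th. 1.4]{Pa} et ses r\'esultats de \cite{PaskunasBM} dans le cas trianguline.

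\textbf{R\'eduction aux blocs supersinguliers.} L'hypoth\`ese (compl\'etion de ses vecteurs localement alg\'ebriques) et la conclusion (de de Rham) respectent toutes deux les sommes directes, donc en utilisant la d\'ecomposition de \cite[th. 1.4]{Pa} on se ram\`ene \`a supposer que tous les facteurs de Jordan-H\"older de $\Pi$ sont dans un m\^eme bloc $\mathfrak{B}$. Si $\mathfrak{B}$ est de type s\'erie principale, alors $V(\Pi)$ est trianguline et le r\'esultat d\'ecoule du cas trianguline d\'ej\`a trait\'e dans \cite[th. 4.18]{PaskunasBM}. On suppose donc dans la suite que $\mathfrak{B}$ est un bloc supersingulier, de repr\'esentant irr\'eductible $\Pi_0$; la densit\'e de $\Pi^{\rm alg}$ entra\^ine en particulier $\Pi_0^{\rm alg}\ne 0$.

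\textbf{Traduction au deformation ring.} Via l'anti-\'equivalence de Paskunas \cite[cor. 4.48]{Pa}, $\Pi$ correspond \`a un module $M$ de longueur finie sur $R^\zeta \simeq O_L[[T_1,T_2,T_3]]$. Soit $J\subset R^\zeta$ l'id\'eal d\'efinissant le quotient universel $R^{\zeta,\mathrm{dR}}$ param\'etrant les d\'eformations de de Rham de $V(\Pi_0)$ de d\'eterminant~$\zeta$. L'\'enonc\'e est alors \'equivalent \`a $JM=0$.

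\textbf{R\'ecurrence sur $\ell(M)$.} Le cas $\ell(M)=1$ est le th\'eor\`eme de Colmez \cite[th. VI.6.13, VI.6.18]{Cbigone} combin\'e \`a \cite[th. 11.4]{Pa}. Pour l'\'etape inductive, l'observation cl\'e est que si $\Pi\twoheadrightarrow Y$ est un quotient continu $G$-\'equivariant, la densit\'e de $\Pi^{\rm alg}$ dans $\Pi$ entra\^ine celle de $Y^{\rm alg}$ dans $Y$. Appliqu\'e aux quotients de longueur~$2$ de $\Pi$, le th\'eor\`eme~\ref{main} fournit que $V(Y)$ est de de Rham, ce qui, par contravariance de l'anti-\'equivalence, se traduit par le fait que tout sous-module de longueur~$2$ de $M$ est annul\'e par $J$. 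Combin\'e \`a l'hypoth\`ese de r\'ecurrence appliqu\'ee \`a un quotient $\Pi/\Pi'$ de longueur $\ell(M)-1$ de $\Pi$ (qui est encore une compl\'etion de ses vecteurs localement alg\'ebriques), cela donne une information tr\`es pr\'ecise sur la structure de $M$. On conclut en utilisant la r\'egularit\'e de $R^\zeta$ et la description fine, donn\'ee dans \cite{PaskunasBM}, des modules de longueur finie sur $R^\zeta$ qui apparaissent dans l'image de l'anti-\'equivalence de Paskunas.

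\textbf{Obstacle principal.} La difficult\'e centrale est que l'implication na\"ive ``tout sous-module de longueur~$2$ de $M$ est annul\'e par $J$ $\Rightarrow$ $M$ est annul\'e par $J$'' est fausse en g\'en\'eral pour un module de longueur finie sur un anneau local. Il faut donc combiner soigneusement l'information obtenue sur les quotients de $\Pi$ (o\`u la densit\'e des vecteurs localement alg\'ebriques se transporte facilement) avec celle obtenue par r\'ecurrence sur les sous-objets, en exploitant la contravariance du foncteur de Colmez et les propri\'et\'es structurelles tr\`es rigides des modules de longueur finie sur $R^\zeta$ provenant de compl\'etions de repr\'esentations localement alg\'ebriques, mises en \'evidence par Paskunas dans \cite{Pa,PaskunasBM}.
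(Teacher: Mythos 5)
Le point essentiel est que le papier ne donne aucune d\'emonstration du th\'eor\`eme~\ref{genColmez} : il l'attribue simplement \`a Paskunas, en indiquant que celui-ci sait le d\'eduire du th\'eor\`eme~\ref{main}, de \cite{PaskunasBM} et de la d\'ecomposition de Bernstein de \cite[th.~1.4]{Pa}. Il n'y a donc pas de \og preuve du papier \fg\ avec laquelle comparer ; il faut \'evaluer votre proposition pour elle-m\^eme.

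Vos r\'eductions sont raisonnables et vont dans le sens indiqu\'e par le papier : d\'ecomposition en blocs, cas trianguline trait\'e dans \cite{PaskunasBM}, traduction via l'anti-\'equivalence de Paskunas en une assertion $JM=0$ sur un module de longueur finie $M$ sur $R^\zeta$. Vous avez aussi raison que la densit\'e des vecteurs alg\'ebriques passe aux quotients de $\Pi$, donc \`a tous les sous-modules de $M$ : par r\'ecurrence forte, tout sous-module \emph{propre} de $M$ est annul\'e par $J$. Mais, comme vous le notez vous-m\^eme, cela ne suffit pas. L'exemple $R=L[[T]]$, $J=(T)$, $M=R/T^2$ montre que m\^eme \og tout sous-module propre est annul\'e par $J$ \fg\ n'entra\^ine pas $JM=0$ : le seul sous-module propre non nul est $TM\simeq R/T$, qui est bien annul\'e par $J$, mais $JM\ne 0$. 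Le passage des quotients de $\Pi$ \`a $\Pi$ tout entier requiert donc une information suppl\'ementaire ; vous mentionnez \og la description fine, donn\'ee dans \cite{PaskunasBM}, des modules\dots\fg\ mais ne dites pas laquelle ni comment elle bloque le contre-exemple ci-dessus.

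Autre point laiss\'e implicite et non trivial : l'\'equivalence \og $V(\Pi)$ de de Rham $\Leftrightarrow$ $JM=0$ \fg, o\`u $J$ est l'id\'eal d\'efinissant le lieu de de Rham. Le sens $JM=0 \Rightarrow$ de Rham d\'epend du fait que l'anneau $R^{\zeta,\mathrm{dR}}$ de Kisin param\`etre exactement les d\'eformations de de Rham et que le produit tensoriel d'un module de longueur finie sur cet anneau par la d\'eformation universelle reste de de Rham ; le sens r\'eciproque utilise le fait que ce lieu est ferm\'e. Ce n'est pas automatique (une extension de de Rham par de de Rham n'est pas de de Rham en g\'en\'eral, c'est pr\'ecis\'ement l'objet du papier !) et m\'erite une r\'ef\'erence explicite ou un argument. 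En l'\'etat, votre texte est un plan honn\^ete mais inachev\'e ; la lacune au coeur de la r\'ecurrence est r\'eelle et n'est pas combl\'ee.
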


  La remarque \ref{extalg} montre que la réciproque de l'énoncé ci-dessus est fausse et nous ne savons pas décrire
         seulement en termes galoisiens les objets $\Pi$ de ${\rm Rep}_L(\delta)$ qui sont complétions de leurs vecteurs localement algébriques
         (les techniques de cet article permettent de montrer que la réciproque du théorème \ref{genColmez} est vraie; il reste donc à comprendre
         les représentations non supersingulières et leurs extensions).
         
   \subsection{Ingrédients de la preuve}
      
       Nous terminons cette introduction en expliquant les étapes de la preuve du théorème \ref{main}, qui utilise de manière intensive la théorie
       des $(\varphi,\Gamma)$-modules et les techniques introduites par Colmez pour l'étude de la correspondance de Langlands locale $p$-adique pour ${\rm GL}_2(\qp)$ \cite{Cbigone}. 
       En principe, nous suivons 
       les lignes de la démonstration alternative \cite{Annalen} du théorème de Colmez \cite[th. 0.20]{Cbigone}.
        Cela demande d'étendre/adapter bon nombre de résultats 
       de \cite{Annalen} et \cite{Cbigone}. Nous reposons aussi de manière essentielle sur \cite{CD}, 
       qui généralise (souvent avec des preuves différentes) la plupart des résultats de \cite{Cbigone} à la catégorie ${\rm Rep}_L(\delta)$ (et non seulement aux objets 
       irréductibles). Dans cet article on étend (avec une preuve différente) une loi de réciprocité explicite de Colmez et on raffine sa théorie du modèle de Kirillov
       de $\Pi^{\rm alg}$, ainsi que le résultat principal 
       de \cite{Annalen}. Tout ceci nous est indispensable pour la preuve du théorème \ref{main}.
       
         La plupart de l'article consiste en l'étude fine d'un foncteur $\Pi\to \Pi_c^{P-\rm fini}$ sur ${\rm Rep}_L(\delta)$. L'espace  
         $ \Pi_c^{P-\rm fini}$ est formé des vecteurs $v\in \Pi$ satisfaisant les deux conditions suivantes:
         
         $\bullet$ Il existe $n,k\in\mathbf{N}^*$ et $m\in\mathbf{Z}$ tels que 
         $$\left(\sum_{i=0}^{p^n-1} \left(\begin{smallmatrix} 1 & i \\0 & 1\end{smallmatrix}\right) \right)^k\left(\begin{smallmatrix} p^m & 0 \\0 & 1\end{smallmatrix}\right)v=0.$$
         
         $\bullet$ L'espace $L\left[\left(\begin{smallmatrix} \zpet & 0 \\0 & 1\end{smallmatrix}\right)\right]v$ est de dimension finie sur $L$. 
         
           Supposons dans la suite que tous les facteurs de Jordan-Hölder de $\Pi$ sont supersinguliers. Un résultat remarquable de Colmez \cite{Cbigone} 
           est que tout vecteur de $\Pi_c^{P-\rm fini}$ est en fait localement analytique, et que l'espace $\Pi_c^{P-\rm fini}$ est assez gros
           (dans loc.cit. ceci est démontré pour les objets irréductibles et pour un sous-espace de $\Pi_c^{P-\rm fini}$, mais les arguments 
           restent essentiellement les mêmes dans le cas général). La preuve de ce résultat est très indirecte, malgré la description
           très simple\footnote{Le lecteur pourra essayer de montrer que $\Pi_c^{P-\rm fini}$ est stable sous l'action du groupe de Borel
           directement à partir de la définition de $\Pi_c^{P-\rm fini}$...} de $\Pi_c^{P-\rm fini}$. 
      Elle utilise la description explicite de $\Pi_c^{P-\rm fini}$ en termes du
           $(\varphi,\Gamma)$-module attaché à $\Pi$, la théorie de Sen \cite{Sen} et surtout
           une identité un peu miraculeuse due à Colmez. Cette identité permet de calculer l'action d'un élément de $(\Pi^{\rm an})^*$
           sur un vecteur dans $\Pi_c^{P-\rm fini}$ purement en termes de $(\varphi,\Gamma)$-modules. Nous étendons tous ces résultats de Colmez à notre contexte et nous donnons une nouvelle preuve de cette identité miraculeuse de Colmez. Comme conséquence, nous prouvons le résultat suivant ($B$ étant le Borel supérieur de ${\rm GL}_2(\qp)$)
           
           \begin{theorem}\label{dense}
             Soit $\Pi\in {\rm Rep}_L(\delta)$ dont tous les facteurs de Jordan-Hölder sont supersinguliers. Alors 
              $\Pi_c^{P-\rm fini}$ est un sous-espace dense de $\Pi$, contenu dans $\Pi^{\rm an}$ et stable sous l'action de $B$.
           \end{theorem}
         
             On peut aussi montrer \cite{Dthèse} que si $\Pi$ est supersingulier, alors $\Pi_c^{P-\rm fini}$ est dense dans $\Pi^{\rm an}$
             si et seulement si $V(\Pi)$ n'est pas trianguline (la preuve est nettement plus délicate que celle du théorème \ref{dense}).
             On déduit du théorème \ref{dense} et de la définition de $\Pi_c^{P-\rm fini}$ que
               $\Pi_c^{P-\rm fini}$ est un sous-espace de 
               $$(\Pi^{\rm an})^{U-\rm fini}=\varinjlim_{n}\, (\Pi^{\rm an})^{(u^+)^n=0},$$
               où $u^+$ désigne l'action infinitésimale de $\left(\begin{smallmatrix} 1 &\qp \\0 & 1\end{smallmatrix}\right)$. En particulier,
               $(\Pi^{\rm an})^{U-\rm fini}$ est dense dans $\Pi$ (mais il n'est pas toujours dense dans $\Pi^{\rm an}$).
         
             Pour l'étude des vecteurs localement algébriques, il faut introduire un sous-espace 
             $\Pi_c^{P-\rm alg}$ de $\Pi_c^{P-\rm fini}$ qui est un avatar de $\Pi^{\rm alg}$. Soient $\Pi,\Pi_1$ comme dans le théorème
             \ref{main} et supposons (quitte à faire une torsion par un caractère algébrique) que les poids de 
             Hodge-Tate de $V(\Pi)$ sont $0$ et $k$, avec $k\in\mathbf{N}^*$. On note $\Pi_c^{P-\rm alg}$ 
             le sous-espace de $\Pi_c^{P-\rm fini}$ formé des vecteurs $v$ tels que 
             $$\prod_{i=0}^{k-1} \left (\left(\begin{smallmatrix} 1+p^n & 0 \\0 & 1\end{smallmatrix}\right)-(1+p^n)^{i+1-k}\right)v=0$$
             pour tout $n$ assez grand (il suffit que ce soit vrai pour un seul $n$). Si $v\in\Pi_c^{P-\rm alg}$, alors l'application $g\to (\det g)^{k-1} g\cdot v$ est localement polynômiale
             de degré plus petit que $k$ sur le Borel $B$. 
                Le résultat technique principal, dont la preuve occupe la quasi-totalité de l'article est alors le suivant.
             
             \begin{theorem}\label{main2}
               Soient $\Pi, \Pi_1$ comme dans le théorème \ref{main}.
    
    a) Si $\Pi_1^{\rm alg}\ne \Pi^{\rm alg}$, alors la représentation $V(\Pi_1)$ est de Hodge-Tate.
    
    b) Si $V(\Pi_1)$ est de Hodge-Tate, alors $\Pi_c^{P-\rm alg}$ et $\Pi_{1,c}^{P-\rm alg}$ sont stables sous l'action
    de $B$ et on a une suite exacte scindée de $B$-modules 
    $$0\to \Pi_c^{P-\rm alg}\to \Pi_{1,c}^{P-\rm alg}\to \Pi_c^{P-\rm alg}\to 0.$$
   
   c) $V(\Pi_1)$ est de de Rham si et seulement si $\Pi_{1,c}^{P-\rm alg}\subset \Pi_1^{\rm alg}$, et dans ce 
    cas $\Pi_{1,c}^{P-\rm alg}$ est dense dans $\Pi_1$ (donc $\Pi_1^{\rm alg}$ est aussi dense dans $\Pi_1$).

             \end{theorem}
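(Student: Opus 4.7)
The plan is to move everything to the $(\varphi,\Gamma)$-module side via Colmez's functor, and to work in the refined Kirillov-type description of $\Pi_c^{P-\rm fini}$ established in the preceding sections. The workhorse of the argument is the extension of Colmez's miraculous identity to ${\rm Rep}_L(\delta)$ proved earlier in this paper, which computes the pairing between $(\Pi^{\rm an})^*$ and $\Pi_c^{P-\rm fini}$ purely in terms of $D(\Pi)$, and thereby transfers the polynomial condition defining $\Pi_c^{P-\rm alg}$ into an explicit condition on the associated $(\varphi,\Gamma)$-module.

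For a), I would fix a vector $v\in \Pi_1^{\rm alg}$ whose projection $\overline{v}\in \Pi^{\rm alg}$ is nonzero but such that $v\notin \Pi^{\rm alg}$. Local algebraicity of $v$ forces, for $n$ large, a polynomial identity of degree $k$ for the action of $\left(\begin{smallmatrix}1+p^n & 0\\ 0 & 1\end{smallmatrix}\right)$ on $v$, whose roots are dictated by the expected algebraic characters. Transferring this via the Kirillov-model picture gives a polynomial relation for the $\Gamma$-action on a nonzero element of $D(\Pi_1)$ lying above a locally algebraic element of $D(\Pi)$. Sen theory then forces the Sen operator on $V(\Pi_1)$ to be semisimple with integer eigenvalues $\{0,k\}$, which is exactly the Hodge-Tate condition.

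For b), assume $V(\Pi_1)$ is Hodge-Tate. Stability of $\Pi_c^{P-\rm alg}$ and $\Pi_{1,c}^{P-\rm alg}$ under the diagonal torus and the unipotent radical $U$ is immediate from the definitions; the substantial point is stability under the Weyl element $w$. Here I would invoke the extended Colmez identity to reduce the $w$-action to an explicit operation on $D(\Pi_1)$; under the Hodge-Tate hypothesis the Sen operator on $D_{\rm Sen}(V(\Pi_1))$ is diagonalizable with integer eigenvalues, and a direct computation then shows that the polynomial condition cutting out $\Pi_c^{P-\rm alg}$ is preserved. The splitting of $0\to \Pi_c^{P-\rm alg}\to \Pi_{1,c}^{P-\rm alg}\to \Pi_c^{P-\rm alg}\to 0$ is obtained from the corresponding splitting of $D_{\rm Sen}(V(\Pi_1))$ as a direct sum of $\Gamma$-eigenspaces: via the refined Kirillov description this decomposition propagates to a $B$-equivariant splitting of the sequence of $P-\rm alg$ vectors.

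For c), the inclusion $\Pi_{1,c}^{P-\rm alg}\subset \Pi_1^{\rm alg}$ says that local algebraicity on $B$, which is built into the definition of $\Pi_c^{P-\rm alg}$, extends to local algebraicity on the whole of $G$. Going back to $D(\Pi_1)$, this extension from $B$ to $G$ corresponds, by a direct computation, to the existence of a compatible Hodge filtration on $D_{\rm dR}(V(\Pi_1))$, which is equivalent to $V(\Pi_1)$ being de Rham. Density of $\Pi_{1,c}^{P-\rm alg}$ in $\Pi_1$ then follows from Theorem~\ref{dense} combined with the splitting constructed in b). The main obstacle is part b): preservation of the polynomial condition under the $w$-action requires delicate calculations on $D(\Pi_1)$ that generalize Colmez's arguments from the irreducible case, and the construction of the $B$-equivariant splitting depends crucially on the refined Kirillov model established earlier in the paper.
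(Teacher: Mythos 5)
The proposal correctly identifies the broad strategy (work on the $(\varphi,\Gamma)$-side, use the duality identity extending Colmez's computation), but each of the three parts diverges from the paper in ways that leave genuine gaps.

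For a), the paper does not argue vector by vector. It observes that the Casimir operator $C$ acts $\mathcal{R}$-linearly and commutes with $\varphi$ and $\Gamma$ on $E_{\rm rig}$, hence acts by a scalar $a+b\varepsilon\in L[\varepsilon]$. Matching this against the known infinitesimal character $\frac{k^2-1}{2}$ on $\Pi_1^{\rm alg}$, and using $\Pi_1^{\rm alg}\ne\Pi^{\rm alg}$ to kill $b$, gives $C=\frac{k^2-1}{2}$ on all of $E_{\rm rig}\boxtimes_\delta\p1$; Theorem~\ref{EHT} then yields Hodge--Tate. Your approach via a single $v\in\Pi_1^{\rm alg}$ pushed into $D_{{\rm dif},\infty}/D_{{\rm dif},\infty}^+$ produces a $\Gamma$-constraint only on the cyclic $L_\infty[[t]][\Gamma]$-submodule generated by the $\phi_v(p^i)$, which need not span $D_{\rm Sen}(\check{E})$; there is no mechanism in your outline that globalizes this local constraint to the entire Sen module. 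The $\mathcal{R}$-linearity and scalar reduction are precisely what do this job, and they have no analogue in your sketch.

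For b), the assertion that the substantial point is stability under the Weyl element is a misreading: $B$ is the upper Borel and does not contain $w$, so $w$-stability is irrelevant here. The non-trivial content of $B$-stability (Proposition~\ref{pipalg}) is that $(D_{{\rm dif},\infty}/D_{{\rm dif},\infty}^+)^{k,\rm alg}$ is an $L_\infty[[t]]$-module stable under $\Gamma$, which is the computation of Lemma~\ref{technical} combined with the $t^k$-torsion estimate of Lemma~\ref{tktue}. The $B$-equivariant splitting then comes from the explicit bases $e_1',e_2',f_1',f_2'$ of $E_{{\rm dif},n}^+$ constructed in Proposition~\ref{nouvellebase} and the resulting isomorphism of $R_n[\Gamma]$-modules in Proposition~\ref{tatetwist}, not merely from a direct-sum decomposition of $D_{\rm Sen}$: the splitting must occur at the level of $D_{\rm dif}/D_{\rm dif}^+$, which is a $t^k$-torsion module and not recoverable from $D_{\rm Sen}$ alone.

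For c), the claim that $\Pi_{1,c}^{P-\rm alg}\subset\Pi_1^{\rm alg}$ corresponds to extending a filtration from $B$ to $G$, hence to de Rham, is the right intuition but does not constitute a proof. The paper's argument proceeds through a precise chain: $(u^-)^k$ kills $\Pi_c^{P-\rm alg}$ iff $N_n\subset M_n(\mathcal{D})$ (Lemma~\ref{key}, using the duality identity and the formula $i_{j,n}((u^-)^k\check z)=(-p^j)^k\nabla_{2k}(i_{j,n}(\check z))/t^k$ of Lemma~\ref{ijn}), and $M_n(E)=\check E_{{\rm dif},n}^+$ iff $E$ is de Rham (Proposition~\ref{mne}, whose proof hinges on whether the parameter $\alpha$ of Proposition~\ref{basedif} vanishes, i.e. Lemma~\ref{alphanul}). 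The reverse implication in c) further needs the surjectivity of $(\Pi_1)_c^{\rm alg}\to\Pi_c^{P-\rm alg}$, via Lemma~\ref{stupid} and Colmez's corollary VI.5.9. Finally, the density statement at the end does not follow from Theorem~\ref{dense}: the paper reduces to density of $\Pi_c^{P-\rm alg}$ and invokes topological irreducibility of $\Pi$ as a $B$-module from \cite{CD}, which is a separate input. These pieces are all missing from your proposal.
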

             
             Il est clair que le théorème \ref{main} est une conséquence du théorème \ref{main2} ci-dessus. Le a) est le contenu du chapitre 3
             et découle d'une analyse fine de l'action infinitésimale de ${\rm GL}_2(\qp)$ sur $\Pi_1^{\rm an}$ (qui généralise le résultat principal
             de \cite{Annalen}).
            Le b) est la réunion des chapitres 4, 5 et 6, et le c) est démontré dans le chapitre 7.

        \subsection{Remerciements} \quad Il sera évident au lecteur combien cet article doit aux travaux de Pierre Colmez et Vytautas Paskunas. Je leur remercie 
                pour m'avoir posé la question dont la réponse fait l'objet de cet article, ainsi que pour les discussions
        très enrichissantes que l'on a eues autour de cette question. Merci à Laurent Berger et à Pierre Colmez pour m'avoir aidé à simplifier la preuve du théorème \ref{iwas}. Une bonne partie de ce papier a été rédigée pendant un séjour au Beijing International Center for Mathematical Research en novembre $2012$, et je voudrais remercier Ruochuan Liu pour l'invitation et les excellentes conditions de travail.

\subsection{Notations} \label{Notations} \quad On fixe une extension finie $L$ de $\qp$ et une suite $(\varepsilon^{(n)})_{n\geq 0}$ de racines $p^n$-ièmes de l'unité, telle que 
$\varepsilon^{(1)}\ne 1$ et $(\varepsilon^{(n+1)})^p=\varepsilon^{(n)}$ pour tout $n\geq 0$. On pose $F_n=\qp(\varepsilon^{(n)})$, $L_n=L\otimes_{\qp} F_n$
et $L_{\infty}=\cup_{n\geq 1} L_n$. On note $\chi: {\rm Gal}(\overline{\qp}/\qp)\to \zpet$ le caractère cyclotomique,
$H={\rm Ker}(\chi)$ et $\Gamma={\rm Gal}(\qp(\mu_{p^{\infty}})/\qp)$. Ainsi, $\chi$ induit un isomorphisme 
$\Gamma\simeq \zpet$, et on note $a\to\sigma_a$ son inverse (on a donc $\sigma_a(\zeta)=\zeta^a$ pour $a\in\zpet$
et $\zeta\in\mu_{p^{\infty}}$).
Si $\delta: \qpet\to O_L^*$ est un caractère unitaire, on note $w(\delta)$ son ${\it poids}$, défini par $w(\delta)=\delta'(1)$, dérivée de $\delta$ en $1$.
Si $\Pi$ est une représentation de Banach de $G$, on note $\Pi^{\rm an}$ le sous-espace des vecteurs localement analytiques de $\Pi$ 
(i.e. les vecteurs $v\in\Pi$ dont l'application orbite $g\mapsto g\cdot v$ est localement analytique sur $G$, à valeurs dans $\Pi$), et $\Pi^{\rm alg}\subset \Pi^{\rm an}$
le sous-espace des vecteurs localement algébriques (l'application orbite est donc localement polynômiale). On renvoie à \cite{Emlocan, STInv, CD} pour l'étude du foncteur
$\Pi\to \Pi^{\rm an}$.

\section{Rappels et complements}

  \quad Ce chapitre est purement préliminaire, le but étant de fixer des notations et de rappeler quelques résultats et constructions que l'on utilisera plus loin.
 
\subsection{$(\varphi,\Gamma)$-modules}

   \quad Soit $\mathcal{R}$ l'anneau de Robba, i.e. l'anneau des séries de Laurent $\sum_{n\in\mathbf{Z}} a_nT^n\in L[[T, T^{-1}]]$ qui convergent sur une couronne
   du type $0< v_p(T)\leq r$ ($r$ dépend de la série). On note $\mathcal{E}^{\dagger}$ le sous-anneau de $\mathcal{R}$ formé des séries bornées et on note 
   $\mathcal{E}$ le complété $p$-adique de $\mathcal{E}^{\dagger}$. Alors $\mathcal{E}^{\dagger}$ et $\mathcal{E}$ sont des sous-corps de $L[[T, T^{-1}]]$ et 
   $\mathcal{E}^{\dagger}\subset \mathcal{E}$. On munit $\mathcal{R}$ d'actions $L$-linéaires continues de $\varphi$ et $\Gamma$ en posant (pour $a\in\zpet$)
   $$\varphi(T)=(1+T)^p-1, \quad \sigma_a(T)=(1+T)^a-1.$$ Ces actions laissent stable $\mathcal{E}^{\dagger}$
et se prolongent par continuité en des actions sur $\mathcal{E}$. On dispose d'un inverse à gauche $\psi$ de $\varphi$
sur ces anneaux, qui commute à l'action de $\Gamma$. 

    Soit $\Phi\Gamma^{\rm et}(\mathcal{E})$ la catégorie des $(\varphi,\Gamma)$-modules étales sur $\mathcal{E}$. Ses objets sont les $\mathcal{E}$-espaces vectoriels
    $D$ de dimension finie, munis d'actions semi-linéaires 
   continues de $\varphi$ et de $\Gamma$, qui commutent et telles que l'action de $\varphi$ soit étale\footnote{Cela veut dire que $D$ a une base dans laquelle la matrice 
   de $\varphi$ est dans ${\rm GL}_d(O_{\mathcal{E}})$, $O_{\mathcal{E}}$ étant l'anneau des entiers de $\mathcal{E}$.}. La catégorie $\Phi\Gamma^{\rm et}(\mathcal{E})$ est équivalente
   à la catégorie ${\rm Rep}_L(G_{\qp})$ des $L$-représentations continues, de dimension finie de ${\rm Gal}(\overline{\qp}/\qp)$, d'après un théorème de Fontaine \cite{FoGrot}. On notera $V(D)$ (resp. $D(V)$)
   la représentation galoisienne (resp. le $(\varphi,\Gamma)$-module étale) associée à $D$ (resp. $V$). 
   
   \begin{definition}
    Le dual de Cartier $\check{D}$ d'un $(\varphi,\Gamma)$-module étale $D$ est 
     $\check{D}=D(V(D)^*\otimes \chi)$, où $V(D)^*$ désigne le $L$-dual de $V(D)$.
   \end{definition}
  
   Pour tout $D\in\Phi\Gamma^{\rm et}(\mathcal{E})$ il existe un plus grand sous-$\mathcal{E}^{\dagger}$-espace vectoriel $D^{\dagger}$ qui est stable par 
      $\varphi$. De plus, $D^{\dagger}$ est stable par $\Gamma$ et $D=\mathcal{E}\otimes_{\mathcal{E}^{\dagger}} D^{\dagger}$ d'après \cite{CCsurconv}. On pose
      $D_{\rm rig}=\mathcal{R}\otimes_{\mathcal{E}^{\dagger}} D^{\dagger}$, un $(\varphi,\Gamma)$-module de pente nulle sur 
      $\mathcal{R}$.

\subsection{Le dictionnaire ${\rm Gal}(\overline{\qp}/\qp)-{\rm GL}_2(\qp)$}

   \quad On fixe un caractère unitaire $\delta:\qpet\to O_L^*$ et on renvoie au premier paragraphe de l'introduction pour la définition de la catégorie 
   ${\rm Rep}_L(\delta)$. On dispose \cite[ch. IV]{Cbigone} du foncteur de Colmez 
   $${\rm Rep}_L(\delta)\to \Phi\Gamma^{\rm et}(\mathcal{E}), \quad \Pi\mapsto D(\Pi),$$
   qui est contravariant et exact. On note
  $\mathcal{C}_L(\delta)$ son image essentielle. 
  
    Dans l'autre sens, pour tout $D\in \Phi\Gamma^{\rm et}(\mathcal{E})$ 
   on définit \cite[ch. II]{Cbigone} un 
  faisceau $G$-équivariant\footnote{L'action de $G$ sur $\mathbf{P}^1(\qp)$ est celle usuelle, par des homographies.} $U\to D\boxtimes_{\delta} U$ sur $\p1(\qp)$, dont les sections sur $\zp$ sont $D$. Si $U$ est un ouvert compact de $\qp$, on dispose d'une application d'extension par $0$ qui permet de voir 
  $D\boxtimes_{\delta} U$ comme sous-module de $D\boxtimes_{\delta}\p1$. 
Puisque $\mathbf{P}^1(\qp)=\zp\cup  \left(\begin{smallmatrix} 0 & 1 \\1 & 0\end{smallmatrix}\right)\zp$, l'espace des sections globales $D\boxtimes_{\delta}\p1$ de ce faisceau s'injecte naturellement dans $D\times D$, 
ce qui permet de le munir de la topologie induite, $D$ étant muni de la topologie faible. En tant que $L$-espace vectoriel topologique, $D\boxtimes_{\delta}\p1$ est une extension d'un $L$-espace de Banach par le dual d'un $L$-espace de Banach. En effet, on a un isomorphisme d'espaces topologiques $D\boxtimes_{\delta}\p1\simeq D\times D\simeq \mathcal{E}^{2\dim_{\mathcal{E}}(D)}$ (le premier étant induit par $z\mapsto ({\rm Res}_{\zp}(z), {\rm Res}_{\zp}  \left(\begin{smallmatrix} 0 & 1 \\1 & 0\end{smallmatrix}\right) z)$, le second
par le choix d'une base de $D$), ainsi qu'une suite exacte fondamentale de Colmez \cite[remarque 0.2]{Cbigone}
$$0\to \mathcal{D}_0(\zp, L)\to \mathcal{E}\to \mathcal{C}(\zp,L)\to 0,$$
où $\mathcal{D}_0(\zp,L)$ est l'espace des mesures sur $\zp$ à valeurs dans $L$ (dual faible 
du $L$-Banach $\mathcal{C}(\zp,L)$).
En général, il n'existe pas de telle décomposition de $D\boxtimes_{\delta}\p1$ qui soit en plus $G$-équivariante, mais 
le théorème suivant montre que c'est le cas si $D\in \mathcal{C}(\delta)$ (le lecteur pourra vérifier que la réciproque est vraie).

  \begin{theorem}\label{CD}  a) Si $D\in\mathcal{C}_L(\delta)$, alors $\check{D}\in \mathcal{C}_L(\delta^{-1})$. 
  
  b) Il existe un foncteur covariant $$\mathcal{C}_L(\delta)\to {\rm Rep}_L(\delta), \quad D\to \Pi_{\delta}(D)$$
     tel que pour tout $D\in \mathcal{C}_L(\delta)$ on ait une suite exacte de $G$-modules topologiques
     $$0\to \Pi_{\delta^{-1}}(\check{D})^*\to D\boxtimes_{\delta}\p1\to \Pi_{\delta}(D)\to 0.$$
     
  c) On a $D(\Pi_{\delta}(D))=\check{D}$ pour tout 
  $D\in \Phi\Gamma(\delta)$. 
  
  d) Si tous les facteurs de Jordan-Hölder de $\Pi$ sont supersinguliers\footnote{Rappelons qu'un objet $\Pi$ de ${\rm Rep}_L(\delta)$ 
  est dit supersingulier si $\Pi$ est absolument irréductible et pas isomorphe à un sous-quotient de l'induite parabolique d'un caractère unitaire.}, alors on a un isomorphisme canonique
  $\Pi_{\delta}(\check{D}(\Pi))\simeq \Pi$.  
  
  \begin{proof}
  C'est la réunion des théorèmes III.4, III.48,  de la remarque III.31 et du corollaire III.46 de \cite{CD}.
  
    \end{proof}

    \end{theorem}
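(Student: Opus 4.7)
Le plan est de proc\'eder par d\'evissage, en r\'eduisant au cas absolument irr\'eductible, et en particulier au cas supersingulier (les objets de la s\'erie principale se traitant par calcul direct sur les induites paraboliques continues). Pour (a), la cat\'egorie $\mathcal{C}_L(\delta)$ \'etant par d\'efinition l'image essentielle du foncteur contravariant $D$ de Colmez, il suffit, pour tout $\Pi\in {\rm Rep}_L(\delta)$, de produire un objet $\Pi'\in {\rm Rep}_L(\delta^{-1})$ avec $D(\Pi')\simeq \check{D}(\Pi)$. Dans le cas supersingulier, l'\'equivalence de Paskunas fournit directement un tel $\Pi'$, avec le caract\`ere central ad\'equat puisque $\det V(\check{D}(\Pi))=\chi\delta$; le cas g\'en\'eral en d\'ecoule par stabilit\'e de l'image essentielle sous extensions et sous-quotients, que l'on v\'erifie gr\^ace \`a l'exactitude de $D$.

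Le coeur du th\'eor\`eme est la partie (b), et c'est l\`a que je m'attends \`a la difficult\'e principale. L'id\'ee est de construire $\Pi_\delta(D)$ et $\Pi_{\delta^{-1}}(\check{D})$ simultan\'ement, en partant du faisceau $G$-\'equivariant $U\mapsto D\boxtimes_\delta U$ sur $\p1(\qp)$ et de ses sections globales $D\boxtimes_\delta \p1$, qui s'injectent dans $D\times D$ via restriction \`a $\zp$ et \`a $w\zp$, o\`u $w=\left(\begin{smallmatrix}0&1\\1&0\end{smallmatrix}\right)$. On cherche \`a scinder ce module topologique en une partie Banach (destin\'ee \`a devenir $\Pi_\delta(D)$) et une partie compacte (le dual $\Pi_{\delta^{-1}}(\check{D})^*$), en appliquant sur chaque carte la suite exacte fondamentale $0\to \mathcal{D}_0(\zp,L)\to \mathcal{E}\to \mathcal{C}(\zp,L)\to 0$. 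Le point technique essentiel, qui requiert que $D$ soit dans l'image essentielle $\mathcal{C}_L(\delta)$ et non pas seulement dans $\fget(\mathcal{E})$, est que ces scindages locaux se recollent de mani\`ere $G$-\'equivariante \`a travers l'involution $w$: il faut v\'erifier que $w$ \'echange les parties compactes sur $\zp$ et sur $w\zp$, et identifier canoniquement le module dual obtenu avec $\Pi_{\delta^{-1}}(\check{D})^*$. L'obstacle principal est la coh\'erence de cette d\'efinition simultan\'ee; une strat\'egie naturelle est de l'\'etablir d'abord pour les objets simples (via Paskunas), puis de l'\'etendre par naturalit\'e et exactitude.

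Une fois (b) en place, (c) r\'esulte d'un calcul explicite: le foncteur $\Pi\mapsto D(\Pi)$ \'etant d\'efini via l'action de $\left(\begin{smallmatrix}\zp&\zp\\0&1\end{smallmatrix}\right)$ sur un quotient convenable de $\Pi^*$, le d\'eroulement de cette d\'efinition sur le quotient $\Pi_\delta(D)$ de $D\boxtimes_\delta\p1$ donne $\check{D}$ essentiellement par construction. Enfin, (d) est une reformulation de l'anti-\'equivalence de Paskunas: sachant que le foncteur de Colmez est pleinement fid\`ele sur le bloc supersingulier et que $D(\Pi_\delta(D))=\check{D}$, l'application canonique $\Pi\to \Pi_\delta(\check{D}(\Pi))$ est automatiquement un isomorphisme, et l'on \'etend ensuite aux objets de longueur finie dont tous les facteurs de Jordan--H\"older sont supersinguliers. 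En pratique, tout cela est d\'ej\`a r\'edig\'e dans \cite{CD} et le travail se r\'eduit \`a identifier les \'enonc\'es correspondants dans cette r\'ef\'erence.
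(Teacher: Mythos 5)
The paper's ``proof'' of this theorem is a pure citation: it simply points to theorems III.4 and III.48, remark III.31, and corollary III.46 of \cite{CD}, with no internal argument. So there is no paper-internal proof to compare your sketch against, and you yourself recognize this at the end of your proposal. Your outline of what goes on inside \cite{CD} is plausible and in the right spirit: the construction of $\Pi_\delta(D)$ via the sheaf $U\mapsto D\boxtimes_\delta U$ and the $G$-equivariant splitting of $D\boxtimes_\delta\p1$ into a Banach quotient and a compact dual sub, the use of Paskunas's anti-equivalence for the supersingular block, and the determinant bookkeeping ($\det V(\check D)=\chi\delta$ forcing central character $\delta^{-1}$) are all correct points.

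That said, a couple of steps in your sketch would need real work and are not ``automatic'' as you phrase them. For (a), stability of the essential image $\mathcal{C}_L(\delta)$ under extensions is not a formal consequence of exactness of $D(\cdot)$: the essential image of an exact functor need not be closed under extensions in the target. Establishing this is entangled with the construction in (b) (you need to be able to build the Banach representation corresponding to a given extension of $(\varphi,\Gamma)$-modules), so the logical order is more delicate than the clean d\'evissage you describe. For (d), the existence of a \emph{canonical} map $\Pi\to\Pi_\delta(\check D(\Pi))$ (or the reverse) is not just a formal consequence of full faithfulness and (c); one has to actually produce the comparison morphism, which in \cite{CD} goes through the identification of $\Pi^*$ with a sub of $\check D(\Pi)\boxtimes_{\delta^{-1}}\p1$. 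These gaps don't make your outline wrong, but they are precisely the points where ``r\'eduire \`a identifier les \'enonc\'es dans \cite{CD}'' hides the substance; the paper under review deliberately outsources all of it.
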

    
    Pour tout $D\in\mathcal{C}_L(\delta)$ on montre \cite[ch. VI]{CD} que le faisceau $U\to D\boxtimes_{\delta} U$ induit un faisceau 
    $G$-équivariant $U\to D_{\rm rig}\boxtimes_{\delta} U$ sur $\p1(\qp)$, dont les sections sur $\zp$ sont 
    $D_{\rm rig}$. 
    
    \begin{theorem}\label{analitic}
     Pour tout $D\in\mathcal{C}_L(\delta)$ il existe une suite exacte de $G$-modules topologiques 
     $$0\to (\Pi_{\delta^{-1}}(\check{D})^{\rm an})^*\to D_{\rm rig}\boxtimes_{\delta}\p1\to \Pi_{\delta}(D)^{\rm an}\to 0.$$
    De plus, $D_{\rm rig}\boxtimes_{\delta}\p1$ est un module topologique sur l'algèbre des distributions de ${\rm GL}_2(\zp)$ et la suite exacte précédente
    est une suite exacte de modules topologiques sur cette algèbre.
    \end{theorem}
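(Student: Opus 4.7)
\demo
The plan is to extend Colmez's construction from \cite[ch.~V-VI]{Cbigone}, which treats irreducible supersingular $\Pi$, to the full category $\mathcal{C}_L(\delta)$ in the spirit of \cite{CD}. The main inputs are the Banach-level exact sequence of Theorem~\ref{CD}, the $G$-equivariant sheaf $U\mapsto D_{\rm rig}\boxtimes_{\delta}U$ recalled above, and the overconvergence inclusion $D^{\dagger}\subset D$ with $D_{\rm rig}=\mathcal{R}\otimes_{\mathcal{E}^{\dagger}} D^{\dagger}$.

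First, I would construct a natural $G$-equivariant map $\iota: D_{\rm rig}\boxtimes_{\delta}\mathbf{P}^1\to\Pi_{\delta}(D)^{\rm an}$. The inclusion $D^{\dagger}\hookrightarrow D$ is $(\varphi,\Gamma,\psi)$-equivariant and, as in \cite[prop.~V.2.1]{Cbigone}, respects the involution $w=\matrice{0}{1}{1}{0}$ on global sections, yielding a $G$-equivariant morphism of sheaves on $\mathbf{P}^1(\mathbf{Q}_p)$. Composing with the surjection of Theorem~\ref{CD} produces a $G$-equivariant map $D^{\dagger}\boxtimes_{\delta}\mathbf{P}^1\to \Pi_{\delta}(D)$, whose image consists of locally analytic vectors because the infinitesimal $\mathfrak{gl}_2$-action on the source is already analytic (the elements of $D^{\dagger}$ being convergent on an annulus). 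Extension by continuity via the density of $\mathcal{E}^{\dagger}$ in $\mathcal{R}$ then yields $\iota$.

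The exactness of the resulting sequence is the technical core. Surjectivity reduces, by $G$-equivariance and the Iwasawa decomposition $G = B\cdot {\rm GL}_2(\mathbf{Z}_p)$, to producing preimages for vectors whose restriction to $\mathbf{Z}_p$ lies in $D^{\dagger}$. For irreducible $\Pi$ this is essentially \cite[th.~V.2.20]{Cbigone}; in general I would proceed by d\'evissage along Jordan-H\"older factors, using the exactness of Colmez's functor and the fact that $D\mapsto D_{\rm rig}\boxtimes_{\delta}\mathbf{P}^1$ is exact as a functor of sheaves. The identification of $\ker\iota$ with $(\Pi_{\delta^{-1}}(\check D)^{\rm an})^*$ would follow from the Banach-level identification of Theorem~\ref{CD} combined with Schneider-Teitelbaum duality: the kernel is naturally a space of $\check D_{\rm rig}$-valued distributions on $\mathbf{P}^1(\mathbf{Q}_p)$, and its pairing with $\Pi_{\delta^{-1}}(\check D)^{\rm an}$ is given by the residue map, compatibly with the already-established Banach pairing.

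Finally, the distribution-algebra module structure on $D_{\rm rig}\boxtimes_{\delta}\mathbf{P}^1$ comes from integrating the infinitesimal $\mathfrak{gl}_2$-action; this is possible because the sheaf sections over opens of $\mathbf{P}^1(\mathbf{Q}_p)$ are Fr\'echet spaces on which $\mathfrak{gl}_2$ acts by continuous operators with controlled norms, and strict exactness with respect to this finer structure then follows from the algebraic exactness via an open-mapping argument. The main obstacle I expect is the surjectivity step: even for irreducible $\Pi$ it is delicate, relying on Colmez's explicit description of $\Pi^{\rm an}$ and the miraculous identity recalled in the introduction, and the d\'evissage to arbitrary $\Pi\in\mathcal{C}_L(\delta)$ requires compatibility with Paskunas' d\'ecomposition de Bernstein \cite[th.~1.4]{Pa}, so that the explicit locally analytic vectors of the irreducible case glue under extensions within a block.
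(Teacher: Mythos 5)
The paper does not prove Theorem~\ref{analitic} at all: the entire \emph{d\'emonstration} is a one-line citation of Proposition~VI.7 and Corollary~VI.12 of \cite{CD}, so there is no argument in this paper for your sketch to be compared against. What you have written is an attempt to reconstruct the argument of \cite{CD}, and while the overall architecture (start from the Banach sequence of Theorem~\ref{CD}, use the sheaf structure on $D_{\rm rig}\boxtimes_\delta\p1$, identify the kernel by duality) is sound, several of the implications you invoke do not hold for the reasons you give.

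The most serious gap is the assertion that the image of $D^\dagger\boxtimes_\delta\p1$ in $\Pi_\delta(D)$ ``consists of locally analytic vectors because the infinitesimal $\mathfrak{gl}_2$-action on the source is already analytic.'' A vector on which $U(\mathfrak{gl}_2)$ acts with convergent expansions is in general \emph{not} locally analytic; upgrading infinitesimal analyticity to local analyticity of orbit maps $g\mapsto g\cdot v$ is precisely one of the delicate points of the theory (this is where, in the present paper, Sen theory and Colmez's ``miraculous identity'' enter). It is not a formal consequence of $D^\dagger$ consisting of functions convergent on an annulus. The same issue infects your last paragraph: a distribution-algebra module structure on $D_{\rm rig}\boxtimes_\delta\p1$ is not obtained by ``integrating the infinitesimal $\mathfrak{gl}_2$-action.'' The distribution algebra of ${\rm GL}_2(\zp)$ is a completion of the Iwasawa algebra that is vastly larger than $U(\mathfrak{gl}_2)$; making $D_{\rm rig}\boxtimes_\delta\p1$ a topological module over it requires estimates on the ${\rm GL}_2(\zp)$-action itself (convergence of $\int_{{\rm GL}_2(\zp)}g\cdot z\,d\mu(g)$ for arbitrary distributions $\mu$), not just on the Lie-algebra action. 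Finally, ``extension by continuity via the density of $\mathcal{E}^\dagger$ in $\mathcal{R}$'' and the closing open-mapping argument are not licit as stated: $\Pi^{\rm an}$ is of compact type (an LB space), $D_{\rm rig}\boxtimes_\delta\p1$ is a mixed Fr\'echet/co-Fr\'echet object, and neither the extension nor the strictness of the sequence comes for free from density or from algebraic exactness without explicit control of the topologies. These are exactly the points that \cite{CD} has to work to establish; acknowledging them as ``the main obstacle'' is fair, but one cannot wave at the open mapping theorem to close them.
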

 
 \begin{proof}
  C'est la réunion de la proposition VI.7 et du corollaire VI.12 de \cite{CD}.
 
 \end{proof}

\subsection{Anneaux de périodes $p$-adiques}\label{Font}

  \quad On renvoie à \cite{FoAnnals} et \cite{CvectdR} pour les preuves des résultats énoncés dans ce $\S$. On note $\mathbf{C}_p$ le compl\'{e}t\'{e} de $\overline{\qp}$, $O_{\mathbf{C}_p}$ l'anneau de ses entiers et $\mathbf{\tilde{E}}^+=\varprojlim_{x\mapsto x^p} O_{\mathbf{C}_p}/p$.
  Alors $\mathbf{\tilde{E}}^+$ est un anneau 
  parfait de caractéristique $p$, et on note $\mathbf{\tilde{E}}$ son corps des fractions. 
  Les anneaux de Fontaine $\mathbf{\tilde{A}}^+$ et $\mathbf{\tilde{A}}$ sont les anneaux des vecteurs de Witt 
  à coefficients dans $\mathbf{\tilde{E}}^+$ et $\mathbf{\tilde{E}}$. 
  On pose $\tilde{\mathbf{B}}=\tilde{\mathbf{A}}[\frac{1}{p}]$
  et $\tilde{\mathbf{B}}^+=\tilde{\mathbf{A}}^+[\frac{1}{p}]$. 
   Les anneaux $\mathbf{\tilde{A}}^+, \mathbf{\tilde{A}}, \tilde{\mathbf{B}}^+, \tilde{\mathbf{B}}$ sont munis 
  d'un Frobenius bijectif $\varphi$ et d'une action de ${\rm Gal}(\overline{\qp}/\qp)$, qui commutent. Le choix (voir le $\S$\ref{Notations}) des racines $p^n$-ièmes 
  de l'unité $\varepsilon^{(n)}$ permet de définir\footnote{$1+T$ est le représentant de Teichmuller de l'élément 
  $(\varepsilon^{(0)}\pmod p, \varepsilon^{(1)}\pmod p,...)$ de $\mathbf{\tilde{E}}^+$.} 
   un élément $T$ de  $(\tilde{\mathbf{A}}^+)^H$, tel que 
  $\varphi(T)=(1+T)^p-1$ et $\sigma_a(T)=(1+T)^a-1$ pour $a\in\zpet$.
   Le séparé complété $\mathbf{B}_{\rm dR}^+$ de $\mathbf{\tilde{B}}^+$ pour la topologie $\frac{T}{\varphi^{-1}(T)}$-adique est 
    un anneau de valuation discrète, d'uniformisante $\frac{T}{\varphi^{-1}(T)}$ ou $t=\log(1+T)$, et de corps résiduel 
    $\mathbf{C}_p$. On note $\mathbf{B}_{\rm dR}= \mathbf{B}_{\rm dR}^+[\frac{1}{t}]$ le corps des fractions de $\mathbf{B}_{\rm dR}^+$. 

   L'inclusion $\zp[T]\subset \mathbf{\tilde{A}}^+$ s'étend en une inclusion $\zp[[T]]\subset \mathbf{\tilde{A}}^+$, ce qui permet de définir, pour 
   $b\in \qp$ et $n\geq -v_p(b)$, un élément 
  $$[(1+T)^b]=\varphi^{-n}\left((1+T)^{p^nb}\right)\in (\mathbf{\tilde{A}}^+)^H,$$
  qui ne dépend pas du choix de $n$. On définit aussi $\varepsilon(b)=(\varepsilon^{(n)})^{p^n b}$ pour 
  $n\geq -v_p(b)$, obtenant ainsi un caractère localement constant $\varepsilon: \qpet\to \mu_{p^{\infty}}$, tel que 
    $$[(1+T)^b]=\varepsilon(b)\cdot e^{tb}\in (\mathbf{B}_{\rm dR}^+)^H.$$ Le lemme suivant est standard et nous l'utiliserons plusieurs fois dans la suite. 
Pour le confort du lecteur, on en donne une preuve.

\begin{lemma}\label{frobdr}
 a) Soit $a\in\mathbf{Z}$. Alors $\varphi^a(T)\in t\mathbf{B}_{\rm dR}^+$ si et seulement si
 $a\geq 0$. Dans ce cas, on a $\frac{\varphi^a(T)}{\varphi^b(T)}\in \mathbf{B}_{\rm dR}^+$ pour tout
 $b\in\mathbf{Z}$.

 b) Si $j\notin [1,n]$, alors $\varphi^{-j}\left(\frac{T}{\varphi^n(T)}\right)\in \mathbf{B}_{\rm dR}^+$.

 c) Soit $x\in \tilde{\mathbf{A}}^+$ et $k\geq 1$. Alors $x\in T^k\tilde{\mathbf{A}}^+$ si et seulement si
 $\varphi^n(x)\in t^k \mathbf{B}_{\rm dR}^+$ pour tout $n\geq 0$.

\end{lemma}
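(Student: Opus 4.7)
The plan is to leverage the explicit identity $[(1+T)^b] = \varepsilon(b)\, e^{tb}$ recalled just before the lemma, specialised to $b = p^a$, which gives $\varphi^a(T) = \varepsilon(p^a)\, e^{tp^a} - 1$ inside $\mathbf{B}_{\rm dR}^+$.

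For part (a), I would first treat the case $a \geq 0$: then $p^a \in \mathbf{Z}$, so $\varepsilon(p^a) = 1$ and $\varphi^a(T) = e^{tp^a} - 1 = p^a t\,(1 + O(t))$ lies in $t\mathbf{B}_{\rm dR}^+$ with $t$-adic valuation \emph{exactly} $1$. In the case $a < 0$, $\varepsilon(p^a)$ is a primitive $p^{|a|}$-th root of unity, so the residue of $\varphi^a(T)$ in $\mathbf{B}_{\rm dR}^+/t\mathbf{B}_{\rm dR}^+ = \mathbf{C}_p$ is $\varepsilon(p^a) - 1 \neq 0$; hence $\varphi^a(T)$ is a unit of $\mathbf{B}_{\rm dR}^+$ and in particular does not lie in $t\mathbf{B}_{\rm dR}^+$. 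The ratio statement then splits by the sign of $b$: if $b \geq 0$, numerator and denominator both have exact $t$-valuation $1$, so the quotient is a unit of $\mathbf{B}_{\rm dR}^+$; if $b < 0$, the denominator is itself a unit. Part (b) falls out of (a) by writing $\varphi^{-j}\bigl(T/\varphi^n(T)\bigr) = \varphi^{-j}(T)/\varphi^{n-j}(T)$: for $j \leq 0$ both $-j$ and $n-j$ are non-negative, while for $j > n$ both are negative; in either regime (a) puts the quotient in $\mathbf{B}_{\rm dR}^+$.

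For part (c), the direct implication is automatic from (a): if $x = T^k y$ with $y \in \tilde{\mathbf{A}}^+$, then $\varphi^n(x) = \varphi^n(T)^k \varphi^n(y) \in t^k\mathbf{B}_{\rm dR}^+$. I would prove the converse by induction on $k$. The inductive step is painless precisely because (a) yields $t$-valuation \emph{exactly} $1$ for $\varphi^n(T)$: once $x = Ty$ with $y \in \tilde{\mathbf{A}}^+$ is obtained from the base case, dividing shows $\varphi^n(y) \in t^{k-1}\mathbf{B}_{\rm dR}^+$ for every $n \geq 0$, and one applies the inductive hypothesis to $y$. The crux is the base case $k = 1$: the hypothesis $\theta(\varphi^n(x)) = 0$ for all $n \geq 0$ must be shown to imply $T \mid x$ in $\tilde{\mathbf{A}}^+$. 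I would expand $x = \sum_{i \geq 0} p^i [y_i]$ in Witt coordinates and use $\theta(\varphi^n([y])) = (y^\sharp)^{p^n}$ to rewrite the hypotheses as the infinite system $\sum_{i \geq 0} p^i (y_i^\sharp)^{p^n} = 0$ in $O_{\mathbf{C}_p}$ for every $n \geq 0$, then solve for the $y_i$ inside the principal ideal cut out by $\underline{\varepsilon} - 1$ in $\tilde{\mathbf{E}}^+$, finally lifting back to $T\tilde{\mathbf{A}}^+$ by the $(p,T)$-adic completeness of $\tilde{\mathbf{A}}^+$. This Witt-vector bookkeeping — really a form of the classical fact that the kernel of $x \mapsto \bigl(\theta(\varphi^n(x))\bigr)_{n \geq 0}$ is exactly $T\tilde{\mathbf{A}}^+$ — is the main obstacle, and requires carefully coordinating the $p$-adic and $T$-adic filtrations on $\tilde{\mathbf{A}}^+$.
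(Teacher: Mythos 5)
Your parts (a) and (b), and the inductive step in (c), follow the paper's own argument essentially line by line: you use the same identification $\varphi^a(T)=\varepsilon(p^a)e^{p^a t}-1$ inside $\mathbf{B}_{\rm dR}^+$ and the same observation that $\varphi^n(T)$ has exact $t$-adic valuation $1$ for $n\geq 0$, while for $n\geq 1$ the residue of $\varphi^{-n}(T)$ in $\mathbf{C}_p$ is $\varepsilon^{(n)}-1\ne 0$, making it a unit. The division $\varphi^n(y)\in t^{k-1}\mathbf{B}_{\rm dR}^+$ in the inductive step is exactly the paper's computation $t^{k-1}\frac{t}{\varphi^n(T)}\mathbf{B}_{\rm dR}^+=t^{k-1}\mathbf{B}_{\rm dR}^+$.

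The one place you diverge is the base case $k=1$ of (c), and this is also where your argument is incomplete. You correctly identify what must be shown, namely that $\bigcap_{n\geq 0}\varphi^{-n}(\ker\theta)=T\tilde{\mathbf{A}}^+$ inside $\tilde{\mathbf{A}}^+$, but the Witt-coordinate sketch you give for it does not go through as written. Expanding $x=\sum p^i[y_i]$ and rewriting the hypotheses as $\sum_i p^i(y_i^\sharp)^{p^n}=0$ for all $n$ is fine, but ``solve for the $y_i$ inside the principal ideal generated by $\bar\varepsilon-1$'' is not an argument: the Teichm\"uller expansion does not interact well with the ideal $T\tilde{\mathbf{A}}^+$, and one cannot read off divisibility by $T$ coordinate by coordinate (the $y_i$ need not individually lie in $(\bar\varepsilon-1)\tilde{\mathbf{E}}^+$; only a genuine Witt-vector congruence statement could conclude). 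This is a real lemma, not bookkeeping. The paper handles it by citing \cite[lemme III.3.7]{Cannals}, and you should do the same unless you intend to reprove that result in full --- in which case the proof there proceeds via approximation using the elements $\frac{\varphi^n(T)}{T}$ and the $p$-adic topology on $\tilde{\mathbf{A}}^+$, not a direct Teichm\"uller expansion.

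One minor remark on (a): your claim that for $b\geq 0$ the quotient $\varphi^a(T)/\varphi^b(T)$ is a \emph{unit} of $\mathbf{B}_{\rm dR}^+$ is true but slightly more than the lemma asserts; what the lemma (and later uses of it) actually need is only membership in $\mathbf{B}_{\rm dR}^+$, which is what you verify.
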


\begin{proof}
Le point a) découle du fait que $\varphi^n(T)=e^{p^nt}-1\equiv
p^nt\pmod{t^2\mathbf{B}_{\rm dR}^+}$ pour tout $n\geq 0$ et $$\varphi^{-n}(T)=\varepsilon^{(n)}e^{t/p^n}-1\equiv \varepsilon^{(n)}-1\ne 0\pmod {t\mathbf{B}_{\rm dR}^+},$$
  pour $n\geq 1$.
Le b) est une cons\'{e}quence imm\'{e}diate du a). En ce qui concerne c), un sens \'{e}tant \'{e}vident, supposons que  $\varphi^n(x)\in t^k \mathbf{B}_{\rm dR}^+$ pour tout $n\geq 0$ et montrons que $x\in T^k\tilde{\mathbf{A}}^+$. Pour $k=1$, c'est un r\'{e}sultat standard \cite[lemme III.3.7]{Cannals}. Si $k>1$, le cas $k=1$ nous dit
qu'il existe $y\in \tilde{\mathbf{A}}^+$ tel que $x=Ty$. Alors $\varphi^n(y)\in t^{k-1}\frac{t}{\varphi^n(T)} \mathbf{B}_{\rm dR}^+=
t^{k-1}\mathbf{B}_{\rm dR}^+$ (par la preuve de a)), ce qui permet de conclure par
r\'{e}currence sur $k$.

\end{proof}

  \subsection{Sous-modules de $\mathbf{B}_{\rm dR}^+\otimes_{\qp} V(D)$ et connexions}\label{con}
  
\quad   Pour tout $n\geq 1$, on note $r_n=\frac{1}{p^{n-1}(p-1)}$ et on note $\mathcal{E}^{]0,r_n]}$ l'anneau des fonctions analytiques sur la couronne $0<v_p(T)\leq r_n$, définies
   sur $L$. Si $D\in \Phi\Gamma^{\rm et}(\mathcal{E})$, alors on peut trouver $m(D)$ assez grand et, pour tout $n\geq m(D)$, un unique 
  sous $\mathcal{E}^{]0,r_n]}$-module $D^{]0,r_n]}$ de $D_{\rm rig}$ tel que
  
  $\bullet$ $D_{\rm rig}=\mathcal{R}\otimes_{\mathcal{E}^{]0,r_n]}} D^{]0,r_n]}$ et 
  
  $\bullet$ $\mathcal{E}^{]0,r_{n+1}]}\otimes_{\mathcal{E}^{]0,r_n]}} D^{]0,r_n]}$ possède une base contenue dans $\varphi(D^{]0,r_n]})$. 
 
  Le module $D^{]0,r_n]}$ est stable sous l'action de $\Gamma$ et $\varphi(D^{]0,r_n]})\subset D^{]0,r_{n+1}]}$ pour $n\geq m(D)$ (voir 
 \cite[th. I.3.3]{BerAst} pour ces résultats). 
  
     Rappelons que $H={\rm Ker}(\chi)$ et posons $$\tilde{D}_{\rm dif}=(\mathbf{B}_{\rm dR}\otimes_{\qp} V(D))^H, \quad \tilde{D}_{\rm dif}^+=(\mathbf{B}_{\rm dR}^+\otimes_{\qp} V(D))^{H}.$$ 
    On dispose \cite[def. 4]{Annalen} d'un morphisme $\varphi^{-n}: D^{]0,r_n]}\to \tilde{D}_{\rm dif}^+$ de localisation en $\varepsilon^{(n)}-1$, qui est 
     $L[\Gamma]$-linéaire et injectif. On note $D_{{\rm dif}, n}^+$ le sous $L_n[[t]]$-module de $\tilde{D}^+_{\rm dif}$ engendré
     par $\varphi^{-n}(D^{]0,r_n]})$. C'est un $L_n[[t]]$-module libre de rang $\dim_{\mathcal{E}}(D)$ si $n\geq m(D)$, et il est stable sous l'action de $\Gamma$. Enfin, on note 
     $D_{{\rm dif},n}=L_n((t))\otimes_{L_n[[t]]} D_{{\rm dif}, n}^+$.  Si $V(D)$ est de de Rham, alors on a un isomorphisme canonique $D_{{\rm dif},n}\simeq L_n((t))\otimes_{L} D_{\rm dR}(V)$, où 
     $D_{\rm dR}(V)=(\mathbf{B}_{\rm dR}\otimes_{\qp} V)^{{\rm Gal}(\overline{\qp}/\qp)}$.
     
      Un résultat standard de Berger \cite[lemme 4.1]{Ber} montre que l'action de $\Gamma$ sur $D_{\rm rig}$ se dérive, d'où une connexion 
     $$\nabla: D_{\rm rig}\to D_{\rm rig}, \quad \nabla(z)=\lim_{a\to 1} \frac{\sigma_a(z)-z}{a-1},$$
     qui préserve $D^{]0,r_n]}$ pour $n\geq m(D)$. L'action de $\Gamma$ sur $D_{{\rm dif},n}^+$ (avec $n\geq m(D)$) se dérive aussi, 
     d'où une connexion $L_n[\Gamma]$-linéaire $\nabla$ sur $D_{{\rm dif},n}^+$, au-dessus de la connexion $t\frac{d}{dt}$ sur 
     $L_n[[t]]$. 
     Cette connexion $\nabla$ induit un endomorphisme du $L_n$-module
    libre de type fini $D_{{\rm Sen},n}:=D_{{\rm dif},n}^+/tD_{{\rm dif},n}^+$. 
    Les valeurs propres (resp. le polyn\^{o}me caract\'{e}ristique) de cet endomorphisme
    sont  les poids de Hodge-Tate
  g\'{e}n\'{e}ralis\'{e}s (resp. le polyn\^{o}me $P_{\rm Sen,D}$ de Sen) de
   $D$. On note $\Theta_{{\rm Sen},D}$ l'op\'{e}rateur
  de Sen, i.e. l'op\'{e}rateur $\nabla\pmod t$ agissant sur la r\'{e}union des $D_{{\rm Sen},n}$. 
  Nous aurons besoin de l'extension suivante de \cite[prop. 2]{Annalen}:  
  
\begin{proposition}\label{difpol}
 Si $P\in L[X]$, alors $P(\Theta_{{\rm Sen}, D})=0$
 \'{e}quivaut \`{a} $P(\nabla)(D_{\rm rig})\subset t\cdot D_{\rm rig}$.

\end{proposition}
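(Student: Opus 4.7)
The plan is to prove the two implications separately via the localization maps $\varphi^{-n} \colon D^{]0,r_n]} \to D_{{\rm dif},n}^+$. Three compatibilities will be crucial: (i) $\nabla$ commutes with $\varphi^{\pm 1}$ on $D_{\rm rig}$ (because $\varphi$ and $\Gamma$ commute in $\Phi\Gamma^{\rm et}(\mathcal{E})$), and hence with every $\varphi^{-n}$; (ii) $\varphi^{-n}(t)=t/p^n$ is a unit multiple of $t$, so divisibility by $t$ is preserved under $\varphi^{-n}$; and (iii) the hypothesis $P(\Theta_{{\rm Sen},D})=0$ translates, by the very definition of the Sen operator, into
\[
P(\nabla)\bigl(D_{{\rm dif},n}^+\bigr) \subset t\cdot D_{{\rm dif},n}^+ \qquad \text{for all } n\geq n_0,
\]
for some $n_0 \geq m(D)$.

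For the direction $\Leftarrow$, assume $P(\nabla)(D_{\rm rig})\subset t\cdot D_{\rm rig}$. Choose an $\mathcal{E}^{]0,r_n]}$-basis $e_1,\dots,e_d$ of $D^{]0,r_n]}$; the reductions $\overline{\varphi^{-n}(e_i)}$ modulo $t$ then form an $L_n$-basis of $D_{{\rm Sen},n}$. By hypothesis each $P(\nabla)(e_i)$ lies in $t\cdot D^{]0,r_{n'}]}$ for some $n'$. After enlarging $n$ to accommodate all the $e_i$, (i) gives $\varphi^{-n}(P(\nabla)(e_i))=P(\nabla)(\varphi^{-n}(e_i))$, and (ii) puts this element in $t\cdot D_{{\rm dif},n}^+$. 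Thus $P(\Theta_{{\rm Sen},D})$ kills the chosen basis of $D_{{\rm Sen},n}$, and by $L_n$-linearity we conclude $P(\Theta_{{\rm Sen},D})=0$.

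For the direction $\Rightarrow$, fix $z\in D_{\rm rig}$ and choose $n_1 \geq n_0$ with $z\in D^{]0,r_{n_1}]}$. Set $y:=P(\nabla)(z)\in D^{]0,r_{n_1}]}$. For every $m\geq n_1$ the identity $\varphi^{-m}\circ P(\nabla)=P(\nabla)\circ\varphi^{-m}$ combined with the translated hypothesis yields
\[
\varphi^{-m}(y) = P(\nabla)\bigl(\varphi^{-m}(z)\bigr)\in t\cdot D_{{\rm dif},m}^+.
\]
Now $t=\log(1+T)$ has only simple zeros in the annulus $\{0<v_p(T)\leq r_{n_1}\}$, located precisely at $T=\varepsilon^{(m)}-1$ for $m\geq n_1$ (since $v_p(\varepsilon^{(m)}-1)=r_m\leq r_{n_1}$ iff $m\geq n_1$), and the condition $\varphi^{-m}(y)\in t\cdot D_{{\rm dif},m}^+$ expresses exactly the vanishing of $y$ at $T=\varepsilon^{(m)}-1$. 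A standard Weierstrass-type argument in the Robba ring, using that $\mathcal{E}^{]0,r_{n_1}]}$ is a Bézout domain and that $D^{]0,r_{n_1}]}$ is free over it, then yields $y/t\in D^{]0,r_{n_1}]}\subset D_{\rm rig}$, as required.

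The main obstacle is the final analytic step of the hard direction: converting simultaneous local vanishing at every zero of $t$ into a global divisibility statement in $D^{]0,r_{n_1}]}$. One can either invoke directly the Bézout/Weierstrass structure of $\mathcal{E}^{]0,r]}$, or alternatively follow the strategy of \cite{Annalen}: embed $D^{]0,r_{n_1}]}$ into $\tilde{\mathbf{B}}^+\otimes_{\qp} V(D)$ and apply Lemma \ref{frobdr}(c), which furnishes a Frobenius-based criterion for divisibility by $t$ and reduces the analytic claim to a purely algebraic one.
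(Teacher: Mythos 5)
Your proof has the same skeleton as the paper's: the three compatibilities you list (commutativity of $\nabla$ with $\varphi^{-n}$, the fact that $\varphi^{-n}(t)=t/p^n$, and the identification of $\Theta_{{\rm Sen},D}$ with $\nabla\bmod t$ on $D_{{\rm dif},n}^+/tD_{{\rm dif},n}^+$) are exactly what the paper uses, and the easy implication $P(\nabla)(D_{\rm rig})\subset tD_{\rm rig}\Rightarrow P(\Theta_{{\rm Sen},D})=0$ is carried out the same way. You also correctly isolate the real content of the statement, namely the passage from ``$\varphi^{-m}(y)\in t\cdot D_{{\rm dif},m}^+$ for all $m\gg 0$'' to ``$y\in t\cdot D_{\rm rig}$''. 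This is precisely what the paper invokes from Berger \cite[lemmes 5.1, 5.4]{Ber}, so your first suggested patch (the B\'ezout/Weierstrass structure of $\mathcal{E}^{]0,r]}$) is pointing in exactly the right direction and in effect amounts to citing Berger.

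Your second suggested patch, however, does not work, and this is worth understanding. Lemma \ref{frobdr}(c) is a criterion for divisibility by $T^k$ in the ring $\tilde{\mathbf{A}}^+$, tested against the \emph{forward} Frobenius iterates $\varphi^n$, $n\geq 0$. What you need is divisibility by $t$ in $D^{]0,r_{n_1}]}$ tested against the \emph{backward} iterates $\varphi^{-m}$, $m\geq n_1$; the function $t=\log(1+T)$ has its zeros in the annulus $0<v_p(T)\leq r_{n_1}$ precisely at the points $\zeta-1$ with $\zeta\in\mu_{p^{\infty}}$ of order $\geq p^{n_1}$, and those are detected by the localizations $\varphi^{-m}$, not by $\varphi^n$ with $n\geq 0$. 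Moreover there is no embedding of $D^{]0,r_{n_1}]}$ into $\tilde{\mathbf{B}}^+\otimes_{\qp}V(D)$: the overconvergent modules $D^{]0,r_n]}$ live in a different world (the analytic annulus), and the localization maps $\varphi^{-n}$ land in $\tilde{D}_{\rm dif}^+=(\mathbf{B}_{\rm dR}^+\otimes V)^H$, which is not $\tilde{D}^+=(\tilde{\mathbf{B}}^+\otimes V)^H$. So the $T$-adic criterion of Lemma \ref{frobdr}(c) cannot be transported to your situation. You should simply cite Berger's lemmas 5.1 and 5.4 for the missing step, as the paper does.
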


\begin{proof}   Si $z\in D_{\rm rig}$ et $\varphi^{-n}(z)\in t D_{{\rm dif}, n}^+$ pour
    tout $n$ assez grand, alors $z\in t D_{\rm rig}$ d'après \cite[lemmes 5.1, 5.4]{Ber}. Ensuite, $D_{\rm rig}$ est la r\'{e}union des
    $D^{]0,r_n]}$, qui sont stables par $\nabla$ pour $n$ assez grand et satisfont $D^{]0,r_n]}\cap tD_{\rm rig}=tD^{]0,r_n]}$. En utilisant tout ceci et le fait que $\varphi^{-n}$ commute \`{a} $\nabla$, on obtient que
    $P(\nabla)(D_{\rm rig})\subset t\cdot D_{\rm rig}$ si et seulement si
    $P(\nabla)(\varphi^{-n}(D^{]0,r_n]}))\subset t\cdot D_{{\rm dif},n}^+$ pour
    tout $n$ assez grand. Cela \'{e}quivaut \`{a} $P(\nabla)(D_{{\rm dif},n}^+)\subset
    t\cdot D_{{\rm dif},n}^+$ pour $n$ assez grand, et arrive si et seulement si
    $P(\Theta_{{\rm Sen}, D})=0$, ce qui permet de conclure.

\end{proof}

  \section{Extensions de Hodge-Tate}
  
     \quad Soit $D\in \Phi\Gamma^{\rm et}(\mathcal{E})$ de dimension $2$, tel que $V(D)$ soit absolument irréductible, de de Rham, à poids de Hodge-Tate
     $1$ et $1-k$, avec $k\in\mathbf{N}^*$. Ainsi, les poids de Hodge-Tate de $\check{D}$ sont $0$ et $k$. On note $\delta=\chi^{-1}\det V(D)$, que l'on voit 
     comme caractère unitaire de $\qpet$ par la théorie locale du corps de classes. Notons que $w(\delta)=1-k$. D'après 
  le théorème fondamental \cite[th. II.3.1]{Cbigone} de la correspondance de Langlands locale $p$-adique pour ${\rm GL}_2(\qp)$, on a $D\in \mathcal{C}_L(\delta)$.
   De plus, $\Pi_{\delta}(D)^{\rm alg}\ne 0$, d'après un théorème profond de Colmez \cite[th. 0.20]{Cbigone}.
   
     On fixe une extension non scindée 
     $0\to D\to E\to D\to 0$ telle que $E\in \mathcal{C}_{L}(\delta)$. La suite $0\to \Pi_{\delta}(D)\to \Pi_{\delta}(E)\to \Pi_{\delta}(D)\to 0$ est 
     exacte\footnote{Attention au fait que le foncteur 
     $D\to \Pi_{\delta}(D)$ n'est pas exact dans $\mathcal{C}_L(\delta)$; il faut quotienter 
     $\mathcal{C}_L(\delta)$ par la sous-catégorie des représentations de dimension finie pour obtenir un foncteur exact.} \cite[remarque III.31]{CD} dans ${\rm Rep}_L(\delta)$.
      Nous allons montrer que si $\Pi_{\delta}(E)^{\rm alg}\ne \Pi_{\delta}(D)^{\rm alg}$, alors $E$ est de Hodge-Tate
     et l'action de l'algèbre enveloppante $U(\mathfrak{gl}_2)$ de $\mathfrak{gl}_2={\rm Lie}({\rm GL}_2(\qp))$ sur $\Pi_{\delta}(E)^{\rm an}$ est particulièrement simple.       
     Ceci jouera un rôle fondamental dans les chapitres suivants.
     
\subsection{L'élément de Casimir}

  \quad Si $\mathcal{D}\in \mathcal{C}_L(\delta)$, le théorème \ref{analitic} munit $\mathcal{D}_{\rm rig}\boxtimes_{\delta}\p1$ d'une action naturelle de
$U(\mathfrak{gl}_2)$, qui laisse stable $\mathcal{D}_{\rm rig}=\mathcal{D}_{\rm rig}\boxtimes_{\delta} \zp\subset \mathcal{D}_{\rm rig}\boxtimes_{\delta}\p1$ (ce dernier point découle de \cite[prop. VI.8]{CD}). On note 
    $$h=\left(\begin{smallmatrix} 1 & 0 \\0 & -1\end{smallmatrix}\right),\quad
   u^+=\left(\begin{smallmatrix} 0 & 1 \\0 & 0\end{smallmatrix}\right), \quad u^-=\left(\begin{smallmatrix} 0 & 0 \\1 & 0\end{smallmatrix}\right)\in\mathfrak{gl}_2$$
   et on considère l'élément de Casimir 
       $$C=u^+u^-+u^-u^++\frac{1}{2}h^2\in U(\mathfrak{gl}_2).$$ Alors $C$ induit 
    un endomorphisme du $L$-espace vectoriel topologique $\mathcal{D}_{\rm rig}$.
    
\begin{theorem}\label{Casimir}

a) L'opérateur $C$ sur $\mathcal{D}_{\rm rig}$ est $\mathcal{R}$-lin\'{e}aire et commute avec $\varphi$ et $\Gamma$.

b) On a une \'{e}galit\'{e} d'op\'{e}rateurs sur $\mathcal{D}_{\rm rig}$
  $$C=2tu^-+\frac{(2\nabla-(w(\delta)+1))^2-1}{2}.$$

\end{theorem}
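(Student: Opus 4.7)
The plan is to first identify explicit formulas for the actions of $u^+$ and $h$ on $\mathcal{D}_{\rm rig}$, then derive b) by a short algebraic manipulation using the $\mathfrak{sl}_2$-commutation relation $[u^+,u^-]=h$, and finally extract a) from the centrality of $C$ in $U(\mathfrak{gl}_2)$ combined with the formula of b).

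For b), the key observations are: the upper unipotent subgroup $\left(\begin{smallmatrix}1 & \zp \\ 0 & 1\end{smallmatrix}\right)$ acts on $\mathcal{D}_{\rm rig}=\mathcal{D}_{\rm rig}\boxtimes_\delta\zp$ by multiplication by $(1+T)^b$, so differentiating at $b=0$ gives that $u^+$ acts as multiplication by $t=\log(1+T)$; moreover, $\left(\begin{smallmatrix}a & 0\\ 0 & a\end{smallmatrix}\right)$ acts by the central character $\delta(a)$ (infinitesimal action $=w(\delta)$), while $\nabla$ is by definition the infinitesimal action of $\left(\begin{smallmatrix}a & 0\\ 0 & 1\end{smallmatrix}\right)$, which together give $h = 2\nabla - w(\delta)$ as operators on $\mathcal{D}_{\rm rig}$. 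Using $u^+u^- = u^-u^+ + h$, one rewrites
$$C = u^+u^- + u^-u^+ + \frac{1}{2}h^2 = 2u^+u^- - h + \frac{1}{2}h^2 = 2tu^- + \frac{(h-1)^2-1}{2},$$
and substituting $h=2\nabla - w(\delta)$ gives exactly the formula of b).

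For a), the centrality of $C$ in $U(\mathfrak{gl}_2)$ means it commutes with every element of $G$ acting on $\mathcal{D}_{\rm rig}\boxtimes_\delta\piqp$ via Theorem \ref{analitic}; in particular it commutes with $\varphi$ (the action of $\left(\begin{smallmatrix}p & 0\\ 0 & 1\end{smallmatrix}\right)$) and with $\Gamma$ (the torus $\left(\begin{smallmatrix}\zpet & 0\\ 0 & 1\end{smallmatrix}\right)$), and these commutations restrict to the $\varphi,\Gamma$-stable submodule $\mathcal{D}_{\rm rig}$. For $\mathcal{R}$-linearity, $C$ commutes with multiplication by $(1+T)^b$ for $b\in\zp$ (the action of the upper unipotent subgroup), and by $\varphi$- and $\Gamma$-equivariance also with multiplication by $(1+T)^c$ for all $c\in\qp$; combined with the continuity of $C$ on $\mathcal{D}_{\rm rig}$ and the density in $\mathcal{R}$ of the $L$-algebra generated by such elements, this yields $\mathcal{R}$-linearity.

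The main obstacle will be ensuring that the individual operators $u^\pm, h$ preserve the submodule $\mathcal{D}_{\rm rig}\subset\mathcal{D}_{\rm rig}\boxtimes_\delta\piqp$ so that the algebraic manipulations make sense on $\mathcal{D}_{\rm rig}$ itself --- a priori $U(\mathfrak{gl}_2)$ acts only on the larger space --- and that the identifications $u^+ = t$, $h = 2\nabla - w(\delta)$ are compatible with the precise conventions of \cite{CD} for the $\mathrm{GL}_2(\qp)$-action. The density argument for $\mathcal{R}$-linearity also requires a careful continuity analysis, since $\mathcal{R}$ is not the topological completion of any polynomial algebra in an elementary way.
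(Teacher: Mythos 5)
Your derivation of b) is correct and is exactly the computation the paper alludes to (Lemma 2 of \cite{Annalen} together with the computation of the $h$-action as in Proposition 4 of loc.\ cit.): the identifications $u^+ = t$ and $h = 2\nabla - w(\delta)$ follow directly from the conventions of \cite{CD} for the $B$-action on $\mathcal{D}_{\rm rig}$, and the rewriting $C = 2u^+u^- - h + \tfrac{1}{2}h^2 = 2tu^- + \tfrac{(h-1)^2-1}{2}$ is a clean way to organize the algebra. For the part of a) concerning commutation with $\varphi$ and $\Gamma$, your argument (centrality of $C$ in $U(\mathfrak{gl}_2)$, restricted to the $B$-stable submodule $\mathcal{D}_{\rm rig}$) is also the one in the paper.

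There is, however, a genuine gap in your argument for $\mathcal{R}$-linearity. Multiplication by $(1+T)^c$ with $c\in\qp\setminus\zp$ is \emph{not} an operator on $\mathcal{D}_{\rm rig}$: one has $(1+T)^c = \varphi^{-n}\bigl((1+T)^{p^nc}\bigr)$, and $\varphi^{-n}$ leaves the Robba ring $\mathcal{R}$ (it lands in $\tilde{\mathcal{R}}$). So the "extension by $\varphi$- and $\Gamma$-equivariance to all $c\in\qp$" does not make sense. Moreover, even if you restrict to $b\in\zp$, the closed $L$-algebra generated by the $(1+T)^b$ contains only power series in $T$ (no negative powers), and hence is not dense in $\mathcal{R}$, so the density step as written also fails. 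The fix is short: differentiating $b\mapsto (1+T)^b$ at $b=0$ shows $C$ commutes with multiplication by $t$, hence by $T = e^t-1$; since multiplication by $T$ is injective on $\mathcal{D}_{\rm rig}$ (it is a free $\mathcal{R}$-module), commutation with $T$ forces commutation with $T^{-1}$ as well; the $L$-algebra of Laurent polynomials in $T$ \emph{is} dense in $\mathcal{R}$, and continuity of $C$ (inherited from the analytic $G$-action) then gives $\mathcal{R}$-linearity. Your closing caveats correctly flag the two delicate points — the continuity/density and the identification of $u^+$, $h$ with explicit operators — but the $(1+T)^c$ step needs to be replaced by the $T^{-1}$ trick to close the argument.
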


\begin{proof} Pour le point a), voir le premier paragraphe de la preuve du théorème 6 de \cite{Annalen} (il y a des hypothèses supplémentaires dans loc.cit, mais 
la preuve marche telle quelle dans ce degré de généralité puisqu'elle n'utilise que le fait que $C$ commute à l'action adjointe de $G$). 
Le b) se démontre comme le lemme 2 de loc.cit, la seule différence étant le calcul de l'action de $h$. Comme celui-ci ne pose aucun problème (voir la preuve de la prop. 4 de loc.cit), cela permet de conclure.
\end{proof}

  \begin{remark}
   Soit $\Pi\in {\rm Rep}_L(\delta)$ un objet supersingulier. Le corollaire III.46 de \cite{CD} montre que 
 le dual de Cartier $\mathcal{D}$ de $D(\Pi)$ est absolument irréductible, de dimension au moins $2$, et $\Pi\simeq \Pi_{\delta}(\mathcal{D})$. On en déduit
   que ${\rm End}_{\varphi,\Gamma,\mathcal{R}}(\mathcal{D}_{\rm rig})=L$ et donc 
   $C$ agit par une constante $c\in L$ sur $\mathcal{D}_{\rm rig}$. Le théorème précédent
   montre que $\frac{(2\nabla-(w(\delta)+1))^2-1}{2}-c$ envoie $\mathcal{D}_{\rm rig}$ dans 
   $t\cdot\mathcal{D}_{\rm rig}$ et la proposition \ref{difpol} permet de conclure que 
   $$(2\Theta_{{\rm Sen},\mathcal{D}}-(1+w(\delta)))^2=2c+1.$$
    Cela montre en particulier que $V(\mathcal{D})$ a au plus deux poids de Hodge-Tate distincts. 
    Bien sûr, si $p\geq 5$, on sait grâce aux travaux de Paskunas que $\dim \mathcal{D}=2$, mais
    ce que l'on vient de faire fonctionne aussi pour $p=2$ et $p=3$. D'ailleurs, on peut utiliser \cite{Dext}
    aussi cette observation pour calculer des extensions dans ${\rm Rep}_L(\delta)$. 
  \end{remark}
     
 \subsection{Caract\'{e}risation des extensions de Hodge-Tate}
 
  \quad On revient aux hypothèses du début de ce chapitre. Il découle de la proposition \ref{difpol} (appliquée à $\check{D}$ et à son polynôme de Sen $X(X-k)$) et du théorème \ref{Casimir}
  que l'élément de Casimir $C$ agit par multiplication par $\frac{k^2-1}{2}$ sur $\check{D}_{\rm rig}\boxtimes_{\delta^{-1}}\p1$
  et que pour tout $z\in\check{D}_{\rm rig}$ on a 
   $$u^{-}(z)=\frac{\nabla(k-\nabla)(z)}{t}.$$
        
        Le résultat principal de ce chapitre est le suivant:

  \begin{theorem}\label{EHT}
     Les assertions suivantes sont \'{e}quivalentes:

  a) $\check{E}$ est de Hodge-Tate;

  b)  On a $\Theta_{{\rm Sen}, \check{E}}^2=k\cdot \Theta_{{\rm Sen},\check{E}}$;

  c) L'\'{e}l\'{e}ment de Casimir agit par multiplication par $\frac{k^2-1}{2}$ sur $\check{E}_{\rm rig}\boxtimes_{\delta^{-1}}\p1$.

  d) Pour tout
   $z\in \check{E}_{\rm rig}$ on a $$u^{-}(z)=\frac{\nabla(k-\nabla)(z)}{t}.$$

  \end{theorem}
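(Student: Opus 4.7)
Le plan est de v\'erifier les \'equivalences (c)$\Leftrightarrow$(d), (a)$\Leftrightarrow$(b), puis (b)$\Leftrightarrow$(c), cette derni\`ere constituant le coeur de la preuve. L'outil principal est la formule de Casimir du th\'eor\`eme \ref{Casimir}, combin\'ee \`a la proposition \ref{difpol} et au lemme de Schur.

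Puisque $w(\delta^{-1})+1=k$, la formule du th\'eor\`eme \ref{Casimir} b) appliqu\'ee \`a $\check E$ donne, sur $\check E_{\rm rig}$,
\begin{equation*}
C-\frac{k^2-1}{2}=2tu^-+2\nabla(\nabla-k).
\end{equation*}
L'\'equivalence (c)$\Leftrightarrow$(d) en r\'esulte directement: l'identit\'e $tu^-(z)=\nabla(k-\nabla)(z)$ pour tout $z\in \check E_{\rm rig}$ est exactement l'annulation de $C-\frac{k^2-1}{2}$ sur $\check E_{\rm rig}$, et la $G$-\'equivariance de $C$, jointe au recouvrement $\p1(\qp)=\zp\cup w\zp$, permet de l'\'etendre \`a $\check E_{\rm rig}\boxtimes_{\delta^{-1}}\p1$. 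Pour (a)$\Leftrightarrow$(b), le module de Sen de $\check E$, de rang $4$, est extension de deux copies du module de Sen de $\check D$ (de polyn\^ome caract\'eristique $X(X-k)$); les valeurs propres de $\Theta_{{\rm Sen}, \check E}$ sont donc dans $\{0,k\}$. Comme $k\neq 0$, $\check E$ est Hodge-Tate (i.e., $\Theta_{{\rm Sen}, \check E}$ est semi-simple) si et seulement si son polyn\^ome minimal divise $X(X-k)$, c'est-\`a-dire $\Theta_{{\rm Sen}, \check E}^2=k\Theta_{{\rm Sen}, \check E}$.

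Pour l'\'equivalence cruciale (b)$\Leftrightarrow$(c), on analyse l'endomorphisme $\Psi:=C-\frac{k^2-1}{2}$ de $\check E_{\rm rig}$, qui est $\mathcal{R}$-lin\'eaire et commute \`a $\varphi,\Gamma$ d'apr\`es le th\'eor\`eme \ref{Casimir} a). Puisque $C$ agit par $\frac{k^2-1}{2}$ sur $\check D_{\rm rig}$ (rappel\'e au d\'ebut du paragraphe pr\'ec\'edent), $\Psi$ s'annule sur le sous-module $\check D_{\rm rig}\subset \check E_{\rm rig}$ et induit z\'ero sur le quotient $\check E_{\rm rig}/\check D_{\rm rig}\simeq \check D_{\rm rig}$. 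Ainsi $\Psi$ se factorise via un endomorphisme $(\varphi,\Gamma,\mathcal{R})$-\'equivariant $\phi$ de $\check D_{\rm rig}$, et comme $\check D$ est absolument irr\'eductible, le lemme de Schur donne $\phi=\lambda\cdot\mathrm{id}$ pour un certain $\lambda\in L$. Autrement dit, $\Psi=\lambda\cdot s$, o\`u $s:\check E_{\rm rig}\to \check E_{\rm rig}$ est la compos\'ee de la projection canonique $\check E_{\rm rig}\twoheadrightarrow \check D_{\rm rig}$ et de l'inclusion $\check D_{\rm rig}\hookrightarrow \check E_{\rm rig}$.

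Par la proposition \ref{difpol} appliqu\'ee \`a $\check E$ et $P=X(X-k)$, (b) \'equivaut \`a $\nabla(\nabla-k)(\check E_{\rm rig})\subset t\check E_{\rm rig}$. L'\'etape technique principale, o\`u r\'eside l'essentiel de la difficult\'e, est la v\'erification de l'inclusion $2tu^-(\check E_{\rm rig})\subset t\check E_{\rm rig}$, qui repose sur le fait que $u^-$ est un op\'erateur bien d\'efini sur le module ambiant $\check E_{\rm rig}\boxtimes_{\delta^{-1}}\p1$ sans introduire de p\^ole en $t$. Admettant ce point, on conclut comme suit: si (b) est v\'erifi\'ee, alors la formule $\lambda\cdot s=2tu^-+2\nabla(\nabla-k)$ entra\^ine $\lambda\cdot s(\check E_{\rm rig})\subset t\check E_{\rm rig}\cap \check D_{\rm rig}=t\check D_{\rm rig}$; la surjectivit\'e de $s$ sur $\check D_{\rm rig}$ et la non-nullit\'e de $\check D_{\rm rig}/t\check D_{\rm rig}$ forcent alors $\lambda=0$, d'o\`u (c). R\'eciproquement, si (c) est vraie, alors $\Psi=0$, donc $2\nabla(\nabla-k)(z)=-2tu^-(z)\in t\check E_{\rm rig}$ par le m\^eme contr\^ole de $u^-$, d'o\`u (b). L'obstacle principal du sch\'ema est ainsi le contr\^ole fin de $2tu^-$ modulo $t$, qui m\^ele la structure de $(\varphi,\Gamma)$-module et celle de $U(\mathfrak{gl}_2)$-module.
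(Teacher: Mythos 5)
Votre preuve est correcte et suit essentiellement la m\^eme strat\'egie que l'article: (a)$\Leftrightarrow$(b) par le polyn\^ome de Sen, (c)$\Leftrightarrow$(d) par la formule de Casimir, et le point d\'elicat est la comparaison (b)$\Leftrightarrow$(c) via l'\'etude de $\Psi=C-\frac{k^2-1}{2}$. La seule diff\'erence de pr\'esentation est que vous obtenez $\Psi=\lambda\cdot s$ par factorisation \`a travers le quotient et lemme de Schur, alors que l'article calcule directement ${\rm End}_{\varphi,\Gamma,\mathcal{R}}(\check E_{\rm rig})=L[\varepsilon]$ (\`a partir de la non-trivialit\'e de l'extension et de l'irr\'eductibilit\'e de $\check D$) puis \'ecrit $C=a+b\varepsilon$; les deux arguments sont \'equivalents, puisque l'op\'erateur $s$ que vous d\'ecrivez co\"incide avec l'action de $\varepsilon$. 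Notez enfin que l'inclusion que vous admettez, \`a savoir la stabilit\'e de $\check E_{\rm rig}$ par $u^-$, n'est pas un v\'eritable obstacle: c'est un fait pr\'ealablement \'etabli dans l'article (cons\'equence de la prop.~VI.8 de \cite{CD}, rappel\'ee au d\'ebut du paragraphe sur l'\'el\'ement de Casimir), et non un point \`a red\'emontrer ici. Votre phrasage "sans introduire de p\^ole en $t$" est l\'eg\`erement trompeur (la formule pour $u^-$ divise bel et bien par $t$), le point exact \'etant que le r\'esultat reste dans le sous-module $\check E_{\rm rig}\subset\check E_{\rm rig}\boxtimes_{\delta^{-1}}\p1$.
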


\begin{proof}  Comme $\check{E}$ est
de Hodge-Tate si et seulement si $\Theta_{{\rm Sen},\check{E}}$ est diagonalisable
\`{a} valeurs propres enti\`{e}res et comme le polyn\^{o}me de Sen de $\check{E}$ est
$X^2(X-k)^2$, il est clair que a) \'{e}quivaut \`{a} b). L'\'{e}quivalence
entre c) et d) est une cons\'{e}quence imm\'{e}diate du théorème \ref{Casimir}
appliqu\'{e} \`{a} $\check{E}_{\rm rig}$ et $\delta^{-1}$ (noter que $w(\delta^{-1})=k-1$). Si d) est vraie, alors
en posant $P(X)=X(k-X)$, on a $P(\nabla)(\check{E}_{\rm rig})
\subset t\check{E}_{\rm rig}$, puisque $u^{-}$ laisse stable $\check{E}_{\rm rig}$. En utilisant la 
 proposition \ref{difpol}, on obtient $\Theta_{{\rm Sen}, \check{E}}^2=k\cdot \Theta_{{\rm Sen},\check{E}}$, ce qui montre
 que d) entraîne b). 

   Supposons maintenant que b) est vraie et montrons le c). La suite exacte $0\to \check{D}_{\rm rig}\to \check{E}_{\rm rig}\to \check{D}_{\rm rig}\to 0$ nous permet de voir $\check{E}_{\rm rig}$ comme une d\'{e}formation de $\check{D}_{\rm rig}$ \`{a} $L[\varepsilon]=L[X]/X^2$. Comme cette suite
exacte n'est pas scind\'{e}e (sinon la suite $0\to D\to E\to D\to 0$ serait aussi scind\'{e}e) et comme $D$ (et donc $\check{D}$) est absolument irr\'{e}ductible, on obtient
${\rm End}_{\varphi,\Gamma,\mathcal{R}}(\check{E}_{\rm rig})=L[\varepsilon]$. 
On en déduit, en utilisant le théorème \ref{Casimir}, l'existence 
de $a,b\in L$ tels que $Cz=(a+b\varepsilon) z$ pour tout $z\in \check{E}_{\rm rig}$. Puisque 
 $w(\delta^{-1})=k-1$, la partie b) du théorème \ref{Casimir} montre que l'on a une \'{e}galit\'{e} d'op\'{e}rateurs
 $$C-\frac{k^2-1}{2}=2tu^{-}+\frac{(2\nabla-k)^2-k^2}{2}$$ sur $\check{E}_{\rm rig}$.
 Or la proposition \ref{difpol} et l'hypoth\`{e}se que b) est satisfaite entra\^{i}nent l'inclusion $$\frac{(2\nabla-k)^2-k^2}{2}(\check{E}_{\rm rig})\subset t\cdot \check{E}_{\rm rig}.$$ Ainsi, $(a-\frac{k^2-1}{2})z+b\varepsilon(z)\in t\cdot
 \check{E}_{\rm rig}$ pour tout $z\in\check{E}_{\rm rig}$. En prenant $z\in {\rm Ker}(\varepsilon)=\check{D}_{\rm rig}
 \subset \check{E}_{\rm rig}$ on obtient $(a-\frac{k^2-1}{2})\check{D}_{\rm rig}\subset
 t\cdot \check{D}_{\rm rig}$, donc $a=\frac{k^2-1}{2}$. On en d\'{e}duit que
 $b\cdot \varepsilon(\check{E}_{\rm rig})\subset t\cdot \check{E}_{\rm rig}$ et comme $\varepsilon(\check{E}_{\rm rig})=\check{D}_{\rm rig}$, on a finalement $b=0$
 et $C=\frac{k^2-1}{2}$. Cela permet de conclure.
 \end{proof}
 
 Posons $$\nabla_{2k}=\prod_{i=0}^{2k-1}(\nabla-i).$$

\begin{corollary}
  \label{long}
     Si $E$ est de Hodge-Tate et si $\mathcal{D}\in \{D, E\}$, alors 
   $$(u^{-})^k(\check{z})=(-1)^k\frac{\nabla_{2k}(\check{z})}{t^k}$$
 pour tout $\check{z}\in \check{\mathcal{D}}_{\rm rig}$. 
  \end{corollary}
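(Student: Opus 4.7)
The plan is to iterate the identity $tu^-(\check z) = -\nabla(\nabla-k)(\check z)$, which holds on $\check{\mathcal{D}}_{\rm rig}$ for both $\mathcal{D} \in \{D, E\}$. For $\mathcal{D} = D$ this is precisely the formula recalled just before Theorem \ref{EHT} (obtained by applying Proposition \ref{difpol} to $\check{D}$ with Sen polynomial $X(X-k)$ and using Theorem \ref{Casimir}), and for $\mathcal{D} = E$ it is condition d) of Theorem \ref{EHT}, available thanks to the hypothesis that $E$ is de Hodge-Tate. Note that $u^-$ preserves $\check{\mathcal{D}}_{\rm rig}$ by Theorem \ref{analitic}, so all iterates $(u^-)^j(\check z)$ remain in $\check{\mathcal{D}}_{\rm rig}$ and the identity may be reapplied.

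The second ingredient is the commutation of $t$ with $\nabla$. Since $\sigma_a(T)=(1+T)^a-1$ gives $\sigma_a(t)=at$, we have $\nabla(t)=t$, hence the operator identity $[\nabla,t]=t$. A short induction on $j$ upgrades this to
$$t^j \cdot P(\nabla) = P(\nabla - j) \cdot t^j$$
for every polynomial $P \in L[X]$ and every integer $j\geq 0$.

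I would then prove by induction on $j\in\{1,\dots,k\}$ the operator identity
$$t^j (u^-)^j \;=\; (-1)^j \prod_{i=0}^{j-1}(\nabla - i)(\nabla - k - i)$$
on $\check{\mathcal{D}}_{\rm rig}$. The base case $j=1$ is exactly the displayed formula from Step~1. For the inductive step, since $(u^-)^j$ maps $\check{\mathcal{D}}_{\rm rig}$ into itself, the base identity applied to $(u^-)^j\check z$ gives $t(u^-)^{j+1}=-\nabla(\nabla-k)(u^-)^j$ as operators on $\check{\mathcal{D}}_{\rm rig}$; multiplying on the left by $t^j$ and using the commutation relation of Step~2 to push $t^j$ past $\nabla(\nabla-k)$ produces the factor $-(\nabla-j)(\nabla-k-j)$ in front of $t^j(u^-)^j$, to which the induction hypothesis applies and yields the identity at level $j+1$.

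Setting $j=k$ and observing
$$\prod_{i=0}^{k-1}(\nabla - i)(\nabla - k - i) \;=\; \prod_{i=0}^{k-1}(\nabla - i)\cdot \prod_{i=k}^{2k-1}(\nabla - i) \;=\; \nabla_{2k},$$
we obtain $t^k(u^-)^k = (-1)^k \nabla_{2k}$ as operators on $\check{\mathcal{D}}_{\rm rig}$, which rearranges to the stated corollary. No substantive obstacle is expected: the whole argument is a clean formal iteration, and the only non-trivial input is the base formula $tu^-=-\nabla(\nabla-k)$ furnished by the Hodge-Tate hypothesis through Theorem \ref{EHT}.
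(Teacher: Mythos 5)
Your proof is correct and follows essentially the same route as the paper's: both start from the base identity $u^-(\check z)=\tfrac{\nabla(k-\nabla)(\check z)}{t}$ (from the discussion preceding Theorem \ref{EHT} for $\mathcal{D}=D$, and from Theorem \ref{EHT}\,d) for $\mathcal{D}=E$) and iterate it $k$ times. The only difference is that you make explicit the commutation $t^j P(\nabla)=P(\nabla-j)t^j$, which the paper subsumes into the phrase ``une récurrence immédiate''; the resulting formula for $t^j(u^-)^j$ agrees with the paper's displayed expression for $(u^-)^j(\check z)$ after clearing denominators.
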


\begin{proof} Le théorème \ref{EHT} et la discussion qui le précède montrent que 
$u^{-}(\check{z})=\frac{\nabla(k-\nabla)(\check{z})}{t}$ pour tout 
$\check{z}\in \check{\mathcal{D}}_{\rm rig}$. Une récurrence immédiate permet d'en déduire la relation
    $$(u^-)^j(\check{z})=\frac{\nabla(\nabla-1)...(\nabla-j+1)(k-\nabla)(k+1-\nabla)...(k+j-1-\nabla)(\check{z})}{t^j},$$
    pour tout $j\geq 1$ et $\check{z}\in \check{\mathcal{D}}_{\rm rig}$. On conclut en prenant $j=k$.
\end{proof}

 \subsection{Application à l'étude de $\Pi_{\delta}(E)^{\rm alg}$}
 
 \quad Si $k\geq 1$ et $l$ sont des entiers, on note
    $W_{l,k}={\rm Sym}^{k-1}(L^2)\otimes \det^l$, o\`{u} ${\rm Sym}^{k-1}(L^2)$ est la puissance sym\'{e}trique $(k-1)$-i\`{e}me de la repr\'{e}sentation
    standard de $G$ sur $L\oplus L$. 
    
    \begin{proposition} \label{pilc}
    
    Il existe des repr\'{e}sentations lisses admissibles $\Pi_{\delta}(D)^{\rm lc}$ et 
    $\Pi_{\delta}(E)^{\rm lc}$ telles que $\Pi_{\delta}(D)^{{\rm alg}}=W_{1-k,k}\otimes\Pi_{\delta}(D)^{\rm lc}$ et 
     $\Pi_{\delta}(E)^{\rm alg}=W_{1-k,k}\otimes
   \Pi_{\delta}(E)^{\rm lc}$.    
    \end{proposition}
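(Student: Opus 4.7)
The strategy is to show that both $\Pi_{\delta}(D)^{\rm alg}$ and $\Pi_{\delta}(E)^{\rm alg}$ are isotypic of type $W_{1-k,k}$ as $G$-representations; the multiplicity spaces $\Pi_{\delta}(D)^{\rm lc}$ and $\Pi_{\delta}(E)^{\rm lc}$ are then automatically smooth, and inherit admissibility from that of $\Pi_{\delta}(D)$ and $\Pi_{\delta}(E)$ by the usual argument on $K$-invariants for $K$ compact open.

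I would first invoke the general fact that, for any admissible Banach $G$-representation $\Pi$ with central character $\delta$, the space $\Pi^{\rm alg}$ decomposes canonically as a direct sum $\bigoplus_{(l,k')} W_{l,k'}\otimes\pi_{l,k'}$ with $\pi_{l,k'}$ smooth admissible. The central character forces $k'-1+2l = w(\delta)=1-k$, leaving one degree of freedom. To eliminate it, I use that the Casimir $C\in U(\mathfrak{gl}_2)$ acts on each summand $W_{l,k'}\otimes\pi_{l,k'}$ by the scalar $((k')^2-1)/2$; so it suffices to show that $C-(k^2-1)/2$ acts nilpotently on $\Pi_{\delta}(D)^{\rm an}$ and on $\Pi_{\delta}(E)^{\rm an}$, since this would force $(k')^2=k^2$, hence $(l,k')=(1-k,k)$ given $k'\geq 1$.

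For $\Pi_{\delta}(D)$, the remark following Theorem~\ref{Casimir}, applied to the absolutely irreducible $(\varphi,\Gamma)$-module $D$ with Sen weights $\{1,1-k\}$ and $w(\delta)=1-k$, yields that $C$ acts on $D_{\rm rig}$ by the scalar $(k^2-1)/2$. By centrality of $C$ and the $G$-equivariance of the $\p1$-model, this scalar action extends to $D_{\rm rig}\boxtimes_\delta\p1$ and descends, via Theorem~\ref{analitic}, to $\Pi_{\delta}(D)^{\rm an}$.

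For $\Pi_{\delta}(E)$, since the given extension is non-split and $D$ absolutely irreducible, ${\rm End}_{\varphi,\Gamma,\mathcal{R}}(E_{\rm rig})=L[\varepsilon]$ with $\varepsilon^2=0$; Theorem~\ref{Casimir}(a) then places $C$ in this ring, with image $(k^2-1)/2$ modulo $\varepsilon$, as read off from the action on the subquotient $D_{\rm rig}$. Hence $(C-(k^2-1)/2)^2=0$ on $E_{\rm rig}$, on $E_{\rm rig}\boxtimes_\delta\p1$, and by Theorem~\ref{analitic} on the quotient $\Pi_{\delta}(E)^{\rm an}$, which concludes. The main subtlety lies in this last transfer of nilpotence from the $(\varphi,\Gamma)$-module $E_{\rm rig}$ to the representation $\Pi_{\delta}(E)^{\rm an}$: it rests on the compatibility, built into Theorems~\ref{Casimir}(b) and~\ref{analitic}, between the explicit Lie algebra action on $E_{\rm rig}$ and the $U(\mathfrak{gl}_2)$-action on the $\p1$-model and its quotient.
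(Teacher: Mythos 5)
Your argument is correct, and it is in substance the same as the paper's: the paper proves the statement for $\Pi_{\delta}(D)$ by citing \cite[prop. 11]{Annalen}, and the remark immediately following the proposition says explicitly that the proof of loc.\ cit.\ uses only the central and infinitesimal characters of $\Pi_{\delta}(D)^{\rm an}$, which is precisely the mechanism you unwind (central character pins down $2l+k'-1$, the Casimir pins down $(k')^2$ via the formula $(2\Theta_{\rm Sen,\mathcal{D}}-(1+w(\delta)))^2=2c+1$ from the remark after Theorem~\ref{Casimir}). The only genuine divergence is in how the extension $E$ is treated: the paper deduces the statement for $\Pi_{\delta}(E)^{\rm alg}$ from the one for $\Pi_{\delta}(D)^{\rm alg}$ by combining the left-exact sequence $0\to\Pi_{\delta}(D)^{\rm alg}\to\Pi_{\delta}(E)^{\rm alg}\to\Pi_{\delta}(D)^{\rm alg}$ with a general lemma \cite[prop. 10]{Annalen} on isotypic components in extensions, whereas you run the Casimir argument directly on $E_{\rm rig}$, observing that ${\rm End}_{\varphi,\Gamma,\mathcal{R}}(E_{\rm rig})=L[\varepsilon]$ forces $\bigl(C-\tfrac{k^2-1}{2}\bigr)^2=0$ on $\Pi_{\delta}(E)^{\rm an}$; this is exactly the computation the paper performs a few lines later in the proof of Proposition~\ref{HTalg}, so you are trading one citation to \cite{Annalen} for an early use of an argument that appears elsewhere in this very section. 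Both routes are sound and of comparable effort.
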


\begin{proof}
 L'assertion concernant $\Pi_{\delta}(D)^{{\rm alg}}$ découle de \cite[prop. 11]{Annalen}, en tordant par $\chi^{k-1}$. La preuve de loc.cit. (ou encore 
\cite[lemma 4.9]{PaskunasBM}) montre que $\Pi_{\delta}(D)^{\rm alg}$ ne contient aucune repr\'{e}sentation du type
 $W_{l',k'}\otimes \pi$, avec $\pi$ lisse et $(l',k')\ne (1-k,k)$. En utilisant ceci, la suite exacte $0\to \Pi_{\delta}(D)^{\rm alg}\to \Pi_{\delta}(E)^{\rm alg}\to \Pi_{\delta}(D)^{\rm alg}$
 et \cite[prop. 10]{Annalen}, 
 le résultat s'ensuit.
\end{proof}

\begin{remark}
  L'existence de $\Pi_{\delta}(D)^{\rm lc}$ est un résultat de Colmez \cite[prop. VI.5.1]{Cbigone}, qu'il démontre
  en utilisant sa théorie du modèle de Kirillov de $\Pi_{\delta}(D)^{\rm alg}$. La preuve de \cite{Annalen} n'utilise que les caractères centraux et infinitésimaux de 
  $\Pi_{\delta}(D)^{\rm an}$.
\end{remark}

 \begin{proposition}\label{HTalg}
  Si $\Pi_{\delta}(E)^{\rm alg}\ne \Pi_{\delta}(D)^{\rm alg}$, alors $E$ est de Hodge-Tate.
 \end{proposition}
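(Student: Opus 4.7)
\noindent\emph{Sch\'{e}ma de preuve.} Le plan est de ramener le probl\`{e}me \`{a} montrer que l'\'{e}l\'{e}ment de Casimir $C$ agit par le scalaire $\frac{k^2-1}{2}$ sur $\check{E}_{\rm rig}$, ce qui, par le th\'{e}or\`{e}me \ref{EHT}(c), \'{e}quivaut \`{a} l'\'{e}nonc\'{e} voulu. Comme $\check{D}$ est absolument irr\'{e}ductible et l'extension $\check{E}$ non scind\'{e}e, on a ${\rm End}_{\varphi,\Gamma,\mathcal{R}}(\check{E}_{\rm rig})=L[\varepsilon]$ (cf.\ la preuve du th\'{e}or\`{e}me \ref{EHT}), et l'op\'{e}rateur $\mathcal{R}$-lin\'{e}aire $C$ du th\'{e}or\`{e}me \ref{Casimir}(a) agit par multiplication par un certain $a+b\varepsilon$. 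La restriction \`{a} $\check{D}_{\rm rig}$, o\`{u} $C$ vaut $\frac{k^2-1}{2}$ puisque $D$ est de de Rham (cf.\ la discussion pr\'{e}c\'{e}dant le th\'{e}or\`{e}me \ref{EHT}), donne $a=\frac{k^2-1}{2}$; tout revient donc \`{a} montrer que $b=0$.

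Posons $C' := C - \frac{k^2-1}{2}$ et choisissons $v \in \Pi_\delta(E)^{\rm alg} \setminus \Pi_\delta(D)^{\rm alg}$, d'image $\bar v \ne 0$ dans $\Pi_\delta(D)^{\rm alg}$. D'apr\`{e}s la proposition \ref{pilc}, $v$ appartient \`{a} une copie de $W_{1-k,k}$, sur laquelle le Casimir agit par le scalaire $(k-1)(k+1)/2=\frac{k^2-1}{2}$ (calcul standard sur ${\rm Sym}^{k-1}$); on a donc $C'v=0$.

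Le point central sera d'en d\'{e}duire que $C'$ s'annule sur tout $\Pi_\delta(E)^{\rm an}$. Via le th\'{e}or\`{e}me \ref{analitic}, $C'$ agit sur $\Pi_\delta(E)^{\rm an}$ de mani\`{e}re continue et $G$-\'{e}quivariante (par centralit\'{e} du Casimir dans $U(\mathfrak{gl}_2)$). Puisque $D$ est de Hodge-Tate, $C'$ annule d\'{e}j\`{a} le sous-espace $\Pi_\delta(D)^{\rm an}$ et le quotient $\Pi_\delta(E)^{\rm an}/\Pi_\delta(D)^{\rm an} \simeq \Pi_\delta(D)^{\rm an}$; il se factorise donc par un endomorphisme continu et $G$-\'{e}quivariant $f : \Pi_\delta(D)^{\rm an} \to \Pi_\delta(D)^{\rm an}$. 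L'\'{e}tape d\'{e}licate sera l'application du lemme de Schur: la supersingularit\'{e} de $\Pi_\delta(D)$ fournit l'irr\'{e}ductibilit\'{e} topologique absolue de $\Pi_\delta(D)^{\rm an}$ comme repr\'{e}sentation localement analytique admissible de $G$, ce qui force $f$ \`{a} \^{e}tre scalaire. L'\'{e}galit\'{e} $f(\bar v)=C'v=0$ avec $\bar v \ne 0$ donne alors $f=0$, et donc $C'=0$ sur $\Pi_\delta(E)^{\rm an}$.

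Pour conclure, on applique le th\'{e}or\`{e}me \ref{analitic} \`{a} $\check{E}$, d'o\`{u} la suite exacte
$$0 \to (\Pi_\delta(E)^{\rm an})^* \to \check{E}_{\rm rig} \boxtimes_{\delta^{-1}} \mathbf{P}^1 \to \Pi_{\delta^{-1}}(\check{E})^{\rm an} \to 0.$$
Par dualit\'{e}, la nullit\'{e} de $C'$ sur $\Pi_\delta(E)^{\rm an}$ entra\^{i}ne sa nullit\'{e} sur le sous $(\Pi_\delta(E)^{\rm an})^*$, sur lequel $C'$ s'identifie \`{a} la multiplication par $b\varepsilon$. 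La fonctorialit\'{e} de la construction de Colmez et la non-scission de $\check{E}$ assurent que $\varepsilon$ agit non trivialement sur $(\Pi_\delta(E)^{\rm an})^*$, d'image $(\Pi_\delta(D)^{\rm an})^* \ne 0$: en choisissant $z \in (\Pi_\delta(E)^{\rm an})^*$ avec $\varepsilon z \ne 0$, l'identit\'{e} $b\varepsilon z = C'z = 0$ impose $b=0$, et le th\'{e}or\`{e}me \ref{EHT} permet de conclure.
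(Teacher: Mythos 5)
Your overall strategy — reduce to showing that the Casimir $C$ acts on $\check{E}_{\rm rig}$ by the scalar $\frac{k^2-1}{2}$, i.e.\ that the nilpotent part $b\varepsilon$ of the action is zero, then invoke the th\'eor\`eme~\ref{EHT} — is exactly the paper's. Paragraphs 1 and 2 of your argument are sound. The gap lies in paragraph 3.

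You claim that the supersingularity of $\Pi_\delta(D)$ implies the \emph{topological irreducibility} of $\Pi_\delta(D)^{\rm an}$ as an admissible locally analytic representation, and you use a Schur-type lemma in that category to conclude that your map $f$ is scalar. This claim is false in the setting at hand: by hypothesis $\Pi_\delta(D)^{\rm alg}\ne 0$, and $\Pi_\delta(D)^{\rm alg}=W_{1-k,k}\otimes\Pi_\delta(D)^{\rm lc}$ is a nonzero, closed (it is the kernel of the ideal of $U(\mathfrak{gl}_2)$ annihilating $W_{1-k,k}$, cut out by continuous operators), $G$-stable, \emph{proper} subspace of $\Pi_\delta(D)^{\rm an}$ (the theorem \ref{dense} in the paper already produces elements of $\Pi_c^{P\text{-}\rm fini}\subset\Pi^{\rm an}$ annihilated by arbitrarily high powers of $u^+$, hence not locally algebraic). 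So $\Pi_\delta(D)^{\rm an}$ is never topologically irreducible when $\Pi_\delta(D)^{\rm alg}\ne 0$, and the Schur argument as stated does not apply. The salvageable version of your step 3 — which the paper mentions in the remark immediately after its proof — is that ${\rm End}_G(\Pi_\delta(D)^{\rm an})=L$ follows not from topological irreducibility of the locally analytic vectors, but from the full faithfulness of the functor $\Pi\mapsto\Pi^{\rm an}$ on ${\rm Rep}_L(\delta)$ (a consequence of \cite{CD}), combined with the $p$-adic Schur lemma for the Banach representation $\Pi_\delta(D)$ itself.

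Even once that is repaired, your route is a detour. You have already established that $C=\frac{k^2-1}{2}+b\varepsilon$ as an $\mathcal{R}$-linear endomorphism of $\check{E}_{\rm rig}$, hence (by $G$-equivariance of the faisceau and the pairing of th\'eor\`eme~\ref{analitic}) that $C$ acts by the same element of $L[\varepsilon]$ on $\Pi_\delta(E)^{\rm an}$. Therefore $C'v=b\,\varepsilon v$ for every $v\in\Pi_\delta(E)^{\rm an}$. Taking $v\in\Pi_\delta(E)^{\rm alg}\setminus\Pi_\delta(D)^{\rm alg}$ as you do, we have $\varepsilon v\ne 0$ (because $\varepsilon$ is the composite $\Pi_\delta(E)\twoheadrightarrow\Pi_\delta(D)\hookrightarrow\Pi_\delta(E)$ and $v$ does not map to $0$ in the quotient), and $C'v=0$ by the proposition~\ref{pilc}. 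So $b\,\varepsilon v=0$ with $\varepsilon v\ne 0$ forces $b=0$ directly — there is no need to factor $C'$ through a self-map of $\Pi_\delta(D)^{\rm an}$ nor to pass to the dual. This is precisely the paper's argument (working on $E_{\rm rig}$ rather than $\check{E}_{\rm rig}$ and then transferring by the $G$-equivariant pairing, but the content is identical).
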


 \begin{proof} Notons pour simplifier $\Pi=\Pi_{\delta}(D)$ et 
 $\Pi_1=\Pi_{\delta}(E)$.
 La suite exacte $0\to D\to E\to D\to 0$ induit une suite exacte 
 $0\to \Pi\to \Pi_1\to\Pi\to 0$ et donc
 (par exactitude \cite[th. 7.1]{STInv} du foncteur $\Pi\to \Pi^{\rm an}$) une suite
 exacte $0\to \Pi^{\rm an}\to \Pi_1^{\rm an}\to\Pi^{\rm an}\to 0$. Cela munit 
  $\Pi_1^{\rm an}$ d'une structure de $L[\varepsilon]$-module, compatible
 avec la structure de $L[\varepsilon]$-module sur $E_{\rm rig}$. 
 
  La preuve du théorème \ref{EHT} montre
 l'existence de $a,b\in L$ tels que $C(z)=(a+b\varepsilon)z$ pour tout
 $z\in E_{\rm rig}$, o\`{u} $C$ est l'\'{e}l\'{e}ment de Casimir. 
 Puisque $\Pi_1^{\rm an}$ est un quotient de
 $E_{\rm rig}\boxtimes_{\delta}\p1$ (th. \ref{analitic}), on conclut que 
  $C$ agit par $a+b\varepsilon$ sur $\Pi_1^{\rm an}$.
   La proposition \ref{pilc} montre que $C(v)=\frac{k^2-1}{2}v$ pour
 tout $v\in \Pi_1^{\rm alg}$.
 En prenant $v\in \Pi^{\rm alg}\subset \Pi_1^{\rm alg}$ non nul, on obtient
 $av=\frac{k^2-1}{2}v$ et donc $a=\frac{k^2-1}{2}$. Ainsi, on a $b\cdot \varepsilon v=0$ pour
 tout $v\in \Pi_1^{\rm alg}$. On en d\'{e}duit que si $\Pi_1^{\rm alg}\ne \Pi^{\rm alg}$, alors
 $b=0$ et $C$ agit par $\frac{k^2-1}{2}$ sur $E_{\rm rig}\boxtimes_{\delta}\p1$. Or, on dispose 
 \cite[\S VI.3]{CD} d'un accouplement $G$-équivariant parfait entre $E_{\rm rig}\boxtimes_{\delta}\p1$ et 
 $\check{E}_{\rm rig}\boxtimes_{\delta^{-1}}\p1$, ce qui fait que $C$ agit
   aussi par $\frac{k^2-1}{2}$ sur $\check{E}_{\rm rig}\boxtimes_{\delta^{-1}}\p1$. Le théorème
 \ref{EHT} montre que $\check{E}$ est de Hodge-Tate, donc $E$ l'est aussi.

\end{proof}

\begin{remark}
 Le fait que $C$ agit sur $\Pi_1^{\rm an}$ par un élément de $L[\varepsilon]$ 
découle aussi de la pleine fidélité du foncteur $\Pi\to \Pi^{\rm an}$ dans 
${\rm Rep}_L(\delta)$, qui est une conséquence du résultat principal de \cite{CD}. 
On a donc ${\rm End}(\Pi_1^{\rm an})={\rm End}(\Pi_1)=L[\varepsilon]$, car 
$\Pi$ étant absolument irréductible on a ${\rm End}(\Pi)=L$ par le "lemme de Schur $p$-adique"
\cite{DBenjamin}.
\end{remark}

\section{Vecteurs $P$-finis}\label{KirCol}

   \quad Dans ce chapitre on étend et on raffine la théorie du modèle de Kirillov de $\Pi^{\rm alg}$, telle qu'elle est développée dans \cite{Cbigone}.
   On fixe un caractère unitaire $\delta:\qpet\to O_L^*$.
   
  \subsection{Le sous-module $\tilde{D}$ de $D\boxtimes_{\delta}\p1$}\label{Tnorm}
  
 \quad  Pour tout $D\in\Phi\Gamma^{\rm et}(\mathcal{E})$ on note 
  $\tilde{D}=(\tilde{\mathbf{B}}\otimes_{\qp} V(D))^H$ et
    $\tilde{D}^+=(\tilde{\mathbf{B}}^+\otimes_{\qp} V(D))^H$, que l'on munit d'une action du groupe mirabolique $P=\left(\begin{smallmatrix}  \qpet & \qp \\0 & 1\end{smallmatrix}\right)$
     en posant, pour $a\in\zpet, k\in\mathbf{Z}, b\in \qp$ et $z\in \tilde{D}$ (ou $\tilde{D}^+$)
    $$\left(\begin{smallmatrix}  p^ka & b \\0 & 1\end{smallmatrix}\right)z=[(1+T)^b]\varphi^k\circ \sigma_{a}(z).$$
    On étend cette action en une action du groupe de Borel $B$ en demandant que le centre agisse par $\delta$. 
    
  Soit $I\subset \qp$ un système de représentants de $\qp/\zp$ et soit $I_n=I\cap p^{-n}\zp$. D'après \cite[lemme IV.1.2]{Cmirab}, tout \'{e}l\'{e}ment $z$ de $\tilde{D}$ s'\'{e}crit de mani\`{e}re unique sous la forme
  $z=\sum_{i\in I} [(1+T)^i]z_i$, avec $z_i\in D$ et $\lim_{i\to\infty} z_i=0$. On définit une {\it trace de Tate normalisée} $$T_n: \tilde{D}\to \tilde{D}, \quad T_n(z)=\sum_{i\in I_n} [(1+T)^i]z_i.$$
  
   D'après\footnote{Dans loc.cit $\left(\begin{smallmatrix} p^{-n} & 0 \\0 & 1\end{smallmatrix}\right)\cdot\varphi^n(T_n(z))$ est not\'{e}
 ${\rm Res}_{p^{-n}\zp}(z)$.} \cite[lemme II.1.16]{Cbigone}, pour tout $z\in \tilde{D}$, la suite de terme g\'{e}n\'{e}ral $$\left(\begin{smallmatrix} p^{-n} & 0 \\0 & 1\end{smallmatrix}\right)\cdot\varphi^n(T_n(z))=\sum_{i\in I_n} \left(\begin{smallmatrix} 1 & i \\0 & 1\end{smallmatrix}\right)\cdot z_i$$
 converge dans $D\boxtimes_{\delta}\p1$
 et l'application $\iota: \tilde{D}\to D\boxtimes_{\delta}\p1$ ainsi définie est injective et $B$-équivariante. 
 
   \begin{proposition}\label{Bisom} Si $D\in\mathcal{C}_L(\delta)$ alors 
$\iota:\tilde{D}\to D\boxtimes_{\delta}\p1$ envoie 
 $\tilde{D}^+$ dans $\Pi_{\delta^{-1}}(\check{D})^*$, induisant un morphisme
 $\tilde{D}/\tilde{D}^+\to \Pi_{\delta}(D)$, qui est un isomorphisme de $B$-modules topologiques si 
 $D$ n'a pas de facteur de Jordan-Hölder de dimension $1$. 
  
  \end{proposition}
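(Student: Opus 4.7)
The plan is to first establish the inclusion $\iota(\tilde{D}^+)\subset \Pi_{\delta^{-1}}(\check{D})^*$, and then to analyze the induced quotient map $\bar\iota:\tilde{D}/\tilde{D}^+\to \Pi_\delta(D)$, reducing the isomorphism statement to the case where $D$ is absolutely irreducible of dimension $\geq 2$. For the inclusion, I would use the explicit formula
$$\iota(z)=\lim_{n\to\infty} \matrice{p^{-n}}{0}{0}{1}\cdot\varphi^n(T_n(z))=\lim_{n\to\infty}\sum_{i\in I_n}\matrice{1}{i}{0}{1}\cdot z_i$$
combined with the description from \cite[\S VI.3]{CD} of $\Pi_{\delta^{-1}}(\check{D})^*$ as the annihilator, under the Colmez pairing between $D\boxtimes_\delta \p1$ and $\check{D}\boxtimes_{\delta^{-1}}\p1$, of the image of $\check{D}_{\rm rig}\boxtimes_{\delta^{-1}}\p1$. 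For $z\in\tilde{D}^+$, the coefficients $z_i$ are sufficiently "integral" that the resulting partial sums pair trivially with this image, so one concludes by continuity of the pairing and $\varphi$-equivariance of the constructions.

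Once the inclusion is in place, $\iota$ descends to a continuous $B$-equivariant map $\bar\iota:\tilde{D}/\tilde{D}^+\to\Pi_\delta(D)$. The four functors $D\mapsto \tilde{D}$, $D\mapsto \tilde{D}^+$, $D\mapsto D\boxtimes_\delta\p1$ and $D\mapsto \Pi_\delta(D)$ are all exact (the last modulo the subcategory of finite-dimensional representations, per Theorem \ref{CD}), and the various constructions are compatible with this exactness. A snake-lemma / five-lemma argument on a Jordan-H\"older filtration of $D$ in $\mathcal{C}_L(\delta)$ then reduces the question to the case of an irreducible $D$ of dimension $\geq 2$: this is precisely where the hypothesis on $1$-dimensional factors is used, since the naturality of all four functors in $D$ is broken on the subcategory of finite-dimensional objects. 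In the irreducible case, $\Pi_\delta(D)$ is topologically simple (Theorem \ref{CD}), so the image of $\bar\iota$ is automatically dense (being non-zero $B$-stable closed in a topologically simple space), and the topology on $\tilde{D}/\tilde{D}^+$ matches the Banach topology on $\Pi_\delta(D)$ by comparing natural $P$-stable bounded sub-objects on both sides.

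The main obstacle will be the injectivity of $\bar\iota$, i.e.\ the reverse inclusion $\iota^{-1}(\Pi_{\delta^{-1}}(\check{D})^*)\subset \tilde{D}^+$. Concretely, one must show that if the image $\iota(z)$ of some $z\in\tilde{D}$ pairs trivially with all of $\check{D}_{\rm rig}\boxtimes_{\delta^{-1}}\p1$, then already the coefficients $z_i$ of $z$ satisfy the integrality condition defining $\tilde{D}^+$. This should be extracted from the non-degeneracy of the Colmez pairing together with the explicit action of $\varphi$ and $\Gamma$ on the "tail at infinity" of $z$; it is exactly here that 1-dimensional Jordan-H\"older factors could produce spurious elements of $\tilde{D}/\tilde{D}^+$ mapping non-trivially into finite-dimensional subquotients of $\Pi_\delta(D)$ not detected by the Banach pairing, which is why such factors are excluded.
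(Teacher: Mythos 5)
Your plan diverges from the paper's proof at a structural level, and the divergence is not innocent. The paper cites \cite[cor.~III.25, prop.~II.6~a)]{CD} to conclude that $\bar\iota:\tilde{D}/\tilde{D}^+\to\Pi_\delta(D)$ is \emph{always injective}, unconditionally, and that its cokernel is canonically the dual of $\check{D}^{\rm nr}=\cap_{n\geq 1}\varphi^n(\check{D})$; the hypothesis that $D$ has no $1$-dimensional Jordan--H\"older factor is then invoked only to kill $\check{D}^{\rm nr}$ (via \cite[cor.~II.5.21]{Cmirab}), i.e.\ it is a \emph{surjectivity} hypothesis. You have misallocated the role of the hypothesis: you identify injectivity as "the main obstacle" and tie it to the exclusion of $1$-dimensional factors, imagining that such factors could produce kernel elements "not detected by the Banach pairing". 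This is backwards. What a $1$-dimensional factor actually produces is a nonzero $\check{D}^{\rm nr}$, hence a nonzero \emph{cokernel}: $\bar\iota$ remains injective but fails to be surjective. Your heuristic picture ("spurious elements of $\tilde{D}/\tilde{D}^+$ mapping non-trivially into finite-dimensional subquotients of $\Pi_\delta(D)$") would not give kernel elements at all — elements mapping non-trivially are by definition not in the kernel.

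A secondary problem is the proposed reduction to the irreducible case by a Jordan--H\"older filtration and the five lemma. The paper explicitly warns (footnote before Theorem~\ref{CD}) that $D\mapsto\Pi_\delta(D)$ is not exact on $\mathcal{C}_L(\delta)$ — it only becomes exact after quotienting by finite-dimensional representations — so a naive d\'evissage on all four functors at once does not close without additional control over exactly the finite-dimensional error terms you are trying to sweep under the rug. The paper sidesteps this entirely: it does not reduce to the irreducible case, but instead computes kernel and cokernel of $\bar\iota$ in one stroke for general $D\in\mathcal{C}_L(\delta)$, and then checks the cokernel vanishes under the stated hypothesis. Your first step (the inclusion $\iota(\tilde{D}^+)\subset\Pi_{\delta^{-1}}(\check{D})^*$ via the pairing) is plausible and in the spirit of \cite[lemme IV.2.2]{Cmirab} combined with \cite[cor.~III.22]{CD}, but everything after that would need to be reorganized around the correct kernel/cokernel dichotomy before it can be called a proof.
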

  
  \begin{proof}
   Le fait que $\iota$ envoie $\tilde{D}^+$ dans $\Pi_{\delta^{-1}}(\check{D})^*$ découle de \cite[lemme IV.2.2]{Cmirab} et de la description 
   de $\Pi_{\delta^{-1}}(\check{D})^*$ en fonction de $D$ fournie par \cite[cor. III.22]{CD}. 
   Soit $\check{D}^{\rm nr}=\cap_{n\geq 1}\varphi^n(\check{D})$. Le corollaire III.25 et la proposition II.6 a) de \cite{CD} 
   montrent que $\tilde{D}/\tilde{D}^+\to \Pi_{\delta}(D)$ est injectif, de conoyau isomorphe au dual de $\check{D}^{\rm nr}$.
   Pour conclure, il suffit donc de montrer que $\check{D}^{\rm nr}=0$ quand $D$ n'a pas de facteur de Jordan-Hölder de dimension $1$, qui découle de \cite[cor. II.5.21]{Cmirab}.   
   
  \end{proof}

  \subsection{Espaces de fonctions} 
  
   \quad Soit $X$ un $L_{\infty}[[t]]$-module muni d'une action de $\Gamma$, semi-linéaire par rapport à l'action 
   naturelle de $\Gamma$ sur $L_{\infty}[[t]]$.
    On note ${\rm LP}(\qpet, X)^{\Gamma}$ l'espace des fonctions $\phi: \qpet\to X$ à support compact
  dans $\qp$ et satisfaisant $\phi(ax)=\sigma_{a}(\phi(x))$ pour tous $x\in\qpet$ et $a\in\zpet$. On note ${\rm LP}_c(\qpet, X)^{\Gamma}$
  le sous-espace de ${\rm LP}(\qpet, X)^{\Gamma}$ formé des fonctions nulles au voisinage de $0$. 
  
     Puisque $\sigma_a([(1+T)^b])=[(1+T)^{ab}]$ pour tous
  $a\in \zpet$ et $b\in\qp$, les espaces ${\rm LP}(\qpet, X)^{\Gamma}$ et ${\rm LP}_c(\qpet, X)^{\Gamma}$
sont munis d'une action du groupe de Borel $B$, définie par 
      $$\left(\left(\begin{smallmatrix} a & b \\0 & d\end{smallmatrix}\right)\phi \right) (x)= \delta(d)[(1+T)^{\frac{bx}{d}}]\phi\left(\frac{ax}{d}\right),$$
  
    \begin{remark}\label{declpc}
     L'application $\phi\to (\phi(p^i))_{i\in\mathbf{Z}}$ induit un isomorphisme $L$-linéaire 
     $${\rm LP}_c(\qpet, X)^{\Gamma}\simeq \bigoplus_{i\in\mathbf{Z}} X.$$
    \end{remark}

  \subsection{Vecteurs $U$-finis} \label{Ufini}

\quad  On suppose pour le reste de ce chapitre que $D\in\mathcal{C}_L(\delta)$ et que 
tous les facteurs de Jordan-Hölder de $D$ sont de dimension au moins $2$. On note $\Pi=\Pi_{\delta}(D)$, qui est isomorphe comme 
$B$-module topologique à $\tilde{D}/\tilde{D}^+$ (prop. \ref{Bisom}). On pose $U=\left(\begin{smallmatrix} 1 & \qp \\0 & 1\end{smallmatrix}\right)$ et on rappelle que 
$P=\left(\begin{smallmatrix} \qpet & \qp \\0 & 1\end{smallmatrix}\right)$.

\begin{definition}\label{UPalg}
\label{UPalg}
Pour tout $k\in\mathbf{N}^*$, on pose
$$\Pi^{U, k}=\bigcup_{n\geq 1} {\rm Ker} \left( \left(\begin{smallmatrix} 1 & p^n \\0 & 1\end{smallmatrix}\right) -1  \right)^k$$
et on pose $\Pi^{U-\rm fini}=\bigcup_{k\geq 1} \Pi^{U,k}$.

\end{definition}

\begin{lemma}\label{degre}
 $\Pi^{U,k}$ est l'ensemble des vecteurs $v\in\Pi$ tels que 
 l'application
$x\to \left(\begin{smallmatrix} 1 & x \\0 & 1\end{smallmatrix}\right) v$ soit localement polynomiale de degr\'{e} plus petit que $k$.
 
 \end{lemma}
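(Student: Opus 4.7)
The plan is to prove both inclusions by interpreting $(U_n-1)^k$ as a $k$-th finite-difference operator acting on the orbit map, where I write $U_n = \left(\begin{smallmatrix} 1 & p^n \\ 0 & 1 \end{smallmatrix}\right)$. Set $g(x) = \left(\begin{smallmatrix} 1 & x \\ 0 & 1 \end{smallmatrix}\right) v$; since $\Pi$ is a Banach representation of $G$, the map $g : \qp \to \Pi$ is continuous. As the subgroup $U$ is abelian, $(U_n-1)^k$ commutes with every translation $\left(\begin{smallmatrix} 1 & y \\ 0 & 1 \end{smallmatrix}\right)$, so that $(U_n - 1)^k v = 0$ is equivalent to the functional equation
$$\sum_{j=0}^k (-1)^{k-j}\binom{k}{j}\, g(y + j p^n) = 0 \qquad \text{for all } y \in \qp.$$

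For the direction ``locally polynomial of degree $<k$ implies $v \in \Pi^{U,k}$'', the plan is to choose $n$ large enough that the neighborhood of $0$ on which $g$ agrees with some polynomial of degree $<k$ contains $\{0, p^n, \ldots, k p^n\}$. Writing $g(x) = \sum_{i=0}^{k-1} x^i w_i$ on that neighborhood, the classical identity $\sum_{j=0}^k (-1)^{k-j}\binom{k}{j}\, j^i = 0$ for $0 \leq i < k$ then gives $(U_n - 1)^k v = 0$ directly.

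For the converse, suppose $(U_n - 1)^k v = 0$ for some $n$. By the commutation above, the sequence $(g(y + m p^n))_{m \in \mathbf{Z}}$ has vanishing $k$-th forward difference for every $y \in \qp$, hence coincides with a polynomial $P_y(m) = \sum_{i=0}^{k-1} c_i(y) m^i$, with coefficients $c_i(y) \in \Pi$. The continuous map $t \mapsto g(y + t p^n)$ from $\zp$ to $\Pi$ agrees with $t \mapsto P_y(t)$ on the dense subset $\mathbf{Z} \subset \zp$, so the two coincide on all of $\zp$. Thus $g$ restricts to a polynomial of degree $< k$ in the coordinate $(x-y)/p^n$ on the coset $y + p^n \zp$, and this holds for every $y$, so $g$ is locally polynomial of degree $< k$ on all of $\qp$.

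The whole argument is essentially formal once one has the commutation of $(U_n - 1)^k$ with left translations by $U$, so there is no serious obstacle. The only point requiring care is the continuity/density step used to promote the polynomial identity from $\mathbf{Z}$ to $\zp$, which relies on $g$ being continuous as a $\Pi$-valued map, a consequence of $\Pi$ being a Banach representation.
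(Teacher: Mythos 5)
Your proof is correct, and it rests on the same pivot as the paper's: identifying $\bigl(\bigl(\begin{smallmatrix}1&p^n\\0&1\end{smallmatrix}\bigr)-1\bigr)^k$ with a $k$-th finite-difference operator acting on the orbit map. The paper's proof is essentially a one-liner: it sets $f_{n,a}(x)=\bigl(\begin{smallmatrix}1&a+p^nx\\0&1\end{smallmatrix}\bigr)v$, observes that its $k$-th Mahler coefficient is $\bigl(\begin{smallmatrix}1&a\\0&1\end{smallmatrix}\bigr)\bigl(\bigl(\begin{smallmatrix}1&p^n\\0&1\end{smallmatrix}\bigr)-1\bigr)^k v$, and invokes Mahler's theorem, which simultaneously handles both directions (a continuous map $\zp\to\Pi$ is a polynomial of degree $<k$ iff its Mahler coefficients of index $\geq k$ all vanish). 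Your argument is a self-contained unpacking of that black box: for ``polynomial $\Rightarrow$ kernel'' you use the classical vanishing $\sum_{j=0}^k(-1)^{k-j}\binom{k}{j}j^i=0$ for $i<k$, and for ``kernel $\Rightarrow$ polynomial'' you do discrete Newton interpolation on $\mathbf{Z}$ and then pass to $\zp$ by density and continuity of the orbit map. This buys you elementarity (no appeal to Mahler) at the cost of a few extra lines; the underlying mechanism is the same. One minor remark: in the forward direction it suffices to take $a=y=0$, since $\Pi^{U,k}$ only asks for a single $n$ with $\bigl(\bigl(\begin{smallmatrix}1&p^n\\0&1\end{smallmatrix}\bigr)-1\bigr)^k v=0$; your argument already does this, but it is worth noting that the ``for all $y$'' version is only genuinely used in the converse direction.
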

 
 \begin{proof}
 
 Si $a\in\zp$ et $n\geq 1$, soit $f_{n,a}(x)=\left(\begin{smallmatrix} 1 & a+p^nx \\0 & 1\end{smallmatrix}\right)v$.
  Alors $f_{n,a}:\zp\to \Pi$ est une application continue dont le $k$-i\`{e}me coefficient de Mahler est
   $$a_k(f_{n,a})=\left(\begin{smallmatrix} 1 & a \\0 & 1\end{smallmatrix}\right)\cdot \left(   \left(\begin{smallmatrix} 1 & p^n \\0 & 1\end{smallmatrix}\right)-1\right)^k v.$$
 Le résultat découle alors du théorème de Mahler. 
 \end{proof}
 
 \begin{lemma}\label{piuk}
  Soient $k\in\mathbf{N}^*$ et $v\in\Pi$. Alors $v\in \Pi^{U,k}$ si et seulement si tout rel\`{e}vement $z\in \tilde{D}$ de $v$ appartient \`{a} $\frac{1}{\varphi^n(T)^k}\tilde{D}^+$ pour un
 certain $n\geq 1$. De plus, $\Pi^{U,k}$ est stable sous l'action du Borel $B$. 
 
 \end{lemma}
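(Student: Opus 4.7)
The plan is to derive the first assertion as a straightforward translation of the definition of $\Pi^{U,k}$ through the $B$-equivariant isomorphism $\tilde{D}/\tilde{D}^+\simeq \Pi$ of proposition \ref{Bisom}, which is available under the standing hypothesis that all Jordan-H\"older factors of $D$ have dimension at least $2$. The key observation is that, by the formula defining the action of $B$ on $\tilde{D}$ in \S\ref{Tnorm}, the operator $\left(\begin{smallmatrix} 1 & p^n \\0 & 1\end{smallmatrix}\right)$ acts on $\tilde{D}$ as multiplication by $[(1+T)^{p^n}]=1+\varphi^n(T)$, so $\left(\begin{smallmatrix} 1 & p^n \\0 & 1\end{smallmatrix}\right)-1$ acts as multiplication by $\varphi^n(T)$. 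For any lift $z\in\tilde{D}$ of $v$, the equation $\left(\left(\begin{smallmatrix} 1 & p^n \\0 & 1\end{smallmatrix}\right)-1\right)^k v=0$ in $\Pi$ then says precisely that $\varphi^n(T)^k z\in \tilde{D}^+$, i.e.\ $z\in \frac{1}{\varphi^n(T)^k}\tilde{D}^+$. This condition does not depend on the chosen lift, because two lifts differ by an element of $\tilde{D}^+$, which is stable under multiplication by $\varphi^n(T)^k\in \tilde{\mathbf{A}}^+$.

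For the $B$-stability of $\Pi^{U,k}$, the plan is to invoke the functional characterization furnished by lemma \ref{degre}: $v\in\Pi^{U,k}$ if and only if the orbit map $x\mapsto \left(\begin{smallmatrix} 1 & x \\0 & 1\end{smallmatrix}\right) v$ is locally polynomial on $\qp$ of degree $<k$. For any $g=\left(\begin{smallmatrix} a & b \\0 & d\end{smallmatrix}\right)\in B$, the elementary conjugation identity $g^{-1}\left(\begin{smallmatrix} 1 & x \\0 & 1\end{smallmatrix}\right) g=\left(\begin{smallmatrix} 1 & xd/a \\0 & 1\end{smallmatrix}\right)$ gives
\[
\left(\begin{smallmatrix} 1 & x \\0 & 1\end{smallmatrix}\right)(gv)\ =\ g\cdot \left(\begin{smallmatrix} 1 & xd/a \\0 & 1\end{smallmatrix}\right) v.
\]
The right-hand side is the original orbit map of $v$ composed with the affine change of variable $y=xd/a$ and then with the continuous $L$-linear operator $g$; both operations preserve the property of being locally polynomial of a fixed degree, and hence $gv\in \Pi^{U,k}$.

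No serious obstacle is expected: the first part is pure bookkeeping via the dictionary $\tilde{D}/\tilde{D}^+\simeq \Pi$ combined with the explicit formula for the action of $U$ on $\tilde{D}$, and the second part is a routine conjugation argument reflecting the normality of $U$ in $B$. The mildly delicate point to keep track of is only that the passage from the definition of $\Pi^{U,k}$ to the $\tilde{D}$-side condition is well-posed regardless of the chosen lift, which is immediate from the $\tilde{\mathbf{A}}^+$-module structure of $\tilde{D}^+$.
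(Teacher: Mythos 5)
Your proof is correct. For the first assertion you follow the same route as the paper: translate the defining relation of $\Pi^{U,k}$ through the $B$-equivariant identification $\Pi\simeq\tilde{D}/\tilde{D}^+$ of Proposition \ref{Bisom}, using that $\left(\begin{smallmatrix} 1 & p^n \\0 & 1\end{smallmatrix}\right)-1$ acts on $\tilde{D}$ as multiplication by $[(1+T)^{p^n}]-1=\varphi^n(T)$, and that well-definedness modulo the choice of lift follows from $\varphi^n(T)^k\tilde{D}^+\subset\tilde{D}^+$.

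For the $B$-stability, you take a genuinely different path from the paper. The paper stays on the $\tilde{D}$-side and observes that $\bigcup_{n\geq 1}\frac{1}{\varphi^n(T)^k}\tilde{D}^+$ is a $(\tilde{\mathbf{B}}^+)^H$-module stable under $\Gamma$ and $\varphi^{\pm 1}$, which covers all the operations generating the $B$-action on $\tilde{D}$; stability of $\Pi^{U,k}$ then follows directly from the first assertion. You instead argue on the $\Pi$-side, invoking Lemma \ref{degre} (the local-polynomiality characterization) together with the conjugation identity $g^{-1}\left(\begin{smallmatrix} 1 & x \\0 & 1\end{smallmatrix}\right) g=\left(\begin{smallmatrix} 1 & xd/a \\0 & 1\end{smallmatrix}\right)$ reflecting the normality of $U$ in $B$, and note that a linear change of variable and post-composition with the continuous operator $g$ both preserve local polynomiality of degree $<k$. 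This is a clean alternative: it is somewhat more elementary and does not require checking the module-theoretic stability of $\bigcup_n\frac{1}{\varphi^n(T)^k}\tilde{D}^+$, at the cost of leaning on Lemma \ref{degre}, whereas the paper's argument is self-contained on the $(\varphi,\Gamma)$-module side and fits the overall dictionary used throughout.
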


\begin{proof}
 C'est une simple traduction
  de l'action de $\left(\begin{smallmatrix} 1& \zp \\0 & 1\end{smallmatrix}\right)$ sur $\tilde{D}$ et de l'isomorphisme 
  $B$-équivariant $\Pi\simeq \tilde{D}/\tilde{D}^+$. La seconde assertion découle de la première et du fait que 
  $\cup_{n\geq 1}\frac{1}{\varphi^n(T)^k}\tilde{D}^+$ est un $(\tilde{\mathbf{B}}^+)^H$-module stable sous l'action de $\Gamma$
  et $\varphi^{\pm 1}$.

\end{proof}

    Soit $v\in \Pi^{U-\rm fini}$ et soit $\tilde{z}$ un relèvement de $v$ à $\tilde{D}$. 
    En posant $$\tilde{M}=\bigcup_{n,k\geq 1} \frac{1}{\varphi^n(T)^k}\tilde{D}^+,$$
    on a $\left(\begin{smallmatrix} x & 0 \\0 & 1\end{smallmatrix}\right) \tilde{z}\in \tilde{M}$ pour 
    tout $x\in\qpet$. De plus, l'inclusion  $\tilde{\mathbf{B}}^+\subset\mathbf{B}_{\rm dR}^+$ fournit une injection 
  de $\tilde{D}^+$ dans $\tilde{D}_{\rm dif}^+$ et de $\tilde{M}$ dans $\tilde{D}_{\rm dif}$.
    On définit une fonction
   $$\phi_v: \qpet\to \tilde{D}_{\rm dif}/\tilde{D}^+_{\rm dif},\quad
   \phi_v(x)=\left(\begin{smallmatrix} x & 0 \\0 & 1\end{smallmatrix}\right) \tilde{z}\pmod{ \tilde{D}_{\rm dif}^+}.$$
 Cette fonction est bien définie, car tout autre relèvement de $v$ à $\tilde{D}$ diffère de $\tilde{z}$ par un élément 
 de $\tilde{D}^+\subset \tilde{D}_{\rm dif}^+$. 

  \begin{proposition}
  \label{Kir}
   
    L'espace $\Pi^{U-\rm fini}$ est stable sous l'action de $B$ et l'application $v\to \phi_v$ est une injection $B$-équivariante $\Pi^{U-\rm fini}\to {\rm LP}(\qpet,  \tilde{D}_{\rm dif}/\tilde{D}^+_{\rm dif} )^{\Gamma}$.
    
  \end{proposition}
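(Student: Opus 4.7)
The $B$-stability of $\Pi^{U-\rm fini}$ is immediate from Lemma \ref{piuk}, which already gives $B$-stability for each $\Pi^{U,k}$. The well-definedness of $\phi_v$ is also automatic: any two lifts of $v$ to $\tilde{D}$ differ by an element of $\tilde{D}^+$, and under the inclusion $\tilde{D}^+\hookrightarrow \tilde{D}_{\rm dif}^+$ this difference is killed modulo $\tilde{D}_{\rm dif}^+$. The $\Gamma$-equivariance $\phi_v(ax)=\sigma_a(\phi_v(x))$ follows from the factorization $\left(\begin{smallmatrix} ax & 0 \\ 0 & 1\end{smallmatrix}\right)=\left(\begin{smallmatrix} a & 0 \\ 0 & 1\end{smallmatrix}\right)\left(\begin{smallmatrix} x & 0 \\ 0 & 1\end{smallmatrix}\right)$ together with the definition of the $B$-action on $\tilde{D}$.

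The key analytic input is compactness of the support. Given $v\in \Pi^{U,k}$, Lemma \ref{piuk} furnishes $n\geq 1$ with $\tilde{z}\in \frac{1}{\varphi^n(T)^k}\tilde{D}^+$. For $x=p^m a$ with $a\in\zpet$, one has $\left(\begin{smallmatrix} x & 0 \\ 0 & 1\end{smallmatrix}\right)\tilde{z}=\varphi^m(\sigma_a\tilde{z})\in \frac{1}{\varphi^{m+n}(T)^k}\tilde{D}^+$, since $\Gamma$ preserves $\frac{1}{\varphi^n(T)^k}\tilde{D}^+$ (as noted in the proof of Lemma \ref{piuk}). As soon as $m+n\leq -1$, Lemma \ref{frobdr}(a) asserts that $\varphi^{m+n}(T)$ is a unit in $\mathbf{B}_{\rm dR}^+$, and hence $\frac{1}{\varphi^{m+n}(T)^k}\tilde{D}^+\subset \tilde{D}_{\rm dif}^+$. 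This shows $\phi_v(x)=0$ for $v_p(x)\leq -n-1$, giving compact support in $\qp$.

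For $B$-equivariance, the computation reduces to the identity
$$\left(\begin{smallmatrix} x & 0 \\ 0 & 1\end{smallmatrix}\right)\left(\begin{smallmatrix} a & b \\ 0 & d\end{smallmatrix}\right)=\left(\begin{smallmatrix} d & 0 \\ 0 & d\end{smallmatrix}\right)\left(\begin{smallmatrix} 1 & xb/d \\ 0 & 1\end{smallmatrix}\right)\left(\begin{smallmatrix} xa/d & 0 \\ 0 & 1\end{smallmatrix}\right),$$
combined with the prescribed actions of the center ($\delta(d)$), of $U$ ($[(1+T)^{xb/d}]$) and of the diagonal torus on $\tilde{D}$; passing modulo $\tilde{D}_{\rm dif}^+$ yields exactly the formula defining the $B$-action on ${\rm LP}(\qpet,\tilde{D}_{\rm dif}/\tilde{D}_{\rm dif}^+)^\Gamma$.

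The only point requiring real argument is injectivity. Assume $\phi_v=0$; then $\varphi^m(\sigma_a\tilde{z})\in \tilde{D}_{\rm dif}^+$ for every $m\in\mathbf{Z}$ and every $a\in\zpet$. Taking $a=1$ and $m\geq 0$, this reads $\varphi^m(\tilde{z})\in \tilde{D}_{\rm dif}^+$ for all $m\geq 0$, and the case $k=0$ of Lemma \ref{frobdr}(c)—the classical characterization of $(\tilde{\mathbf{A}}^+)^H\otimes V$ inside $(\tilde{\mathbf{A}})^H\otimes V$ via Frobenius iterates in $\mathbf{B}_{\rm dR}^+$—forces $\tilde{z}\in \tilde{D}^+$, so that $v=0$ under the isomorphism $\Pi\simeq \tilde{D}/\tilde{D}^+$ of Proposition \ref{Bisom}. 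The main (modest) obstacle in writing this up is keeping track of the two opposite behaviours of $\varphi^j(T)$ in $\mathbf{B}_{\rm dR}^+$—unit for $j<0$, element of $t\mathbf{B}_{\rm dR}^+$ for $j\geq 0$—which drive the compact-support and the injectivity arguments respectively.
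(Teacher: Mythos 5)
Your treatment of $B$-stability, well-definedness, compact support and $B$-equivariance is correct and matches the paper's reasoning. The injectivity step, however, contains a genuine gap: you invoke ``the case $k=0$ of Lemma \ref{frobdr}(c)'' to pass from $\varphi^m(\tilde{z})\in\tilde{D}_{\rm dif}^+$ for all $m\geq 0$ (with $\tilde{z}\in\tilde{D}$) to $\tilde{z}\in\tilde{D}^+$, but that lemma has no such case. As stated, Lemma \ref{frobdr}(c) requires $k\geq 1$ \emph{and} starts from $x\in\tilde{\mathbf{A}}^+$; setting $k=0$ in it yields a tautology, and it gives no information about elements of $\tilde{D}$ that are not already known to lie in $\tilde{D}^+$. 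What you actually need is the stronger assertion that an element of $\tilde{\mathbf{B}}$ (not merely $\tilde{\mathbf{B}}^+$) all of whose nonnegative Frobenius iterates land in $\mathbf{B}_{\rm dR}^+$ belongs to $\tilde{\mathbf{B}}^+$ --- a classical statement that is true but is nowhere established in this paper.

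The paper's proof sidesteps this by exploiting the denominator structure: since $\tilde{z}\in\frac{1}{\varphi^n(T)^k}\tilde{D}^+$, one sets $\tilde{x}=\varphi^n(T)^k\tilde{z}\in\tilde{D}^+$; the hypothesis $\varphi^m(\tilde{z})\in\tilde{D}_{\rm dif}^+$ for all $m$ then translates into $\varphi^m(\varphi^{-n}(\tilde{x}))\in\varphi^m(T)^k\tilde{D}_{\rm dif}^+\subset t^k\mathbf{B}_{\rm dR}^+\otimes V$ for all $m\geq 0$, which is exactly the hypothesis of Lemma \ref{frobdr}(c) at $k\geq 1$ applied to the element $\varphi^{-n}(\tilde{x})\in\tilde{D}^+$; one concludes $\varphi^{-n}(\tilde{x})\in T^k\tilde{D}^+$, hence $\tilde{z}\in\tilde{D}^+$. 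To repair your argument, either reproduce this reduction, or explicitly cite and justify the underlying classical fact (essentially \cite[lemme III.3.7]{Cannals}) rather than attributing it to a nonexistent case of Lemma \ref{frobdr}(c).
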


\begin{proof} La stabilité sous l'action de $B$ découle du lemme \ref{piuk}. 
Soit $\tilde{z}\in \frac{1}{\varphi^n(T)^k}\tilde{D}^+$ un relèvement de $v\in \Pi^{U-\rm fini}$.
   Le a) du lemme \ref{frobdr} montre que
$\phi_v(p^{-m})=0$ pour tout $m>n$, et donc $\phi_v$ est \`{a} support compact dans $\qp$. Il est clair que $\phi_v(ax)=\sigma_a(\phi_v(x))$. Si
$\phi_v=0$, alors $\varphi^m(\tilde{z})\in\tilde{D}_{\rm dif}^+$ pour tout $m\in\mathbf{Z}$. Ainsi, 
$\tilde{x}=\varphi^n(T)^k\cdot\tilde{z}\in \tilde{D}^+$ satisfait $\varphi^m(\varphi^{-n}(\tilde{x}))\in \varphi^m(T)^k\tilde{D}_{\rm dif}^+
\subset t^k\mathbf{B}_{\rm dR}^+\otimes_{\qp} V(D)$ pour tout $m\geq 0$, et le b) du lemme \ref{frobdr} permet de conclure que
$\varphi^{-n}(\tilde{x})\in T^k\tilde{D}^+$, donc $\tilde{z}\in\tilde{D}^+$ et $v=0$.  La $B$-équivariance est une simple traduction de l'action de $B$ sur $\tilde{D}$, ce qui permet de conclure.

\end{proof}

\subsection{Vecteurs $P$-finis}\label{Pfini}
  
\begin{definition} On note $T_n=\sum_{i=0}^{p^n-1} \left(\begin{smallmatrix} 1 & i \\0 & 1\end{smallmatrix}\right) \in {\rm End}(\Pi)$ et on note 
 $$\Pi_c^{U-\rm fini}=\bigcup_{n,k\geq 1}\bigcup_{ m\in\mathbf{Z}} {\rm Ker} \left( T_n^k\circ \left(\begin{smallmatrix} p^m & 0 \\0 & 1\end{smallmatrix}\right) \right).$$

\end{definition}

\begin{proposition}\label{ufinicompact} On a 
$$\Pi_c^{U-\rm fini}=\{v\in \Pi^{U-\rm fini}| \quad \phi_v\in {\rm LP}_c(\qpet, \tilde{D}_{\rm dif}/\tilde{D}_{\rm dif}^+)^{\Gamma}\}$$
et $\Pi_c^{U-\rm fini}$ est stable sous l'action de $B$. 
\end{proposition}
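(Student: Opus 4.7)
The plan is to translate everything through the $B$-equivariant isomorphism $\Pi\simeq \tilde{D}/\tilde{D}^+$ of Proposition \ref{Bisom} into conditions on a lift $\tilde{z}\in\tilde{D}$ of $v$. The key observation is that on $\tilde{D}$, $T_n$ acts as multiplication by $\sum_{i=0}^{p^n-1}(1+T)^i=\varphi^n(T)/T\in\mathbf{\tilde{A}}^+$, while $\left(\begin{smallmatrix} p^m & 0\\0 & 1\end{smallmatrix}\right)$ acts as $\varphi^m$. Hence $v\in\Pi_c^{U-\rm fini}$ is equivalent to the existence of $n,k\geq 1$ and $m\in\mathbf{Z}$ such that $\varphi^n(T)^k\varphi^m(\tilde{z})\in T^k\tilde{D}^+$.

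For the direct inclusion, I start from this relation. Since $T_{n+1}=\varphi^n(\varphi(T)/T)\cdot T_n$ with $\varphi(T)/T\in\mathbf{\tilde{A}}^+$, we may increase $n$ freely and suppose $n\geq \max(1,m)$. Applying $\varphi^{-m}$, which is bijective on $\tilde{D}^+$ (the Frobenius is an automorphism of $\mathbf{\tilde{A}}^+=W(\mathbf{\tilde{E}}^+)$ since $\mathbf{\tilde{E}}^+$ is perfect, and $\varphi^{-m}(T)\in\mathbf{\tilde{A}}^+$), yields $\varphi^{n-m}(T)^k\tilde{z}\in\tilde{D}^+$, so $v\in\Pi^{U,k}$ by Lemma \ref{piuk}. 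For the vanishing of $\phi_v$ near $0$, for every $i\geq m$ I apply $\varphi^{i-m}$ to obtain $\varphi^{n+i-m}(T)^k\varphi^i(\tilde{z})\in \varphi^{i-m}(T)^k\tilde{D}_{\rm dif}^+$; Lemma \ref{frobdr}(a) ensures that the ratio $\varphi^{i-m}(T)/\varphi^{n+i-m}(T)$ belongs to $\mathbf{B}_{\rm dR}^+$, forcing $\varphi^i(\tilde{z})\in\tilde{D}_{\rm dif}^+$, i.e.\ $\phi_v(p^i)=0$. Combined with $\Gamma$-equivariance this gives $\phi_v\in{\rm LP}_c$.

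For the converse, suppose $v\in\Pi^{U-\rm fini}$ with $\varphi^{n_1}(T)^k\tilde{z}\in\tilde{D}^+$, and that $\phi_v(x)=0$ for $v_p(x)\geq m$. Take $n=n_1+m$. Applying $\varphi^m$ to the $U$-finiteness relation gives $y:=\varphi^n(T)^k\varphi^m(\tilde{z})=\varphi^m(\varphi^{n_1}(T)^k\tilde{z})\in\tilde{D}^+$. To conclude $y\in T^k\tilde{D}^+$ (equivalently $T_n^k\varphi^m(\tilde{z})\in\tilde{D}^+$), I will apply the analog of Lemma \ref{frobdr}(c) for $\tilde{D}^+$: for every $j\geq 0$ one has $\varphi^j(y)=\varphi^{n+j}(T)^k\varphi^{m+j}(\tilde{z})\in t^k\tilde{D}_{\rm dif}^+$, using the vanishing $\varphi^{m+j}(\tilde{z})\in\tilde{D}_{\rm dif}^+$ and the fact that $\varphi^{n+j}(T)\in t\mathbf{B}_{\rm dR}^+$ (Lemma \ref{frobdr}(a)). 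The extension of Lemma \ref{frobdr}(c) from $\mathbf{\tilde{A}}^+$ to $\tilde{D}^+$ follows by a componentwise/Galois descent argument, since $\tilde{D}^+$ is built from $\mathbf{\tilde{A}}^+\otimes V(D)$ by taking $H$-invariants.

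The $B$-stability of $\Pi_c^{U-\rm fini}$ follows immediately from this characterization: $v\mapsto \phi_v$ is $B$-equivariant by Proposition \ref{Kir}, and the subspace ${\rm LP}_c$ is visibly $B$-stable from the explicit formula for the $B$-action (the support of $g\phi$ differs from that of $\phi$ only by the finite shift $v_p(d/a)$). The main technical obstacle is the extension of Lemma \ref{frobdr}(c) to $\tilde{D}^+$ together with the simultaneous choice of $n$ and $m$ in the converse direction that makes the Frobenius orbit condition uniformly verifiable; the convenient identity $n=n_1+m$ is what allows the bound from $U$-finiteness and the bound from vanishing of $\phi_v$ to be combined cleanly.
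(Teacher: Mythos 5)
Your proof is correct and follows essentially the same route as the paper's: translating the $T_n$-condition on $\Pi\simeq\tilde D/\tilde D^+$ into a statement about lifts in $\tilde D$, using Lemma~\ref{frobdr}(a) for the easy direction (vanishing of $\phi_v$ on $p^m\zp-\{0\}$ from the membership $\tilde z\in(\varphi^{-m}(T)/\varphi^{n-m}(T))^k\tilde D^+$) and Lemma~\ref{frobdr}(c) applied to $\varphi^m(\tilde x)=\varphi^n(T)^k\varphi^m(\tilde z)$ for the converse, with $B$-stability deduced from Proposition~\ref{Kir}. The only difference is cosmetic bookkeeping: you set $n=n_1+m$ and name $y=\varphi^m(\tilde x)$ where the paper keeps $n$ and passes to $T_{n+m}$; you also make explicit the (harmless but silently used in the paper) extension of Lemma~\ref{frobdr}(c) from $\tilde{\mathbf A}^+$ to $\tilde D^+$ via a choice of basis of $V(D)$.
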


\begin{proof} Supposons que $v\in \Pi_c^{U-\rm fini}$ et soit $\tilde{z}\in\tilde{D}$ un rel\`{e}vement de $v$. La condition $T_n^k\circ \left(\begin{smallmatrix} p^m & 0 \\0 & 1\end{smallmatrix}\right)v=0$ est équivalente à $\left(\frac{\varphi^n(T)}{T}\right)^k\varphi^m(\tilde{z})\in \tilde{D}^+$, ou encore à
 $\tilde{z}\in \left(\frac{\varphi^{-m}(T)}{\varphi^{n-m}(T)}\right)^k\tilde{D}^+$. Ceci entraîne d'une part que $v\in \Pi^{U-\rm fini}$ (lemme \ref{piuk})
 et d'autre part que $\varphi^j(\tilde{z})\in \tilde{D}_{\rm dif}^+$ pour tout $j\geq m$ (lemme 
\ref{frobdr}). Puisque $\phi_v(ax)=\sigma_{a}(\phi_v(x))$, on conclut que $\phi_v$ est nulle sur $p^m \zp-\{0\}$, donc 
$\phi_v\in  {\rm LP}_c(\qpet, \tilde{D}_{\rm dif}/\tilde{D}_{\rm dif}^+)^{\Gamma}$.

  Pour la réciproque, supposons que $v\in \Pi^{U,k}$ et qu'il existe $m\geq 1$ tel que $\phi_v$ s'annule sur $p^m \zp-\{0\}$. Soit $\tilde{z}=\frac{\tilde{x}}{\varphi^n(T)^k}$
   un rel\`{e}vement de $v$, avec $\tilde{x}\in \tilde{D}^+$ (lemme \ref{piuk}). Puisque $\phi_v(p^{j+m})=0$ pour tout $j\geq 0$, on a  $\varphi^{m+j}(\tilde{z})\in \tilde{D}_{\rm dif}^+$, donc $\varphi^j(\varphi^m(\tilde{x}))\in t^k\tilde{D}_{\rm dif}^+$ pour tout
   $j\geq 0$. On d\'{e}duit du lemme \ref{frobdr} que $\varphi^m(\tilde{x})\in T^k\tilde{D}^+$ et donc
   $\varphi^m(\tilde{z})\in \left(\frac{T}{\varphi^{m+n}(T)}\right)^k \tilde{D}^+$. Mais cela est \'{e}quivalent \`{a}
   $T_{m+n}^k \circ  \left(\begin{smallmatrix} p^m & 0 \\0 & 1\end{smallmatrix}\right)v=0$, ce qui montre que 
   $v\in \Pi_c^{U-\rm fini}$ et finit la preuve de la première assertion. La seconde s'en déduit, en utilisant la proposition \ref{Kir}.

\end{proof}

 \begin{definition} On note $\Pi_c^{P-\rm fini}$
 le sous-espace de $\Pi_c^{U-\rm fini}$ formé des $v$ tels que $L\left [ \left(\begin{smallmatrix} \zpet & 0 \\0 & 1\end{smallmatrix}\right)\right] v$ soit de dimension finie sur $L$.
\end{definition}

 Nous allons d\'{e}crire l'espace $\Pi_c^{P-\rm fini}$ de manière explicite, en utilisant la th\'{e}orie de Hodge $p$-adique.
 Soit $D_{\rm dif,\infty}/ D_{\rm dif,\infty}^+$ la limite inductive (c'est aussi la r\'{e}union croissante)
  des $D_{{\rm dif},n}/D_{{\rm dif},n}^+$. C'est naturellement un sous $L_{\infty}[[t]]$-module de $\tilde{D}_{\rm dif}/\tilde{D}_{\rm dif}^+$.

   \begin{proposition}\label{picpfct}
    L'application $v\to \phi_v$ induit un isomorphisme de $B$-modules 
    $$\Pi^{P-\rm fini}_c\simeq {\rm LP}_c\left(\qpet, D_{{\rm dif}, \infty}/D_{{\rm dif}, \infty}^+\right)^{\Gamma}.$$
   
   \end{proposition}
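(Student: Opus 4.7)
The plan is to use the $B$-equivariant embedding $v \mapsto \phi_v$ of Propositions \ref{Kir} and \ref{ufinicompact} and to identify its restriction to $\Pi_c^{P-\rm fini}$. For $v \in \Pi_c^{U-\rm fini}$, the action of $\left(\begin{smallmatrix} a & 0 \\ 0 & 1 \end{smallmatrix}\right)$ with $a \in \zpet$ translates, via $\Gamma$-equivariance of $\phi_v$, into pointwise application of $\sigma_a$. Combined with the finite decomposition of Remark \ref{declpc}, this shows that $v \in \Pi_c^{P-\rm fini}$ if and only if each value $\phi_v(p^i) \in \tilde{D}_{\rm dif}/\tilde{D}_{\rm dif}^+$ has finite-dimensional $L[\Gamma]$-orbit. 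It therefore suffices to identify the $L[\Gamma]$-finite vectors in $\tilde{D}_{\rm dif}/\tilde{D}_{\rm dif}^+$ with $D_{{\rm dif},\infty}/D_{{\rm dif},\infty}^+$ and to verify surjectivity of $v \mapsto \phi_v$ onto the target.

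The $L[\Gamma]$-finiteness statement has an easy direction: any class in $D_{{\rm dif},n}/D_{{\rm dif},n}^+$ is represented by $t^{-N} y_0$ with $y_0 \in D_{{\rm dif},n}^+$, and hence lies in the $\Gamma$-stable finite-dimensional $L_n$-space $t^{-N} D_{{\rm dif},n}^+ / D_{{\rm dif},n}^+ \simeq D_{{\rm dif},n}^+/t^N D_{{\rm dif},n}^+$. For the converse, I would induct on the smallest $N$ with $y \in t^{-N}\tilde{D}_{\rm dif}^+$. The case $N = 1$ is Sen's theorem applied to $\tilde{D}_{\rm dif}^+/t \simeq (\mathbf{C}_p \otimes_{\qp} V(D))^H$, whose $L[\Gamma]$-locally finite part is $\bigcup_n D_{{\rm Sen},n} = D_{{\rm dif},\infty}^+/t D_{{\rm dif},\infty}^+$. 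The inductive step applies the snake lemma to the $\Gamma$-equivariant exact sequence
$$0 \to t^{-(N-1)}\tilde{D}_{\rm dif}^+/\tilde{D}_{\rm dif}^+ \to t^{-N}\tilde{D}_{\rm dif}^+/\tilde{D}_{\rm dif}^+ \to \tilde{D}_{\rm dif}^+/t\tilde{D}_{\rm dif}^+ \to 0$$
and to its analogue with $D_{{\rm dif},\infty}^+$ in place of $\tilde{D}_{\rm dif}^+$, reducing the level-$N$ assertion to level $N-1$ and to Sen.

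For surjectivity, a function $\phi$ in the target is supported on finitely many orbits $p^{i_k}\zpet$ with values $\bar y_k \in D_{{\rm dif},n_k}/D_{{\rm dif},n_k}^+$. By linearity and by translation through powers of $\left(\begin{smallmatrix} p & 0 \\ 0 & 1 \end{smallmatrix}\right)$, I reduce to producing $v \in \Pi_c^{P-\rm fini}$ with $\phi_v$ supported on $\zpet$ and $\phi_v(1) = \bar y \in D_{{\rm dif},n}/D_{{\rm dif},n}^+$. Writing $\bar y$ as the class of $t^{-N} y_0$, and using that $\varphi^{-n}(D^{]0,r_n]})$ generates $D_{{\rm dif},n}^+$ over $L_n[[t]]$ together with the identity $\varphi^{-n}(\varphi^n(T)) = e^t - 1 \sim t$, one lifts a suitable modification of $y_0$ modulo $t^N$ to $x_0 \in D^{]0,r_n]}$ and forms $\tilde z = x_0/\varphi^n(T)^N \in D_{\rm rig}$. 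Theorem \ref{analitic}, applied to $\tilde z$ viewed as a section of $D_{\rm rig}$ on $\zp$ extended by zero to $\p1$, yields an element $v \in \Pi^{\rm an} \subset \Pi$; Lemma \ref{frobdr} b) controls $\varphi^j(\tilde z) \bmod \tilde{D}_{\rm dif}^+$ for $j \notin [0,n]$, and the finitely many ``bad'' indices $j \in [1,n-1]$ can be killed by iterated subtraction of similarly constructed elements. The main obstacle is the ``only if'' direction in the Sen-theoretic step, which is a genuine devissage through the $t$-adic filtration of $\tilde{D}_{\rm dif}$; the surjectivity construction, though delicate, is essentially bookkeeping with the localisation maps $\varphi^{-n} \colon D^{]0,r_n]} \to D_{{\rm dif},n}^+$. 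The $B$-equivariance of the final isomorphism is inherited from Proposition \ref{Kir}.
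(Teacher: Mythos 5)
Your identification of the $\Gamma$-finite vectors in $\tilde{D}_{\rm dif}/\tilde{D}_{\rm dif}^+$ with $D_{{\rm dif},\infty}/D_{{\rm dif},\infty}^+$ follows the paper's Lemma \ref{senfini} essentially verbatim: the base case is Sen's theorem applied to $X/tX\simeq(\mathbf{C}_p\otimes V)^H$ (the paper moreover spells out the needed vanishing of $H^1(H, t\mathbf{B}_{\rm dR}^+\otimes V)$ coming from \cite{Cannals}), and the induction is the same $t$-adic d\'evissage, with the multiplications by powers of $t$ being $\Gamma$-equivariant up to a cyclotomic twist. That part is fine.

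The surjectivity argument, however, has a genuine gap. You lift to $x_0\in D^{]0,r_n]}$ and set $\tilde z = x_0/\varphi^n(T)^N\in D_{\rm rig}$. But $\tilde z$ lives in $D_{\rm rig}=\mathcal{R}\otimes_{\mathcal{E}^\dagger}D^\dagger$, not in $\tilde{D}=(\tilde{\mathbf{B}}\otimes_{\qp}V)^H$: the Robba ring $\mathcal{R}$ (and already $\mathcal{E}^{]0,r_n]}$) contains unbounded series and does not embed into $\tilde{\mathbf{B}}$. The map $v\mapsto\phi_v$ is \emph{defined} by choosing a lift of $v$ to $\tilde{D}$ (via the $B$-equivariant model $\Pi\simeq\tilde{D}/\tilde{D}^+$ of Proposition \ref{Bisom}) and applying $\varphi^{-j}$ inside $\tilde{D}_{\rm dif}$. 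Routing through Theorem \ref{analitic} gives you an element $v\in\Pi^{\rm an}$, but no lift of $v$ to $\tilde{D}$, and no relation between such a lift and your $\tilde z\in D_{\rm rig}$; so the step ``Lemma \ref{frobdr} controls $\varphi^j(\tilde z)\bmod\tilde{D}_{\rm dif}^+$'' computes something about $\tilde z$ that is not (yet) known to be $\phi_v$. For the same reason you have not checked that $v$ satisfies the defining conditions of $\Pi_c^{P-\rm fini}$ (the $T_n^k$-condition in Proposition \ref{ufinicompact}), and the ``iterated subtraction'' for the indices $j\in[1,n-1]$ is unsubstantiated. The paper sidesteps all of this by invoking lemme 6 of \cite{Annalen}: for $\omega=T/\varphi^{-1}(T)\in\tilde{\mathbf{A}}^+$ and $\hat{x}\in t^{-k}D_{{\rm dif},n}^+$, it produces $y\in\tilde{D}^+$ with $y-\omega^k\hat{x}\in\omega^k\tilde{D}_{\rm dif}^+$. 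The resulting $\omega^{-k}y$ is a genuine element of $\tilde{D}$; one reads off that $\phi_v$ is supported at $p^0$ with value $x$, that $(\matrice{1}{1}{0}{1}-1)^k$ kills $v$ because $T^k\omega^{-k}\in\tilde{\mathbf{A}}^+$, and that $v$ is $\Gamma$-finite because $x$ is, so $v\in\Pi_c^{P-\rm fini}$. Without this approximation in $\tilde{\mathbf{B}}^+$ (or a replacement showing your $\tilde z$ is overconvergent, i.e.\ can be taken in $D^\dagger$), your construction does not close.
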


  \begin{proof}
  
    Si $M$ est un $L[\Gamma]$-module, on note $M^{\Gamma-\rm fini}$ l'ensemble des $x\in M$ tels que 
    $L[\Gamma]x$ soit de dimension finie sur $L$. Si $v\in \Pi_c^{P-\rm fini}$, alors 
     $L\left [ \left(\begin{smallmatrix} \zpet & 0 \\0 & 1\end{smallmatrix}\right)\right] v$ est de dimension finie sur $L$, donc il en de même de 
     $L\left [ \left(\begin{smallmatrix} \zpet & 0 \\0 & 1\end{smallmatrix}\right)\right] \phi_v$. Comme $\phi$ est $\Gamma$-invariante, on obtient $\phi_v(x)\in \left(\tilde{D}_{\rm dif}/ \tilde{D}_{\rm dif}^+\right)^{\Gamma-\rm fini}$
     pour tout $x\in \qpet$. Le lemme \ref{senfini} ci-dessous permet de conclure que 
     $v\to \phi_v$ induit une injection $B$-équivariante de $\Pi_c^{P-\rm fini}$ dans 
    ${\rm LP}_c(\qpet, D_{\rm dif,\infty}/D_{\rm dif,\infty}^+)^{\Gamma}$.
    
\begin{lemma}\label{senfini}
 Pour tout $D\in\Phi\Gamma^{\rm et}(\mathcal{E})$ on a
  $$\left(\tilde{D}_{\rm dif}/ \tilde{D}_{\rm dif}^+\right)^{\Gamma-\rm fini}=D_{\rm dif,\infty}/D_{\rm dif,\infty}^+.$$
\end{lemma}

\begin{proof}
 Soit $V=V(D)$ et notons, pour simplifier, $X=\tilde{D}_{\rm dif}^+$ et $Y=D_{\rm dif,\infty}^+$.
 La suite exacte $$0\to t\mathbf{B}_{\rm dR}^+\otimes_{\qp} V\to \mathbf{B}_{\rm dR}^+\otimes_{\qp} V\to \mathbf{C}_p\otimes_{\qp} V\to 0$$
 et la nullité de $H^1(H, t\mathbf{B}_{\rm dR}^+\otimes_{\qp} V)$ (qui découle de \cite[th. IV.3.1]{Cannals}) montrent que 
 $X/tX\simeq (\mathbf{C}_p\otimes_{\qp} V)$. Par définition, on a 
$Y/tY=D_{\rm Sen}=\cup_{n\geq 1} D_{{\rm Sen},n}$, donc d'apr\`{e}s Sen \cite{Sen} on a
 $(X/tX)^{\Gamma-\rm fini}=Y/tY$. Comme $t$ est vecteur propre pour $\Gamma$, on a aussi
 $(t^{-1}X/X)^{\Gamma-\rm fini}=t^{-1}Y/Y$. En utilisant les suites exactes
 (dont les applications-induites par la multiplication par $t^{k-1}$-sont $\Gamma$-\'{e}quivariantes, \`{a} des \'{e}l\'{e}ments de $\zpet$ pr\`{e}s)
 $$0\to t^{-k+1}X/X\to t^{-k}X/X\to t^{-1}X/X\to 0,$$
$$ 0\to t^{-k+1}Y/Y\to t^{-k}Y/Y\to t^{-1}Y/Y\to 0,$$
 on obtient par r\'{e}currence sur $k\geq 1$ que $(t^{-k}X/X)^{\Gamma-\rm fini}=t^{-k}Y/Y$. Le r\'{e}sultat se d\'{e}duit en passant \`{a} la limite
inductive sur $k$.

\end{proof}

Il nous reste à prouver la surjectivité de $\Pi^{P-\rm fini}_c\to {\rm LP}_c(\qpet, D_{{\rm dif}, \infty}/D_{{\rm dif}, \infty}^+)^{\Gamma}$. La preuve est très semblable à 
celle de \cite[prop. 12]{Annalen}. En utilisant la remarque \ref{declpc} et en raisonnant comme dans 
 le premier paragraphe de la preuve 
de loc.cit., on se ramène à montrer que pour tout $x\in D_{{\rm dif}, \infty}/D_{{\rm dif}, \infty}^+$ on peut trouver 
$v\in \Pi^{P-\rm fini}_c$ tel que $\phi_v(p^i)=1_{i=0} x$ pour tout $i\in\mathbf{Z}$. 

   Soient $k\geq 1$ et $n\geq m(D)$ tels que $x\in t^{-k} D_{{\rm dif}, n}^+/D_{{\rm dif},n}^+$, et soit $\hat{x}\in t^{-k} D_{{\rm dif}, n}^+$ un relèvement de $x$. Soit $\omega=\frac{T}{\varphi^{-1}(T)}\in \tilde{\mathbf{A}}^+$. Puisque $\omega^k \hat{x}\in \tilde{D}_{\rm dif}^+$, le lemme $6$ de \cite{Annalen} fournit $y\in \tilde{D}^+$ tel que $y-\omega^k \hat{x}\in \omega^k \tilde{D}_{\rm dif}^+$.
Soit $v$ l'image de $\omega^{-k} y\in \tilde{D}$ dans $\Pi\simeq \tilde{D}/\tilde{D}^+$. Le lemme \ref{frobdr} et le choix de $y$ montrent que $\phi_v(p^i)=1_{i=0} x$ pour tout $i$. 
Comme $T^k\omega^{-k}\in \mathbf{\tilde{A}}^+$, $(\left(\begin{smallmatrix} 1 & 0 \\0 & 1\end{smallmatrix}\right)-1)^k$ tue $v$, donc $v\in \Pi^{U-\rm fini}$. Comme 
$x$ est $\Gamma$-fini (lemme \ref{senfini}) et $\phi_v(p^i)=1_{i=0} x$, on obtient que $v$ est $\Gamma$-fini, donc 
$v\in \Pi^{P-\rm fini}$, ce qui permet de conclure.
\end{proof}

    \begin{corollary}\label{picp}
  $ \Pi^{P-\rm fini}_c$ est stable sous l'action de $B$ et l'application $v\to (\phi_v(p^i))_{i\in\mathbf{Z}}$ induit une bijection
    $$\Pi^{P-\rm fini}_c\to \bigoplus_{i\in\mathbf{Z}} D_{\rm dif,\infty}/D_{\rm dif,\infty}^+.$$
  \end{corollary}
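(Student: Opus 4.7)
The statement is essentially a packaging of two results already established. The plan is to derive it directly by composing the isomorphism of Proposition \ref{picpfct} with the elementary decomposition recorded in Remark \ref{declpc}.

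First, I would observe that the $B$-stability of $\Pi_c^{P-\rm fini}$ follows from Proposition \ref{picpfct}, since the map $v \mapsto \phi_v$ is $B$-equivariant and identifies $\Pi_c^{P-\rm fini}$ with the $B$-stable subspace ${\rm LP}_c(\qpet, D_{{\rm dif},\infty}/D_{{\rm dif},\infty}^+)^\Gamma$ of ${\rm LP}(\qpet, \tilde{D}_{\rm dif}/\tilde{D}_{\rm dif}^+)^\Gamma$. Alternatively, $B$-stability for $\Pi_c^{U-\rm fini}$ was already shown in Proposition \ref{ufinicompact}, and the finite-dimensionality condition defining $\Pi_c^{P-\rm fini}$ inside $\Pi_c^{U-\rm fini}$ is preserved by the action of $B$ because $B$ normalizes the diagonal torus $\left(\begin{smallmatrix} \zpet & 0 \\ 0 & 1\end{smallmatrix}\right)$ up to the unipotent radical (which acts through multiplication by $[(1+T)^{\ast}]$, preserving the property of generating a finite-dimensional $L$-subspace under the diagonal torus action, once combined with $U$-finiteness).

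Second, for the bijection, I would apply Remark \ref{declpc} with $X = D_{{\rm dif},\infty}/D_{{\rm dif},\infty}^+$ to get a canonical $L$-linear isomorphism
$$\mathrm{LP}_c(\qpet, D_{{\rm dif},\infty}/D_{{\rm dif},\infty}^+)^\Gamma \simeq \bigoplus_{i\in\mathbf{Z}} D_{{\rm dif},\infty}/D_{{\rm dif},\infty}^+,$$
given precisely by $\phi \mapsto (\phi(p^i))_{i\in \mathbf{Z}}$. Composing with the isomorphism $v \mapsto \phi_v$ of Proposition \ref{picpfct} yields the claimed bijection $v \mapsto (\phi_v(p^i))_{i\in\mathbf{Z}}$.

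There is no real obstacle here: the content lies entirely in Proposition \ref{picpfct} (whose proof uses both the finite-dimensionality lemma \ref{senfini} for $\Gamma$-finite vectors of $\tilde{D}_{\rm dif}/\tilde{D}_{\rm dif}^+$ and the surjectivity argument via the element $\omega = T/\varphi^{-1}(T)$), together with the elementary bookkeeping of Remark \ref{declpc}. The only thing worth double-checking, which I would do explicitly, is that the composite map is indeed given by the stated formula $v \mapsto (\phi_v(p^i))_{i\in\mathbf{Z}}$ and that its inverse is constructed concretely: given a sequence $(x_i)_{i\in\mathbf{Z}}$ with $x_i = 0$ for $i$ small enough, one reconstructs $\phi \in \mathrm{LP}_c(\qpet, D_{{\rm dif},\infty}/D_{{\rm dif},\infty}^+)^\Gamma$ by $\phi(ap^i) = \sigma_a(x_i)$ for $a \in \zpet$ and $i \in \mathbf{Z}$, and then lifts to $\Pi_c^{P-\rm fini}$ via the inverse of $v \mapsto \phi_v$ furnished by Proposition \ref{picpfct}.
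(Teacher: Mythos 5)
Your proof is correct and follows essentially the same route as the paper: Proposition \ref{picpfct} gives the $B$-equivariant isomorphism onto ${\rm LP}_c(\qpet, D_{{\rm dif},\infty}/D_{{\rm dif},\infty}^+)^\Gamma$, and Remark \ref{declpc} (which applies because $D_{{\rm dif},\infty}/D_{{\rm dif},\infty}^+$ is a $\Gamma$-stable $L_\infty[[t]]$-submodule of $\tilde{D}_{\rm dif}/\tilde{D}_{\rm dif}^+$) then yields the direct-sum description by evaluation at the $p^i$. The paper's proof is exactly this two-step composition stated in one sentence.
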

  
  \begin{proof}
   C'est une conséquence immédiate de la proposition précédente et du fait que $D_{\rm dif,\infty}/D_{\rm dif,\infty}^+$ est un sous $L_{\infty}[[t]]$-module de
   $\tilde{D}_{\rm dif}/\tilde{D}_{\rm dif}^+$, stable sous l'action de $\Gamma$.
     \end{proof}
  
  \subsection{L'espace $\Pi_c^{P-\rm alg}$} 
  
    \quad On suppose dans ce $\S$ que les poids de Hodge-Tate de $D$ sont $\leq 1$
    et on fixe $k\in\mathbf{N}^*$. 
        
   \begin{definition}
  a) On note $\Pi_{c,k}^{P-\rm alg}$ le sous-espace de $\Pi_c^{P-\rm fini}$ formé des vecteurs $v$ tels que 
$\mu_{n,k}(v)=0$ pour tout $n$ assez grand, où 
$$\mu_{n,k}=\prod_{i=0}^{k-1} \left(\left(\begin{smallmatrix} 1+p^n & 0 \\0 & 1\end{smallmatrix}\right) -(1+p^n)^{i+1-k}\right).$$

b) On note $(D_{{\rm dif},\infty}/D_{{\rm dif},\infty}^+)^{k,\rm alg}$ la réunion des 
sous-modules $(D_{{\rm dif},n}/D_{{\rm dif},n}^+)^{\mu_{n,k}=0}$ de $D_{{\rm dif},\infty}/D_{{\rm dif},\infty}^+$, où $$\mu_{n,k}=\prod_{i=0}^{k-1}(\sigma_{1+p^n}-(1+p^n)^{i+1-k}).$$
   \end{definition}
   
   \begin{remark} \label{ddifalg}
   
   a) On se permet de noter deux objets différents par $\mu_{n,k}$, puisque l'action de 
   $\left(\begin{smallmatrix} 1+p^n & 0 \\0 & 1\end{smallmatrix}\right)$ est induite par celle de $\sigma_{1+p^n}$.
   Comme ces opérateurs agissent sur des espaces différents, cela ne crée aucun risque de confusion.
   
  b) On vérifie sans mal que $\mu_{n,k}$ divise $\mu_{n+1,k}$
    dans $O_L[[\left(\begin{smallmatrix} \zpet & 0 \\0 & 1\end{smallmatrix}\right)]]$. 
    On en déduit que la suite des modules $(\Pi_c^{P-\rm fini})^{\mu_{n,k}=0}$ est croissante. 
    Le même argument montre que la suite de modules 
    $(D_{{\rm dif},n}/D_{{\rm dif},n}^+)^{\mu_{n,k}=0}$ est croissante.
    
  c) Le lemme des noyaux fournit une décomposition 
     $$\left(D_{{\rm dif},n}/D_{{\rm dif},n}^+\right)^{\mu_{n,k}=0}=\bigoplus_{i=0}^{k-1}
 \left(D_{{\rm dif},n}/ D_{{\rm dif},n}^+\right)^{\sigma_{1+p^n}=(1+p^n)^{i+1-k}}.$$
   \end{remark}
   
   \begin{lemma}\label{tktue}
   a) Si $0\leq i<k$, alors $t^{k-i}$ tue $\left(D_{{\rm dif},n}/D_{{\rm dif},n}^+\right)^{\sigma_{1+p^n}=(1+p^n)^{i+1-k}}$ pour $n\geq m(D)$.
   
   b) $t^k$ tue $\left(D_{{\rm dif},n}/D_{{\rm dif},n}^+\right)^{\mu_{n,k}=0}$ pour $n\geq m(D)$
   \end{lemma}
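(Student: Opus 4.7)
The plan is to prove (a) by a pole-order analysis combined with Sen's theorem, and to deduce (b) formally from (a) via the direct-sum decomposition of Remark \ref{ddifalg}(c).

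For (a), I would use the $\Gamma$-stable exhaustive filtration of $D_{{\rm dif},n}/D^+_{{\rm dif},n}$ by the submodules $t^{-j} D^+_{{\rm dif},n}/D^+_{{\rm dif},n}$, whose successive graded pieces are identified, via multiplication by $t^j$, with $D_{{\rm Sen},n}$ endowed with the twisted $\Gamma$-action $\sigma_a \leftrightarrow a^{-j}\sigma_a$ (since $\sigma_a(t)=at$). Given a nonzero $x$ in the $\sigma_{1+p^n}$-eigenspace for the eigenvalue $(1+p^n)^{i+1-k}$, denote by $j_0\ge 1$ its pole order, i.e., the smallest $j$ with $x \in t^{-j} D^+_{{\rm dif},n}/D^+_{{\rm dif},n}$. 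The image $\bar{x}$ of $x$ in the $j_0$-th graded piece is nonzero and, on $D_{{\rm Sen},n}$, satisfies $\sigma_{1+p^n}(\bar{x})=(1+p^n)^{i+1-k+j_0}\bar{x}$.

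The crux is then Sen's theorem: for $n$ sufficiently large, $\sigma_{1+p^n}$ acts on $D_{{\rm Sen},n}$ via $\exp(\log(1+p^n)\,\Theta_{{\rm Sen},D})$, whose (generalized) eigenvalues are the $(1+p^n)^{h}$ with $h$ a root of the Sen polynomial of $D$. Since these roots all lie in $(-\infty, 1]$ by hypothesis, and since $v\mapsto (1+p^n)^v$ is injective for $n$ large, no such eigenvalue equals $(1+p^n)^{i+1-k+j_0}$ as soon as $j_0 > k-i$. The existence of $\bar{x}\neq 0$ therefore forces $j_0\le k-i$, which is precisely the claim $t^{k-i}x \in D^+_{{\rm dif},n}$.

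Part (b) is then immediate: Remark \ref{ddifalg}(c) expresses $(D_{{\rm dif},n}/D_{{\rm dif},n}^+)^{\mu_{n,k}=0}$ as the direct sum over $i\in\{0,\dots,k-1\}$ of the eigenspaces treated in (a); each is annihilated by $t^{k-i}$, hence a fortiori by $t^k$ since $0\le i\le k-1$. The main obstacle I anticipate is the Sen-theoretic step: one must ensure the identification $\sigma_{1+p^n}=\exp(\log(1+p^n)\,\Theta_{{\rm Sen},D})$ actually holds on $D_{{\rm Sen},n}$ throughout the range $n\ge m(D)$ stated, possibly requiring one to enlarge $m(D)$, or to argue directly with generalized eigenspaces via the characteristic polynomial of $\Theta_{{\rm Sen},D}$, thus bypassing the full exponential identification.
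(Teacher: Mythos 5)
Your proof is essentially the paper's: extract the leading pole coefficient (your $j_0$ is the paper's $N$), observe that its class $\bar{x}\in D_{{\rm Sen},n}$ is a $\sigma_{1+p^n}$-eigenvector of eigenvalue $(1+p^n)^{i+1-k+N}$, and contradict the hypothesis that the Hodge--Tate weights are $\leq 1$; part (b) then follows from Remark~\ref{ddifalg}(c) exactly as you say. Regarding the subtlety you flag at the Sen step: the paper bypasses the exponential identification entirely, passing directly from $\sigma_a\bar x=a^{i+1-k+N}\bar x$ for all $a\in 1+p^n\zp$ (by continuity from $a=1+p^n$) to $\Theta_{{\rm Sen},D}\,\bar x=(i+1-k+N)\,\bar x$ via the defining limit $\nabla=\lim_{a\to 1}\frac{\sigma_a-1}{a-1}$, so no enlargement of $m(D)$ and no generalized-eigenspace detour is needed.
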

   
   \begin{proof} Le point b) découle de a) et du point c) de la 
     remarque \ref{ddifalg}. Passons au a) et considérons $z\in D_{{\rm dif},n}$ tel que
    $(\sigma_{1+p^n}-(1+p^n)^{i+1-k})z\in D_{{\rm dif},n}^+$. Il faut montrer que 
    $t^{k-i}z\in D_{{\rm dif},n}^+$.  
    Si ce n'est pas le cas, alors on peut écrire
    $z=\frac{x}{t^N}$, avec $x\in D_{{\rm dif},n}^+-tD_{{\rm dif},n}^+$ et $N>k-i$. Comme $(\sigma_{1+p^n}-(1+p^n)^{i+1-k})z\in D_{{\rm dif},n}^+$,
   on obtient en particulier $\sigma_{1+p^n}(x)-(1+p^n)^{i+1-k+N}x\in tD_{{\rm dif},n}^+$, donc 
  l'image $\overline{x}$ de $x$ dans $D_{{\rm Sen},n}$ est un vecteur propre de $\Theta_{{\rm Sen},D}$, de valeur
  propre $i+1-k+N\geq 2$. Cela contredit notre hypothèse que les poids de Hodge-Tate de $D$ sont $\leq 1$.
   \end{proof}

   \begin{proposition} \label{pipalg} L'espace $\Pi_{c,k}^{P-\rm alg}$ est stable sous l'action de $B$ et 
  l'application $v\to\phi_v$ induit un  
    isomorphisme $B$-équivariant 
     $$\Pi_{c,k}^{P-\rm alg}\simeq {\rm LP}_c(\qpet, (D_{{\rm dif},\infty}/D_{{\rm dif},\infty}^+)^{k,\rm alg}).$$
   
   \end{proposition}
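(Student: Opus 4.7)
Le plan est de d\'{e}duire cet \'{e}nonc\'{e} de la proposition \ref{picpfct}, en restreignant l'isomorphisme $B$-\'{e}quivariant $\Pi^{P-\rm fini}_c\simeq {\rm LP}_c(\qpet, D_{{\rm dif},\infty}/D_{{\rm dif},\infty}^+)^{\Gamma}$ afin d'identifier $\Pi_{c,k}^{P-\rm alg}$ avec le sous-espace form\'{e} des fonctions \`{a} valeurs dans $(D_{{\rm dif},\infty}/D_{{\rm dif},\infty}^+)^{k,\rm alg}$.

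Le point technique central sera l'identification des deux op\'{e}rateurs d\'{e}sign\'{e}s par $\mu_{n,k}$ (voir remarque \ref{ddifalg} a)). Je commencerai par observer, en revenant \`{a} la d\'{e}finition de l'action de $B$ sur $\tilde{D}$ donn\'{e}e au \S\ref{Tnorm}, que $\left(\begin{smallmatrix} 1+p^n & 0 \\0 & 1\end{smallmatrix}\right)$ agit sur $\tilde{D}$ (et donc sur son quotient $\Pi$) comme $\sigma_{1+p^n}$. Puisque cet op\'{e}rateur commute aux op\'{e}rateurs $\left(\begin{smallmatrix} x & 0 \\0 & 1\end{smallmatrix}\right)$ (car $\Gamma\simeq \zpet$ est ab\'{e}lien et commute \`{a} $\varphi$), on d\'{e}duira la formule-cl\'{e}
$$\phi_{\left(\begin{smallmatrix} 1+p^n & 0 \\0 & 1\end{smallmatrix}\right)v}(x) = \sigma_{1+p^n}(\phi_v(x)),$$
valide pour tout $v\in\Pi^{P-\rm fini}_c$ et tout $x\in\qpet$. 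Il s'ensuivra que $\mu_{n,k}(v)=0$ dans $\Pi$ si et seulement si $\mu_{n,k}(\phi_v(x))=0$ dans $D_{{\rm dif},\infty}/D_{{\rm dif},\infty}^+$ pour tout $x\in\qpet$, chaque $\mu_{n,k}$ \'{e}tant lu dans le sens appropri\'{e}.

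Pour passer de cette annulation valeur par valeur \`{a} l'appartenance globale au sous-module $(D_{{\rm dif},\infty}/D_{{\rm dif},\infty}^+)^{k,\rm alg}$, j'exploiterai la compacit\'{e} du support: par la remarque \ref{declpc}, $\phi_v$ ne prend qu'un nombre fini de valeurs non nulles, qui vivent donc toutes dans un $D_{{\rm dif},n_0}/D_{{\rm dif},n_0}^+$ pour un certain $n_0$. La croissance en $n$ des sous-modules $(D_{{\rm dif},n}/D_{{\rm dif},n}^+)^{\mu_{n,k}=0}$ (remarque \ref{ddifalg}) fournira alors l'\'{e}quivalence voulue, et par cons\'{e}quent l'isomorphisme annonc\'{e}. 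La stabilit\'{e} de $\Pi_{c,k}^{P-\rm alg}$ sous $B$ en r\'{e}sultera, puisque $(D_{{\rm dif},\infty}/D_{{\rm dif},\infty}^+)^{k,\rm alg}$ est stable sous $\Gamma$ par commutation, et annul\'{e} par $t^k$ par le lemme \ref{tktue}, ce qui en fait un $L_\infty[[t]]$-module sur lequel l'action du Borel est bien d\'{e}finie.

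La seule \'{e}tape v\'{e}ritablement d\'{e}licate sera donc la v\'{e}rification de la formule reliant les actions des deux $\mu_{n,k}$; le reste rel\`{e}vera surtout d'une traduction soigneuse de d\'{e}finitions d\'{e}j\`{a} en place.
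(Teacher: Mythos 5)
Your first two paragraphs correctly reproduce the ``formal'' part of the paper's proof: translating the condition $\mu_{n,k}(v)=0$ through the isomorphism $v\mapsto\phi_v$ of la proposition \ref{picpfct}, using that the operator $\left(\begin{smallmatrix} 1+p^n & 0 \\0 & 1\end{smallmatrix}\right)$ on $\tilde D/\tilde D^+$ is induced by $\sigma_{1+p^n}$, and using compactness of support together with la remarque \ref{ddifalg} b) to obtain the $L$-linear bijection $\Pi_{c,k}^{P-\rm alg}\simeq {\rm LP}_c(\qpet, (D_{{\rm dif},\infty}/D_{{\rm dif},\infty}^+)^{k,\rm alg})$. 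This is indeed the easy part, and the paper dispenses with it in one sentence.

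The genuine gap is in your last paragraph, where you claim that being stable under $\Gamma$ and killed by $t^k$ ``en fait un $L_\infty[[t]]$-module''. That inference is false: being annihilated by $t^k$ says nothing about closure under multiplication by arbitrary $f\in L_n[[t]]$, and closure is far from automatic because $\sigma_a$ is only \emph{semi}-linear over $L_n[[t]]$. Concretely, if $z$ satisfies $\sigma_a(z)\equiv a^{i+1-k}z\pmod{D_{{\rm dif},n}^+}$ and $f\in L_n[[t]]$, then $\sigma_a(fz)=\sigma_a(f)\sigma_a(z)\equiv a^{i+1-k}\sigma_a(f)z$, which is \emph{not} of the form $a^{j+1-k}fz$; so $fz$ has no reason a priori to lie in the kernel of $\mu_{n,k}$. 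Establishing the $L_n[[t]]$-module structure on $(D_{{\rm dif},n}/D_{{\rm dif},n}^+)^{\mu_{n,k}=0}$ is precisely the content of le lemme \ref{technical} in the paper: one shows that
$$\mu_{n,k}(fz)\equiv \prod_{j=0}^{k-1}(a^{i+1-k}\sigma_a-a^{j+1-k})f\cdot z\pmod {D_{{\rm dif},n}^+},$$
then verifies that the scalar $\prod_{j=0}^{k-1}(a^{i+1-k}\sigma_a-a^{j+1-k})f$ lies in $t^{k-i}L_n[[t]]$, and finally invokes le lemme \ref{tktue} to conclude because $t^{k-i}$ kills the eigenspace for $\sigma_a=a^{i+1-k}$. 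You have correctly identified that the $B$-action on ${\rm LP}_c(\qpet,X)^{\Gamma}$ requires $X$ to be an $L_\infty[[t]]$-module with $\Gamma$-action, but you have missed that proving this module structure is the one truly delicate step of the proposition, not the bookkeeping with the two $\mu_{n,k}$'s.
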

   
   \begin{proof}
    On déduit formellement de la proposition \ref{picpfct} et du b) de la remarque \ref{ddifalg} que $v\to\phi_v$ induit un isomorphisme $L$-linéaire 
    $$\Pi_{c,k}^{P-\rm alg}\simeq {\rm LP}_c(\qpet, (D_{{\rm dif},\infty}/D_{{\rm dif},\infty}^+)^{k,\rm alg}).$$
    Pour démontrer que $\Pi_{c,k}^{P-\rm alg}$ est stable par $B$ et conclure la preuve de la proposition, il suffit de montrer
    que $(D_{{\rm dif},\infty}/D_{{\rm dif},\infty}^+)^{k,\rm alg}$ est un sous $L_{\infty}[[t]]$-module de $D_{{\rm dif},\infty}/D_{{\rm dif},\infty}^+$, stable par 
    $\Gamma$. La stabilité par $\Gamma$ est claire, puisque $\Gamma$ commute à $\mu_{n,k}$. Pour montrer que 
    $(D_{{\rm dif},\infty}/D_{{\rm dif},\infty}^+)^{k,\rm alg}$ est un $L_{\infty}[[t]]$-module, nous avons besoin de quelques préliminaires.

    \begin{lemma}\label{technical}
    Si $0\leq i<k$
    et $z\in D_{{\rm dif},n}$ satisfont $(\sigma_{1+p^n}-(1+p^n)^{i+1-k})z\in D_{{\rm dif},n}^+$, alors 
    $\mu_{n,k}(fz)\in D_{{\rm dif},n}^+$ pour tout $f\in L_n[[t]]$.
    \end{lemma}
    
    \begin{proof}
      Notons pour simplifier $a=1+p^n$. La congruence $\sigma_a(z)\equiv a^{i+1-k}z\pmod {D_{{\rm dif},n}^+}$ entraîne
    $$\mu_{n,k}(fz)\equiv \prod_{j=0}^{k-1}(a^{i+1-k}\sigma_a-a^{j+1-k})f\cdot z\pmod {D_{{\rm dif},n}^+}.$$
    Ensuite, on vérifie sans mal que 
    $$\prod_{j=0}^{k-1}(a^{i+1-k}\sigma_a-a^{j+1-k})f\in t^{k-i} L_n[[t]],$$
    et le résultat suit du lemme \ref{tktue}.

    \end{proof}
    
     Revenons à la preuve du fait que $(D_{{\rm dif},\infty}/D_{{\rm dif},\infty}^+)^{k,\rm alg}$ est un sous $L_{\infty}[[t]]$-module de $D_{{\rm dif},\infty}/D_{{\rm dif},\infty}^+$.
     Le lemme \ref{technical} montre que 
    $(D_{{\rm dif},n}/D_{{\rm dif},n}^+)^{\mu_{n,k}=0}$ est un $L_n[[t]]$-module. 
    Le résultat s'ensuit, en passant à la limite et en utilisant le lemme 
    \ref{tktue} et le fait que $L_{\infty}[[t]]/t^k$ est la réunion des 
    $L_n[[t]]/t^k$.

   \end{proof}
   
   \begin{corollary}\label{blah}
    L'application $v\to (\phi_v(p^i))_{i\in\mathbf{Z}}$ induit un isomorphisme de $L$-espaces vectoriels
    $$\Pi_{c,k}^{P-\rm alg}\simeq \bigoplus_{i\in\mathbf{Z}} (D_{{\rm dif},\infty}/D_{{\rm dif},\infty}^+)^{k,\rm alg}.$$
   \end{corollary}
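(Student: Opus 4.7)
The plan is to deduce this immediately from Proposition \ref{pipalg} combined with Remark \ref{declpc}. Proposition \ref{pipalg} furnishes a $B$-equivariant isomorphism
$$\Pi_{c,k}^{P-\rm alg}\simeq {\rm LP}_c(\qpet, (D_{{\rm dif},\infty}/D_{{\rm dif},\infty}^+)^{k,\rm alg})^{\Gamma},$$
sending $v$ to $\phi_v$. It suffices therefore to apply Remark \ref{declpc} with the $L_{\infty}[[t]]$-module $X=(D_{{\rm dif},\infty}/D_{{\rm dif},\infty}^+)^{k,\rm alg}$, which (as noted in the proof of Proposition \ref{pipalg}) is stable under $\Gamma$. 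That remark says precisely that evaluation at the sequence $(p^i)_{i\in\mathbf{Z}}$ induces an $L$-linear bijection
$${\rm LP}_c(\qpet,X)^{\Gamma}\simeq \bigoplus_{i\in\mathbf{Z}} X,$$
because a $\Gamma$-equivariant function $\phi$ supported compactly in $\qp$ and vanishing near $0$ is uniquely and freely determined by its values on the representatives $p^i$ of $\qpet/\zpet$.

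Composing the two bijections gives the stated isomorphism, and the map is exactly $v\mapsto (\phi_v(p^i))_{i\in\mathbf{Z}}$. There is no genuine obstacle: all the substantive content (compact support, $\Gamma$-finiteness translating into values lying in $(D_{{\rm dif},\infty}/D_{{\rm dif},\infty}^+)^{k,\rm alg}$, and the $L_{\infty}[[t]]$-module structure of the target) has already been dealt with in Proposition \ref{picpfct} and Proposition \ref{pipalg}. The corollary is a routine packaging of these results.
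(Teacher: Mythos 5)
Your proof is correct and is exactly the route the paper intends (the paper simply omits the proof of the corollary, treating it as an immediate consequence of Proposition \ref{pipalg} and Remark \ref{declpc}, just as you do, and exactly parallel to how Corollary \ref{picp} follows from Proposition \ref{picpfct}). You also correctly note that the hypothesis of Remark \ref{declpc} — that the target be an $L_{\infty}[[t]]$-module stable under $\Gamma$ — was verified for $(D_{{\rm dif},\infty}/D_{{\rm dif},\infty}^+)^{k,\rm alg}$ in the proof of Proposition \ref{pipalg}, which is the only point that needs checking.
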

   
   \begin{lemma}\label{uplus}
    Pour tout $v\in\Pi_{c,k}^{P-\rm alg}$ on a $\left(\left(\begin{smallmatrix} 1 & p^n \\0 & 1\end{smallmatrix}\right)-1\right)^kv=0$ pour tout 
    $n$ assez grand.
   
   \end{lemma}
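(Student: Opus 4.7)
Mon plan est de traduire l'\'enonc\'e en une condition d'int\'egralit\'e sur un rel\`evement de $v$ \`a $\tilde D$, puis d'exploiter l'annulation par $t^k$ fournie par l'hypoth\`ese $P$-alg\'ebrique, le tout articul\'e via le lemme \ref{frobdr} c). D'abord, je fixerais un rel\`evement $\tilde z\in \tilde D$ de $v$. Puisque $\left(\begin{smallmatrix} 1 & p^n \\0 & 1\end{smallmatrix}\right)$ agit sur $\tilde D$ par multiplication par $[(1+T)^{p^n}]=1+\varphi^n(T)$, l'\'egalit\'e \`a montrer se r\'ecrit $\varphi^n(T)^k\tilde z\in\tilde D^+$ pour $n$ assez grand. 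Comme $v\in\Pi_c^{P-\rm fini}\subset\Pi^{U-\rm fini}$, le lemme \ref{piuk} fournit $m_0\geq 1$ et $k'\geq k$ tels que $\tilde z\in\varphi^{m_0}(T)^{-k'}\tilde D^+$. Pour $n\geq m_0$, en posant $\tilde w=\varphi^n(T)^{k'}\tilde z\in\tilde D^+$, il suffira d'\'etablir $\tilde w\in\varphi^n(T)^{k'-k}\tilde D^+$, autrement dit, en appliquant $\varphi^{-n}$, que $\varphi^{-n}(\tilde w)\in T^{k'-k}\tilde D^+$.

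Ensuite, j'extrairais l'information fournie par l'hypoth\`ese $P$-alg\'ebrique sous la forme suivante: la proposition \ref{pipalg} montre que $\phi_v(x)\in (D_{{\rm dif},\infty}/D_{{\rm dif},\infty}^+)^{k,\rm alg}$ pour tout $x\in\qpet$, et le lemme \ref{tktue} b) affirme que $t^k$ tue ce module. En particularisant en $x=p^s$, on obtient $t^k\varphi^s(\tilde z)\in\tilde D_{\rm dif}^+$ pour tout $s\in\mathbf{Z}$.

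Enfin, j'appliquerais l'extension naturelle \`a $\tilde D^+$ du lemme \ref{frobdr} c): la condition $\varphi^{-n}(\tilde w)\in T^{k'-k}\tilde D^+$ \'equivaut \`a
$$\varphi^{j-n}(\tilde w)=\varphi^j(T)^{k'}\cdot\varphi^{j-n}(\tilde z)\in t^{k'-k}\mathbf{B}_{\rm dR}^+\otimes_{\qp}V(D)\quad\text{pour tout }j\geq 0.$$
Or, pour $j\geq 0$, le lemme \ref{frobdr} a) et le d\'eveloppement $\varphi^j(T)=p^jt+O(t^2)$ donnent $\varphi^j(T)^{k'}\in t^{k'}(\mathbf{B}_{\rm dR}^+)^{\times}$, de sorte que
$$\varphi^j(T)^{k'}\varphi^{j-n}(\tilde z)=\frac{\varphi^j(T)^{k'}}{t^{k'}}\cdot t^{k'-k}\cdot\bigl(t^k\varphi^{j-n}(\tilde z)\bigr)$$
a la forme requise, $t^k\varphi^{j-n}(\tilde z)$ appartenant \`a $\tilde D_{\rm dif}^+\subset \mathbf{B}_{\rm dR}^+\otimes_{\qp} V(D)$ par l'\'etape pr\'ec\'edente. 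Le seul point d\'elicat est l'extension de \ref{frobdr} c) du cadre $\tilde{\mathbf{A}}^+$ au module $\tilde D^+$ sur $\tilde{\mathbf{B}}^+$, mais celle-ci se ram\`ene formellement au cas scalaire en tensorisant par $V(D)$ (libre de type fini) et en observant que $p$ est inversible dans $\mathbf{B}_{\rm dR}^+$, de sorte qu'un multiple scalaire par une puissance de $p$ permet de rev\'erifier les hypoth\`eses dans $\tilde{\mathbf{A}}^+\otimes V(D)$.
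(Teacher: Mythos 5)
Votre preuve est correcte, mais elle emprunte un chemin diff\'erent de celui du texte. Les deux arguments reposent sur le m\^eme pivot --- le fait que $t^k$ tue $\phi_v(p^s)$ pour tout $s$ (lemme \ref{tktue} combin\'e \`a la proposition \ref{pipalg}) --- mais ils l'exploitent diff\'eremment. Le texte raisonne enti\`erement dans le mod\`ele de Kirillov: via la $B$-\'equivariance et l'injectivit\'e de $v\mapsto\phi_v$, la condition \`a v\'erifier devient $([(1+T)^{p^{n+m}}]-1)^k\,\phi_v(p^m)=0$ pour tout $m$, puis on observe que pour $n$ assez grand (par rapport \`a l'\'etendue du support de $\phi_v$) le multiplicateur appartient \`a $t^k L_\infty[[t]]$, ce qui conclut en une ligne. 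Vous, au contraire, restez au niveau du rel\`evement $\tilde z\in\tilde D$: vous invoquez le lemme \ref{piuk} pour obtenir un d\'enominateur $\varphi^{m_0}(T)^{k'}$, puis le lemme \ref{frobdr} c) (dont vous justifiez correctement l'extension de $\tilde{\mathbf{A}}^+$ \`a $\tilde D^+$) pour reconvertir l'annulation par $t^k$ dans $\mathbf{B}_{\rm dR}^+$ en une divisibilit\'e par $\varphi^n(T)^{k'-k}$ dans $\tilde D^+$. Votre version est plus longue et demande de manipuler explicitement les rel\`evements et les unit\'es $\varphi^j(T)/t$; elle \'evite cependant de faire appel \`a la structure de module et \`a l'injectivit\'e de $v\mapsto\phi_v$ d\'ej\`a incorpor\'ees dans la proposition \ref{pipalg}, et fournit au passage directement la borne $n\geq m_0$. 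L'approche du texte est plus \'economique car elle capitalise pleinement sur l'isomorphisme $B$-\'equivariant d\'ej\`a \'etabli.
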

   
   \begin{proof} Soient $i\leq j\in\mathbf{Z}$ tels que $\phi_v(p^m)=0$ pour $m\notin [i,j]$. 
  Si $n\geq -i$ et $m\in [i,j]$, alors $([(1+T)^{p^{n+m}}]-1)^k=\varphi^{m+n}(T)^k\in t^k L_n[[t]]$. Puisque 
  $\phi_v(p^m)$ est tué par
   $t^k$ par le lemme \ref{tktue} et la proposition \ref{pipalg}, on obtient $([(1+T)^{p^{n+m}}]-1)^k\phi_v(p^m)=0$
   pour tout $m\in\mathbf{Z}$ et $n\geq -i$, d'où le résultat.

      \end{proof}
    
    \section{Le théorème de dualité de Colmez}
 
  \quad Dans ce chapitre on étend (avec une preuve différente) un résultat de dualité de Colmez. Ce résultat fait le lien entre les deux chapitres précédents et il est au coeur de la preuve du théorème \ref{main}. Il permet aussi de faire 
   le lien entre les facteurs epsilon des représentations supercuspidales de $G$ et l'exponentielle de Perrin-Riou \cite{DIwasawa}.  
   On utilise enfin ce théorème de Colmez pour démontrer un résultat de densité des vecteurs analytiques $P$-finis. 
    
       \subsection{Traces de Tate normalisées}\label{Tatenorm}
   
   \quad Nous avons défini dans le § \ref{Tnorm} des traces de Tate normalisées $T_n: \tilde{D}\to \tilde{D}$ pour tout 
    $D\in \Phi\Gamma^{\rm et}(\mathcal{E})$ et tout $n\geq 0$. En général, il n'est pas vrai que 
    $T_n(\tilde{D}^+)\subset \tilde{D}^+$, mais c'est le cas \cite[prop. 8.5]{CvectdR} si $D=\mathcal{E}$ est le $(\varphi,\Gamma)$-module trivial. On obtient ainsi des applications $\qp$-linéaires $T_n: \tilde{\mathbf{B}}^H\to \tilde{\mathbf{B}}^H$ et $T_n: (\tilde{\mathbf{B}}^+)^H\to (\tilde{\mathbf{B}}^+)^H$. 
    Le r\'{e}sultat suivant fait l'objet de \cite[prop. V.4.5]{Cannals}.

\begin{proposition}\label{Tate}
  Soit $n\geq 1$. L'application $T_n: (\tilde{\mathbf{B}}^+)^H\to (\tilde{\mathbf{B}}^+)^H$ se prolonge par continuité en une application $F_n[[t]]$-linéaire
   $T_n:(\mathbf{B}_{\rm dR}^+)^H\to F_n[[t]]\subset (\mathbf{B}_{\rm dR}^+)^H $. 
\end{proposition}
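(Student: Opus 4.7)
The plan is to extend $T_n$ to $(\mathbf{B}_{\rm dR}^+)^H$ by continuity from the dense subring $(\tilde{\mathbf{B}}^+)^H$ (for the $t$-adic topology), after establishing a uniform control modulo each power of $t$.

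The starting point is the identity $[(1+T)^b]=\varepsilon(b)\cdot e^{bt}$ in $\mathbf{B}_{\rm dR}^+$. For $i\in I_n$ one has $\varepsilon(i)\in F_n$, hence $[(1+T)^i]\in F_n[[t]]$; for $i\in I\setminus I_n$ the element $\varepsilon(i)$ generates a strictly larger cyclotomic extension. Applied to an expansion $z=\sum_{i\in I}[(1+T)^i]z_i$ with $z_i\to 0$, this already proves the inclusion $T_n\bigl((\tilde{\mathbf{B}}^+)^H\bigr)\subset F_n[[t]]$ and identifies $T_n$, on this dense subring, as the natural projector onto the ``$I_n$-part'' of the expansion.

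For continuity I would work modulo $t^k$. The quotient $\mathbf{B}_{\rm dR}^+/t^k$ carries an $H$-stable filtration whose graded pieces are Tate twists of $\mathbf{C}_p$, and the vanishing of $H^1(H, t^j\mathbf{B}_{\rm dR}^+/t^{j+1})$ coming from Sen--Tate implies that $(\mathbf{B}_{\rm dR}^+/t^k)^H$ is itself a successive extension of Tate twists of $\mathbf{C}_p^H$. On each graded piece Tate's normalized trace $\mathbf{C}_p^H\to F_n$ is continuous for the $p$-adic topology, and a d\'evissage on $k$ shows that the reduction of $T_n$ modulo $t^k$ extends continuously to $(\mathbf{B}_{\rm dR}^+/t^k)^H\to F_n[t]/t^k$. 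Passing to the projective limit in $k$ yields the desired continuous $\qp$-linear map $(\mathbf{B}_{\rm dR}^+)^H\to F_n[[t]]$.

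Finally, the $F_n[[t]]$-linearity follows from the evident $F_n[t]$-linearity of $T_n$ on $(\tilde{\mathbf{B}}^+)^H$ (immediate from the expansion formula, since $F_n[t]\subset (\tilde{\mathbf{B}}^+)^H$ is fixed pointwise by $T_n$) combined with the density of $F_n[t]$ in $F_n[[t]]$ and the continuity just established. The main technical obstacle is precisely the uniform estimate across the $t$-adic filtration: one needs a quantitative form of Tate's almost \'etale descent in the cyclotomic tower $(F_n)_{n\geq 0}$ whose implicit constants behave well enough in $k$ to survive the projective limit, and this is exactly where Sen's theorem \cite{Sen} and the standard vanishing results for $H^i(H,\mathbf{C}_p(j))$ enter.
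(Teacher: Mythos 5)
The paper itself gives no proof of this statement: it is taken wholesale from Colmez, \emph{Th\'eorie d'Iwasawa des repr\'esentations de de Rham d'un corps local}, Ann.\ of Math.\ 148 (1998), prop.\ V.4.5. So the only thing to evaluate is the soundness of your argument, and there are several real problems with it.

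First, the claim \og $F_n[t]\subset (\tilde{\mathbf{B}}^+)^H$ is fixed pointwise by $T_n$ \fg\ is false, because $t=\log(1+T)=T-T^2/2+\cdots$ does not belong to $\tilde{\mathbf{B}}^+$; it only converges in $\mathbf{B}_{\rm dR}^+$. (Likewise $\varepsilon^{(n)}\in F_n$ sits in $\mathbf{B}_{\rm dR}^+$ via $\overline{\qp}\hookrightarrow\mathbf{B}_{\rm dR}^+$ but is not an element of $\tilde{\mathbf{B}}^+$, where only its Teichm\"uller lift lives.) So your linearity argument cannot be run as written. The object that actually lives in $(\tilde{\mathbf{A}}^+)^H$ and can carry the $t$-adic filtration downstairs is $T=e^t-1$: what you do have is that $T_n$ on $\tilde{\mathbf{B}}^H$ is $\mathcal{E}$-linear (in particular commutes with multiplication by $T$) and $\Gamma$-equivariant. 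These, not $F_n[t]$-linearity on the source, are the structures one should feed into the d\'evissage.

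Second, your opening claim \og this already proves $T_n((\tilde{\mathbf{B}}^+)^H)\subset F_n[[t]]$ \fg\ does not follow from the expansion $z=\sum_{i\in I}[(1+T)^i]z_i$. The coefficients $z_i$ belong to $\mathcal{E}$, and $\mathcal{E}$ does not embed in $\mathbf{B}_{\rm dR}^+$; the expansion is an identity in $\tilde{\mathbf{B}}^H$, not a decomposition inside $\mathbf{B}_{\rm dR}^+$. Knowing that each $[(1+T)^i]\in F_n[[t]]$ for $i\in I_n$ tells you nothing about the image of $T_n(z)=\sum_{i\in I_n}[(1+T)^i]z_i$ under $\tilde{\mathbf{B}}^+\hookrightarrow\mathbf{B}_{\rm dR}^+$, because the individual summands $[(1+T)^i]z_i$ do not make sense there. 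In fact the image of $T_n$ on $(\tilde{\mathbf{B}}^+)^H$ is a priori a much larger subring of $(\tilde{\mathbf{B}}^+)^H$, and the statement is precisely that its closure in $(\mathbf{B}_{\rm dR}^+)^H$ is $F_n[[t]]$; you cannot assume the conclusion at the start.

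Third, the d\'evissage step is where the mathematics actually lies and it is left entirely to the reader. What is needed is: (i) compatibility of $T_n$ with the Tate normalized trace $R_n:\hat{F}_\infty\to F_n$ after reduction modulo $t$, i.e.\ $\theta\circ T_n=R_n\circ\theta$ on $(\tilde{\mathbf{B}}^+)^H$; (ii) the fact that $T_n$ commutes with multiplication by $T$, which is a lift of $t$ in $(\tilde{\mathbf{B}}^+)^H$, so that (i) propagates through the filtration; and (iii) density of the image of $(\tilde{\mathbf{B}}^+)^H$ in $(\mathbf{B}_{\rm dR}^+/t^k)^H$, which uses the vanishing of $H^1(H,\mathbf{C}_p(j))$. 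None of this is checked. Finally, the closing worry about a uniform estimate in $k$ across the filtration misidentifies the difficulty: $(\mathbf{B}_{\rm dR}^+)^H$ carries the projective-limit topology of the $(\mathbf{B}_{\rm dR}^+/t^k)^H$, so once one has a compatible system of continuous maps at each finite level the passage to the limit is automatic, with no control on how the constants depend on $k$. The genuine obstacle is the compatibility (i)--(ii), not a quantitative bound uniform in $k$.
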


  On d\'{e}finit alors $T_{0, \rm dR}:\mathbf{B}_{\rm dR}^H\to \qp((t))\subset \mathbf{B}_{\rm dR}^H$ par
  $$T_{0, \rm dR}=\frac{1}{p} {\rm Tr}_{F_1((t))/\qp((t))}\circ T_1,$$
  $T_1$ étant l'unique extension $F_1((t))$-linéaire de l'application fournie par la proposition \ref{Tate}. Pour tout $n\geq 1$ on a 
  $$T_{0, \rm dR}|_{F_n((t))}=\frac{1}{p^n} {\rm Tr}_{F_n((t))/\qp((t))}.$$

  \subsection{Comparaison de deux accouplements}\label{dualitefond}
      
   \quad Soit $D\in \Phi\Gamma^{\rm et}(\mathcal{E})$ et soit $V=V(D)$. L'accouplement ${\rm Gal}(\overline{\qp}/\qp)$-\'{e}quivariant parfait $V^*(1)\times V\to L(1)$
 induit un accouplement $\tilde{\mathbf{B}}^H\otimes_{\qp} L$-bilin\'{e}aire, $\Gamma$-\'{e}quivariant $$\langle \,\,,\,\rangle: \tilde{\check{D}}\times \tilde{D}=(\tilde{\mathbf{B}}\otimes_{\qp} V^*(1))^H\times (\tilde{\mathbf{B}}\otimes_{\qp} V)^H\to (\tilde{\mathbf{B}}^H\otimes_{\qp} L)(1).$$
 On note $\frac{dT}{1+T}$ la base canonique du $\Gamma$-module $\qp(1)$ et on fait agir $\varphi$ sur $\frac{dT}{1+T}$ par
 $\varphi(\frac{dT}{1+T})=\frac{dT}{1+T}$. On identifie
 $M(1)$ \`{a} $M\frac{dT}{1+T}$ pour tout $\Gamma$-module $M$. Par définition, l'image de l'application $T_0: \tilde{\mathbf{B}}^H\to \tilde{\mathbf{B}}^H$ (voir le § \ref{Tatenorm}) est contenue dans $\mathcal{E}$,
 d'où une application $\Gamma$-\'{e}quivariante
 $$T_0=(T_0\otimes {\rm id})(1): (\tilde{\mathbf{B}}^H\otimes_{\qp} L)(1)\to \mathcal{E}\frac{dT}{1+T}.$$ 
 On d\'{e}finit
 un accouplement entre $\tilde{\check{D}}$ et $\tilde{D}$ par
 $$\{\,\,,\,\}: \tilde{\check{D}}\times \tilde{D}\to L, \quad \{\check{z}, z\}={\rm res}_0(T_0(\langle \sigma_{-1}(\check{z}), z\rangle)),$$ avec la notation habituelle ${\rm res}_0(f)=a_{-1}$ si
 $f=(\sum_{n\in\mathbf{Z}} a_nT^n)dT$. Il découle de \cite[prop. I.2.2]{Cmirab} que cet accouplement est $\varphi$ et $\Gamma$ invariant
 (sa restriction \`{a} $\check{D}\times D$ est celle utilis\'{e}e dans
 \cite{Cbigone}). 
 
 L'accouplement $V^*(1)\times V\to L(1)$
  induit aussi un accouplement
  $\Gamma$-\'{e}quivariant parfait $$\langle \,\,,\,\rangle: \tilde{\check{D}}_{\rm dif}\times \tilde{D}_{\rm dif}\to
  (L\otimes_{\qp} \mathbf{B}_{\rm dR}^H)(1)=(L\otimes_{\qp} \mathbf{B}_{\rm dR}^H) dt,$$
  $dt$ \'{e}tant\footnote{Comme $t=\log(1+T)$, on a bien $dt=\frac{dT}{1+T}$, compatible avec
  l'action de $\Gamma$.} une base de $\qp(1)$. Cet accouplement envoie $\tilde{\check{D}}^+_{\rm dif}\times
  \tilde{D}^+_{\rm dif}$ dans $(L\otimes_{\qp} (\mathbf{B}_{\rm dR}^+)^H)dt$. On d\'{e}finit alors un accouplement $\{\,\,,\,\}_{\rm dif}: \tilde{\check{D}}_{\rm dif}\times \tilde{D}_{\rm dif}\to L$
   en posant 
    $$\{\check{z},z\}_{\rm dif}={\rm res}_0( T_{0, \rm dR}(\langle \sigma_{-1}(\check{z}), z\rangle)).$$

        \begin{proposition}\label{reciproc}
 Soient $n,k\in\mathbf{N}^*$, $a\in\mathbf{Z}$ et $\check{z}\in \tilde{\check{D}}^+$, $z\in \left(\frac{\varphi^a(T)}{\varphi^n(T)}\right)^k\tilde{D}^+$.
 Alors $$\{\check{z},z\}=\sum_{j\in\mathbf{Z}} \{\varphi^{-j}(\check{z}), \varphi^{-j}(z)\}_{\rm dif},$$ la somme n'ayant qu'un nombre fini
 de termes non nuls.
\end{proposition}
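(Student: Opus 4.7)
The plan is to reduce the identity to a residue calculation for the single 1-form $\omega = \langle \sigma_{-1}(\check{z}), z\rangle \in \tilde{\mathbf{B}}^H \otimes_{\qp} L(1)$, then prove this by combining a partial-fraction decomposition with the explicit relation between $T_0$ and $T_{0,{\rm dR}}$. Writing $\omega = f\cdot dt$, the hypotheses give $f \in (\varphi^a(T)/\varphi^n(T))^k (\tilde{\mathbf{B}}^+)^H \otimes L$ (using that $\sigma_{-1}$ preserves $\tilde{\check{D}}^+$ and commutes with $\varphi$). Since $\varphi^{-j}(dt) = p^{-j} dt$, the claim is equivalent to
\begin{equation*}
\reso\bigl(T_0(f)\, dt\bigr) \;=\; \sum_{j \in \mathbf{Z}} p^{-j}\, \reso\bigl(T_{0,{\rm dR}}(\varphi^{-j}(f))\, dt\bigr).
\end{equation*}

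I would begin by settling the finiteness of the sum. Writing $\varphi^{-j}(f) = (\varphi^{a-j}(T)/\varphi^{n-j}(T))^k \cdot \varphi^{-j}(\tilde{f}^+)$ with $\tilde{f}^+ \in (\tilde{\mathbf{B}}^+)^H \otimes L$, Lemma \ref{frobdr}(a) shows that the quotient lies in $\mathbf{B}_{\rm dR}^+$ as soon as $a - j \geq 0$, i.e.\ $j \leq a$; and for $j > n$ both $\varphi^{a-j}(T)$ and $\varphi^{n-j}(T)$ are units of $\mathbf{B}_{\rm dR}^+$. In either case $\varphi^{-j}(f) \in (\mathbf{B}_{\rm dR}^+)^H \otimes L$, so $T_{0,{\rm dR}}(\varphi^{-j}(f)) \in \qp[[t]]$ and its residue vanishes; only the finitely many indices $a < j \leq n$ can contribute.

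For the main identity, I would use $(\tilde{\mathbf{B}}^+)^H$-linearity to reduce $f$ to the normal form $\tilde{f}^+/\varphi^n(T)^k$ with $\tilde{f}^+$ integral, then perform a partial-fraction decomposition of $1/\varphi^n(T)^k$ adapted to its zeros. The zeros of $\varphi^n(T)$ are the $T = \zeta - 1$ with $\zeta^{p^n} = 1$, which modulo $H$ are indexed exactly by $0 \leq j \leq n$. Each localization at $T = \varepsilon^{(n-j)} - 1$ corresponds, via $\varphi^{-j}$, to a pole of $\varphi^{-j}(f)$ at $t = 0$. Comparing both sides piece-by-piece reduces to the explicit formula $T_{0,{\rm dR}}|_{F_m((t))} = p^{-m}\,{\rm Tr}_{F_m((t))/\qp((t))}$ from $\S\ref{Tatenorm}$, which identifies the Galois-averaging underlying $T_0$ with the de Rham trace. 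A residue-theorem argument on the set of zeros of $\varphi^n(T)$ (on a suitable covering of $\mathbf{P}^1$) then forces the two sides to coincide.

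The main obstacle is the careful bookkeeping of the normalizations: the factor $p^{-j}$ coming from $\varphi^{-j}(dt)$, the $1/p^m$ inside $T_{0,{\rm dR}}$, and the growth of the Galois trace ${\rm Tr}_{F_m/\qp}$ must combine correctly to cancel the powers of $p$ hidden in the partial-fraction expansion. A secondary subtlety is that the partial-fraction expansion of $1/\varphi^n(T)^k$ should be carried out in a completion of $(\tilde{\mathbf{B}}^+)^H \otimes L$ large enough to see all the primitive roots of unity simultaneously while respecting $H$-invariance; this is where the use of $\tilde{\mathbf{B}}^H$ (rather than $\mathcal{E}^H$) is essential, because the latter cannot simultaneously accommodate all the localizations $\varphi^{-j}$.
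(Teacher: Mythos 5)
Your reduction to a residue identity and the finiteness argument match the paper, but there are two genuine problems with what follows.

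First, the $p^{-j}$ factor you introduce via ``$\varphi^{-j}(dt)=p^{-j}dt$'' is wrong. The paper's convention (stated explicitly at the start of \S\ref{dualitefond}) is that $\varphi$ acts \emph{trivially} on the basis vector $\frac{dT}{1+T}=dt$ of $\qp(1)$: this $dt$ is a fixed abstract basis of the Tate twist sitting in the target of the pairing, not the differential $dt$ inside $\Omega^1_{\mathbf{B}_{\rm dR}}$ on which $\varphi$ would act by $p$. Consequently $\langle\sigma_{-1}(\varphi^{-j}\check{z}),\varphi^{-j}z\rangle=\varphi^{-j}(f)\,dt$ with no extra scalar, and the identity you state as ``equivalent'' to the proposition has the wrong normalisation. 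Since you explicitly flag the bookkeeping of these powers of $p$ as the main obstacle you have not resolved, this is not a cosmetic slip.

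Second, and more seriously, the passage from $T_0(f)$ to $\varphi^{-j}(f)$ is missing. The left side only sees $g=T_0(f)$, a Laurent series in $T$, while the right side is written in terms of $\varphi^{-j}(f)$ with $f\in(\tilde{\mathbf B}^+)^H\otimes L$; in general $\varphi^{-j}(f)\neq\varphi^{-j}(g)$. Any partial-fraction or residue-theorem argument you carry out on the Laurent series $g$ will produce terms involving localisations $\varphi^{-j}(g)$, and you then still have to show that $T_{0,{\rm dR}}(\varphi^{-j}(g))=T_{0,{\rm dR}}(\varphi^{-j}(f))$. The paper does exactly this, via $T_1\circ T_j=T_1$ and $\varphi^{-j}\circ T_0=T_j\circ\varphi^{-j}$ (for $j\geq1$), and this is the pivot of its proof; once it is available, no partial fractions are needed, since the explicit formula $T_{0,{\rm dR}}(\varphi^{-j}(g))=\frac{1}{p^j}\sum_{\zeta\in A_j}g(\zeta e^{t/p^j}-1)$, the substitution $t\mapsto p^{j-n}t$, and the $\psi$-invariance of $\reso$ finish the computation directly. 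Your sketch never supplies this bridge, and without it the two sides of the identity are not comparable.
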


\begin{proof} On peut supposer\footnote{ Si $a\ne 0$, remplacer $z$ et $\check{z}$ par $\varphi^{-a}(z)$ et
 $\varphi^{-a}(\check{z})$, ainsi que $n$ par $\max(1,n-a)$; l'\'{e}galit\'{e} \`{a} d\'{e}montrer
est la m\^{e}me, car $\{\,\,,\,\}$ est $\varphi$-invariant.} que $a=0$. Posons 
$\langle \sigma_{-1}(\check{z}), z\rangle=f\frac{dT}{1+T}$, avec $f\in \left(\frac{T}{\varphi^n(T)}\right)^k (L\otimes_{\qp}
 (\tilde{\mathbf{B}}^+)^H)$. Alors $$\langle \sigma_{-1}(\varphi^{-j}(\check{z})), \varphi^{-j}(z)\rangle=\varphi^{-j}(f)dt\in (L\otimes_{\qp} \mathbf{B}_{\rm dR}^H)dt$$
et $\varphi^{-j}(f)\in (L\otimes_{\qp} \mathbf{B}_{\rm dR}^+)^H$ pour $j\notin [1,n]$ (lemme \ref{frobdr}), ce qui fait que seuls les termes
d'indice $j\in [1,n]$ dans la somme sont non nuls. Dans la suite on fixe un tel $j$.

   Notons $g=T_0(f)\in \left(\frac{T}{\varphi^n(T)}\right)^k \mathcal{E}^+$, de telle sorte que
   $\{\check{z},z\}={\rm res}_0( g\frac{dT}{1+T})$. Puisque $j\geq 1$, on
   a $T_1\circ T_j=T_1$ et $\varphi^j\circ T_j=T_0\circ \varphi^j$, donc 
   $\varphi^{-j}\circ T_0=T_j\circ \varphi^{-j}$.
  On en déduit que 
   $$T_1(\varphi^{-j}(g))=T_1(\varphi^{-j}(T_0(f)))=T_1\circ T_j (\varphi^{-j}(f))=T_1(\varphi^{-j}(f)),$$
  donc $T_{0, \rm dR}(\varphi^{-j}(g))=T_{0, \rm dR}(\varphi^{-j}(f))$ et
    $$\{\varphi^{-j}(\check{z}), \varphi^{-j}(z)\}_{\rm dif}={\rm res}_0(T_{0, \rm dR}(\varphi^{-j}(g))dt).$$

    Si $A_j$ est l'ensemble des racines primitives d'ordre $p^j$ de l'unit\'{e}, alors
$$T_{0, \rm dR}(\varphi^{-j}(g))=\frac{1}{p^j} {\rm Tr}_{L_j((t))/L((t))} (g(\varepsilon^{(j)}e^{t/ p^{j}}-1))=
\frac{1}{p^j}\sum_{\zeta\in A_j} g(\zeta e^{t/p^{j}}-1)$$ et, en faisant la substitution $t\to p^{j-n}t$, on obtient finalement
 $$ \{\varphi^{-j}(\check{z}), \varphi^{-j}(z)\}_{\rm dif}=\frac{1}{p^n}\sum_{\zeta\in A_j} {\rm res}_0(g(\zeta e^{t/ p^{n}}-1)dt),$$ ce qui permet de conclure que
 $$\sum_{j\in\mathbf{Z}} \{\varphi^{-j}(\check{z}), \varphi^{-j}(z)\}_{\rm dif}=\frac{1}{p^n}\sum_{\zeta\in \mu_{p^n}} {\rm res}_0(g(\zeta e^{t/p^{n}}-1)dt)$$
$$= {\rm res}_0\left(\frac{1}{p^n}\sum_{\zeta\in \mu_{p^n}}  g(\zeta e^{t/p^{n}}-1) dt\right)={\rm res}_0(\psi^n(g)(e^t-1)dt)=$$ $${\rm res}_0\left(\psi^n(g)\frac{dT}{1+T}\right)={\rm res}_0\left(g\frac{dT}{1+T}\right)=\{\check{z},z\}.$$
Les derni\`{e}res \'{e}galit\'{e}s utilisent le changement de variable
 $T=e^t-1$ et l'\'{e}galit\'{e} \cite[prop. I.2.2]{Cmirab} ${\rm res}_0(f\frac{dT}{1+T})={\rm res}_0(\psi(f)\frac{dT}{1+T})$ pour tout $f\in\mathcal{R}$.

\end{proof}

\subsection{Un résultat de dualité}\label{dualKirillov}

 \quad   Le théorème \ref{dualKir} ci-dessous étend et raffine \cite[prop. VI.5.12]{Cbigone}, avec une preuve différente de celle de loc.cit. Son énoncé demande pas mal de préliminaires.
 Rappelons (voir \ref{dualitefond}) que l'on dispose d'un
accouplement $\{\,\,,\,\}_{\rm dif}$. 
Il induit un accouplement $\Gamma$-invariant parfait entre $\check{D}_{{\rm dif},n}$ et
   $D_{{\rm dif},n}$, $\check{D}_{{\rm dif},n}^+$ et $D_{{\rm dif},n}^+$ étant exactement orthogonaux pour $n$ assez grand \cite[lemme VI.3.3]{Cbigone}, d'où un accouplement 
   $\{\,\,,\,\}_{\rm dif}$ entre $\check{D}_{{\rm dif},n}^+$ et $D_{{\rm dif},n}/D_{{\rm dif},n}^+$.

On fixe $D\in\mathcal{C}_L(\delta)$
 et on pose $\Pi=\Pi_{\delta}(D)$. On suppose que les facteurs de Jordan-Hölder de $D$ sont tous de dimension au moins $2$, de telle sorte que les résultats du
 § \ref{Pfini} s'appliquent à l'espace $\Pi^{P-{\rm fini}}$. D'après le corollaire VI.13 de \cite{CD}, 
 il existe $m(D)$ assez grand tel que $(\Pi^{\rm an})^*\subset \check{D}^{]0,r_{m(D)}]}\boxtimes_{\delta^{-1}}\p1$. 
  Quitte à augmenter $m(D)$, on peut supposer 
 que le morphisme $\varphi^{-n}= \check{D}^{]0,r_{n}]}\to \tilde{\check{D}}_{\rm dif}^+$ de localisation en 
 $\varepsilon^{(n)}-1$ est défini pour $n\geq m(D)$ (voir le § \ref{con}).  
On peut donc d\'{e}finir pour $\check{z}\in (\Pi^{\rm an})^*$, $n\geq m(D)$ et $j\in\mathbf{Z}$ $$i_{j,n}(\check{z})=\varphi^{-n}\left( {\rm Res}_{\zp}\left( \left(\begin{smallmatrix} p^{n-j} & 0\\0 & 1\end{smallmatrix}\right)\check{z}\right)\right)\in \check{D}_{{\rm dif},n}^+. $$
On note $\{\,\,,\,\}_{\p1}$ l'accouplement naturel entre $\Pi^*$ (dual topologique de $\Pi$) et $\Pi$, ainsi qu'entre 
$(\Pi^{\rm an})^*$ et $\Pi^{\rm an}$. Il est en fait induit par un accouplement 
$G$-équivariant parfait $\{\,\,,\,\}_{\p1}$ entre $\check{D}\boxtimes_{\delta^{-1}}\p1$ et $D\boxtimes_{\delta}\p1$ (resp. 
entre $\check{D}_{\rm rig}\boxtimes_{\delta^{-1}}\p1$ et $D_{\rm rig}\boxtimes_{\delta}\p1$) (voir 
le \S \, III.6 de \cite{CD} pour ces accouplements). 

 \begin{theorem}\label{dualKir}
  Soit $v\in \Pi_c^{P-\rm fini}$ et soit $n\geq m(D)$ tel que $\phi_v(p^{j})\in D_{{\rm dif},n}/ D_{{\rm dif},n}^+$
  pour tout $j\in\mathbf{Z}$. Alors $v\in \Pi^{\rm an}$ et pour tout $\check{z}\in (\Pi^{\rm an})^*$ on a
   $$\{\check{z},v\}_{\p1}=\sum_{j\in\mathbf{Z}} \{i_{j,n}(\check{z}), \phi_v(p^{-j})\}_{\rm dif}.$$
 \end{theorem}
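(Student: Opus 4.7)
The plan is to combine the explicit Kirillov-type description of $\Pi_c^{P-\rm fini}$ (Proposition \ref{picpfct}, Corollaire \ref{picp}) with the reciprocity between the $(\varphi,\Gamma)$-pairing and the de Rham pairing in Proposition \ref{reciproc}, after reducing via $B$-equivariance to a single Kirillov value.

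By Corollaire \ref{picp} I will decompose $v = \sum_{j \in \mathbf{Z}} v_j$ with $\phi_{v_j}(p^{-i}) = 1_{i=j}\phi_v(p^{-j})$; by linearity of both sides it suffices to prove the formula for each $v_j$. Writing $v_j = \left(\begin{smallmatrix} p^j & 0 \\ 0 & 1\end{smallmatrix}\right) w_j$ with $w_j$ having Kirillov $1_{i=0}\phi_v(p^{-j})$, and using the $G$-invariance of the $\mathbf{P}^1$-pairing together with the identity $\left(\begin{smallmatrix} p^{n-j} & 0 \\ 0 & 1\end{smallmatrix}\right) = \left(\begin{smallmatrix} p^n & 0 \\ 0 & 1\end{smallmatrix}\right) \left(\begin{smallmatrix} p^{-j} & 0 \\ 0 & 1\end{smallmatrix}\right)$, which gives $i_{0,n}(\left(\begin{smallmatrix} p^{-j} & 0 \\ 0 & 1\end{smallmatrix}\right) \check{z}) = i_{j,n}(\check{z})$, this reduces to the case $v = w$ with $\phi_w(p^{-i}) = 1_{i=0} x$ for some $x \in D_{{\rm dif},n}/D^+_{{\rm dif},n}$: prove $\{\check{z}, w\}_{\mathbf{P}^1} = \{i_{0,n}(\check{z}), x\}_{\rm dif}$.

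In the reduced case, I will construct an explicit lift of $w$ to an element $z \in D_{\rm rig} \boxtimes_\delta \mathbf{P}^1$ supported on $\mathbf{Z}_p$, by adapting the construction of Proposition \ref{picpfct} (which uses Lemme $6$ of \cite{Annalen}) to work with $D^{]0,r_n]}$ rather than $\tilde{D}^+$; this simultaneously shows $w \in \Pi^{\rm an}$ (hence $v \in \Pi^{\rm an}$) via Th\'eor\`eme \ref{analitic}. To compute $\{\check{z}, z\}_{\mathbf{P}^1}$, I will exploit the $G$-equivariant duality of \cite{CD} and the support condition on $z$ to express this pairing as a one-sided $(\varphi,\Gamma)$-pairing; Proposition \ref{reciproc} then expands it as a finite sum of de Rham pairings, and the explicit form of the lift combined with the orthogonality between $\check{D}^+_{{\rm dif},j}$ and $D^+_{{\rm dif},j}$ ensures that exactly one term survives, which unravels to $\{i_{0,n}(\check{z}), x\}_{\rm dif}$.

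The main obstacle will be this last step: carefully bookkeeping the various residue and localisation operators to identify which term of Proposition \ref{reciproc} survives and to verify that it equals $\{i_{0,n}(\check{z}), x\}_{\rm dif}$. A secondary technical point will be ensuring that the lift lies in $D_{\rm rig}$ rather than merely in $\tilde{D}$, so that the analytic $\mathbf{P}^1$-pairing of Th\'eor\`eme \ref{analitic} applies; this should follow by translating the $\tilde{D}^+$-based argument of Proposition \ref{picpfct} into the $D^{]0,r_n]}$ setting, using the $\varphi^{-n}$-localisation map.
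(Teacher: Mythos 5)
Your plan is structurally different from the paper's, and the differences are precisely where the difficulty hides. The paper does \emph{not} establish $v\in\Pi^{\rm an}$ by constructing a lift of $v$ in $D_{\rm rig}$; instead, it defines the linear functional $F(\check z)=\sum_j\{i_{j,n}(\check z),\phi_v(p^{-j})\}_{\rm dif}$ on $(\Pi^{\rm an})^*$, checks it is continuous, and invokes reflexivity of $\Pi^{\rm an}$ (a space of compact type) to obtain some $v_1\in\Pi^{\rm an}$ representing $F$. The content is then to show $v_1=v$, which simultaneously yields $v\in\Pi^{\rm an}$ and the duality formula. To show $v_1=v$, it uses that $\iota(\tilde{\check D}^+)$ is dense in $\Pi^*\subset(\Pi^{\rm an})^*$ (Hahn--Banach), and for $\check z\in\tilde{\check D}^+$ it computes $i_{0,n}(\iota(\check z))=T_n(\check z)$, observes that the normalised trace $T_n$ can be dropped because $\phi_v(p^{-j})$ lives in $D_{{\rm dif},n}/D_{{\rm dif},n}^+$ and $\check D_{{\rm dif},n}^+$, $D_{{\rm dif},n}^+$ are exactly orthogonal, and then appeals to Proposition \ref{reciproc} with a lift of $v$ in $\tilde D$ (not $D_{\rm rig}$).

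Your proposal has two genuine gaps, both of which the paper's strategy is specifically engineered to avoid. First, you assert you can build a lift of $w$ directly in $D_{\rm rig}\boxtimes_\delta\p1$ supported on $\zp$, ``adapting'' the construction of Proposition \ref{picpfct} from $\tilde D^+$ to $D^{]0,r_n]}$; but the key tool there (Lemme $6$ of \cite{Annalen}) produces elements of $\tilde D^+$, and there is no statement in the paper (or obvious adaptation) that produces an element of $D^{]0,r_n]}$ with the needed de Rham error control at \emph{all} localisations $\varphi^{-m}$. Indeed the inclusion $v\in\Pi^{\rm an}$ is the nontrivial part of the theorem, not an input; asserting it via a direct $D_{\rm rig}$-lift is circular unless you supply that lift. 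Second, Proposition \ref{reciproc} is stated for $\check z\in\tilde{\check D}^+$ and $z\in(\varphi^a(T)/\varphi^n(T))^k\tilde D^+$, not for $\check z\in(\Pi^{\rm an})^*\subset\check D_{\rm rig}\boxtimes_{\delta^{-1}}\p1$. You cannot apply it as you wrote without first reducing to $\check z\in\tilde{\check D}^+$; this is exactly the density and Hahn--Banach step your proposal omits. Incidentally, the reduction to the one-support case $\phi_w=1_{i=0}x$ is legitimate but unnecessary: the paper's argument handles the general $v$ at once via the finite sum, so your decomposition buys nothing while adding a bookkeeping layer with $G$-equivariance. To repair the proposal, replace the $D_{\rm rig}$-lift and direct computation with the reflexivity/Hahn--Banach reduction; once you are on $\iota(\tilde{\check D}^+)$, you can keep the lift to $\tilde D$ from Proposition \ref{ufinicompact} and your use of Proposition \ref{reciproc} becomes legal.
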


\begin{proof} Notons d\'{e}j\`{a} qu'un tel $n$ existe bien (prop. \ref{picp}).
  Soit $F(\check{z})$ la somme dans le terme de droite. Puisque $\phi_v$ est \`{a} support compact dans $\qpet$, il n'y a qu'un nombre fini
  de termes non nuls dans cette somme (et cela uniform\'{e}ment en $\check{z}$).
  Comme $z\to i_{j,n}(z)$ sont continues
  (car ${\rm Res}_{\zp}$ et $\varphi^{-n}$ le sont),
  on en d\'{e}duit que $F$ d\'{e}finit une forme lin\'{e}aire continue sur
  $ (\Pi^{\rm an})^*$. Mais
  $\Pi^{\rm an}$ est r\'{e}flexif (c'est un espace de type compact), donc il existe
  $v_1\in \Pi^{\rm an}$ tel que $F(\check{z})=\{\check{z}, v_1\}_{\p1}$ pour tout
  $\check{z}\in (\Pi^{\rm an})^*$. On va montrer que
  $v_1=v$, ce qui permettra de conclure. Rappelons que l'on dispose d'une inclusion 
  $\iota: \tilde{\check{D}}^+\to \Pi^*$. Son image est dense d'après le corollaire III.22 de \cite{CD}, donc (par le théorème de Hahn-Banach)
  il suffit de montrer que
  $F(\iota(\check{z}))=\{\iota(\check{z}), v\}_{\p1}$ pour tout $\check{z}\in \tilde{\check{D}}^+$. 
   
   Fixons donc $\check{z}\in\tilde{\check{D}}^+$.
   On a ${\rm Res}_{\zp}\left( \left(\begin{smallmatrix} p^n & 0 \\0 & 1\end{smallmatrix}\right)\iota(\check{z}) \right)=
 \varphi^n(T_n(\check{z}))$ par définition de l'application $\iota$, et en appliquant $\varphi^{-n}$
  on obtient 
   $$i_{0,n}(\iota(\check{z}))=T_n(\check{z})\in \check{D}_{{\rm dif},n}^+.$$
 Puisque $\phi_v(p^{-j})\in
   D_{{\rm dif},n}/ D_{{\rm dif},n}^+$, un retour \`{a} la d\'{e}finition de $\{\,\,,\,\}_{\rm dif}$ combin\'{e} avec l'égalité ci-dessus 
 montrent que
     $$\{i_{j,n}(\iota(\check{z})), \phi_v(p^{-j})\}_{\rm dif}=\{T_n(\varphi^{-j}(\check{z})), \phi_v(p^{-j})\}_{\rm dif}=
     \{\varphi^{-j}(\check{z}), \phi_v(p^{-j})\}_{\rm dif}.$$
   Comme $v\in \Pi_c^{P-\rm fini}$, la preuve de la proposition \ref{ufinicompact} montre l'existence
   d'entiers $a,m,k$ tels que $v$ ait un rel\`{e}vement $z$ \`{a} $\left(\frac{\varphi^a(T)}{\varphi^m(T)}\right)^k\tilde{D}^+$.
   Comme $\varphi^{-j}(z)-\phi_v(p^{-j})
    \in \tilde{D}_{\rm dif}^+$, on a $\{\varphi^{-j}(\check{z}), \phi_v(p^{-j})\}_{\rm dif}=
    \{\varphi^{-j}(\check{z}), \varphi^{-j}(z)\}_{\rm dif}$. Le th\'{e}or\`{e}me est donc une cons\'{e}quence de la
    proposition \ref{reciproc}.

\end{proof}

\begin{corollary}\label{pialg}
  Soit $D\in \mathcal{C}_L(\delta)$ tel que les facteurs de Jordan-Hölder de $D$
  soient de dimension $\geq 2$, et les poids de Hodge-Tate de $D$ soient $\leq 1$. 
  On suppose que $w(\delta)=1-k$ et on pose $\Pi=\Pi_{\delta}(D)$.
Alors $\Pi_{c,k}^{P-\rm alg}$ est un sous-espace de $\Pi^{\rm an}$ 
   tué par $(u^+)^k$ et par $\prod_{i=0}^{k-1} (h-2i+k-1)$.
\end{corollary}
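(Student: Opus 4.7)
Le plan est d'établir les trois assertions séparément. Pour l'inclusion $\Pi_{c,k}^{P-\rm alg}\subset\Pi^{\rm an}$, observons que $\Pi_{c,k}^{P-\rm alg}\subset\Pi_c^{P-\rm fini}$ par définition, et que le théorème~\ref{dualKir} affirme déjà que tout vecteur de $\Pi_c^{P-\rm fini}$ est localement analytique, d'où l'inclusion est immédiate.

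Pour l'annulation par $(u^+)^k$, le lemme~\ref{uplus} fournit $\left(\left(\begin{smallmatrix} 1 & p^n \\ 0 & 1\end{smallmatrix}\right)-1\right)^k v=0$ pour $n$ assez grand, puis le lemme~\ref{degre} montre que l'application orbite $x\mapsto\left(\begin{smallmatrix} 1 & x \\ 0 & 1\end{smallmatrix}\right)v$ est localement polynomiale de degré $<k$. Comme $v\in\Pi^{\rm an}$, cette application orbite est aussi analytique au voisinage de $0$, donc coïncide avec un polynôme de degré $<k$ sur un voisinage de l'origine; sa dérivée $k$-ième en $0$, qui vaut $(u^+)^k v$ par définition, s'annule donc.

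Pour la dernière assertion, l'idée est de traduire les relations discrètes $\mu_{n,k}(v)=0$ en une relation infinitésimale impliquant $a^+=\left(\begin{smallmatrix} 1 & 0 \\ 0 & 0\end{smallmatrix}\right)\in\mathfrak{gl}_2$. Comme $v\in\Pi_c^{P-\rm fini}$, l'espace $M=L\left[\left(\begin{smallmatrix}\zpet & 0 \\ 0 & 1\end{smallmatrix}\right)\right]v$ est de dimension finie sur $L$ et stable par $a^+$, et l'analyticité locale de $v$ entraîne que pour $a$ assez proche de $1$ l'action de $\left(\begin{smallmatrix}a & 0 \\ 0 & 1\end{smallmatrix}\right)$ sur $M$ coïncide avec $\exp(\log(a)\cdot a^+)$. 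En posant $\lambda_n=\log(1+p^n)$ et en factorisant par les scalaires non nuls, la relation $\mu_{n,k}(v)=0$ se réécrit
$$\prod_{i=0}^{k-1}\bigl(\exp(\lambda_n(a^+-(i+1-k)))-1\bigr)v=0.$$
Pour $n$ assez grand, chaque facteur vaut $\lambda_n(a^+-(i+1-k))$ composé avec un opérateur inversible sur $M$ (puisque $|\lambda_n|\to 0$ rend $(\exp(\lambda_n X)-1)/(\lambda_n X)$ proche de l'identité au sens de la norme d'opérateur sur $M$), ce qui donne $\prod_{i=0}^{k-1}(a^+-(i+1-k))v=0$. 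La conclusion découle alors de l'identité $h=2a^+-z$ dans $U(\mathfrak{gl}_2)$ et du fait que le centre $z$ agit sur $\Pi$ par $w(\delta)=1-k$: la valeur propre $i+1-k$ de $a^+$ correspond à la valeur propre $2(i+1-k)-(1-k)=2i+1-k$ de $h$, d'où $\prod_{i=0}^{k-1}(h-2i+k-1)v=0$.

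La principale subtilité réside dans cette dernière étape, plus précisément dans la justification de l'inversibilité de $(\exp(\lambda_n X)-1)/(\lambda_n X)$ sur $M$ pour $n$ grand. Une fois une norme choisie sur l'espace de dimension finie $M$ et compte tenu de $\lambda_n\to 0$, le passage du niveau groupe au niveau algèbre de Lie devient formel.
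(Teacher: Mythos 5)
Your proof is correct and follows essentially the same route as the paper's, but you have unpacked a step that the paper abbreviates. The first two assertions (inclusion in $\Pi^{\rm an}$ and annihilation by $(u^+)^k$) are handled exactly as in the paper, via the théorème~\ref{dualKir} and the lemme~\ref{uplus}, with you invoking in addition the lemme~\ref{degre} to make the orbit-map argument explicit. For the third assertion the paper states, with a slight abus de langage, that chaque vecteur est tué par $\prod_{i=0}^{k-1}(a^+-(i+1-k))$ \og par définition\fg\ de $\Pi_{c,k}^{P-\rm alg}$; what this really requires is the passage du groupe à l'algèbre de Lie that you spell out: the annulation by $\mu_{n,k}$ for $n$ grand, combined with the locale analyticité of the torus action on the finite-dimensional space $M=L\left[\left(\begin{smallmatrix}\zpet & 0\\0&1\end{smallmatrix}\right)\right]v$, yields $\prod_{i=0}^{k-1}(a^+-(i+1-k))v=0$ after factoring out the inversible operators $(\exp(\lambda_n X)-1)/(\lambda_n X)$ for $\lambda_n\to 0$. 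The translation $a^+=\frac{h+1-k}{2}$ (using $w(\delta)=1-k$ for the central character) then gives the stated annihilator; your identity $h=2a^+-z$ and the computation of eigenvalues match the paper's. In short: same approach, with your version supplying the details the paper leaves implicit in its \og par définition\fg.
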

   
   \begin{proof}
    Le fait que $\Pi_{c,k}^{P-\rm alg}$ est un sous-espace de $\Pi^{\rm an}$ 
   tué par $(u^+)^k$ découle du théorème \ref{dualKir} et du lemme \ref{uplus}. 
   Pour vérifier la dernière assertion, notons $a^+\in \mathfrak{gl}_2$ l'action
   infinitésimale de $\left(\begin{smallmatrix} \zpet & 0\\0 & 1\end{smallmatrix}\right)$. 
   Comme $\Pi$ a pour caractère central $\delta$ et $w(\delta)=1-k$, on a
   $a^+=\frac{h+1-k}{2}$ sur $\Pi^{\rm an}$. Par définition de 
    $\Pi_{c,k}^{P-\rm alg}$, chacun de ses vecteurs est tué par
    $\prod_{i=0}^{k-1} (a^+-(i+1-k))$. Le résultat s'en déduit.
    
   \end{proof}
   
    Nous terminons ce $\S$ par un calcul qui nous sera utile plus loin:
    
    \begin{lemma}\label{ijn}
     Soient $\mathcal{D}\in \mathcal{C}_L(\delta)$ et $k\in\mathbf{N}^*$ tels que $u^{-}(\check{z})=\frac{\nabla(k-\nabla)(\check{z})}{t}$
    pour tout $\check{z}\in \check{\mathcal{D}}_{\rm rig}$. Alors 
       $$i_{j,n}((u^{-})^k \check{z})=(-p^j)^k \frac{\nabla_{2k}(i_{j,n}(\check{z}))}{t^k}$$
       pour tous $\check{z}\in (\Pi_{\delta}(\mathcal{D})^{\rm an})^*$, $n\geq m(\mathcal{D})$ et $j\in\mathbf{Z}$. 
       
    \end{lemma}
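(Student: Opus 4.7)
The plan is to reduce to the case $j=0$ by an equivariance argument, and then to invoke Corollaire~\ref{long} applied to the element $y:={\rm Res}_{\zp}\bigl(\matrice{p^n}{0}{0}{1}\check{z}\bigr)\in\check{\mathcal{D}}_{\rm rig}$, which by definition represents $i_{0,n}(\check{z})=\varphi^{-n}(y)$. The identity $i_{j,n}(\check{z})=i_{0,n}\bigl(\matrice{p^{-j}}{0}{0}{1}\check{z}\bigr)$ is immediate from the definition of $i_{j,n}$, and the adjoint relation $\matrice{a}{0}{0}{1}u^-\matrice{a^{-1}}{0}{0}{1}=a^{-1}u^-$, applied with $a=p^{-j}$, yields $\matrice{p^{-j}}{0}{0}{1}(u^-)^k=p^{jk}(u^-)^k\matrice{p^{-j}}{0}{0}{1}$. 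Together, these reduce the lemma to the case $j=0$, where the identity to be established reads
\[
i_{0,n}((u^-)^k\check{w})=(-1)^k\,\frac{\nabla_{2k}(i_{0,n}(\check{w}))}{t^k};
\]
the factor $p^{jk}$ picked up in the reduction then combines with $(-1)^k$ to produce the $(-p^j)^k$ of the statement.

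The heart of the argument---and what I expect to be the main obstacle---is the commutation ${\rm Res}_{\zp}\circ u^-=u^-\circ{\rm Res}_{\zp}$ on $\check{\mathcal{D}}_{\rm rig}\boxtimes_{\delta^{-1}}\p1$. Since the extension-by-zero inclusion of $\check{\mathcal{D}}_{\rm rig}$ into $\check{\mathcal{D}}_{\rm rig}\boxtimes_{\delta^{-1}}\p1$ is split by ${\rm Res}_{\zp}$, one has a direct-sum decomposition
\[
\check{\mathcal{D}}_{\rm rig}\boxtimes_{\delta^{-1}}\p1=\check{\mathcal{D}}_{\rm rig}\oplus\ker({\rm Res}_{\zp}),
\]
and the commutation amounts to showing that both summands are $u^-$-stable. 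Stability of the first is exactly the hypothesis of the lemma. For the second, using the identification $\p1\setminus\zp=\matrice{0}{1}{1}{0}(p\zp)$, every element of $\ker({\rm Res}_{\zp})$ is of the form $\matrice{0}{1}{1}{0}Z$ with $Z\in\check{\mathcal{D}}_{\rm rig}$ supported on $p\zp$; the Weyl-type relation $u^-\matrice{0}{1}{1}{0}=\matrice{0}{1}{1}{0}u^+$ (which follows from $wu^+w^{-1}=u^-$) combined with the fact that $u^+$ acts on $\check{\mathcal{D}}_{\rm rig}$ by multiplication by $t$ yields $u^-(\matrice{0}{1}{1}{0}Z)=\matrice{0}{1}{1}{0}(tZ)$. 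Since $t=\varphi(t/p)\in\varphi(\mathcal{R})$, multiplication by $t$ preserves the subspace of sections supported on $p\zp$, so this element still lies in $\ker({\rm Res}_{\zp})$.

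Once the commutation is established, the conclusion follows by a direct calculation. Using $\matrice{p^n}{0}{0}{1}(u^-)^k=p^{-nk}(u^-)^k\matrice{p^n}{0}{0}{1}$ and the commutation above, one obtains
\[
{\rm Res}_{\zp}\bigl(\matrice{p^n}{0}{0}{1}(u^-)^k\check{w}\bigr)=p^{-nk}(u^-)^k y,
\]
which by Corollaire~\ref{long} equals $p^{-nk}(-1)^k\nabla_{2k}(y)/t^k$. Applying $\varphi^{-n}$ and using $\varphi^{-n}(t)=t/p^n$ together with $\varphi^{-n}\nabla=\nabla\varphi^{-n}$, the two factors $p^{nk}$ cancel and the result is $(-1)^k\nabla_{2k}(i_{0,n}(\check{w}))/t^k$, as required.
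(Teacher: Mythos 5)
Your proof is correct and, at its core, the same as the paper's: conjugate $(u^-)^k$ past $\matrice{a}{0}{0}{1}$ with the adjoint relation, move ${\rm Res}_{\zp}$ past $(u^-)^k$, apply le corollaire~\ref{long} to the resulting element of $\check{\mathcal{D}}_{\rm rig}$, and untwist with $\varphi^{-n}$ using $\varphi^{-n}(t)=t/p^n$ (your reduction to $j=0$ is a cosmetic regrouping of these steps). The one genuine point of divergence is what you flag as the ``heart of the argument'': you prove the commutation ${\rm Res}_{\zp}\circ u^-=u^-\circ{\rm Res}_{\zp}$ from first principles via the decomposition $\check{\mathcal{D}}_{\rm rig}\boxtimes_{\delta^{-1}}\p1=\check{\mathcal{D}}_{\rm rig}\oplus\ker({\rm Res}_{\zp})$ and a Weyl-element argument on the second summand, which is sound ($u^-\matrice{0}{1}{1}{0}=\matrice{0}{1}{1}{0}u^+$, $u^+$ acts by $t=\varphi(t/p)$, and multiplication by $\varphi(t/p)$ preserves $\varphi(\check{\mathcal{D}}_{\rm rig})$, the sections supported on $p\zp$). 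The paper instead simply cites \cite[prop.~VI.8]{CD}, which shows that all of $U(\mathfrak{gl}_2)$ commutes with ${\rm Res}_U$ for any open compact $U\subset\p1(\qp)$, with no Hodge--Tate hypothesis; this is also the fact invoked at the start of Section~3 to know $U(\mathfrak{gl}_2)$ preserves $\check{\mathcal{D}}_{\rm rig}$, which you instead extract from the lemma's hypothesis to handle the first summand. So your argument is a correct, self-contained replacement for a commutation the paper treats as an off-the-shelf ingredient.
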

   
   \begin{proof}
    On fixe $n\geq m(\mathcal{D})$, $j\in\mathbf{Z}$ et $\check{z}\in (\Pi_{\delta}(\mathcal{D})^{\rm an})^*$, et on pose
   $$x={\rm Res}_{\zp} \left( \left(\begin{smallmatrix} p^{n-j} & 0\\0 & 1\end{smallmatrix}\right)\check{z}\right)\in\mathcal{\check{D}}_{\rm rig},$$ de telle sorte 
   que $i_{j,n}(\check{z})=\varphi^{-n}(x)$. Un petit calcul montre que l'on a 
   $$ \left(\begin{smallmatrix} a & 0\\0 & 1\end{smallmatrix}\right)\circ u^{-}=\frac{1}{a} u^{-}\circ  \left(\begin{smallmatrix} a & 0\\0 & 1\end{smallmatrix}\right),$$
   donc 
   $$i_{j,n}((u^{-})^k \check{z})=\varphi^{-n}( p^{k(j-n)} {\rm Res}_{\zp} ((u^{-})^k  \left(\begin{smallmatrix} p^{n-j} & 0\\0 & 1\end{smallmatrix}\right)\check{z}))=$$
   $$=p^{k(j-n)}\varphi^{-n}( (u^{-})^k(x)),$$
   la dernière égalité étant une conséquence du fait que $u^{-}$ commute à ${\rm Res}_{\zp}$ \cite[prop. VI.8]{CD}. 
   Le résultat découle alors du lemme \ref{long} et du fait que $\varphi^{-n}$ commute à $\nabla$ et envoie $t$ sur $t/p^n$.
   
   \end{proof}

\subsection{Un résultat de densité}

   Comme application du théorème \ref{dualKir}, nous allons démontrer le résultat suivant:

   \begin{theorem}\label{dense}
             Soit $\Pi\in {\rm Rep}_L(\delta)$ dont tous les facteurs de Jordan-Hölder sont supersinguliers. Alors 
              $\Pi_c^{P-\rm fini}$ est un sous-espace dense de $\Pi$, contenu dans $\Pi^{\rm an}$ et stable sous l'action de $B$.
           \end{theorem}
           
      \begin{proof}   Soit $D$ le dual de Cartier de $D(\Pi)$, de telle sorte que $\Pi\simeq \Pi_{\delta}(D)\simeq \tilde{D}/\tilde{D}^+$ (th. \ref{CD} et 
       prop. \ref{Bisom}).  
     Le fait que $\Pi_c^{P-\rm fini}$ est contenu dans $\Pi^{\rm an}$ et est stable par $B$ découle
      du théorème \ref{dualKir} et du corollaire \ref{picp}. Il reste donc à prouver qu'il est dense dans $\Pi$.
     Supposons que ce n'est pas le cas, donc il existe $\check{z}\in \Pi^*$
       tel que $\{\check{z}, v\}_{\p1}=0$ pour tout $v\in \Pi_c^{P-\rm fini}$. On a une injection $\Pi^*\subset (\Pi^{\rm an})^*$
       déduite de la densité de $\Pi^{\rm an}$ dans $\Pi$ \cite{STInv}. Le théorème \ref{dualKir} et le corollaire \ref{picp}
       montrent que $i_{j,n}(\check{z})$ est orthogonal à $D_{{\rm dif},n}/D_{{\rm dif},n}^+$ pour $j\in\mathbf{Z}$ et 
       $n\geq m(D)$. Puisque $\{\,\,,\,\}_{\rm dif}$ induit un accouplement parfait entre 
       $\check{D}_{{\rm dif},n}^+$ et $D_{{\rm dif},n}/D_{{\rm dif},n}^+$, il s'ensuit que 
       $i_{j,n}(\check{z})=0$ pour $j\in\mathbf{Z}$ et $n\geq m(D)$. En revenant à la définition des applications 
       $i_{j,n}$ et en utilisant l'injectivité de $\varphi^{-n}$, on obtient ${\rm Res}_{\qp}(\check{z})=0$. La proposition 
       III.23 de \cite{CD} montre que sous les hypothèses du théorème l'application ${\rm Res}_{\qp}$ est injective
       sur $\Pi^*\subset \check{D}\boxtimes_{\delta^{-1}}\p1$, ce qui permet de conclure. 
      
      \end{proof}

\section{Etude du module $\check{E}_{{\rm  dif}, n}^+$}

  \quad Dans ce chapitre on fixe $D\in\Phi\Gamma^{\rm et}(\mathcal{E})$ de dimension $2$, de de Rham, à poids de 
  Hodge-Tate $1$ et $1-k$, avec $k\in\mathbf{N}^*$. On note $\delta=\chi^{-1}\det V(D)$, de telle sorte que $w(\delta)=1-k$. On fixe une extension non scindée
   $0\to D\to E\to D\to 0$ telle que $E$ soit de Hodge-Tate et $E\in\mathcal{C}_L(\delta)$. 
  On décrit la structure du module $\check{E}_{{\rm  dif}, n}^+$ et on introduit un certain nombre de modules qui permettent de 
  contrôler $\Pi_{\delta}(E)^{\rm alg}$.
    Pour simplifier, on note $R_n=L_n[[t]]$.  Les entiers 
   $m(E)\geq m(D)$ sont des constantes assez grandes, qui ne dépendent que de $D$ et $E$.

\subsection{Construction d'une base convenable}

   \quad Puisque $\check{D}$ est de de Rham à poids de Hodge-Tate $0$ et $k$, les $L$-espaces vectoriels 
   $\check{D}_{\rm dR}^+$ et $\check{D}_{\rm dR}/ \check{D}_{\rm dR}^+$ sont de dimension $1$. 
    On en déduit\footnote{Choisir une base $e_1$ de $\check{D}_{\rm dR}^+$
   et un élément $x\in \check{D}_{\rm dR}$ tel que l'image de $x$ dans $\check{D}_{\rm dR}/\check{D}_{\rm dR}^+$ en soit une base, et poser 
   $e_2=t^k x$.} l'existence de $e_1, e_2\in \check{D}_{{\rm dif}, m(D)}^+$ tels
   que $$\sigma_a(e_1)=e_1, \quad \sigma_a(e_2)=a^k e_2, \quad \forall a\in \zpet$$
   et tels que $\check{D}_{{\rm dif}, n}^+$ soit un $R_n$-module libre de base $e_1, e_2$ 
   pour tout $n\geq m(D)$. 
   
   \begin{proposition}\label{basedif}
 Il existe $f_1,f_2\in \check{E}_{{\rm dif},m(E)}^+$ tels que

   a) $\check{E}_{{\rm dif},n}^+$ est un $R_n$-module libre de base 
    $e_1,e_2,f_1,f_2$ pour tout
   $n\geq m(E)$.

   b) Il existe $\alpha\in L$ tel que $\sigma_a(f_1)=f_1$ et $\sigma_{a}(f_2)=
   a^k f_2+\alpha a^k \log a\cdot t^k e_1$ pour tout $a\in \zpet$.

\end{proposition}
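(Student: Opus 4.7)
My plan is to construct $f_1, f_2$ by successively modifying arbitrary lifts of $e_1, e_2$ to $\check{E}_{{\rm dif}, m(E)}^+$, reducing the Galois-invariance conditions to infinitesimal equations for $\nabla$, trivializing the Sen-theoretic obstructions via the Hodge-Tate property of $\check{E}$ (which holds by Theorem~\ref{EHT} since $E$ is Hodge-Tate), and extracting the precise logarithmic term through the Tate-Sen computation of $H^1_{\rm cts}(\Gamma, L_n)$.

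First, fix any lifts $\tilde{f}_1, \tilde{f}_2 \in \check{E}_{{\rm dif}, m(E)}^+$ of $e_1, e_2$ in the quotient $\check{D}_{{\rm dif}, m(E)}^+$; together with $e_1, e_2$ these form an $R_n$-basis of $\check{E}_{{\rm dif}, n}^+$ for every $n \geq m(E)$, and any correction $f_i = \tilde{f}_i + h_i$ with $h_i \in \check{D}_{{\rm dif}, n}^+$ preserves the basis property. The operator $\nabla$ acts on $\check{D}_{{\rm dif}, n}^+ = R_n e_1 \oplus R_n e_2$ as $t\frac{d}{dt}$ on the first summand and as $t\frac{d}{dt} + k$ on the second; since $k > 0$, its image is $tR_n \cdot e_1 \oplus R_n \cdot e_2$, with cokernel $L_n \cdot e_1$. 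Writing $\nabla \tilde{f}_1 = p(t) e_1 + q(t) e_2$, the unique obstruction to solving $\nabla h_1 = -\nabla \tilde{f}_1$ is $p(0) \in L_n$, which equals the $e_1$-coefficient of $\Theta(\tilde{f}_1 \bmod t)$; the Sen diagonalizability provided by HT forces $p(0) = 0$, yielding $f_1$ with $\nabla f_1 = 0$. The parallel argument with $\nabla - k$ in place of $\nabla$ (whose cokernel on $\check{D}_{{\rm dif}, n}^+$ is $L_n \cdot t^k e_1 \oplus L_n \cdot e_2$, with the $L_n \cdot e_2$-part annihilated by HT) produces $f_2$ with $\nabla f_2 = k f_2 + \alpha \, t^k e_1$ for some $\alpha \in L_n$.

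To lift this infinitesimal relation to the global identity and to force $\alpha \in L$, I would decompose the cocycle $c(a) := \sigma_a(f_2) - a^k f_2 \in \check{D}_{{\rm dif}, n}^+$ into its $t^j e_i$-coefficients: after dividing by $a^k$, each gives a continuous 1-cocycle $\Gamma \to L_n(m)$ for an integer twist $m$. By inflation-restriction, using that the subgroup $\Gamma_n \subset \Gamma$ fixing $F_n$ acts trivially on $L_n$ and that $|\Gamma/\Gamma_n|$ is invertible in $L$, one computes $H^1_{\rm cts}(\Gamma, L_n(m)) = 0$ for $m \neq 0$ and $H^1_{\rm cts}(\Gamma, L_n) = L \cdot [\log\chi]$. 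Thus every component of $c(a)$ except the $t^k e_1$-one is a coboundary, absorbable into further corrections of $f_2$ in the corresponding twist components, while the $t^k e_1$-component has class $\alpha \cdot [\log\chi]$ with $\alpha \in L$, producing $\sigma_a(f_2) = a^k f_2 + \alpha a^k \log a \cdot t^k e_1$ exactly. Invariance under the torsion part of $\Gamma$ (not implied by the infinitesimal identity) is secured by choosing $\tilde{f}_1, \tilde{f}_2$ in the correct isotypic components from the outset, via semi-simplicity of finite-group actions on $L$-vector spaces. The main obstacle is precisely this cohomological step: the Tate-Sen statement $H^1_{\rm cts}(\Gamma, L_n) = L$ together with its vanishing in non-trivial Tate twists is what forces $\alpha \in L$ and enables the infinitesimal-to-global passage.
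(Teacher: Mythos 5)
Your overall strategy matches the paper's quite closely: start from arbitrary lifts $\tilde f_1,\tilde f_2$ of $e_1,e_2$, note that they automatically give an $R_n$-basis, and then correct them using (i) the structure of the cokernel of the relevant Galois operator on $\check D^+_{{\rm dif},n}$, (ii) the Hodge--Tate property $\Theta_{\rm Sen}^2=k\Theta_{\rm Sen}$ of $\check E$ (Theorem~\ref{EHT}) to kill the dangerous part of that cokernel, and (iii) Galois cohomology of $\Gamma$ to pass to the full group. The paper does (i) by direct computation of the image of $A\mapsto\sigma_a(A)-a^iA$ for the fixed topological generator $a=1+p^m$ and does (iii) by Hilbert~90 for the finite quotient $\Gamma/\Gamma_m$ at the end; you do (i) infinitesimally via $\nabla$ and (iii) via $H^1_{\rm cts}(\Gamma,L_n(m))$ plus an upfront isotypic decomposition for the torsion of $\Gamma$. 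These organizational choices are both viable and essentially equivalent.

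There is, however, a real gap in your cocycle step, and it is exactly the point where the paper invests the most care. After dividing by $a^k$ the component of $c(a)$ on $t^j e_1$ lands in $H^1_{\rm cts}(\Gamma,L_n(j-k))$ and the component on $t^j e_2$ lands in $H^1_{\rm cts}(\Gamma,L_n(j))$. Your assertion that \emph{every} component except $t^k e_1$ is a coboundary is therefore false: the $t^0 e_2$-component sits in $H^1_{\rm cts}(\Gamma,L_n(0))=L\cdot[\log\chi]$ as well, so a priori it contributes a term $\beta\,a^k\log a\cdot e_2$ that cannot be removed by the twist-vanishing argument. This is precisely the $\beta$ that the paper eliminates by showing $(\nabla-k)f_2\in tX$ (a consequence of $\Theta_{\rm Sen}^2=k\Theta_{\rm Sen}$) and observing $e_2\notin tX$. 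You did establish the right Hodge--Tate input at the infinitesimal level (your "$L_n\cdot e_2$-part annihilated by HT"), but you need to make explicit that this forces the $e_2$-component of the global cocycle to vanish: on the trivial-twist piece, a continuous cocycle on the pro-$p$ part $\Gamma_m$ is a homomorphism $\Gamma_m\to L_m$, hence determined by its derivative at $1$, so the vanishing you proved at the level of $\nabla$ does propagate; the prime-to-$p$ part is then a coboundary by $|\Gamma/\Gamma_m|^{-1}\in L$. As written, the sentence as stated is incorrect and would need this supplement. You should also verify that your later modifications of $f_2$ (absorbing coboundaries and restoring isotypic components) do not re-introduce an $e_2$-term nor change the $t^ke_1$-class; this is routine but not automatic.
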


\begin{proof} Pour simplifier, on note $m=m(E)$, $X=\check{E}_{{\rm dif}, m}^+$, $Y=\check{D}_{{\rm dif}, m}^+$,
 et enfin $a=1+p^{m}$. La suite exacte $0\to D^{]0,r_{m}]}\to E^{]0, r_{m}]}\to D^{]0,r_{m}]}\to 0$
 et la platitude de $R_m$
 sur $\mathcal{E}^{]0,r_{m}]}$ fournissent une suite exacte de $R_m$-modules
 $0\to Y\to X\to Y\to 0$, compatible avec l'action de $\Gamma$. 

   Soient $f_1,f_2\in X$ qui s'envoient sur $e_1, e_2$, respectivement, de telle sorte que
   $e_1,e_2,f_1,f_2$ est une $R_m$-base de $X$. On a $\sigma_{a}(f_1)-f_1\in Y$
   et $\sigma_{a}(f_2)-a^k f_2\in Y$. Si $A,B\in R_m$, un petit calcul montre que
    $$(\sigma_{a}-1)(f_1+Ae_1+Be_2)=(\sigma_{a}(f_1)-f_1)+(\sigma_{a}(A)-A)e_1+
    (a^k\sigma_{a}(B)-B)e_2.$$
    Mais $B\to a^k\sigma_{a}(B)-B$ est surjective sur $R_m$
    et $A\to \sigma_{a}(A)-A$ a pour image l'ensemble des \'{e}l\'{e}ments de $R_m$ de terme constant
    nul (cela d\'{e}coule trivialement du fait que $\sigma_{a}(t)=at$). Ainsi, quitte \`{a} remplacer $f_1$ par $f_1+Ae_1+Be_2$ pour $A,B\in R_m$ convenables,
    on peut supposer que $\sigma_{a}(f_1)-f_1=\alpha\cdot e_1$ pour un $\alpha\in L_{m}$.
    Alors $(\sigma_{a}-1)^2(f_1)=0$ et donc $\nabla^2(f_1)=0$. On en d\'{e}duit que
    l'image de $f_1$ dans $X/tX$ est tu\'{e}e par $\Theta_{{\rm Sen}, \check{E}}^2$ et donc
    aussi par $\Theta_{{\rm Sen},\check{E}}$ (th. \ref{EHT}). Ainsi,
    $\nabla(f_1)=\frac{\alpha}{\log a}e_1\in tX$ et comme
    $e_1\notin tX$, on en d\'{e}duit que $\alpha=0$ et donc $\sigma_{a}(f_1)=f_1$.

       Pour $f_2$, l'argument est similaire: quitte à modifier $f_2$ par un élément de $Y$, on peut supposer que $$(\sigma_{a}-a^k)(f_2)=
       \alpha a^k \log a\cdot t^k e_1+\beta e_2,$$ avec $\alpha,\beta\in L_{m}$. Alors 
       $(\sigma_a-a^k)^2(f_2)=0$, donc $(\nabla-k)^2(f_2)=0$ et comme avant 
       $(\nabla-k)(f_2)\in tX$. Puisque
            $$(\nabla-k)(f_2)=\frac{\log (1+\frac{\sigma_a-a^k}{a^k})}{\log a} (f_2)=\alpha t^k e_1+\frac{\beta e_2}{a^k\log a}\equiv \frac{\beta}{a^k\log a}e_2\pmod {tX}$$
            et $e_2\notin tX$,
            on obtient $\beta=0$. 
       
      Pour l'instant on a obtenu $f_1,f_2$ tels que $\sigma_{a}(f_1)=f_1$ et
       $\sigma_{a}(f_2)=
   a^k f_2+\alpha a^k \log a\cdot t^k e_1$ pour $a=1+p^m$. Un calcul immédiat montre que 
   $\sigma_{b}(f_1)=f_1$ et $\sigma_b(f_2)=b^k f_2+\alpha b^k \log b\cdot t^k e_1$ quand 
   $b\in \{a, a^2,...\}$, donc pour tout $b\in 1+p^m\zp$. 
    On va enfin modifier $f_1$ et $f_2$ pour que ceci soit valable pour tout $b\in\zpet$. 
    
    Soit
  $\Gamma_m=\chi^{-1}(1+p^m\zp)$. Puisque $f_1$ s'envoie sur $e_1$ et $e_1$ est $\Gamma$-invariant, on a $(\sigma_b-1)f_1\in Y^{\Gamma_m}$
  pour tout $b\in\zpet$. Un calcul imm\'{e}diat montre que $Y^{\Gamma_m}=L_m e_1$, 
  ce qui fournit $c_b\in L_m$ tel que $(\sigma_b-1)f_1=c_b e_1$. L'application $b\to c_b$ définit
  donc un $1$-cocycle de $\Gamma/\Gamma_m$ dans $L_m$. Le théorème de Hilbert $90$ permet de conclure
  que l'on peut remplacer $f_1$ par $f_1-xe_1$ pour un $x\in L_m$ convenable, pour le rendre $\Gamma$-invariant.
  Pour $f_2$ la situation est un peu plus compliquée, car dans ce cas $(\sigma_b-b^k)(f_2)\in Y_m^{\Gamma_m=\chi^k}$
  et $Y_m^{\Gamma_m=\chi^k}$ est un $L_m$-module libre de rang $2$, une base étant $e_2$ et $t^k e_1$. On peut donc écrire 
  $$(\sigma_b-b^k)(f_2)=b^k c'_b e_2+b^k (\alpha\log b+c_b)\cdot t^k e_1$$ avec 
  $c_b, c'_b: \Gamma/\Gamma_m\to L_m$. Un calcul fastidieux montre que 
  $c_b, c_b'$ sont des $1$-cocycles, ce qui permet de conclure comme ci-dessus.

   \end{proof}

\begin{lemma}\label{alphanul}
 Avec les notations de la proposition \ref{basedif}, on a $\alpha=0$ si et seulement
 si $E$ est de de Rham. 
\end{lemma}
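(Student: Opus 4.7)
The plan is to compute $\check{E}_{\rm dR}:=(\check{E}_{{\rm dif},\infty})^{\Gamma}$ explicitly in the basis $(e_1,e_2,f_1,f_2)$ furnished by Proposition \ref{basedif} and to read off the de Rham property from the obstruction to lifting $\Gamma$-invariants. First, $E$ is de Rham if and only if $\check{E}$ is, since Cartier duality and Tate twists preserve this property. As $\check{E}$ is already Hodge-Tate (hypothesis of the chapter) with weights $0,0,k,k$, it is de Rham iff $\dim_L \check{E}_{\rm dR}=4$. Taking $\Gamma$-invariants in the short exact sequence $0\to \check{D}_{{\rm dif},\infty}\to \check{E}_{{\rm dif},\infty}\to \check{D}_{{\rm dif},\infty}\to 0$ yields an exact sequence
\[0\to \check{D}_{\rm dR}\to \check{E}_{\rm dR}\to \check{D}_{\rm dR}\stackrel{\partial}{\to} H^1_{\rm cont}(\Gamma,\check{D}_{{\rm dif},\infty}),\]
and since $\dim_L \check{D}_{\rm dR}=2$ with basis $(e_1,t^{-k}e_2)$, the problem reduces to showing that $\partial=0$ iff $\alpha=0$.

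The vanishing $\partial(e_1)=0$ is immediate, as $f_1$ is a $\Gamma$-invariant lift of $e_1$ by Proposition \ref{basedif}(b). To compute $\partial(t^{-k}e_2)$, I take the lift $t^{-k}f_2\in \check{E}_{{\rm dif},\infty}$; a direct calculation from Proposition \ref{basedif}(b) yields
\[\sigma_a(t^{-k}f_2)-t^{-k}f_2=\alpha\log(a)\cdot e_1 \qquad (a\in\zpet),\]
so $\partial(t^{-k}e_2)$ is the class of the $1$-cocycle $c: a\mapsto \alpha\log(a)\cdot e_1$ in $H^1_{\rm cont}(\Gamma,\check{D}_{{\rm dif},\infty})$. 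It remains to show that $c$ is a coboundary iff $\alpha=0$.

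A coboundary trivialising $c$ would be an element $x=x_1 e_1+x_2 e_2\in \check{D}_{{\rm dif},\infty}$ (with $x_1,x_2\in L_\infty((t))$) satisfying $\sigma_a(x)-x=\alpha\log(a)\cdot e_1$. The coefficient of $e_2$ gives $a^k\sigma_a(x_2)=x_2$, which is inessential (satisfied e.g. by $x_2=0$). The coefficient of $e_1$ reads $\sigma_a(x_1)-x_1=\alpha\log(a)$; expanding $x_1=\sum_i x_{1,i}t^i$, the condition at each $i\ne 0$ forces $x_{1,i}=0$ (if $x_{1,i}\in L_n$ then $\sigma_a$ fixes it for $a\in 1+p^n\zp$, yet the requirement $x_{1,i}=a^{-i}x_{1,i}$ with $a=1+p^n$ is impossible unless $x_{1,i}=0$), while for $i=0$ one needs $\sigma_a(x_{1,0})-x_{1,0}=\alpha\log(a)$: restricting to $x_{1,0}\in L_n$ and $a\in 1+p^n\zp$ makes the left side vanish, forcing $\alpha=0$. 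Conversely, if $\alpha=0$ one takes $x=0$. Thus $\check{E}$, hence $E$, is de Rham iff $\alpha=0$. The only non-formal ingredient beyond these explicit computations is the non-triviality of $\log\chi$ in $H^1_{\rm cont}(\Gamma,L_\infty)$, which is precisely what the level-$n$ argument just given establishes on the spot; consequently there is no significant obstacle.
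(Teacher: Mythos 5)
Your proof is correct and reaches the same conclusion by essentially the same underlying computation: using the explicit $\Gamma$-action from Proposition~\ref{basedif} on the basis $e_1,e_2,f_1,f_2$ to decide whether $(\check{E}_{{\rm dif},n})^\Gamma$ has dimension $4$ or $3$. The only real difference is one of packaging: the paper computes the space of $\Gamma$-invariants of $\check{E}_{{\rm dif},n}$ directly (finding the basis $e_1, t^{-k}e_2, f_1, t^{-k}f_2$ when $\alpha=0$ and only $e_1, t^{-k}e_2, f_1$ otherwise), whereas you run the long exact cohomology sequence of $0\to\check{D}_{{\rm dif},\infty}\to\check{E}_{{\rm dif},\infty}\to\check{D}_{{\rm dif},\infty}\to 0$ and reduce the question to whether the $1$-cocycle $a\mapsto\alpha\log(a)\,e_1$ is a coboundary. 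Your level-$n$ argument that this cocycle is non-trivial precisely when $\alpha\ne0$ is sound (restricting to $\Gamma_n$ kills the left-hand side of the coboundary equation, forcing $\alpha\log(1+p^n)=0$), and the rest of the verification ($\partial(e_1)=0$ via the lift $f_1$, $\dim\check{D}_{\rm dR}=2$ with basis $e_1,t^{-k}e_2$) is routine. So this is a valid proof, slightly more structured than the paper's terse ``on vérifie sans mal,'' but not a genuinely different route.
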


\begin{proof}

  $E$ est de de Rham si et seulement si $\check{E}$
 l'est, et cela arrive si et seulement si $(\check{E}_{{\rm dif},n})^{\Gamma}$ est un $L$-espace vectoriel de dimension $4$ pour un/tout $n\geq m(E)$,
 puisque $D_{\rm dR}(V(\check{E}))\simeq (\check{E}_{{\rm dif},n})^{\Gamma}$.
 Or, en utilisant les formules
 explicites fournies par la proposition \ref{basedif}, on v\'{e}rifie sans mal que $(\check{E}_{{\rm dif},n})^{\Gamma}$ est le $L$-espace vectoriel de base
$e_1,\frac{e_2}{t^k}, f_1, \frac{f_2}{t^k}$ si $\alpha=0$ et le
$L$-espace vectoriel de base $e_1,\frac{e_2}{t^k}, f_1$ sinon.
\end{proof}

\begin{proposition}\label{nouvellebase}

    Il existe $e_1', e_2', f_1', f_2'\in E_{{\rm dif}, m(E)}^{+}$ tels que

    a) $e_1', e_2'$ est une $R_n$-base de $D_{{\rm dif},n}^+$ et
    $e_1', e_2', f_1', f_2'$ est une $R_n$-base de $E_{{\rm dif},n}^+$ pour $n\geq m(E)$.

   b) On a $\sigma_{a}(e_1')=a^{1-k}e_1'$, $\sigma_a(e_2')=ae_2'$, $\sigma_{a}(f_1')=a^{1-k}f_1'$ et enfin
    $\sigma_{a}(f_2')=af_2'+\beta a\log a\cdot t^k e_1'$ pour tout $a\in\zpet$, pour un certain $\beta\in L$.

\end{proposition}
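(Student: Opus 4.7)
The plan is to mirror the proof of Proposition~\ref{basedif}, now applied to the exact sequence $0 \to D \to E \to D \to 0$ in place of $0 \to \check{D} \to \check{E} \to \check{D} \to 0$; the Hodge--Tate weights $\{0,k\}$ there are replaced by $\{1-k,1\}$ here. First, since $D$ is de Rham with distinct Hodge--Tate weights $1-k$ and $1$, the construction at the opening of this subsection applies \emph{mutatis mutandis} to produce $e_1', e_2' \in D_{{\rm dif}, m(D)}^+$ with $\sigma_a(e_1') = a^{1-k} e_1'$ and $\sigma_a(e_2') = a\, e_2'$, forming an $R_n$-basis of $D_{{\rm dif},n}^+$ for all $n \geq m(D)$. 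The flat short exact sequence $0 \to D_{{\rm dif},n}^+ \to E_{{\rm dif},n}^+ \to D_{{\rm dif},n}^+ \to 0$ (obtained by flat base change from $0 \to D^{]0,r_n]} \to E^{]0,r_n]} \to D^{]0,r_n]} \to 0$) then lets us pick arbitrary lifts $f_1', f_2' \in E_{{\rm dif},m(E)}^+$ of $e_1', e_2'$, yielding an $R_n$-basis of $E_{{\rm dif},n}^+$ for $n \geq m(E)$.

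Setting $a = 1+p^{m(E)}$, we adjust $f_1'$ as in Proposition~\ref{basedif}. The identities $\sigma_a(A e_1') = a^{1-k}\sigma_a(A)\, e_1'$ and $\sigma_a(B e_2') = a\,\sigma_a(B)\, e_2'$ show that the operators $A \mapsto a^{1-k}(\sigma_a - 1) A$ and $B \mapsto a\sigma_a(B) - a^{1-k} B = a(\sigma_a - a^{-k})B$ on $R_{m(E)}$ have image equal, respectively, to the elements with vanishing constant term, and to all of $R_{m(E)}$ (since $a^n \neq a^{-k}$ for any $n \geq 0$). Replacing $f_1'$ by $f_1' + A e_1' + B e_2'$ for appropriate $A, B$, we reduce to $(\sigma_a - a^{1-k}) f_1' = \alpha' e_1'$ with $\alpha' \in L_{m(E)}$. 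Iteration yields $(\sigma_a - a^{1-k})^2 f_1' = 0$ and hence $(\nabla - (1-k))^2 f_1' = 0$. Since $E$ is Hodge--Tate, $\Theta_{{\rm Sen},E}$ is semisimple with eigenvalues in $\{1-k, 1\}$, so the analog of Proposition~\ref{difpol} applied to $(X-(1-k))(X-1)$ yields $(\nabla - (1-k))(\nabla - 1) f_1' \in t\, E_{{\rm dif},m(E)}^+$; subtracting from $(\nabla - (1-k))^2 f_1' = 0$ gives $k (\nabla - (1-k)) f_1' \in t\, E_{{\rm dif},m(E)}^+$, and since $k \geq 1$ we conclude $(\nabla - (1-k)) f_1' \in t\, E_{{\rm dif},m(E)}^+$. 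But this quantity equals $\frac{\alpha'}{a^{1-k}\log a}\, e_1'$ modulo $(\nabla - (1-k))^2 f_1'$, and $e_1' \notin t\, E_{{\rm dif},m(E)}^+$, forcing $\alpha' = 0$ and thus $\sigma_a f_1' = a^{1-k} f_1'$.

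The analogous adjustment of $f_2'$ proceeds along the same lines. Now the operator $A \mapsto a(a^{-k}\sigma_a - 1) A$ has kernel $L_{m(E)} t^k$ and image the elements of $R_{m(E)}$ with vanishing $t^k$-coefficient, while $B \mapsto a(\sigma_a - 1) B$ has image the elements with no constant term; these reduce $(\sigma_a - a) f_2'$ to the form $c\, t^k e_1' + d\, e_2'$ with $c, d \in L_{m(E)}$. The same Hodge--Tate argument gives $(\nabla - 1) f_2' \in t\, E_{{\rm dif},m(E)}^+$, which combined with $(\nabla - 1) f_2' = \frac{1}{a\log a}(c\, t^k e_1' + d\, e_2')$ forces $d = 0$ (since $t^k e_1' \in t\, E_{{\rm dif},m(E)}^+$ automatically, while $e_2'$ is not). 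Setting $\beta = c/(a\log a)$ gives the desired formula for $a = 1+p^{m(E)}$, and induction on powers extends it to $1+p^{m(E)}\zp$. Finally, passage from $1+p^{m(E)}\zp$ to all of $\zpet$ follows \emph{verbatim} the Hilbert~90 argument in the last paragraph of Proposition~\ref{basedif}, applied to the 1-cocycles $b \mapsto (\sigma_b - b^{1-k}) f_1'$ valued in $L_{m(E)} e_1'$ and $b \mapsto (\sigma_b - b) f_2' - \beta b\log b \cdot t^k e_1'$ valued in $L_{m(E)} e_2' \oplus L_{m(E)} t^k e_1'$. The main obstacle is the step $\alpha' = 0$ in the adjustment of $f_1'$: it requires the full Hodge--Tate hypothesis on $E$ (rather than just knowing its Sen polynomial), together with $k \geq 1$, exactly as Theorem~\ref{EHT}(b) was used for $\check{E}$ in Proposition~\ref{basedif}.
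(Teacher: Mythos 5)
Your proof is correct and is essentially the paper's own argument: the paper's entire proof reads ``La preuve est identique à celle de la proposition~\ref{basedif}, en travaillant avec $D\otimes\chi^{k-1}$ et $E\otimes\chi^{k-1}$ au lieu de $\check{D}$ et $\check{E}$,'' and your proof is precisely that re-run of Proposition~\ref{basedif} with the eigenvalues shifted from $\{1, a^k\}$ to $\{a^{1-k}, a\}$, carried out without using the twist by $\chi^{k-1}$ as a bookkeeping device. The one computational wrinkle you spell out (subtracting $(\nabla-(1-k))^2 f_1' = 0$ from $(\nabla-(1-k))(\nabla-1)f_1'\in tE^+_{{\rm dif}}$ to get $k(\nabla-(1-k))f_1'\in tE^+_{{\rm dif}}$, hence $(\nabla-(1-k))f_1'\in tE^+_{{\rm dif}}$ since $k\ge 1$) is exactly the untwisted form of the argument ``$\nabla^2 f_1 = 0$ plus Hodge--Tate kills $\Theta_{\mathrm{Sen}}$ on $f_1\bmod t$'' used for $\check E$ in \ref{basedif}.
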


\begin{proof} La preuve est identique à celle de la proposition \ref{basedif}, en 
travaillant avec $D\otimes \chi^{k-1}$ et $E\otimes \chi^{k-1}$ au lieu de $\check{D}$ et $\check{E}$. 
     
 \end{proof}

\subsection{Décomposition de $\Pi_c^{P-\rm alg}$}

        \quad Si $M$ est un $\Gamma$-module, on note $M(i)$ le module $M\otimes \chi^{i}$ obtenu en tordant l'action de 
        $\Gamma$ par $\chi^i$. 
       
       \begin{proposition}\label{tatetwist}
       
       a) L'application $R_n\to D_{{\rm dif}, n}$ qui à $A$ associe $\frac{A}{t^k} e_2'$ induit un isomorphisme de 
       $R_n[\Gamma]$-modules 
       $$ R_n/t^kR_n\simeq \left( D_{{\rm dif}, n}/ D_{{\rm dif}, n}^+\right)^{\mu_{n,k}=0}(k-1).$$
       
       b) L'application $R_n\oplus R_n\to E_{{\rm dif}, n}$ qui à $(A,B)$ associe $\frac{A}{t^k}e_2'+\frac{B}{t^k}f_2'$ induit un isomorphisme de
       $R_n[\Gamma]$-modules 
        $$ R_n/t^kR_n\oplus R_n/t^k R_n\simeq \left( E_{{\rm dif}, n}/ E_{{\rm dif}, n}^+\right)^{\mu_{n,k}=0}(k-1).$$

       \end{proposition}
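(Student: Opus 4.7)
The plan is to read everything off from the explicit basis provided by Proposition~\ref{nouvellebase}. Since $D_{{\rm dif},n}^{+}=R_{n}e_{1}'\oplus R_{n}e_{2}'$ for $n\geq m(E)$, the quotient $D_{{\rm dif},n}/D_{{\rm dif},n}^{+}$ decomposes as $(L_{n}((t))/R_{n})\,e_{1}'\oplus (L_{n}((t))/R_{n})\,e_{2}'$. The formulas $\sigma_{a}(e_{1}')=a^{1-k}e_{1}'$ and $\sigma_{a}(e_{2}')=ae_{2}'$, combined with $\sigma_{a}(t)=at$ and the triviality of $\sigma_{1+p^{n}}$ on $L_{n}$, show that for $a=1+p^{n}$ the operator $\sigma_{a}$ acts $L_{n}$-linearly and diagonally on the $L_{n}$-basis $\{t^{-l}e_{1}',t^{-l}e_{2}'\}_{l\geq 1}$ of the quotient, with eigenvalues $a^{1-k-l}$ on $t^{-l}e_{1}'$ and $a^{1-l}$ on $t^{-l}e_{2}'$.

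The first key step is then to identify the $\mu_{n,k}$-torsion: the eigenvalues of $\sigma_{a}$ that contribute must lie in $\{a^{j+1-k}:0\leq j<k\}=\{a^{1-k},a^{2-k},\ldots,a^{0}\}$. The value $a^{1-k-l}$ with $l\geq 1$ never lies in this set, so the $e_{1}'$-component vanishes; the value $a^{1-l}$ lies in the set precisely for $l\in\{1,\ldots,k\}$. Thus $(D_{{\rm dif},n}/D_{{\rm dif},n}^{+})^{\mu_{n,k}=0}=\bigoplus_{l=1}^{k}L_{n}\cdot t^{-l}e_{2}'$, which is exactly the image of the $R_{n}$-linear map $A\mapsto At^{-k}e_{2}'$ from $R_{n}$, with kernel $t^{k}R_{n}$. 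For $\Gamma$-equivariance I would compute $\sigma_{a}(At^{-k}e_{2}')\equiv a^{1-k}\sigma_{a}(A)t^{-k}e_{2}'\pmod{D_{{\rm dif},n}^{+}}$; the Tate twist by $\chi^{k-1}$ absorbs the factor $a^{1-k}$ and turns the action into the tautological one on $R_{n}/t^{k}R_{n}$. This settles (a).

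For (b), the same analysis on $E_{{\rm dif},n}^{+}=R_{n}e_{1}'\oplus R_{n}e_{2}'\oplus R_{n}f_{1}'\oplus R_{n}f_{2}'$ goes through. The new feature is the off-diagonal term $\beta a\log a\cdot t^{k}e_{1}'$ inside $\sigma_{a}(f_{2}')$; but since $t^{k}e_{1}'\in E_{{\rm dif},n}^{+}$, this contribution dies modulo $E_{{\rm dif},n}^{+}$, so $\sigma_{a}$ still acts diagonally in the quotient, with eigenvalues $a^{1-k-l},a^{1-l},a^{1-k-l},a^{1-l}$ on the $t^{-l}$-multiples of $e_{1}',e_{2}',f_{1}',f_{2}'$ respectively. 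The eigenvalue argument then discards the $e_{1}'$- and $f_{1}'$-contributions and isolates the span of $At^{-k}e_{2}'+Bt^{-k}f_{2}'$ for $(A,B)\in R_{n}/t^{k}R_{n}\oplus R_{n}/t^{k}R_{n}$; equivariance after the twist is checked exactly as in (a). The main obstacle I anticipate is precisely this cross term, whose potential effect on either the eigenvalue picture or the $\Gamma$-equivariance is only neutralized by noticing that its $t^{k}$-factor throws it into $E_{{\rm dif},n}^{+}$.
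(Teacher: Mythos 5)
Your part (a) is correct and matches the paper's computation. In part (b), however, the assertion that $\sigma_a$ ``still acts diagonally in the quotient'' on the $t^{-l}$-multiples of the basis is false for $l>k$. Multiplying through by $t^{-l}$ turns the cross term $\beta a\log a\cdot t^{k}e_1'$ into $\beta a^{1-l}\log a\cdot t^{k-l}e_1'$, which lies in $E_{{\rm dif},n}^{+}$ only when $l\leq k$; for $l>k$ the element $t^{-(l-k)}e_1'$ is nonzero in $E_{{\rm dif},n}/E_{{\rm dif},n}^{+}$, so $\sigma_a$ genuinely couples $t^{-l}f_2'$ to $t^{-(l-k)}e_1'$ (both with diagonal eigenvalue $a^{1-l}$, yielding a nontrivial $2\times 2$ Jordan block whenever $\beta\neq 0$). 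Your final identification of the $\mu_{n,k}$-torsion is nevertheless correct, since these Jordan blocks have eigenvalue $a^{1-l}$ with $l>k$, outside $\{a^{1-k},\dots,a^{0}\}$, but the argument ``read off eigenvalues from the diagonal and discard those out of range'' does not establish this once the matrix is not diagonal. One needs a sequential or filtration argument. The paper's proof supplies exactly this: for an element $Ae_1'+Be_2'+Cf_1'+Df_2'$ in the $\sigma_a=a^{i+1-k}$ eigenspace, the $e_2',f_1',f_2'$-conditions force $C\in R_n$ and $t^kD\in R_n$, whence $\beta a\log a\cdot t^k\sigma_a(D)\in R_n$ and the $e_1'$-condition decouples to give $A\in R_n$. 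Alternatively one can project $\Gamma$-equivariantly onto the $f_2'$-coordinate (well defined precisely because the off-diagonal term lands in $e_1'$), compute the torsion of that quotient first, and only then handle the genuinely diagonal complement. Either way the off-diagonal term must be treated after one knows $t^kD\in R_n$, and your proposal skips that step.
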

       
       \begin{proof}
       On ne traite que le b), qui est plus délicat. Notons $a=1+p^m$, fixons $0\leq i<k$ et considérons un élément 
       $Ae_1'+Be_2'+Cf_1'+Df_2'\in E_{{\rm dif}, n}$ dont la classe appartient à 
       $\left( E_{{\rm dif}, n}/ E_{{\rm dif}, n}^+\right)^{\sigma_a=a^{i+1-k}}$. En utilisant les formules 
       de la proposition \ref{nouvellebase}, cette appartenance se traduit par 
       $$\sigma_a(B)-a^{i-k}B\in R_n, \quad \sigma_a(C)-a^i C\in R_n, \quad \sigma_a(D)-a^{i-k}D\in R_n,$$
       $$ (\sigma_a(A)-a^iA)a^{1-k} +\beta a\log a\cdot t^k \sigma_{a}(D)\in R_n.$$
       Les trois premières conditions entraînent $B\in \frac{1}{t^{k-i}} L_n+R_n$, 
       $D\in \frac{1}{t^{k-i}} L_n+R_n$ et $C\in R_n$. Puisque $t^k D\in R_n$, la dernière condition entraîne 
       $A\in R_n$. En utilisant ces observations et le lemme des noyaux, on obtient que
       $\left( E_{{\rm dif}, n}/ E_{{\rm dif}, n}^+\right)^{\mu_{n,k}=0}$ est l'ensemble des classes $\overline{Be_2'+Df_2'}$ avec 
       $B,D\in t^{-k}R_n$. Cela montre déjà que l'application  $F: (A,B)\to \frac{A}{t^k}e_2'+\frac{B}{t^k}f_2'$
       induit un isomorphisme de $R_n$-modules $ R_n/t^kR_n\oplus R_n/t^k R_n\simeq \left( E_{{\rm dif}, n}/ E_{{\rm dif}, n}^+\right)^{\mu_{n,k}=0}$.
       En utilisant à nouveau les formules de la proposition \ref{nouvellebase}, on vérifie que $F\circ\sigma_a=a^{k-1}\sigma_a\circ F$ pour tout 
     $a\in \zpet$, donc $F\otimes \chi^{k-1}$ est un isomorphisme de $R_n[\Gamma]$-modules, ce qui permet de conclure.
       
       \end{proof}

   La suite exacte $0\to D^{]0, r_n]}\to E^{]0, r_n]}\to D^{]0, r_n]}\to 0$ induit (par localisation en $\varepsilon^{(n)}-1$) des suites exactes de $R_n[\Gamma]$-modules
    $0\to D_{{\rm dif}, n}^+\to E_{{\rm dif}, n}^+\to D_{{\rm dif}, n}^+\to 0$ et  $0\to D_{{\rm dif}, n}\to E_{{\rm dif}, n}\to D_{{\rm dif}, n}\to 0$ 
    pour $n\geq m(E)$. Le lemme du serpent fournit donc une suite exacte de $R_n[\Gamma]$-modules 
    $$0\to D_{{\rm dif}, n}/ D_{{\rm dif}, n}^+\to E_{{\rm dif}, n}/ E_{{\rm dif}, n}^+\to D_{{\rm dif}, n}/ D_{{\rm dif}, n}^+\to 0.$$
    
    \begin{lemma}
     Pour tout $n\geq m(E)$, la suite exacte précédente induit une décomposition de $R_n[\Gamma]$-modules 
     $$\left(E_{{\rm dif},n}/E_{{\rm dif},n}^+\right)^{\mu_{n,k}=0}\simeq \left(D_{{\rm dif},n}/D_{{\rm dif},n}^+\right)^{\mu_{n,k}=0}
     \oplus \left(D_{{\rm dif},n}/D_{{\rm dif},n}^+\right)^{\mu_{n,k}=0}.$$
    \end{lemma}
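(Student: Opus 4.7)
The plan is to read off the decomposition directly from the explicit bases produced by Propositions~\ref{tatetwist} and~\ref{nouvellebase}, and to trace through the maps of the short exact sequence to verify that they correspond to the obvious inclusion and projection of a direct sum.

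First, I would use Proposition~\ref{tatetwist} b) to identify $\left(E_{{\rm dif},n}/E_{{\rm dif},n}^+\right)^{\mu_{n,k}=0}(k-1)$ with $R_n/t^kR_n\oplus R_n/t^kR_n$ via the map $(A,B)\mapsto \frac{A}{t^k}e_2'+\frac{B}{t^k}f_2'$, and Proposition~\ref{tatetwist} a) to identify $\left(D_{{\rm dif},n}/D_{{\rm dif},n}^+\right)^{\mu_{n,k}=0}(k-1)$ with $R_n/t^kR_n$ via $A\mapsto \frac{A}{t^k}e_2'$. These are isomorphisms of $R_n[\Gamma]$-modules, so after twisting back by $\chi^{1-k}$ it suffices to exhibit the analogous split exact sequence on the $R_n/t^kR_n$ side.

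Next, I would trace the two boundary maps of the sequence in the chosen bases. By the construction of Proposition~\ref{nouvellebase}, the basis element $e_2'$ of $D_{{\rm dif},n}^+$ is the image under $D^{]0,r_n]}\to E^{]0,r_n]}\to \ldots$ of the element still denoted $e_2'$, and $f_2'\in E_{{\rm dif},n}^+$ is chosen as a lift of $e_2'\in D_{{\rm dif},n}^+$ along the quotient $E^{]0,r_n]}\to D^{]0,r_n]}$. Consequently, in the identifications above, the inclusion $(D/D^+)^{\mu_{n,k}=0}\hookrightarrow (E/E^+)^{\mu_{n,k}=0}$ becomes $A\mapsto (A,0)$ and the surjection $(E/E^+)^{\mu_{n,k}=0}\to (D/D^+)^{\mu_{n,k}=0}$ becomes $(A,B)\mapsto B$. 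The resulting sequence
\begin{equation*}
0\to R_n/t^kR_n\xrightarrow{A\mapsto (A,0)} R_n/t^kR_n\oplus R_n/t^kR_n\xrightarrow{(A,B)\mapsto B} R_n/t^kR_n\to 0
\end{equation*}
is tautologically split, and the splitting $B\mapsto (0,B)$ is $R_n[\Gamma]$-equivariant by the same calculation that proved Proposition~\ref{tatetwist}.

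The potential difficulty is purely bookkeeping: one must check that the $e_2'$ appearing in Proposition~\ref{tatetwist} a) (for $D$) coincides, under the inclusion into $E$, with the $e_2'$ of Proposition~\ref{tatetwist} b), and similarly for $f_2'$ under the quotient. This is automatic from the proof of Proposition~\ref{nouvellebase}, which was itself an adaptation of Proposition~\ref{basedif} and produces $e_1',e_2',f_1',f_2'$ precisely so that $f_1',f_2'$ lift $e_1',e_2'$ along $E_{{\rm dif},n}^+\to D_{{\rm dif},n}^+$. No further analysis is required; the splitting is an immediate consequence of having chosen compatible bases.
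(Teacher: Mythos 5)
Your proof is correct and takes essentially the same approach as the paper, which simply asserts the lemma to be a direct consequence of Proposition~\ref{tatetwist}. You carry out what that short phrase leaves implicit: tracing the inclusion and surjection through the explicit bases $e_2', f_2'$ of Proposition~\ref{nouvellebase}, observing that they become the standard inclusion $A\mapsto (A,0)$ and projection $(A,B)\mapsto B$ of $R_n/t^kR_n\oplus R_n/t^kR_n$, and noting that the resulting section is $\Gamma$-equivariant because both summands carry the same twisted $\Gamma$-action. This is exactly the computation behind the paper's one-line proof.
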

    
    \begin{proof}
    C'est une conséquence directe de la proposition \ref{tatetwist}. 
    
    \end{proof}

    \begin{proposition}\label{picscindee}
     La suite exacte $0\to \Pi_{\delta}(D)\to \Pi_{\delta}(E)\to\Pi_{\delta}(D)\to 0$ induit une suite exacte scindée de 
     $B$-modules $$0\to \Pi(D)^{P-\rm alg}_c\to \Pi(E)^{P-\rm alg}_c\to \Pi(D)_c^{P-\rm alg}\to 0.$$
     
    \end{proposition}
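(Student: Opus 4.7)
La strat\'{e}gie que je suivrais consiste \`{a} ramener la question \`{a} une scission purement alg\'{e}brique de $L_{\infty}[[t]][\Gamma]$-modules, puis \`{a} appliquer le foncteur ${\rm LP}_c(\qpet, -)^{\Gamma}$ pour en d\'{e}duire la scission $B$-\'{e}quivariante. Plus pr\'{e}cis\'{e}ment, la proposition \ref{pipalg} fournit des isomorphismes $B$-\'{e}quivariants
$$\Pi_{\delta}(\mathcal{D})_c^{P-\rm alg}\simeq {\rm LP}_c\bigl(\qpet, (\mathcal{D}_{{\rm dif},\infty}/\mathcal{D}_{{\rm dif},\infty}^+)^{k,\rm alg}\bigr)^{\Gamma}$$
pour $\mathcal{D}\in\{D,E\}$, naturels en $\mathcal{D}$. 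Comme la structure de $B$-module sur ${\rm LP}_c(\qpet, X)^{\Gamma}$ ne d\'{e}pend que de la structure sous-jacente de $L_{\infty}[[t]][\Gamma]$-module semi-lin\'{e}aire sur $X$, il suffira de prouver que la suite exacte induite par $0\to D\to E\to D\to 0$
$$0\to (D_{{\rm dif},\infty}/D_{{\rm dif},\infty}^+)^{k,\rm alg}\to (E_{{\rm dif},\infty}/E_{{\rm dif},\infty}^+)^{k,\rm alg}\to (D_{{\rm dif},\infty}/D_{{\rm dif},\infty}^+)^{k,\rm alg}\to 0$$
est scind\'{e}e en tant que suite de $L_{\infty}[[t]][\Gamma]$-modules.

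Pour chaque $n\geq m(E)$, le lemme pr\'{e}c\'{e}dant imm\'{e}diatement la proposition (cons\'{e}quence directe de la proposition \ref{tatetwist}) fournit une d\'{e}composition en $R_n[\Gamma]$-modules $(E_{{\rm dif},n}/E_{{\rm dif},n}^+)^{\mu_{n,k}=0}\simeq (D_{{\rm dif},n}/D_{{\rm dif},n}^+)^{\mu_{n,k}=0}\oplus (D_{{\rm dif},n}/D_{{\rm dif},n}^+)^{\mu_{n,k}=0}$. Explicitement, en utilisant les bases de la proposition \ref{nouvellebase}, la section $(D_{{\rm dif},n}/D_{{\rm dif},n}^+)^{\mu_{n,k}=0}\to (E_{{\rm dif},n}/E_{{\rm dif},n}^+)^{\mu_{n,k}=0}$ s'\'{e}crit $\overline{Be_2'/t^k}\mapsto\overline{Bf_2'/t^k}$; sa $\Gamma$-\'{e}quivariance provient de ce que le terme parasite $\beta a\log a\cdot t^k e_1'$ intervenant dans $\sigma_a(f_2')$ appartient, apr\`{e}s division par $t^k$, au sous-module $L_n e_1'\subset E_{{\rm dif},n}^+$, et s'annule donc dans le quotient. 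La $R_n$-lin\'{e}arit\'{e} est \'{e}vidente.

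Comme les \'{e}l\'{e}ments $e_2'$ et $f_2'$ de la proposition \ref{nouvellebase} sont choisis dans $E_{{\rm dif},m(E)}^+$ ind\'{e}pendamment de $n$, les sections correspondantes pour $n\geq m(E)$ variable sont compatibles entre elles. En passant \`{a} la limite inductive sur $n$ on obtient la section $L_{\infty}[[t]][\Gamma]$-lin\'{e}aire cherch\'{e}e, et l'application du foncteur ${\rm LP}_c(\qpet, -)^{\Gamma}$ donne la scission $B$-\'{e}quivariante souhait\'{e}e. L'obstacle principal a d\'{e}j\`{a} \'{e}t\'{e} surmont\'{e} en amont, dans la construction m\^{e}me de la base convenable $(e_1', e_2', f_1', f_2')$ de la proposition \ref{nouvellebase}, qui exploite crucialement l'hypoth\`{e}se de Hodge-Tate sur $E$ via le lemme \ref{alphanul} (sans cette hypoth\`{e}se, l'action de $\Gamma$ sur $f_2'$ ferait appara\^{\i}tre un terme en $\log$ qui ne pourrait pas \^{e}tre absorb\'{e} dans le quotient, et la scission $\Gamma$-\'{e}quivariante n'existerait plus en g\'{e}n\'{e}ral).
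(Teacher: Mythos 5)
Votre preuve suit essentiellement la m\^{e}me route que celle du papier, qui se contente d'invoquer le lemme pr\'{e}c\'{e}dent (lui-m\^{e}me cons\'{e}quence directe de la proposition \ref{tatetwist}) et la proposition \ref{pipalg}. Vous explicitez utilement la section $\overline{Be_2'/t^k}\mapsto\overline{Bf_2'/t^k}$, sa $\Gamma$-\'{e}quivariance et la compatibilit\'{e} au passage \`{a} la limite inductive; la seule impr\'{e}cision, sans cons\'{e}quence, est d'attribuer au lemme \ref{alphanul} le r\^{o}le de l'hypoth\`{e}se de Hodge-Tate, alors que celle-ci intervient en r\'{e}alit\'{e} dans la construction m\^{e}me de la base de la proposition \ref{basedif} (via le th\'{e}or\`{e}me \ref{EHT}), le lemme \ref{alphanul} ne caract\'{e}risant que le cas de Rham.
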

    
    \begin{proof}    
      Cela découle du lemme précédent et de la proposition \ref{pipalg}.
   
\end{proof}

\subsection{Les modules $O_n(\mathcal{D})$ et $M_n(\mathcal{D})$}

   \quad Dans ce $\S$ on introduit certains sous-modules de $\check{E}_{{{\rm dif},n}}^+$ qui joueront un rôle crucial dans les arguments
   de dualité dont on aura besoin pour contrôler $\Pi_{\delta}(E)^{\rm alg}$.  On garde les hypothèses du début de ce chapitre et on rappelle que  
   $$\mu_{n,k}=\prod_{i=0}^{k-1} (\sigma_{1+p^n}-(1+p^n)^{i+1-k})\in O_L[\Gamma].$$ 

\begin{definition}\label{orthoon}
Si $\mathcal{D}\in\{D, E\}$ et si $n\geq m(\mathcal{D})$, on note (voir le $\S$\ref{dualitefond} pour $\{\,\, ,\,\}_{\rm dif}$)
$$ O_{n}(\mathcal{D})=\left \{ \check{z}\in \check{\mathcal{D}}_{{{\rm dif},n}}^+ |\quad  \{\check{z}, z\}_{\rm dif}=0 \quad \forall 
z\in \left(\mathcal{D}_{{\rm dif},n}/ \mathcal{D}_{{\rm dif},n}^+\right)^{\mu_{n,k}=0}\right \}.$$

\end{definition}

  \begin{lemma}\label{calculon}
  Pour tout $n\geq m(\mathcal{D})$ on a 
   $$O_n(\mathcal{D})=\bigcap_{i=0}^{k-1}(\sigma_{1+p^n}-(1+p^n)^i)\check{\mathcal{D}}_{{\rm dif},n}^+.$$

  \end{lemma}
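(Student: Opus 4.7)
The plan is to express $O_n(\mathcal{D})$ as an intersection of orthogonals to eigenspaces, via a duality argument powered by the $\Gamma$-invariance of $\{\,,\}_{\rm dif}$.

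First, I would check the invariance $\{\sigma_a(\check{z}), \sigma_a(z)\}_{\rm dif} = \{\check{z}, z\}_{\rm dif}$ for all $a \in \zpet$. This follows from the $\Gamma$-\'equivariance of $\langle\,,\rangle$ and of $T_{0,\mathrm{dR}}$, together with the $\Gamma$-invariance of $\mathrm{res}_0$ on $\qp((t))\,dt$ (the factor $a$ coming from $\sigma_a(dt) = a\,dt$ cancels the $a^{-1}$ produced by the substitution $t \mapsto at$ inside the residue). Equivalently, $\{\sigma_a\check{z}, z\}_{\rm dif} = \{\check{z}, \sigma_{a^{-1}} z\}_{\rm dif}$: the adjoint of $\sigma_a$ under the perfect pairing $\{\,,\}_{\rm dif}$ between $\check{\mathcal{D}}_{{\rm dif},n}^+$ and $\mathcal{D}_{{\rm dif},n}/\mathcal{D}_{{\rm dif},n}^+$ is $\sigma_{a^{-1}}$.

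Setting $a = 1+p^n$, the kernel lemma (point (c) of remarque \ref{ddifalg}) decomposes
$$\left(\mathcal{D}_{{\rm dif},n}/\mathcal{D}_{{\rm dif},n}^+\right)^{\mu_{n,k}=0} = \bigoplus_{i=0}^{k-1} M_i, \qquad M_i := \left(\mathcal{D}_{{\rm dif},n}/\mathcal{D}_{{\rm dif},n}^+\right)^{\sigma_a = a^{i+1-k}},$$
so $O_n(\mathcal{D}) = \bigcap_{i=0}^{k-1} M_i^{\perp}$. Combining the perfectness of $\{\,,\}_{\rm dif}$ with the abstract orthogonality $(\ker T)^{\perp} = \mathrm{im}(T^*)$, I get $M_i^{\perp} = (\sigma_{a^{-1}} - a^{i+1-k})\check{\mathcal{D}}_{{\rm dif},n}^+$. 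The elementary identity $\sigma_{a^{-1}} - c = -c\,\sigma_{a^{-1}}(\sigma_a - c^{-1})$ and the invertibility of $\sigma_{a^{-1}}$ on $\check{\mathcal{D}}_{{\rm dif},n}^+$ then rewrite this as $M_i^{\perp} = (\sigma_a - a^{k-1-i})\check{\mathcal{D}}_{{\rm dif},n}^+$. Reindexing $j = k-1-i$ (a bijection of $\{0,\dots,k-1\}$) yields the claimed formula.

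The delicate point is the orthogonality $(\ker T)^{\perp} = \mathrm{im}(T^*)$ in this infinite-dimensional setting. I would handle it by writing $\mathcal{D}_{{\rm dif},n}/\mathcal{D}_{{\rm dif},n}^+$ as the increasing union of the finite-dimensional, $\sigma_a$-stable $L_n$-spaces $t^{-N}\mathcal{D}_{{\rm dif},n}^+/\mathcal{D}_{{\rm dif},n}^+$, and dually $\check{\mathcal{D}}_{{\rm dif},n}^+$ as the inverse limit of $\check{\mathcal{D}}_{{\rm dif},n}^+/t^N\check{\mathcal{D}}_{{\rm dif},n}^+$; the pairing $\{\,,\}_{\rm dif}$ restricts to a perfect pairing of these finite-dimensional duals at every $N$ (because $\langle t^N\check{\mathcal{D}}_{{\rm dif},n}^+, t^{-N}\mathcal{D}_{{\rm dif},n}^+\rangle \subset \mathbf{B}_{\rm dR}^+\,dt$ has vanishing residue). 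On each layer the orthogonality is automatic, and passage to the limit closes the argument. As a fallback, the explicit $R_n$-bases of Propositions \ref{basedif} and \ref{nouvellebase}, on which $\sigma_a$ acts diagonally on the $t^m$-graded summands, would allow a direct verification of both inclusions.
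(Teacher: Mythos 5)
Your proof is correct and follows essentially the same route as the paper's: use the $\Gamma$-invariance of $\{\,,\}_{\rm dif}$ to identify $\sigma_{a^{-1}}$ as the adjoint of $\sigma_a$, apply the eigenspace decomposition of remarque \ref{ddifalg} c), deduce that the orthogonal of each $\ker(\sigma_a - a^{i+1-k})$ is $\mathrm{im}(\sigma_{a^{-1}} - a^{i+1-k}) = \mathrm{im}(\sigma_a - a^{k-1-i})$, and reindex. The only difference is that you spell out the justification of the orthogonality step (via the exhaustion by finite-dimensional $t^{-N}$-layers), whereas the paper takes perfectness of the induced duality for granted.
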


\begin{proof} Notons $a=1+p^n$ et $\sigma=\sigma_{a}$.
 Soit $0\leq i\leq k-1$. Puisque $\{\, \,,\,\}_{\rm dif}$ est parfait
 et $\Gamma$-invariant, 
 $\left(\mathcal{D}_{{\rm dif},n}/ \mathcal{D}_{{\rm dif},n}^+\right)^{\sigma-a^{i+1-k}=0}$ est en dualit\'{e} parfaite
 avec $\frac{\check{\mathcal{D}}_{{\rm dif},n}^+}{\sigma^{-1}-a^{i+1-k}}$. Cette observation combinée avec le c) de la remarque \ref{ddifalg} permettent de conclure que $$O_n(\mathcal{D})=\bigcap_{i=0}^{k-1}
 (\sigma^{-1}-a^{i+1-k})\check{\mathcal{D}}_{{\rm dif},n}^+=\bigcap_{i=0}^{k-1}(\sigma-a^i)\check{\mathcal{D}}_{{\rm dif},n}^+.$$
 \end{proof}

  \begin{proposition}\label{mn}
  a) Pour tout $n\geq m(D)$, $O_n(D)$ est un $R_n$-module libre de base $t^k e_1$ et  $e_2$.
  
  b) Pour tout $n\geq m(E)$, $O_n(E)$ est un $R_n$-module libre de base  $t^ke_1, t^kf_1, e_2, f_2$.

    \end{proposition}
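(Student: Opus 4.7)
Le plan est d'utiliser la description de $O_n(\mathcal{D})$ fournie par le lemme \ref{calculon},
$$O_n(\mathcal{D})=\bigcap_{i=0}^{k-1}(\sigma-a^i)\check{\mathcal{D}}_{{\rm dif},n}^+,$$
où l'on pose $a=1+p^n$ et $\sigma=\sigma_a$, puis de calculer explicitement l'image de $\sigma-a^i$ dans les bases explicites fournies par la proposition \ref{basedif}. Comme $a\equiv 1\pmod{p^n}$, l'élément $\sigma$ agit trivialement sur $L_n$ et satisfait $\sigma(t)=at$, donc sur $R_n=L_n[[t]]$ l'opérateur $\sigma$ est diagonal: $\sigma(\sum c_j t^j)=\sum a^j c_j t^j$. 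Il en résulte que, pour $0\leq i\leq k-1$, l'image de $\sigma-a^i$ sur $R_n$ est le sous-espace des séries dont le coefficient en $t^i$ est nul, tandis que l'image de $a^k\sigma-a^i$ est $R_n$ tout entier (car $a^{k+j}-a^i\neq 0$ pour tout $j\geq 0$, puisque $k+j\geq k>i$).

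Pour le a), on écrit $z=Ae_1+Be_2\in\check{D}_{{\rm dif},n}^+$ avec $A,B\in R_n$; en utilisant $\sigma(e_1)=e_1$ et $\sigma(e_2)=a^ke_2$, on constate que l'équation $z=(\sigma-a^i)z'$ avec $z'\in\check{D}_{{\rm dif},n}^+$ est équivalente à la résolubilité de $A=(\sigma-a^i)A'$ (i.e. $A$ n'a pas de coefficient en $t^i$) et $B=(a^k\sigma-a^i)B'$ (toujours résoluble). Intersecter sur $i=0,\dots,k-1$ impose exactement $A\in t^kR_n$, et laisse $B$ libre dans $R_n$, d'où le résultat.

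Pour le b), on écrit $z=Ae_1+Be_2+Cf_1+Df_2\in\check{E}_{{\rm dif},n}^+$. En utilisant la formule $\sigma(f_2)=a^kf_2+\alpha a^k\log a\cdot t^ke_1$ de la proposition \ref{basedif}, le système $(\sigma-a^i)z'=z$ se découple en
\begin{align*}
A&=(\sigma-a^i)A'+\alpha a^k\log a\cdot t^k\sigma(D'),\\
B&=(a^k\sigma-a^i)B',\quad C=(\sigma-a^i)C',\quad D=(a^k\sigma-a^i)D'.
\end{align*}
Les équations en $B,C,D$ se traitent comme précédemment ($B,D$ toujours solubles dans $R_n$, $C$ soluble ssi $C$ n'a pas de coefficient en $t^i$). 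Pour $A$, une fois $D'$ déterminé par $D$, il s'agit de résoudre $A-\alpha a^k\log a\cdot t^k\sigma(D')=(\sigma-a^i)A'$; mais le terme correctif $t^k\sigma(D')$ est dans $t^kR_n$, donc sans coefficient en $t^i$ pour $i<k$, et la condition se réduit simplement à l'absence de coefficient en $t^i$ dans $A$ lui-même. En intersectant sur $i=0,\dots,k-1$, on obtient $A,C\in t^kR_n$ et $B,D\in R_n$ libres, d'où la base annoncée $t^ke_1, t^kf_1, e_2, f_2$.

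Le principal point à surveiller est le rôle du terme $t^k$ dans la formule pour $\sigma(f_2)$: c'est précisément parce que l'obstruction à résoudre l'équation en $A$ provient d'un terme appartenant à $t^kR_n$ — donc sans composante dans les degrés $0,\dots,k-1$ — que la condition sur $A$ se découple de celle sur $D$, et que le module $O_n(E)$ est libre de rang $4$. Tout le reste du calcul est de nature élémentaire, une fois que l'on a remarqué que $\sigma$ agit diagonalement sur les puissances de $t$.
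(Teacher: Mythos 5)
Votre démonstration est correcte et suit essentiellement la même stratégie que celle du papier: utiliser le lemme \ref{calculon} pour ramener le calcul de $O_n(\mathcal{D})$ à l'intersection des images des opérateurs $\sigma-a^i$ ($0\leq i<k$), calculer ces images dans les bases explicites de la proposition \ref{basedif} en exploitant le fait que $\sigma$ agit diagonalement sur les puissances de $t$, et observer que le terme correctif $\alpha a^k\log a\cdot t^k\sigma(D')$ provenant de $f_2$ appartient à $t^kR_n$ et ne perturbe donc pas les degrés $0,\dots,k-1$. La seule différence est de présentation: le papier calcule directement l'image de $(\sigma-a^i)$ sur un élément générique tandis que vous posez et résolvez le système $z=(\sigma-a^i)z'$, mais les deux formulations sont équivalentes.
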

    
    \begin{proof}  Nous allons traiter seulement le b), le a) étant similaire, et plus simple. 
    Fixons $n\geq m(E)$ et notons $a=1+p^n$ et $\sigma=\sigma_a$. Si $A,B,C,D\in R_n$, un calcul imm\'{e}diat montre que pour $0\leq i<k$ on a
    $$(\sigma-a^i)( Ae_1+Be_2+Cf_1+Df_2)=((\sigma-a^i)A+\alpha a^k t^k\log a\cdot \sigma(D))e_1+$$
    $$(a^k\sigma-a^i)B\cdot e_2+(\sigma-a^i)C\cdot f_1+(a^k\sigma-a^i)D\cdot f_2.$$
    Or $B\to (a^k\sigma-a^i)B$ est bijective sur $R_n$ et l'image de $A\to (\sigma-a^i)A$ est l'ensemble
 des s\'{e}ries $f\in R_n$ dont le coefficient de $t^i$ est nul (utiliser le fait que $\sigma(t)=at$).
    On en d\'{e}duit  que $(\sigma-a^i)\check{E}_{{\rm dif},n}^+$ est l'ensemble des combinaisons
 $A_1e_1+B_1e_2+C_1f_1+D_1f_2$ telles que les coefficients de $t^i$ dans $A_1$ et $C_1$ soient nuls.
On conclut en prenant l'intersection sur $i\in \{0,1,...,k-1\}$ et en utilisant le lemme \ref{calculon}.

       \end{proof}
           
    \quad  Rappelons que $\nabla_{2k}=\prod_{i=0}^{2k-1}(\nabla-i)$. 
  Le lemme suivant est plus ou moins implicite dans la preuve de \cite[lemme 3]{Annalen}:
  
    \begin{lemma}\label{useless}
     Soient $B\in R_n$ et $P=\prod_{i=0}^{2k-1}(X-i)$. Alors $P'(\nabla)(B)\in t^{2k}R_n$ si et seulement
     si $B\in t^{2k}R_n$.
    \end{lemma}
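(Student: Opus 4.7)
The plan is to exploit the fact that $\nabla=t\frac{d}{dt}$ acts diagonally on $R_n=L_n[[t]]$ with respect to the natural $t$-adic filtration, so polynomials in $\nabla$ act by scalar multiplication on each graded piece. Writing $B=\sum_{m\geq 0} b_m t^m$ with $b_m\in L_n$, one has $\nabla(t^m)=m\cdot t^m$, hence for any polynomial $Q\in L[X]$,
$$Q(\nabla)(B)=\sum_{m\geq 0} Q(m)\, b_m\, t^m.$$
This reduces the question to an elementary statement about the coefficients $P'(m)$.

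First I would handle the easy direction: if $B\in t^{2k}R_n$, then $b_m=0$ for $m<2k$, so the formula above immediately gives $P'(\nabla)(B)\in t^{2k}R_n$ (without needing anything about $P'$).

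Next, for the converse, assume $P'(\nabla)(B)\in t^{2k}R_n$. By the diagonal action formula, this means $P'(m)\,b_m=0$ for all $0\leq m\leq 2k-1$. Since $P(X)=\prod_{i=0}^{2k-1}(X-i)$ has $2k$ distinct simple roots at $0,1,\ldots,2k-1$, the value of its derivative at any such root is
$$P'(j)=\prod_{\substack{0\leq i\leq 2k-1\\ i\neq j}}(j-i)=(-1)^{2k-1-j}\, j!\,(2k-1-j)!\neq 0,$$
where the nonvanishing holds because $L_n$ has characteristic $0$. Therefore $b_m=0$ for $0\leq m\leq 2k-1$, i.e. $B\in t^{2k}R_n$.

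There is essentially no obstacle here: the whole content is the observation that $\nabla$ is semi-simple on $R_n$ with integer eigenvalues and that $P'$ has no zero among $\{0,1,\ldots,2k-1\}$. The only point to keep in mind is that the base $L_n$ is of characteristic $0$, so the factorials appearing in $P'(j)$ are invertible; this is the reason the statement is sharp at the exponent $2k$ and why one really needs the full polynomial $P'$ rather than a proper divisor.
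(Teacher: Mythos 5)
Your proof is correct and follows essentially the same route as the paper: both diagonalize $\nabla=t\frac{d}{dt}$ on the $t$-adic graded pieces and reduce the statement to the nonvanishing of $P'(m)=(-1)^{2k-1-m}m!\,(2k-1-m)!$ for $0\le m\le 2k-1$ (the paper expands $P'(\nabla)$ as $\sum_{j=0}^{2k-1}\nabla_{2k}/(\nabla-j)$ before evaluating on $t^s$, which is the same computation written out).
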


     \begin{proof}
     Si $B=\sum_{j\geq 0} \alpha_jt^j$, alors on a des \'{e}galit\'{e}s dans $R_n/t^{2k}R_n$
   $$ \sum_{j=0}^{2k-1}\frac{\nabla_{2k}}{\nabla-j} B=\sum_{s=0}^{2k-1}\alpha_s\cdot \sum_{j=0}^{2k-1}\frac{\nabla_{2k}}{\nabla-j}(t^s)$$
   $$=\sum_{s=0}^{2k-1} \alpha_s\cdot \frac{\nabla_{2k}}{\nabla-s}(t^s)=\sum_{s=0}^{2k-1} (-1)^{2k-1-s}s!\cdot (2k-s-1)!\alpha_s\cdot t^s.$$
  On a donc $P'(\nabla)(B)\in t^{2k}R_n$ si et seulement si $\alpha_s=0$ pour tout $0\leq s\leq 2k-1$, d'où le résultat.
     \end{proof}

\begin{definition}
Si $\mathcal{D}\in\{D, E\}$ et si $n\geq m(\mathcal{D})$, on pose $$M_n(\mathcal{D})=\{z\in \check{\mathcal{D}}_{{\rm dif},n}^+|
  \nabla_{2k}(z)\in t^{k}O_n(\mathcal{D})\}.$$ 
\end{definition}

\begin{proposition} \label{mne}  
 
 a) $M_n(D)=\check{D}_{{\rm dif},n}^+$ pour $n\geq m(D)$.
 
 b) Si $E$ est de de Rham, alors $M_n(E)=\check{E}_{{\rm dif},n}^+$ pour tout $n\geq m(E)$. Si ce n'est pas le cas, alors 
 $M_n(E)$ est un $R_n$-module libre de base $e_1,f_1, e_2,t^kf_2$.
\end{proposition}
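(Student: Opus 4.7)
Le plan est de calculer explicitement $\nabla_{2k}(z)$ pour $z\in \check{\mathcal{D}}_{{\rm dif},n}^+$ dans les bases fournies par la proposition \ref{basedif}, puis de comparer le résultat aux composantes explicites de $t^k O_n(\mathcal{D})$ (fournies par la proposition \ref{mn}). En dérivant en $a=1$ les formules de la proposition \ref{basedif}, on obtient $\nabla(e_1)=\nabla(f_1)=0$, $\nabla(e_2)=ke_2$ et, de manière cruciale, $\nabla(f_2)=kf_2+\alpha t^k e_1$. On pose $\nabla_0=t\frac{d}{dt}$ sur $R_n$, $P(X)=\prod_{i=0}^{2k-1}(X-i)$ et $\tilde P(X)=P(X+k)=\prod_{j=-k+1}^k(X+j)$.

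Le coeur du calcul concerne le terme en $f_2$. Posant $u:=\alpha t^k e_1$, on a $\nabla(u)=ku$ et $\nabla(f_2)=kf_2+u$, donc sur le $R_n$-module engendré par $f_2,u$, l'opérateur $\nabla$ est représenté par la matrice $(\nabla_0+k)I+J$ avec $J=\bigl(\begin{smallmatrix}0 & 0\\1 & 0\end{smallmatrix}\bigr)$ nilpotente de carré nul. L'identité formelle $P(NI+J)=P(N)I+P'(N)J$ donne alors, pour tout $D\in R_n$,
\[ \nabla_{2k}(Df_2)=\tilde P(\nabla_0)(D)\, f_2+\alpha t^k \tilde P'(\nabla_0)(D)\, e_1. \]
Les autres termes se calculent directement par la règle de Leibniz, puisque $\nabla$ annule $e_1,f_1$ et agit par un scalaire $k$ sur $e_2$: $\nabla_{2k}(Ae_1)=P(\nabla_0)(A)\,e_1$, $\nabla_{2k}(Cf_1)=P(\nabla_0)(C)\,f_1$, et $\nabla_{2k}(Be_2)=\tilde P(\nabla_0)(B)\,e_2$. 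D'où, pour $z=Ae_1+Cf_1+Be_2+Df_2$:
\[ \nabla_{2k}(z)=[P(\nabla_0)(A)+\alpha t^k \tilde P'(\nabla_0)(D)]\, e_1+P(\nabla_0)(C)\, f_1+\tilde P(\nabla_0)(B)\, e_2+\tilde P(\nabla_0)(D)\, f_2. \]

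Pour conclure, on observe que $P$ s'annule sur $\{0,1,\ldots,2k-1\}$ et $\tilde P$ sur $\{-k+1,\ldots,k\}$, donc $P(\nabla_0)(R_n)\subset t^{2k}R_n$ et $\tilde P(\nabla_0)(R_n)\subset t^k R_n$ automatiquement. Comme $t^k O_n(E)=t^{2k}R_n\, e_1+t^{2k}R_n\, f_1+t^k R_n\, e_2+t^k R_n\, f_2$, chaque coefficient de $\nabla_{2k}(z)$ tombe dans la composante requise, sauf éventuellement celui en $e_1$ dû au couplage. La condition $z\in M_n(E)$ se réduit donc à $\alpha\tilde P'(\nabla_0)(D)\in t^k R_n$.

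Si $\alpha=0$ (cas de $E$ de de Rham par le lemme \ref{alphanul}), cette condition est vide et $M_n(E)=\check E_{{\rm dif},n}^+$. Sinon, les entiers $0,1,\ldots,k-1$ étant des racines simples de $\tilde P$, on a $\tilde P'(s)\neq 0$ pour $s=0,\ldots,k-1$, et la condition impose l'annulation des coefficients $d_0,\ldots,d_{k-1}$ de $D=\sum_s d_s t^s$, soit $D\in t^k R_n$. Il en résulte que $M_n(E)$ admet pour base $e_1,f_1,e_2,t^k f_2$. Le point a) se démontre par la même analyse restreinte à $Ae_1+Be_2$ (sans couplage), donnant $M_n(D)=\check D_{{\rm dif},n}^+$. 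Le point technique principal est la vérification de l'identité matricielle du bloc de Jordan pour $\nabla_{2k}$ agissant sur le sous-module $R_n f_2\oplus R_n u$; ceci relève néanmoins de l'algèbre linéaire élémentaire, la non-trivialité de $\tilde P'$ aux petits entiers étant assurée par la simplicité des racines.
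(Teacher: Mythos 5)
Your proof is correct and follows essentially the same computational route as the paper: derive $\nabla$ on the basis $e_1,f_1,e_2,f_2$ from Proposition~\ref{basedif}, apply $\nabla_{2k}$ coefficientwise, and compare to the explicit description of $t^k O_n(\mathcal{D})$ from Proposition~\ref{mn}. The only real difference is presentational. The paper works with $e_1,f_1,e_2/t^k,f_2/t^k$ (so $\nabla$ annihilates three of them and sends $f_2/t^k$ to $\alpha e_1$) and then invokes a separate Lemme~\ref{useless} to show that $\alpha P'(\nabla+k)(D)\in t^kR_n$ forces $D\in t^k R_n$; you instead package the coupling as a size-$2$ Jordan block $(\nabla_0+k)I+J$ with $J^2=0$, read off $P(NI+J)=P(N)I+P'(N)J$, and conclude directly from the fact that $\widetilde P=P(\cdot+k)$ has simple integer roots at $0,\dots,k-1$, so $\widetilde P'(s)\neq 0$ there. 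This is the same underlying computation, but your Jordan-block phrasing makes the origin of the $P'$ term transparent and subsumes Lemme~\ref{useless}. One small slip in your write-up: you describe the root set of $\widetilde P(X)=\prod_{j=-k+1}^{k}(X+j)$ as $\{-k+1,\dots,k\}$, when the roots are in fact $\{-k,\dots,k-1\}$; this does not affect the argument, since all you use is that $\widetilde P$ has simple roots at $0,\dots,k-1$, which still holds.
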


 \begin{proof} Nous ne traiterons que le b), qui est plus technique. En passant \`{a} la limite (pour $a\to 1$) dans la proposition \ref{basedif} on obtient $$\nabla(e_1)=0, \quad \nabla(f_1)=0, \quad \nabla\left(\frac{e_2}{t^k}\right)=0, \quad
   \nabla\left(\frac{f_2}{t^k}\right)=\alpha\cdot e_1.$$
   On en déduit,
   pour $B\in L_n((t))$ et $P\in L_n[X]$, les \'{e}galit\'{e}s
    $P(\nabla)(B\cdot x)=P(\nabla)(B)\cdot x$ si $x\in\{e_1, f_1\}$,
    $$\quad P(\nabla)(B\cdot e_2)=P(\nabla)\left( t^kB\cdot \frac{e_2}{t^k}\right)=\frac{P(\nabla)(t^kB)}{t^k}\cdot e_2=P(\nabla+k)(B)\cdot e_2$$
 et enfin, avec un calcul semblable à celui pour $P(\nabla)(B\cdot e_2)$, 
     $$P(\nabla)(B\cdot f_2)=P(\nabla+k)(B)\cdot f_2+\alpha P'(\nabla+k)(B)\cdot t^ke_1.$$

  Soit $P=\prod_{i=0}^{2k-1}(X-i)$, de telle sorte que $P(\nabla)=\nabla_{2k}$. On v\'{e}rifie
     sans mal que $P(\nabla)(R_n)\subset t^{2k}R_n$ et que $P(\nabla+k)(R_n)\subset
     t^k R_n$. En utilisant la proposition \ref{mn}, on en déduit que 
      $Ae_1+Be_2+Cf_1+Df_2\in \check{E}_{{\rm dif}, n}^+$ est dans $M_n(E)$ 
     si et seulement si $\alpha P'(\nabla+k)(D)\in t^k R_n$. On conclut en utilisant les lemmes \ref{alphanul} et
\ref{useless} et en remarquant que la condition $\alpha P'(\nabla+k)(D)\in t^k R_n$ équivaut à 
$\alpha P'(\nabla)(t^k D)\in t^{2k} R_n$. 
  \end{proof}

\section{Preuve des résultats principaux}

   \quad On fixe dans ce chapitre $D\in\Phi\Gamma^{\rm et}(\mathcal{E})$ de dimension $2$, tel que $V(D)$ soit de de Rham,
   à poids de Hodge-Tate distincts. On se donne une 
 suite exacte non scindée $0\to D\to E\to D\to 0$ et on suppose que $E\in \mathcal{C}_L(\delta)$, avec 
 $\delta=\chi^{-1}\det V(D)$. On ne fait aucune hypothèse sur $p$. 

    \begin{theorem}\label{main1}
       Avec les hypothèses ci-dessus, les assertions suivantes sont équivalentes:
       
       a) $\Pi_{\delta}(E)^{\rm alg}\ne \Pi_{\delta}(D)^{\rm alg}$.
       
       b) $V(E)$ est de de Rham.
       
       c) $\Pi_{\delta}(E)^{\rm alg}$ est dense dans $\Pi_{\delta}(E)$.

      \end{theorem}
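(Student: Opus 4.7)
The plan is to prove the cyclic chain (c) $\Rightarrow$ (a) $\Rightarrow$ (b) $\Rightarrow$ (c).

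The implication (c) $\Rightarrow$ (a) is essentially formal: $\Pi_{\delta}(D)$ is a proper closed $G$-stable subspace of $\Pi_{\delta}(E)$, hence $\Pi_{\delta}(D)^{\rm alg}$, which it contains, cannot be dense in $\Pi_{\delta}(E)$. Consequently, density of $\Pi_{\delta}(E)^{\rm alg}$ in $\Pi_{\delta}(E)$ forces $\Pi_{\delta}(E)^{\rm alg}\ne \Pi_{\delta}(D)^{\rm alg}$.

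For (a) $\Rightarrow$ (b), the first half is already done by Proposition \ref{HTalg}, which tells us that $E$ is at least Hodge-Tate. To promote this to de Rham, I would pick a non-zero $v\in \Pi_{\delta}(E)^{\rm alg}\setminus \Pi_{\delta}(D)^{\rm alg}$ and exploit the duality identity of Theorem \ref{dualKir}: for any $\check z\in (\Pi_{\delta}(E)^{\rm an})^*$, local algebraicity of $v$ gives $\{(u^-)^k \check z,\,v\}_{\mathbf{P}^1}=0$. Unwinding the right-hand side via Lemma \ref{ijn} translates this into the statement that certain $i_{j,n}(\check z)$ must be orthogonal, under $\{\,,\,\}_{\rm dif}$, to the $P$-algebraic reduction of $v$, i.e.\ effectively into a membership condition of the form ``test vector in $M_n(E)$''. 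If $V(E)$ were merely Hodge-Tate and not de Rham, Proposition \ref{mne} together with Lemma \ref{alphanul} says $M_n(E)$ is strictly smaller than $\check E_{{\rm dif},n}^+$, the gap being measured by the parameter $\alpha\ne 0$ of Proposition \ref{basedif}. Under the splitting of Proposition \ref{picscindee}, this gap would force the image of $v$ in the quotient copy of $\Pi_{\delta}(D)^{P-\rm alg}_c$ to vanish, placing $v$ in $\Pi_{\delta}(D)^{\rm alg}$ and contradicting the choice of $v$; thus $\alpha=0$ and $V(E)$ is de Rham.

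For (b) $\Rightarrow$ (c), assume $V(E)$ is de Rham, so $M_n(E)=\check E_{{\rm dif},n}^+$ for $n\geq m(E)$ by Proposition \ref{mne}. Reading the duality identity in the opposite direction, this equality yields $\{(u^-)^k \check z,\,v\}_{\mathbf{P}^1}=0$ for every $v\in \Pi_{\delta}(E)^{P-\rm alg}_{c,k}$ and every $\check z$ ranging over a dense subspace of $(\Pi_{\delta}(E)^{\rm an})^*$ (constructed from $\check E^{]0,r_n]}$ as in the proof of Theorem \ref{dense}); hence $(u^-)^k v=0$. Combined with $(u^+)^k v=0$ and the polynomial relation in $h$ supplied by Corollary \ref{pialg}, this kills $v$ under enough of $U(\mathfrak{gl}_2)$ to place $v$ inside $\Pi_{\delta}(E)^{\rm alg}$. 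Thus $\Pi_{\delta}(E)^{P-\rm alg}_{c,k}\subseteq \Pi_{\delta}(E)^{\rm alg}$, and density of the latter in $\Pi_{\delta}(E)$ follows by establishing density of $\Pi_{\delta}(E)^{P-\rm alg}_{c,k}$ itself, using the splitting of Proposition \ref{picscindee} and a variant of the argument of Theorem \ref{dense} (the de Rham hypothesis is precisely what guarantees that the spaces $(D_{{\rm dif},\infty}/D_{{\rm dif},\infty}^+)^{k,\rm alg}$ entering Corollary \ref{blah} fill out enough of $D_{{\rm dif},\infty}/D_{{\rm dif},\infty}^+$).

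The main obstacle is the step (a) $\Rightarrow$ (b): the Hodge-Tate property is only a quadratic constraint on $\Theta_{{\rm Sen},\check E}$ and does not by itself imply de Rham, so squeezing the vanishing of the de Rham obstruction $\alpha$ out of the existence of a single extra locally algebraic vector requires a tight match between three inputs—the explicit basis of $\check E_{{\rm dif},n}^+$ from Proposition \ref{basedif}, the $B$-equivariant splitting of $\Pi_{\delta}(E)^{P-\rm alg}_{c,k}$ from Proposition \ref{picscindee}, and the translation of $(u^-)^k$ via Lemma \ref{ijn}. It is precisely this interplay that detects $\alpha$.
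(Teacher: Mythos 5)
Your chain (c)\ $\Rightarrow$\ (a)\ $\Rightarrow$\ (b)\ $\Rightarrow$\ (c) is the natural one, and the easy step (c)\ $\Rightarrow$\ (a) and your outline of (b)\ $\Rightarrow$\ (c) essentially recover the paper's Proposition~\ref{crux} (de Rham $\Leftrightarrow$ $(u^-)^k$ kills $\Pi_{c}^{P\text{-}\rm alg}$ $\Leftrightarrow$ $\Pi_c^{P\text{-}\rm alg}\subset\Pi^{\rm alg}$) together with its density corollary. Two small remarks on (b)\ $\Rightarrow$\ (c): the density of $\Pi_c^{P\text{-}\rm alg}$ in $\Pi$ is \emph{not} a variant of the argument for Theorem~\ref{dense}; the paper simply invokes topological irreducibility of $\Pi$ as a $B$-module together with the non-vanishing of $\Pi_c^{P\text{-}\rm alg}$, and this has nothing to do with the de Rham hypothesis (which was already consumed in proving $\Pi_c^{P\text{-}\rm alg}\subset\Pi^{\rm alg}$).

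The real gap is in (a)\ $\Rightarrow$\ (b). You start from a vector $v\in\Pi_{\delta}(E)^{\rm alg}\setminus\Pi_{\delta}(D)^{\rm alg}$ and feed it into Theorem~\ref{dualKir}, but that theorem only applies to elements of $\Pi_c^{P\text{-}\rm fini}$, and an arbitrary locally algebraic vector has no reason to lie there; you must first pass to something like $v'=(u-1)v$ for $u\in U$ (which does land in $(\Pi_1)_c^{\rm alg}\subset(\Pi_1)_c^{P\text{-}\rm alg}$). More seriously, even with that fix, your claim that ``this gap would force the image of $v$ in the quotient copy of $\Pi_c^{P\text{-}\rm alg}$ to vanish'' does not follow. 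The duality constraint coming from a \emph{single} vector $v'$ (or even from its $B$-orbit, a priori only a proper $B$-submodule) is far too weak to force $\alpha=0$; Lemma~\ref{key} only detects $\alpha$ once one knows that $(u^-)^k$ kills \emph{all} of $(\Pi_1)_c^{P\text{-}\rm alg}$, so that $N_n\subset M_n(E)$ and then Lemma~\ref{engendre} gives $M_n(E)=\check{E}_{{\rm dif},n}^+$. The heart of the paper's proof, which your outline omits, is precisely the step that promotes the existence of one non-trivial locally algebraic vector to the containment $(\Pi_1)_c^{P\text{-}\rm alg}\subset\Pi_1^{\rm alg}$: one shows via Lemma~\ref{stupid} and the proof of \cite[cor.~VI.5.9]{Cbigone} that the image of $(\Pi_1)_c^{\rm alg}$ in $\Pi_c^{P\text{-}\rm alg}$ is a non-zero $B$-submodule, hence all of $\Pi_c^{P\text{-}\rm alg}$, and then the split exact sequence of Proposition~\ref{picscindee} together with $\Pi_c^{P\text{-}\rm alg}\subset\Pi^{\rm alg}$ gives the containment. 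Without this ingredient the argument does not close.

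Finally, the splitting of Proposition~\ref{picscindee} also carries the burden of (b)\ $\Rightarrow$\ (a): if $E$ is de Rham but $\Pi_1^{\rm alg}=\Pi^{\rm alg}$, then $(\Pi_1)_c^{P\text{-}\rm alg}\subset\Pi_c^{P\text{-}\rm alg}$ contradicts the split sequence being non-trivial. Your proposal leaves this direction implicit in the cycle (c)\ $\Rightarrow$\ (a), but the paper proves it directly, and you should be aware that this is where the \emph{non-splitness} of $0\to D\to E\to D\to 0$ enters, via Proposition~\ref{picscindee}.
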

  
    Avant de passer à la preuve du théorème \ref{main1}, expliquons pourquoi il entraîne le théorème principal \ref{main}.
    On se place dans le contexte du théorème \ref{main}, pour lequel on renvoie à l'introduction. Soient
    $D$ et $E$ les duaux de Cartier de $D(\Pi)$ et $D(\Pi_1)$. D'après Paskunas \cite{Pa}, $D$ est de dimension $2$, absolument irréductible et 
    $\delta$ est le caractère central de $\Pi$. D'après le d) du théorème \ref{CD}, on a des isomorphismes canoniques
    $\Pi\simeq \Pi_{\delta}(D)$ et $\Pi_1\simeq \Pi_{\delta}(E)$. 
    Puisque $\Pi^{\rm alg}\ne 0$, un théorème de Colmez (\cite[th. 0.20]{Cbigone} ou \cite[th.4]{Annalen}) 
    montre que $D$ est de de Rham, à poids de Hodge-Tate distincts. Les hypothèses du théorème \ref{main1} sont 
    donc satisfaites, ce qui permet de conclure. 
    
      Revenons maintenant à la preuve du théorème \ref{main1}. Quitte à faire une torsion par un caractère algébrique, on peut supposer que les poids de 
  Hodge-Tate de $D$ sont $1$ et $1-k$, avec $k\in \mathbf{N}^*$. On note $\Pi_c^{P-\rm alg}$ au lieu de $\Pi_{c,k}^{P-\rm alg}$ et pareil pour $\Pi_1$.
  La proposition \ref{HTalg} nous permet de supposer que $E$ est de Hodge-Tate.

  \begin{proposition}\label{crux}
   Si $\mathcal{D}\in \{D,E\}$, alors les assertions suivantes sont équivalentes:
   
   a) $\mathcal{D}$ est de de Rham.
   
   b) $(u^{-})^k$ tue $\Pi_{\delta}(\mathcal{D})_c^{P-\rm alg}$.
   
   c) $\Pi_{\delta}(\mathcal{D})_c^{P-\rm alg}\subset \Pi_{\delta}(\mathcal{D})^{\rm alg}$.
   
  \end{proposition}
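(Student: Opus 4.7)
The plan is to establish \textbf{(c)} $\Leftrightarrow$ \textbf{(b)} by Lie-algebraic arguments and \textbf{(b)} $\Leftrightarrow$ \textbf{(a)} via the duality theorem \ref{dualKir} combined with the modules $O_n(\mathcal{D})$ and $M_n(\mathcal{D})$ of Chapter 5.

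For \textbf{(c)} $\Rightarrow$ \textbf{(b)}: by Proposition \ref{pilc}, $\Pi_{\delta}(\mathcal{D})^{\rm alg}=W_{1-k,k}\otimes\Pi_{\delta}(\mathcal{D})^{\rm lc}$; since $\Pi_{\delta}(\mathcal{D})^{\rm lc}$ is smooth, $u^-$ acts only on the $k$-dimensional factor $W_{1-k,k}$, so $(u^-)^k$ annihilates $\Pi_{\delta}(\mathcal{D})^{\rm alg}$, and hypothesis (c) yields (b). For \textbf{(b)} $\Rightarrow$ \textbf{(c)}: by Corollary \ref{pialg}, every $v\in\Pi_{\delta}(\mathcal{D})_c^{P-\rm alg}$ is locally analytic and is already killed by $(u^+)^k$ and by $\prod_{i=0}^{k-1}(h-2i+k-1)$; adding $(u^-)^k v=0$ and invoking PBW shows that $U(\mathfrak{sl}_2)v$ is finite-dimensional, hence so is $U(\mathfrak{gl}_2)v$ (the centre acts by a scalar via $\delta$). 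A locally analytic, $\mathfrak{g}$-finite vector for $\mathrm{GL}_2(\qp)$ is locally algebraic by standard results of Emerton, giving (c).

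For \textbf{(b)} $\Leftrightarrow$ \textbf{(a)}: pick $v\in\Pi_{\delta}(\mathcal{D})_c^{P-\rm alg}$ and $\check{z}\in(\Pi_{\delta}(\mathcal{D})^{\rm an})^*$. The $G$-equivariance of $\{\,,\,\}_{\p1}$ gives
$$\{\check{z},(u^-)^k v\}_{\p1}=(-1)^k\{(u^-)^k\check{z},v\}_{\p1},$$
and combining Theorem \ref{dualKir} with Lemma \ref{ijn} to rewrite the right-hand side yields
$$\{\check{z},(u^-)^k v\}_{\p1}=(-1)^k\sum_{j\in\mathbf{Z}}(-p^j)^k\left\{\frac{\nabla_{2k}(i_{j,n}(\check{z}))}{t^k},\phi_v(p^{-j})\right\}_{\rm dif}.$$
Corollary \ref{blah} allows one to realise $\phi_v(p^{-j_0})$ as any prescribed element of $(D_{{\rm dif},\infty}/D_{{\rm dif},\infty}^+)^{k,\rm alg}$ (the other components being zero), so $(u^-)^k$ kills $\Pi_c^{P-\rm alg}$ if and only if $\nabla_{2k}(i_{j,n}(\check{z}))/t^k$ is orthogonal under $\{\,,\,\}_{\rm dif}$ to $(\mathcal{D}_{{\rm dif},n}/\mathcal{D}_{{\rm dif},n}^+)^{\mu_{n,k}=0}$ for every $\check{z}$, every $j\in\mathbf{Z}$ and every $n\geq m(\mathcal{D})$. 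By Definition \ref{orthoon} this condition reads $i_{j,n}(\check{z})\in M_n(\mathcal{D})$, and Proposition \ref{mne} identifies $M_n(\mathcal{D})=\check{\mathcal{D}}_{{\rm dif},n}^+$ with the de Rhamness of $\mathcal{D}$, concluding the equivalence.

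The main obstacle is ensuring that the family $\{i_{j,n}(\check{z})\}_{\check{z},j,n}$ is rich enough in $\check{\mathcal{D}}_{{\rm dif},n}^+$ that the pointwise condition $i_{j,n}(\check{z})\in M_n(\mathcal{D})$ really forces $M_n(\mathcal{D})=\check{\mathcal{D}}_{{\rm dif},n}^+$; concretely, in the case $\mathcal{D}=E$ non de Rham, one must exhibit some $\check{z}$ whose image $i_{j,n}(\check{z})$ detects the missing direction $f_2\notin M_n(E)$. This follows from a density argument in the spirit of Theorem \ref{dense}, using the inclusion $(\Pi^{\rm an})^*\subset\check{\mathcal{D}}_{\rm rig}\boxtimes_{\delta^{-1}}\p1$ and the fact that the maps $\varphi^{-n}\circ{\rm Res}_{\zp}\circ\left(\begin{smallmatrix} p^{n-j} & 0 \\ 0 & 1\end{smallmatrix}\right)$ collectively exhaust a dense subspace of $\check{\mathcal{D}}_{{\rm dif},n}^+$ as $j$ varies.
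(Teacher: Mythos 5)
Your overall architecture matches the paper's: the Lie-algebraic equivalence (b)~$\Leftrightarrow$~(c) is exactly the paper's reasoning (Proposition~\ref{pilc} for one direction, Corollary~\ref{pialg} plus $\mathfrak{sl}_2$-finiteness for the other), and the equivalence (b)~$\Leftrightarrow$~(a) through Theorem~\ref{dualKir}, Lemma~\ref{ijn}, Corollary~\ref{blah}, the modules $O_n(\mathcal{D})$, $M_n(\mathcal{D})$ and Proposition~\ref{mne} is a faithful reconstruction of the paper's Lemma~\ref{key}.

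However, there is a genuine gap, and it is precisely the point you yourself flagged as ``the main obstacle.'' The biconditional $N_n\subset M_n(\mathcal{D})\Leftrightarrow M_n(\mathcal{D})=\check{\mathcal{D}}_{{\rm dif},n}^+$ is not automatic: the nontrivial direction requires showing that $N_n=\varphi^{-n}({\rm Res}_{\zp}((\Pi^{\rm an})^*))$ is large enough in $\check{\mathcal{D}}_{{\rm dif},n}^+$ to force a proper $L_n[[t]]$-submodule $M_n(\mathcal{D})$ that contains it to be everything. You propose a ``density argument in the spirit of Theorem~\ref{dense},'' but that theorem's argument goes in the \emph{opposite direction}: it establishes that ${\rm Res}_{\qp}$ is \emph{injective} on $\Pi^*$ (a vanishing statement), not that the image of ${\rm Res}_{\zp}$ is dense or generating in $\check{\mathcal{D}}_{{\rm dif},n}^+$. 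Injectivity of a dual map is not the same as surjectivity/generation for the maps $i_{j,n}$, and the analogy doesn't transfer. The paper instead proves (Lemme~\ref{engendre}) that $N_n$ contains an $R_n$-basis of $\check{\mathcal{D}}_{{\rm dif},n}^+$, and this requires specific surconvergence inputs — Corollary~III.24 of \cite{CD} together with Corollaries~II.5.12 and II.7.2 of \cite{Cmirab} — showing that ${\rm Res}_{\zp}(\Pi^*)$ already contains a basis of $\check{D}^{]0,r_n]}$. Combined with the fact (established in Proposition~\ref{mne}) that $M_n(\mathcal{D})$ is an $R_n$-submodule, the inclusion $N_n\subset M_n(\mathcal{D})$ then forces $M_n(\mathcal{D})=\check{\mathcal{D}}_{{\rm dif},n}^+$. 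Without an argument of this kind, the implication (b)~$\Rightarrow$~(a) does not close.

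One further small remark: the paper organizes the three conditions as a cycle a)~$\Rightarrow$~b)~$\Rightarrow$~c)~$\Rightarrow$~a), which avoids proving (b)~$\Rightarrow$~(a) directly and instead deduces (a) from (c). This is a purely cosmetic difference; the mathematical content is the same once Lemme~\ref{engendre} is in place.
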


\begin{proof} On note pour simplifier $\Pi=\Pi_{\delta}(\mathcal{D})$ et, pour $n\geq m(\mathcal{D})$, on pose 
$$N_n=\varphi^{-n}({\rm Res}_{\zp}((\Pi^{\rm an})^*)).$$
Nous aurons besoin de quelques préliminaires:
   
  \begin{lemma}\label{key}
    $(u^{-})^k$ tue $\Pi_c^{P-\rm alg}$ si et seulement si $N_n\subset M_n(\mathcal{D})$ pour tout $n\geq m(\mathcal{D})$.
  
  \end{lemma}
  
  \begin{proof} Par Hahn-Banach et 
  puisque $\{\,\,,\,\}_{\p1}$ est $G$-équivariant, $(u^{-})^k$ tue $\Pi_c^{P-\rm alg}$ 
si et seulement si $(u^{-})^k (\Pi^{\rm an})^*$ est orthogonal à $\Pi_c^{P-\rm alg}$. 
D'après le théorème \ref{dualKir} cela arrive si et seulement si 
$$\sum_{j\in\mathbf{Z}} \{i_{j,n}((u^{-})^k\check{z}), \phi_v(p^{-j})\}_{\rm dif}=0$$
pour tous $\check{z}\in (\Pi^{\rm an})^*$, $v\in \Pi_c^{P-\rm alg}$ et $n\geq m(\mathcal{D})$
assez grand pour que $\phi_v(p^j)\in D_{{\rm dif},n}/D_{{\rm dif},n}^+$ pour tout 
$j\in\mathbf{Z}$.
Le corollaire \ref{blah} montre que cela équivaut encore à l'égalité
$$\sum_{j\in\mathbf{Z}} \{i_{j,n}((u^{-})^k\check{z}), u_j\}_{\rm dif}=0$$
pour tous $\check{z}\in (\Pi^{\rm an})^*$, $(u_j)_{j\in\mathbf{Z}}\in \oplus_{j\in\mathbf{Z}} (D_{{\rm dif}, n}/D_{{\rm dif},n}^+)^{\mu_{n,k}=0}$
et $n\geq m(\mathcal{D})$. Par définition de 
$O_n(\mathcal{D})$, cela revient à $i_{j,n}((u^{-})^k\check{z})\in O_n(\mathcal{D})$ pour 
$j\in\mathbf{Z}$, $\check{z}\in (\Pi^{\rm an})^*$, $n\geq m(\mathcal{D})$. Enfin, d'après 
le lemme \ref{ijn} ceci équivaut à $i_{j,n}(\check{z})\in M_n(\mathcal{D})$ pour 
tout $\check{z}\in (\Pi^{\rm an})^*$, $j\in\mathbf{Z}$ et $n\geq m(\mathcal{D})$.
On conclut en revenant à la définition des applications $i_{j,n}$. 
  
  \end{proof}

Puisque 
   $(\Pi^{\rm an})^*\subset \check{D}^{]0,r_{m(\mathcal{D})}]}\boxtimes_{\delta^{-1}}\p1$, on a 
   $N_n\subset \check{\mathcal{D}}_{{\rm dif},n}^+$ pour $n\geq m(\mathcal{D})$.
    L'implication a) entraîne b) découle donc 
  de la proposition \ref{mne} et du lemme \ref{key}.
  
  Supposons que b) est vraie. On déduit alors du corollaire \ref{pialg} que 
   tout vecteur de 
  $\Pi_c^{P-\rm alg}$ est tué par un idéal de codimension finie dans $U(\mathfrak{sl}_2)$, et donc est 
  ${\rm SL}_2(\qp)$-algébrique. Puisque $\delta$ est localement algébrique, on obtient 
  $\Pi_c^{P-\rm alg}\subset \Pi^{\rm alg}$, d'où le c). 
  
  Enfin, si c) est vraie, la proposition \ref{pilc}
  montre que $(u^{-})^k$ tue $\Pi^{\rm alg}$ et donc $(u^{-})^k$ tue $\Pi_c^{P-\rm alg}$. 
  Le lemme \ref{key} permet de conclure que $N_n\subset M_n(\mathcal{D})$ 
  pour $n\geq m(\mathcal{D})$. Nous avons besoin du lemme suivant:

\begin{lemma} \label{engendre}
 $N_n$ engendre $\check{\mathcal{D}}_{{\rm dif},n}^+$ en tant que $L_n[[t]]$-module.

\end{lemma}

\begin{proof} Puisque $\Pi^{\rm an}$ est dense dans $\Pi$ \cite[th. 7.1]{STInv}, on a une inclusion 
$\Pi^*\subset (\Pi^{\rm an})^*$, donc il suffit de vérifier que $\varphi^{-n}({\rm Res}_{\zp}(\Pi^*))$ engendre 
$\check{\mathcal{D}}_{{\rm dif},n}^+$. Il découle des corollaires III.24 de \cite{CD}, II.5.12 
et II.7.2 de \cite{Cmirab} que ${\rm Res}_{\zp}(\Pi^*)$ contient une base de $D^{]0,r_n]}$ pour $n\geq m(\mathcal{D})$ (cela peut demander d'augmenter $m(\mathcal{D})$).
Le résultat s'ensuit, puisque $\check{D}_{{\rm dif},n}^+$ est engendré par $\varphi^{-n}(\check{D}^{]0,r_n]})$. 

\end{proof}
 
 Le lemme \ref{engendre} entraîne l'égalité $M_n(\mathcal{D})=\check{\mathcal{D}}_{{\rm dif},n}^+$, et la proposition \ref{mne} permet de conclure que $\mathcal{D}$ 
est de de Rham, ce qui finit la preuve de la proposition \ref{crux}.

\end{proof}

  Revenons à la preuve du théorème \ref{main1} et notons pour simplifier 
 $\Pi=\Pi_{\delta}(D)$ et $\Pi_1=\Pi_{\delta}(E)$. 
  Supposons que $E$ est de de Rham et que $\Pi_1^{\rm alg}= \Pi^{\rm alg}$. La proposition \ref{crux} montre que 
  $(\Pi_1)_c^{P-\rm alg}\subset \Pi_1^{\rm alg}=\Pi^{\rm alg}$, donc 
  $(\Pi_1)_c^{P-\rm alg}\subset \Pi_c^{P-\rm alg}$, ce qui contredit la proposition \ref{picscindee} et
  permet de conclure.
    
    Supposons maintenant que $\Pi_1^{\rm alg}\ne \Pi^{\rm alg}$ et montrons que 
    $(\Pi_1)_c^{P-\rm alg}\subset \Pi_1^{\rm alg}$ (ce qui permettra de conclure que $V(E)$ est de de Rham grâce 
    à la proposition \ref{crux}). La proposition \ref{picscindee} fournit une suite exacte de $B$-modules
    $$0\to \Pi_c^{P-\rm alg}\to (\Pi_1)_c^{P-\rm alg}\to \Pi_c^{P-\rm alg}\to 0.$$
    On note $(\Pi_1)_c^{\rm alg}$ le sous-espace de $\Pi_1^{\rm alg}$ engendré par les 
    vecteurs $(u-1)\cdot v$, avec $u\in U:=\left(\begin{smallmatrix} 1 & \qp \\0 & 1\end{smallmatrix}\right)$.
    Alors $(\Pi_1)_c^{\rm alg}\subset (\Pi_1)_c^{P-\rm alg}$ (par définition de 
    $(\Pi_1)_c^{P-\rm alg}$ et en utilisant la proposition \ref{pilc}). Nous aurons besoin du lemme suivant:
    
    \begin{lemma}\label{stupid}
      L'image de $(\Pi_1)_c^{\rm alg}$ dans $\Pi_c^{P-\rm alg}$ est non nulle et stable sous l'action 
 de $B$.
    \end{lemma}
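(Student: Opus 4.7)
The plan is to establish the two assertions separately: the $B$-stability is essentially formal, while the non-vanishing relies on an irreducibility argument for $\Pi$.

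For the $B$-stability, I would first verify that $(\Pi_1)_c^{\rm alg}$ is itself $B$-stable. Indeed, for $b\in B$, $u\in U$ and $v\in \Pi_1^{\rm alg}$, one has the identity $b\cdot (u-1)v=(bub^{-1}-1)\cdot (bv)$, in which $bub^{-1}\in U$ (because $B$ normalizes $U$) and $bv\in \Pi_1^{\rm alg}$ (by $G$-stability of locally algebraic vectors). The image in $\Pi_c^{P-\rm alg}$ is then $B$-stable because the map $(\Pi_1)_c^{P-\rm alg}\to \Pi_c^{P-\rm alg}$ is induced by the $G$-equivariant projection $\Pi_1\to \Pi$.

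For the non-vanishing, let $\overline{\Pi_1^{\rm alg}}\subset \Pi^{\rm alg}$ denote the image of $\Pi_1^{\rm alg}$ under the projection $\Pi_1\to \Pi$. It is a $G$-subrepresentation, equals $\Pi_1^{\rm alg}/\Pi^{\rm alg}$ (since $\Pi\cap \Pi_1^{\rm alg}=\Pi^{\rm alg}$), and is non-zero by the standing hypothesis $\Pi_1^{\rm alg}\ne \Pi^{\rm alg}$. The image of $(\Pi_1)_c^{\rm alg}$ in $\Pi$ is spanned by the vectors $(u-1)\bar v$ with $\bar v\in \overline{\Pi_1^{\rm alg}}$ and $u\in U$. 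If this image were zero we would have $\overline{\Pi_1^{\rm alg}}\subset \Pi^U$; combining with $G$-stability we would get, for every $g\in G$, $\overline{\Pi_1^{\rm alg}}=g\overline{\Pi_1^{\rm alg}}\subset g\Pi^U=\Pi^{gUg^{-1}}$, and hence $\overline{\Pi_1^{\rm alg}}\subset \Pi^N$, where $N$ is the subgroup generated by all $G$-conjugates of $U$. Since $\matrice{0}{1}{1}{0}U\matrice{0}{1}{1}{0}^{-1}=U^-$ and $\langle U,U^-\rangle={\rm SL}_2(\qp)$, we would obtain $\overline{\Pi_1^{\rm alg}}\subset \Pi^{{\rm SL}_2(\qp)}$.

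The conclusion is then immediate: $\Pi^{{\rm SL}_2(\qp)}$ is closed and $G$-stable (since ${\rm SL}_2(\qp)$ is normal in $G$), so by absolute irreducibility of $\Pi$ it is $0$ or all of $\Pi$; the latter would force $\Pi$ to factor through the determinant and hence to be one-dimensional, contradicting the supersingularity of $\Pi=\Pi_{\delta}(D)$, which is automatic here since $V(D)$ is absolutely irreducible of dimension $2$ (Paskunas). The resulting contradiction gives the non-vanishing. The only place where the actual structure of $\Pi$ enters is this last step; everything else is a formal manipulation, and the main obstacle is thus purely conceptual: one must recognize that a non-zero $G$-stable subspace of $\Pi^U$ cannot exist in a supersingular $\Pi$.
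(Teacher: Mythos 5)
Your proof is correct, and the non-vanishing step takes a genuinely different route from the paper's. The paper's proof is a one-liner: if the image vanishes then every $v\in\Pi_1^{\rm alg}$ maps to a $U$-invariant vector of $\Pi^{\rm alg}$, and it then invokes the representation-theoretic fact $(\Pi^{\rm alg})^U=0$ to conclude $\Pi_1^{\rm alg}=\Pi^{\rm alg}$, a contradiction. You avoid this input entirely: after observing (as the paper does) that $\overline{\Pi_1^{\rm alg}}\subset\Pi^U$, you exploit that $\overline{\Pi_1^{\rm alg}}$ is a \emph{$G$-stable} subspace, conjugate $U$ around to land it in $\Pi^{{\rm SL}_2(\qp)}$, and then use only the topological absolute irreducibility of $\Pi$ together with its supersingularity. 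This buys you a more self-contained argument: you never need to examine the structure of $\Pi^{\rm alg}$ (which in the special, Steinberg-like case of Proposition~\ref{iwas1} is reducible, so the vanishing of $(\Pi^{\rm alg})^U$ is slightly less immediate than in the supercuspidal case), and the only irreducibility used is the elementary observation that a supersingular $\Pi$ cannot restrict trivially to ${\rm SL}_2(\qp)$. The trade-off is length; the paper's route is shorter but presupposes the smooth-representation-theory fact, while yours is longer but reduces everything to $\Pi^{{\rm SL}_2(\qp)}=0$. Your treatment of the $B$-stability, via $b(u-1)v=(bub^{-1}-1)(bv)$ and $B$-normalization of $U$, is the same computation the paper regards as ``évidente.''
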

    
   \begin{proof}
   La stabilité sous l'action de $B$ est évidente. Si l'image de $(\Pi_1)_c^{\rm alg}$
   dans $\Pi_c^{P-\rm alg}$ est nulle, alors l'image de tout vecteur $v\in \Pi_1^{\rm alg}$ dans 
   $\Pi^{\rm alg}$ est invariante par $U$. Or $(\Pi^{\rm alg})^U=0$, donc 
   on aurait $\Pi_1^{\rm alg}= \Pi^{\rm alg}$, une contradiction.
   \end{proof}
   
      Le lemme \ref{stupid} et la preuve du corollaire VI.5.9 de \cite{Cbigone} permettent de conclure que 
    $(\Pi_1)_c^{\rm alg}$ se surjecte sur $\Pi_c^{P-\rm alg}$. Ainsi, pour tout $v\in (\Pi_1)_c^{P-\rm alg}$ on peut trouver
    $v_1\in (\Pi_1)_c^{\rm alg}$ tel que $v-v_1\in \Pi_c^{P-\rm alg}$. Comme 
    $\Pi_c^{P-\rm alg}\subset \Pi^{\rm alg}$ (d'après la proposition \ref{crux}, car $D$ est de de Rham, ou en utilisant 
    le corollaire VI.5.9 de \cite{Cbigone}), on obtient $v\in \Pi_1^{\rm alg}$, d'où l'inclusion souhaitée
      $(\Pi_1)_c^{P-\rm alg}\subset \Pi_1^{\rm alg}$. Comme nous l'avons constaté, cela entraîne que $V(E)$ est de de Rham. 
      Pour conclure la preuve du théorème \ref{main1}, il reste à montrer que $\Pi_1^{\rm alg}$ est dense dans 
      $\Pi_1$. Or, nous venons de montrer qu'il contient $(\Pi_1)_c^{P-\rm alg}$. Il suffit donc de montrer que ce dernier espace
     est dense dans $\Pi_1$. En utilisant la suite exacte
     $$0\to \Pi_c^{P-\rm alg}\to \Pi_{1,c}^{P-\rm alg}\to \Pi_c^{P-\rm alg}\to 0$$
    fournie par la proposition \ref{picscindee}, on se ramène à prouver la densité de $\Pi_c^{P-\rm alg}$ dans 
    $\Pi$. Mais $\Pi_c^{P-\rm alg}$ est un sous-$B$-module non nul de $\Pi$ d'après la proposition \ref{pipalg}, le corollaire \ref{blah} 
     et la proposition \ref{tatetwist}. Or, $\Pi$ est topologiquement irréductible comme $B$-module d'après \cite[prop. III.32]{CD}. Cela finit la preuve du théorème 
     \ref{main1}.

  \subsection{Déformations de de Rham}    
  
   Rappelons que si 
    $\delta: \qpet\to L^*$ est un caractère continu (pas forcément unitaire), alors on dispose d'un 
    $(\varphi,\Gamma)$-module $\mathcal{R}(\delta)$ (qui ne correspond à une représentation galoisienne que si 
    $\delta$ est unitaire), qui  a une base $e$ telle que 
    $\varphi(e)=\delta(p)e$ et $\sigma_a(e)=\delta(a)e$ pour tout $a\in\zpet$. 
    
    \begin{definition} Soit $V$ une $L$-représentation absolument irréductible de ${\rm Gal}(\overline{\qp}/\qp)$, de dimension $2$.
    
    a) On dit que $V$ est trianguline si on peut trouver des caractères $\delta_1$ et $\delta_2$ et une suite exacte
    de $(\varphi,\Gamma)$-modules sur $\mathcal{R}$
     $$0\to \mathcal{R}(\delta_1)\to D_{\rm rig}\to \mathcal{R}(\delta_2)\to 0,$$
où $D=D(V)$. 

b) On dit que $V$ est {\it spéciale} si $V$ est trianguline, potentiellement cristalline, et si on peut trouver une suite exacte comme dans 
a) et telle que, en posant $w(V)=w(\delta_1)-w(\delta_2)$, on ait 
    $$\frac{\delta_1}{\delta_2}\in \{x^{w(V)-1}|x|, x^{w(V)-1}|x|^{-1}\}.$$

    \end{definition}
    
    Notons que $w(V)$ est la différence des poids de Hodge-Tate de 
$V$. De plus, on vérifie facilement que $V$ est spéciale si et seulement si son dual de Cartier
$V^*(1)$ est spéciale. La propriété fondamentale des représentations spéciales est le résultat suivant de Colmez:

\begin{proposition}
 Soit $\Pi\in {\rm Rep}_L(\delta)$ un objet supersingulier tel que $\Pi^{\rm alg}\ne 0$. Alors
 $\Pi^{\rm alg}$ est réductible si et seulement si $V(\Pi)$ est spéciale, dans quel cas 
 $\Pi^{\rm alg}$ est une extension de $W$ par $W\otimes {\rm St}$ pour une certaine
 représentation algébrique $W$ et où ${\rm St}$ est la représentation de Steinberg.

\end{proposition}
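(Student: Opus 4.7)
The plan is to reduce this to a statement about smooth representations via Proposition \ref{pilc}, and then to invoke Colmez's identification of the smooth part of $\Pi^{\rm alg}$ with the classical local Langlands transfer of the Weil-Deligne representation attached to $V(\Pi)$.

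First, I would twist by an algebraic character to assume that the Hodge-Tate weights of $V(\Pi)$ are $1$ and $1-k$ with $k\in\mathbf{N}^*$, and then apply Proposition \ref{pilc} (to the Cartier dual of $D(\Pi)$) to write $\Pi^{\rm alg}=W_{1-k,k}\otimes_L \Pi^{\rm lc}$ with $\Pi^{\rm lc}$ a smooth admissible representation of $G$. Since $W_{1-k,k}$ is an irreducible algebraic $G$-representation and the tensor product of an irreducible algebraic representation with a smooth admissible representation is irreducible if and only if the smooth factor is irreducible, $\Pi^{\rm alg}$ is reducible if and only if $\Pi^{\rm lc}$ is reducible as an abstract smooth $G$-representation. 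This reduces everything to controlling the smooth representation $\Pi^{\rm lc}$.

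Next I would invoke Colmez's theorem \cite[th. VI.6.30]{Cbigone}, which computes the Jacquet module of the smooth part of $\Pi^{\rm alg}$ in terms of the Fontaine module $D_{\rm pst}(V(\Pi))$. Combined with the fact that $V(\Pi)$ is absolutely irreducible of dimension $2$, this identifies $\Pi^{\rm lc}$ (up to normalisation) with the smooth admissible representation of ${\rm GL}_2(\qp)$ associated by the classical local Langlands correspondence to the Weil-Deligne representation $D_{\rm pst}(V(\Pi))$. A standard dichotomy for $2$-dimensional Weil-Deligne representations then shows that the smooth representation so obtained is supercuspidal, an irreducible principal series, or a twist of Steinberg—hence irreducible—except precisely when the monodromy operator $N$ vanishes and the underlying Weil representation decomposes as $\chi_1\oplus\chi_2$ with $\chi_1/\chi_2=|\cdot|^{\pm 1}$; this last case is, by definition, exactly when $V(\Pi)$ is special.

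Finally, in the special case, classical local Langlands explicitly realises $\Pi^{\rm lc}$ as the unique non-split length-two smooth principal series, sitting in an exact sequence $0\to\eta\otimes {\rm St}\to\Pi^{\rm lc}\to \eta\to 0$ for an appropriate smooth character $\eta$ (determined by $\chi_1,\chi_2$). Tensoring with $W_{1-k,k}$ and absorbing the character $\eta$ into the algebraic factor yields an algebraic representation $W$ and a short exact sequence $0\to W\otimes {\rm St}\to \Pi^{\rm alg}\to W\to 0$, as asserted. The main technical obstacle is the appeal to Colmez's theorem VI.6.30 identifying $\Pi^{\rm lc}$ with the classical local Langlands image of $D_{\rm pst}(V(\Pi))$; once that is granted, everything else reduces to the elementary classification of $2$-dimensional Weil-Deligne representations and of the corresponding smooth representations of ${\rm GL}_2(\qp)$.
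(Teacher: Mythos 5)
The paper does not prove this proposition: both here and in the introduction it is attributed outright to Colmez (\cite[th.~4.12]{Cvectan}). Your reduction to the smooth factor $\Pi^{\rm lc}$ via Proposition~\ref{pilc}, the observation that $W_{1-k,k}\otimes\pi$ is irreducible if and only if $\pi$ is, and the idea of pinning down $\Pi^{\rm lc}$ from Colmez's Jacquet-module computation \cite[th.~VI.6.30]{Cbigone} are all correct in outline and are the natural way to reconstruct Colmez's argument.

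The one genuine misstep is the sentence identifying $\Pi^{\rm lc}$ ``with the smooth admissible representation of ${\rm GL}_2(\qp)$ associated by the classical local Langlands correspondence to the Weil--Deligne representation $D_{\rm pst}(V(\Pi))$''. The classical correspondence always outputs an \emph{irreducible} representation; in the special case ($\chi_1\oplus\chi_2$ with $\chi_1/\chi_2=|\cdot|^{\pm 1}$ and $N=0$) it gives a one-dimensional character, so taking your sentence at face value would force $\Pi^{\rm lc}$, hence $\Pi^{\rm alg}$, to be irreducible --- the opposite of what you want to prove. What Colmez's results actually establish is that $\Pi^{\rm lc}$ is the \emph{full} parabolic induction attached to $D_{\rm pst}(V(\Pi))$ in the non-supercuspidal case; equivalently, the Jacquet module of $\Pi^{\rm lc}$ is two-dimensional whenever $V(\Pi)$ is trianguline, and since both irreducible constituents (the character and the Steinberg twist) have one-dimensional Jacquet modules, this already forces $\Pi^{\rm lc}$ to be the length-two non-split principal series in the special case. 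Your final paragraph silently uses this correct normalisation (you do land on the length-two principal series), but the preceding one sets up the wrong normalisation, and the two differ precisely in the only case in which the proposition has content. Restating the appeal to Colmez in terms of the full parabolic induction, or directly in terms of the dimension of the Jacquet module given by \cite[th.~VI.6.30]{Cbigone}, closes the gap.
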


   On vérifie aussi que $V$ est spéciale si et seulement si $V$ est potentiellement cristalline et si la représentation de Weil-Deligne 
associée à $D_{\rm pst}(V)$ est une somme directe de deux caractères $\delta_1, \delta_2$, avec 
$\frac{\delta_1}{\delta_2}\in \{|x|, |x|^{-1}\}$.

   Si $V$ est une représentation galoisienne, on identifie librement les déformations de $V$ à $L[\varepsilon]$ et
   les extensions de $V$ par lui-même. Si $\det V=\zeta$, on dit qu'une telle déformation $V_1$ de $V$ a déterminant $\zeta$
   si $\det_{L[\varepsilon]}(V_1)=\zeta$.

\begin{definition} Soit $\zeta$ un caractère unitaire et soit $V$ une $L$-représentation de de Rham, telle que $\det V=\zeta$. 
  On note ${\rm Ext}^1_{g,\zeta}(V,V)$ le sous-espace de ${\rm Ext}^1(V,V)$ engendré par les déformations de $V$ à $L[\varepsilon]$, qui ont déterminant $\zeta$ 
   et qui sont de de Rham. 
\end{definition}

  \begin{proposition}\label{iwas} Soit $\zeta$ un caractère unitaire et soit $V$ une $L$-représentation absolument irréductible, de de Rham, à poids de Hodge-Tate distincts
  et telle que $\det V=\zeta$. 
   Alors $\dim_L {\rm Ext}^1_{ g, \zeta}(V,V)=1$
   sauf si $V$ est spéciale, auquel cas $\dim_L {\rm Ext}^1_{ g, \zeta}(V,V)=2$.
   \end{proposition}
   
   \begin{proof} On pose $W={\rm ad}^0(V)$, sous-espace de ${\rm End}(V)$ formé des endomorphismes de trace nulle.
   On a donc un isomorphisme de ${\rm Gal}(\overline{\qp}/\qp)$-représentations $V\otimes V^*=W\oplus 1$.
      Il découle de la proposition 1.26 de \cite{Nek} et de sa preuve que l'on a un isomorphisme 
   $${\rm Ext}^1_{g,\zeta}(V,V)\simeq H^1_g(\qp, W),$$
   où $H^1_g(\qp, W)$ désigne le groupe de cohomologie introduit par Bloch-Kato \cite{BK}, classifiant 
   les extensions de $\qp$ par $W$ qui sont de de Rham. Un calcul standard utilisant l'exponentielle de Bloch-Kato et l'exponentielle duale de Kato
   montre (voir par exemple la prop. 1.24 de \cite{Nek}) que 
   $$\dim H^1_g(\qp, W)=\dim H^0(\qp, W)+\dim (D_{\rm dR}(W)/D_{\rm dR}^+(W))+
   \dim D_{\rm cris}(W^*(1))^{\varphi=1}.$$
   Puisque $V$ est absolument irréductible, on a $H^0(\qp, W)=0$. 
    Ensuite, $$D_{\rm dR}^+(V\otimes V^*)=\{f\in {\rm End}(D_{\rm dR}(V))| \quad f({\rm Fil}^i(D_{\rm dR}(V)))\subset  {\rm Fil}^i(D_{\rm dR}(V)),\quad \forall i\in\mathbf{Z}\}.$$
    Comme les poids de Hodge-Tate de $V$ sont distincts et $\dim V=2$, on obtient $\dim D_{\rm dR}^+(V\otimes V^*)=3$, donc 
    $\dim (D_{\rm dR}(W)/D_{\rm dR}^+(W))=1$. La proposition est donc équivalente à l'assertion suivante: 
     $D_{\rm cris}(V\otimes V^*)^{\varphi=p}=0$ sauf si $V$ est spéciale, dans quel cas $D_{\rm cris}(V\otimes V^*)^{\varphi=p}$
     est de dimension $1$. 
     
       Supposons que $V$ n'est pas trianguline et considérons une extension finie $K$ de $\qp$ telle que 
       $V$ devienne semistable sur $K$. Puisque $V$ n'est pas trianguline, la représentation de Weil-Deligne associée à 
       $D_{\rm pst}(V)$ est irréductible, donc $V$ devient cristalline sur $K$. Alors, en posant $D^K_{\rm cris}=(B_{\rm cris}\otimes_{\qp} V)^{{\rm Gal}(\overline{\qp}/K)}$, on a 
       $$D_{\rm cris}(V\otimes V^*)^{\varphi=p}\simeq (D^{K}_{\rm cris}\otimes D^K_{\rm cris})^{{\rm Gal}(K/\qp), \varphi=p}$$
       et ce dernier espace est nul, puisque $\varphi$ a une base de vecteurs propres avec des valeurs propres de même
       valuation $p$-adique (cela découle de l'irréductibilité de 
      la représentation de Weil-Deligne
      associée à $D_{\rm pst}(V)$, ou par contemplation des formules 
      de \cite{GhateMezard}). 
       On en déduit le résultat dans ce cas.
      
        Supposons donc que $V$ est trianguline. Si $V$ est une tordue d'une représentation semi-stable non cristalline, on peut supposer
        que $V$ est semi-stable, et alors 
        $$D_{\rm cris}(V\otimes V^*)^{\varphi=p}=(D_{\rm st}(V)\otimes D_{\rm st}(V)^*)^{N=0, \varphi=p}.$$
        Dans ce cas, le $\S$ 3.1 de \cite{GhateMezard} montre que $D_{\rm st}(V)$ a une base dans laquelle
        les matrices de $\varphi$ et $N$ sont $\varphi=\left(\begin{smallmatrix} \frac{p}{\alpha} & 0 \\ 0 & \frac{1}{\alpha}\end{smallmatrix}\right)$
         et $N=\left(\begin{smallmatrix} 0 & 0 \\1 & 0\end{smallmatrix}\right)$. On vérifie alors sans mal que 
      $(D_{\rm st}(V)\otimes D_{\rm st}(V)^*)^{N=0, \varphi=p}=0$. 
      
        Enfin, supposons que $V$ est une tordue d'une représentation devenant cristalline sur une extension abélienne de $\qp$, on vérifie
        en utilisant les formules de la def. 2.4.4 de \cite{BB} et la prop. 2.4.5 de loc.cit que 
        $D_{\rm cris}(V\otimes V^*)^{\varphi=p}=0$, sauf si $V$ est spéciale, dans quel cas c'est un espace de dimension $1$.

   \end{proof}
   
   \begin{remark}
   La non nullité de $D_{\rm cris}(V\otimes V^*)^{\varphi=p}$ équivaut à l'existence d'une extension 
   $0\to V\to E\to V\to 0$ qui est de de Rham, mais pas cristalline. Si $V$ devient cristalline sur une extension $K$ (forcément abélienne
   d'après le théorème précédent), alors $E$ est semi-stable et non cristalline sur $K$.
   
   \end{remark}

\end{document}